\documentclass[12pt,reqno]{amsart}
\usepackage[headings]{fullpage}
\usepackage{amssymb,amsmath,mathtools,bbm,tikz,tikz-cd,color,relsize}
\usepackage[all,cmtip]{xy}
\usepackage{url}
\usetikzlibrary{positioning}
\usetikzlibrary{calc}
\usetikzlibrary{decorations.pathreplacing}
\usetikzlibrary{arrows}
\usetikzlibrary{calc}
\usetikzlibrary{decorations.markings}
\tikzstyle{hvector}=[inner sep=2pt,draw=blue!50,fill=blue!10,thick]
\tikzstyle{unit}=[inner sep=2pt,shape=circle, draw]
\tikzstyle{counit}=[inner sep=2pt,shape=circle, draw,fill=gray]
\tikzstyle{antipode}=[inner sep=2pt,shape=rectangle, draw]
\tikzstyle{fantipode}=[inner sep=2pt,shape=rectangle, draw,fill,white]
\tikzstyle{cocycle}=[inner sep=2pt,shape=circle, draw]
\tikzstyle{twistedm}=[inner sep=2pt,shape=circle, fill=gray]
\tikzstyle{autom}=[inner sep=2pt,shape=circle, draw]
\tikzstyle{coact}=[inner sep=2pt,shape=circle, fill=black]
\usepackage[T1]{fontenc}
\usepackage{lmodern}
\usepackage{mathrsfs}
\newcommand{\Fsurg}{\mathscr{F}}
\newcommand{\figref}[1]{Fig.~\protect\ref{#1}}

\newcommand{\qp}[2]{(#1;#2)_\infty}
\numberwithin{equation}{section}
\newcommand{\CT}{\operatorname{CT}}
\newcommand{\ba}{\begin{aligned}}
\newcommand{\ea}{\end{aligned}}
\newcommand{\bea}{\begin{align}}
\newcommand{\eaa}{\end{align}}
\newcommand{\re}{\mathrm{e}}
\newcommand{\ri}{\mathrm{i}}
\newcommand{\rd}{\mathop{}\!\mathrm{d}}
\newcommand{\had}{\odot}
\newcommand{\me}[1]{\mu_{#1}}
\newcommand{\rme}[1]{\hat{\mu}_{#1}}
\def\IZ{{\mathbb Z}}
\def\IR{{\mathbb R}}
\def\IC{{\mathbb C}}
\def\tq{{\tilde{q}}}
\def\tx{{\tilde{x}}}

\newcommand{\CA}{{\mathcal A}}
\newcommand{\CB}{{\mathcal B}}
\newcommand{\CC}{{\mathcal C}}
\newcommand{\CJ}{{\mathcal J}}
\newcommand{\CF}{{\mathcal F}}
\newcommand{\CO}{{\mathcal O}}
\newcommand{\CP}{{\mathcal P}}

\newcommand{\CM}{{\mathcal M}}
\newcommand{\tX}{{\widetilde X}}

\newcommand{\mL}{{\mathsf{L}}}
\newcommand{\mH}{\mathrm{H}}
\newcommand{\ii}{\mathrm i}

\newcommand{\qpoch}[2]{(#1;#2)_\infty}
\newcommand{\qpochn}[3]{(#1;#2)_{#3}}
\newcommand{\fn}[1]{g_{#1}}

\newcommand{\gammaq}[1]{\gamma_q\!\left(#1\right)}

\tikzset{
  on each segment/.style={
    decorate,
    decoration={
      show path construction,
      moveto code={},
      lineto code={
        \path [#1]
        (\tikzinputsegmentfirst) -- (\tikzinputsegmentlast);
      },
      curveto code={
        \path [#1] (\tikzinputsegmentfirst)
        .. controls
        (\tikzinputsegmentsupporta) and (\tikzinputsegmentsupportb)
        ..
        (\tikzinputsegmentlast);
      },
      closepath code={
        \path [#1]
        (\tikzinputsegmentfirst) -- (\tikzinputsegmentlast);
      },
    },
  },
  mid arrow/.style={postaction={decorate,decoration={
        markings,
        mark=at position .5 with {\arrow[#1]{To}}
      }}},
  mid end arrow/.style={postaction={decorate,decoration={
        markings,
        mark=at position .7 with {\arrow[#1]{To}}
      }}},
  end arrow/.style={postaction={decorate,decoration={
        markings,
        mark=at position .95 with {\arrow[#1]{To}}
      }}},
}

\usepackage[bookmarks=true,%
    colorlinks=true,%
    linkcolor=blue,%
    citecolor=blue,%
    filecolor=blue,%
    menucolor=blue,%
    urlcolor=blue,%
    breaklinks=true]{hyperref}
\usepackage{slashed}
\usepackage{verbatim}
\usepackage[normalem]{ulem}  %
\usepackage{diagbox}

\newcommand{\sectiono}[1]{\section{#1}\setcounter{equation}{0}}

\usepackage{amssymb,amsmath,tikz}

\newcommand{\Z}{\mathbb{Z}}
\newcommand{\CZ}{\mathcal{Z}}
\providecommand{\C}{}
\renewcommand{\C}{\mathbb{C}}
\newcommand{\R}{\mathbb{R}}
\newcommand{\Q}{\mathbb{Q}}

\newcommand{\mg}[2]{\gamma_{#1}\!\left(#2\right)}

\newcommand{\phioneone}{{}_1\phi_1}

\DeclareMathOperator{\dd}{d\!}
\newcommand{\qbinom}[2]{\genfrac{[}{]}{0pt}{}{#1}{#2}_{q}}

\newtheorem{theorem}{Theorem}
\newtheorem{proposition}{Proposition}
\newtheorem{lemma}{Lemma}
\newtheorem{corollary}{Corollary}
\theoremstyle{remark}
\newtheorem{definition}{Definition}
\newtheorem{remark}{Remark}
\newtheorem{conjecture}{Conjecture}

\newtheorem{example}{Example}
\def\be{\begin{equation}}
\def\ee{\end{equation}}
\newcommand{\ps}[1]{\mathcal{P}_{#1}}
\newcommand{\res}[1]{\operatorname{Res}_{#1}}
\newcommand{\fps}[1]{\mathbb{C}[[{#1}]]}

\newcommand{\smf}{\mathcal{SM}}

\title{Analytic quantum invariants}
\author{Rinat Kashaev}
\address{Section de Mathématiques, Université de Genève,
rue du Conseil-G\'en\'eral 7-9, 1205 Gen\`eve, Suisse\\}
\email{Rinat.Kashaev@unige.ch}
\author{Vladimir Mangazeev}
\address{Research School of Physics, Australian National University, Canberra, ACT 2601, Australia\\}
\email{Vladimir.Mangazeev@anu.edu.au}
\author{Marcos Mariño}
\address{Section de Mathématiques et Département de Physique Théorique, Université de Genève,
1211 Genève 4, Switzerland}
\email{Marcos.Marino@unige.ch}
\date{Started: September 6, 2025; updated: \today}

\begin{document}

\begin{abstract}
Using a deformed Borel--Laplace resummation, we propose a framework for
repackaging quantum invariants as analytic invariants. We illustrate our
approach through a detailed analysis of the coloured Jones polynomials
of several simple knots, deriving explicit expressions for the
resulting analytic invariants in terms of $(q,X)$-series. These
invariants are closely related to, but generally distinct from, those
obtained via the conventional Borel resummation of the
Melvin--Morton--Rozansky expansion.

As a further example, we construct an analytic invariant from a sequence
of two-variable polynomials conjectured to be the Links--Gould
polynomials of the figure-eight knot, associated with the quantum
superalgebras $U_q(\mathfrak{gl}(n|1))$. Finally, we apply our technique
to $q$-series arising in the study of three-manifold invariants,
revealing connections with mock modular forms.
\end{abstract}

\maketitle

{\footnotesize
\tableofcontents
}

\section{Introduction}

The coloured Jones polynomials $J_{K,n}(q)$ of a knot $K\subset \R^3$
form a fundamental family of quantum knot invariants, arising from the
$n$-dimensional irreducible representations of the quantum group $U_q(\mathfrak{sl}_2)$~\cite{MR1026957,MR1091619}.
 Beyond their algebraic definition, these invariants exhibit striking analytic, geometric, and asymptotic features~\cite{MR1328734,MR1389962,MR1434238,rozansky, MR1828373, gukov,MR2349300,Garoufalidis2004,MR2860990}.

One problem that has received considerable attention in recent years is how to promote these polynomials to analytic functions of the variables \(q\) and \(x\), where \(x\) agrees with the colour at positive integer values. One possible approach is to start from the Melvin--Morton--Rozansky (MMR) series associated with the knot~\cite{MR1328734,rozansky}. This series in powers of \(h=-\log q\) is typically factorially divergent  with coefficients that are functions of \(X=q^x\). It was conjectured in~\cite{MR4233201} that the MMR series can be reorganized into a formal two-variable series \(F_K(q,X)\), which, for many knots, admits a compact representation in terms of the ``inverted Habiro series'' introduced in~\cite{park-inverted}. In many examples, the latter is given by a convergent expression for \(|q|<1\); however, it has poles at \(X=q^n\) and therefore does not yield an analytic interpolation of the coloured Jones polynomials.

A more direct analytic approach is to use Borel resummation techniques to convert the MMR series into an analytic function in angular sectors of the complex \(h\)-plane~\cite{MR4841756}. In this setting, the resulting function typically involves both \(q\)-series and ``dual'' \(\tilde q\)-series, where $\tilde q=\re^{-4\pi^2/h}$. Another approach was proposed in~\cite{MR4404799}, where a unified two-variable invariant was constructed that specializes to the coloured Jones polynomials for suitable choices of the variables. However, this specialization is algebraic, taking place in a specific completion of a polynomial ring, and does not necessarily correspond to a limit of analytic functions. Finally, the work~\cite{MR2809462} addresses the problem of analytic continuation from the perspective of the path integral formulation of complex Chern--Simons theory.

In this work, we use a specific deformed variant of Borel--Laplace resummation~\cite{Borel1899,MR1317343,MR1722871,MR2474083} to define an analytic knot invariant from a formal expression closely related to the coloured Jones polynomials. This expression, involving an interpolation to non-integral values of the colour parameter, is the result of an approach initiated by K.~Habiro in~\cite{MR2002603} through his cyclotomic expansion of the coloured Jones polynomials, subsequently studied in~\cite{MR4404799,park-inverted}. This expansion reads
\begin{equation}\label{eq:cJH}
J_{K,n}(q)=\sum_{m=0}^{n-1}
\frac{(q^{1+n},q^{1-n};q)_m}{\mg{q}{-m}} H_{K,m}(q),
\end{equation}
where we use the $q$-hypergeometric notation of~\cite{MR2128719},
\begin{equation}\label{eq:mg-notation}
\mg{q}{n}:=(-1)^n q^{n(n-1)/2},
\end{equation}
and where $H_{K,m}(q)$ are the Habiro polynomials (also known as cyclotomic coefficients; Habiro uses the notation $a_m(K)$ for $H_{K,m}(q)$ in~\cite{MR2002603}). The expression (\ref{eq:cJH}) suggests the \emph{Jones interpolating series}
\begin{equation}\label{eq:cJHi}
J_{K,x}(q,z)=\sum_{m=0}^\infty \frac{(q^{1+x},q^{1-x};q)_m}{\mg{q}{-m}}
 H_{K,m}(q)z^m,
\end{equation}
which, when $x=n\in\Z_{>0}$, truncates at $m=n-1$  and hence recovers~\eqref{eq:cJH} upon setting $z=1$. The role of the variable $z$ in~\eqref{eq:cJHi} is to turn the expression into a power series, although the specialisations $z=q^\ell$ were already considered in~\cite{MR4700600} in the definition of the descendants of the coloured Jones polynomials.

To produce our \emph{analytic Jones invariant}, denoted by $\hat{J}_{K,x}(q,z)$, we apply a deformed variant of
Borel--Laplace resummation to the series~\eqref{eq:cJHi}  (for generic $x$) whenever it is divergent. For example, for the figure-eight knot $K={\bf 4}_1$ one has $H_{{\bf 4}_1,m}(q)=1$ for all $m\ge0$, see~\cite[(9)]{MR2002603}, and hence series~\eqref{eq:cJHi} diverges. The value of the analytic Jones invariant at $z=1$,
\be
\label{atz1}
 \hat{J}_{K,x}(q):=\hat{J}_{K,x}(q,1), 
\ee
can be regarded as the \emph{interpolating Jones invariant}, reproducing the coloured Jones polynomials at positive integer values of $x$. For simplicity, we restrict our attention to only real values of \(q\) satisfying \(0<q<1\). In this case, the analytic Jones invariant $\hat{J}_{K,x}(q,z)$, viewed as a function of complex variable $z$, has a logarithmic branch cut; therefore (\ref{atz1}) can depend on the choice of branch, similarly to the dependence on the choice of lateral resummation in conventional Borel resummation. 

The deformed Borel--Laplace resummation may itself produce a divergent expression. In such cases, one may try to iterate the resummation procedure until a finite result is obtained. We illustrate this phenomenon for the knot \({\bf 5}_2^-\) (of negative signature), for which we apply the deformed Borel--Laplace resummation twice. 

In the literature on linear $q$-difference equations, various versions of the $q$-Borel and $q$-Laplace transforms are used; see, for example,~\cite{MR3384489} and the references therein, as well as~\cite{gar-wheel} for an application to knot theory. The transforms we use here are different. In particular, our $q$-Laplace transform, defined below in~\eqref{eq:q-laplace}, is not a sum but a genuine (Lebesgue) integral over the positive real half-line, just as in the case of the usual Laplace transform. As a result, the analytic picture in the Borel plane is very similar to that in the undeformed setting. Moreover, as we will see in Remark~\ref{rem:deformedBL} of Section~\ref{sec:deformedBL}, our \(q\)-deformed Borel--Laplace resummation approaches the standard Borel resummation as \(q\) approaches \(1\).

\subsection{$(q,X)$-series from the interpolating Jones invariant} 
In this paper, we show that the interpolating Jones invariant is
well-defined for $0<q<1$ and is analytic in $x$ in suitable
domains. In the examples considered here, it can be expressed
as a finite linear combination of explicit $(q,X)$-series
(that is, series in $q$ whose coefficients are Laurent
polynomials in $X$), multiplied by powers of $(\log q)^{-1}$.
By construction, it is an even function of $x$ and
specializes to the $|n|$-coloured Jones polynomial at $x=n\in\Z_{\ne0}$.
Moreover, it is also regular at $x=0$.

The series~ \eqref{eq:cJHi} determines a formal
$z$-deformation of the MMR series; setting $z=1$ gives the usual MMR series. In the examples studied in the paper, we
show that, with $q=\re^{-h}$ and $x=-w/h$, the analytic Jones invariant $\hat{J}_{K,x}(q,z)$ admits the corresponding deformed MMR series as
its fixed-$w$ formal asymptotic expansion as $h \to0^+$. We do not provide the remainder estimates needed to establish a Poincaré asymptotic expansion uniform in $w$ on a neighborhood of the origin. However, our numerical evaluations for $w$ near $0$ are consistent with such uniform asymptotic behavior.

It is natural to ask whether the analytic Jones invariant agrees with the conventional Borel resummation of the MMR series, or with the invariants obtained in \cite{MR4233201,park-inverted}. Although the interpolating Jones invariant seems to be closely related to these objects, it is distinct from both of them. In the examples considered here, the interpolating Jones invariant and conventional Borel resummation have the same formal MMR expansion but differ by exponentially small terms in $h=-\log q$. In physics parlance, they define different non-perturbative completions of the perturbative MMR series. They also lead to different mathematical objects, since as we noted above, the conventional Borel resummation involves in general functions depending as well on ``dual'' variables $\tilde q=\re^{-4\pi^2/h}$, $\tilde X=\re^{2\pi\ri x}$ \cite{MR4841756}. 

Our analytic invariant differs in nature from the ones in \cite{MR4233201,park-inverted} as well. As mentioned above, outside the cases where a convergent construction is available, the proposal of \cite{MR4233201} specifies only formal two-variable series \(F_K(q,X)\). When convergent, the inverted Habiro series of~\cite{park-inverted} provides a meromorphic realization, or regularization~\cite{svoboda}, of \(F_K(q,X)\), but it does not satisfy two of the basic properties of our invariant: it is singular when $x=n\in \IZ_{\ge 0}$, and it does not asymptote to the MMR series when $w$ is near $0$. The inverted Habiro series nevertheless appears naturally as one contribution to our answer, while additional terms enforce these two properties.

Our procedure leads to explicit formulae for the analytic Jones invariant in many cases. A particularly important example is the figure-eight knot ${\bf 4}_1$, which we use to illustrate the procedure in Section~\ref{sec:41}. A more general formalism is developed in Section~\ref{sec:gen-for}. In particular, we propose a resummed version of the divergent basic hypergeometric function ${_r\phi_s}$ with $r>s+1$ (Theorem~\ref{thm:r-fi-s}). The general framework allows us to work with a simpler generating function of the Habiro polynomials, as explained in Section~\ref{sec:gen-hab-ser}. As shown in Proposition~\ref{Jxb}, the analytic Jones invariant can then be obtained from the resummed generating function of the Habiro polynomials in a universal form, in the same way that the coloured Jones polynomials can be obtained from the Habiro polynomials by using a universal kernel.

In Section~\ref{sec:2variable}, we apply the deformed Borel--Laplace resummation to a sequence
of two-variable polynomials conjectured to be the Links--Gould
polynomials of the figure-eight knot, associated with the quantum
superalgebras $U_q(\mathfrak{gl}(n|1))$.

Besides quantum knot polynomials, our procedure can also be applied to other $q$-series and sequences of polynomials arising in quantum topology. In Section~\ref{sec:wrt}, we consider $q$-series that arise naturally in the study of the WRT invariants of three-manifolds. By using deformed Borel--Laplace resummation, we obtain analytic functions of $q$ whose formal asymptotic expansions are given by the Ohtsuki series of three-manifolds \cite{ohtsuki}. These functions are also closely related to the $q$-series appearing in \cite{lz,hikami1, hikami2}, as well as to the so-called $\hat Z$-invariants of three-manifolds \cite{gpv,gppv}. As in the case of the analytic Jones invariant, the formulae we obtain include in general corrections to the $\hat Z$-invariants obtained in the literature. These corrections are needed in order to have the correct Ohtsuki asymptotics. At present, these analytic functions are attached to surgery presentations; invariance under changes of presentation has not been addressed yet.

\subsection{Open problems} In this paper we propose a preliminary exploration of the techniques of deformed Borel--Laplace resummation mainly in the context of knot theory, but many open problems remain. The most important problem is to determine whether the method works universally for all knots. Different knots require different types of $p$-Borel--Laplace resummation, where $p=q^{\beta}$, and it would be important to determine what the required value of $\beta$ is for a given knot. In the case of the ${\bf 5}_2^-$ knot, an additional resummation is needed. A precise characterization of the knots for which an iterative method is needed would be welcome. On a different note, one should develop a better understanding of the singularity structure of the deformed Borel--Laplace transform and determine whether interesting information can be extracted from its singularities, as in the usual Borel resummation. The extension of the procedure to more general values of $q$ should also be considered.

It would also be interesting to understand better the situation in the case of three-manifolds, and see whether the approach proposed here could lead to genuine $q$-series invariants. The analytic functions that we have defined for surgery presentations could be easily studied for general Brieskorn homology spheres. Our preliminary results in Section \ref{sec:wrt} indicate that the procedure of deformed Borel--Laplace resummation might shed some light on the problem of finding pairs of ``dual'' mock modular forms for three-manifolds related by orientation reversal \cite{3dmod}.

Finally, there are many $q$-series arising in mathematical physics that could be studied with the techniques developed in this paper. Particularly interesting examples are the topological string free energies of Calabi--Yau threefolds and their quantum mirror maps in the large radius limit. These are series in the exponentiated K\"ahler parameters whose coefficients are functions of $q$. When $0<q<1$, these series are typically divergent \cite{hmmo,Marino2015,akp}, and their deformed Borel--Laplace resummation may lead to new results on non-perturbative topological string theory.

\sectiono{Deformed Borel--Laplace resummation}\label{sec:deformedBL}
Let
\begin{equation}
f(z)=\sum_{n=0}^\infty a_n z^n
\end{equation}
 be a divergent series such that
 \begin{equation}
a_n=\mathcal{O}(r^{-n}q^{-n(n+1)/2}),\quad r\in\R_{>0}.
\end{equation}
 Then, the \emph{$q$-Borel transform}
\begin{equation}
\tilde f(z)=(\mathcal{B}_q f)(z):=\sum_{n=0}^\infty \frac{(-1)^n}{(q^{-1};q^{-1})_n} a_nz^n=\sum_{n=0}^\infty \frac{q^{n(n+1)/2}}{(q;q)_n} a_nz^n
\end{equation}
converges in the disk $|z|<r$.  Assuming that it admits an analytic continuation along the ray through \(z\), and that the integral below converges, we define the \emph{\(q\)-Borel--Laplace resummation} of \(f(z)\) by
\begin{equation}\label{eq:hat-transform}
\hat f(z)=\frac{-1}{\log q}(\mathcal{L}_q\tilde f)(z),
\end{equation}
where $\mathcal{L}_q$ is the $q$-Laplace transform defined by
\begin{equation}\label{eq:q-laplace}
(\mathcal{L}_qh)(z):=\int_0^\infty \frac{h(zt)}{(-t;q)_\infty}\rd  t. 
\end{equation}
Formula~\eqref{eq:hat-transform} is based on the following integral formula:
\begin{equation}
\frac{-1}{\log q}\int_0^\infty \frac{t^n}{(-t;q)_\infty}\rd t=(-1)^{n} (q^{-1};q^{-1})_n,\quad  n\in\Z_{\ge0},
\end{equation}
which can be deduced from Ramanujan’s $q$-beta integral
\begin{equation}\label{eq:ramanujan-beta}
\int_0^\infty t^{s-1}\frac{(-at;q)_\infty}{(-t;q)_\infty}\rd t=\frac{(a,q^{1-s};q)_\infty}{(q,aq^{-s};q)_\infty}\frac{\pi}{\sin(\pi s)},\quad \Re s>0,\ |aq^{-s}|<1,
\end{equation}
 see~\cite[(2.8), (2.9)]{MR567718}.

\begin{remark}\label{rem:deformedBL}
If we consider the slightly modified versions of the $q$-Borel and $q$-Laplace transformations given by
\be
(\mathcal{B}'_q f)(z)=(\mathcal{B}_q f)(z(1-q)),\quad
(\mathcal{L}'_q f)(z)=\frac{-1}{\log q}(\mathcal{L}_q f)(z/(1-q)),
\ee
then one readily sees that, in the limit $q\to 1$,
\be
\mathcal{B}'_1:=\lim_{q\to 1}\mathcal{B}'_q=\mathcal{B},\quad
\mathcal{L}'_1:=\lim_{q\to 1}\mathcal{L}'_q=\mathcal{L},
\ee
where $\mathcal{B}$ and $\mathcal{L}$ denote the usual Borel and Laplace transformations, respectively. In particular, our $q$-Borel--Laplace resummation approaches the usual Borel--Laplace resummation as $q$ approaches $1$, as follows from the identity
\be
\frac{-1}{\log q}\mathcal{L}_q\mathcal{B}_q
=
\mathcal{L}'_q\mathcal{B}'_q.
\ee
This justifies the use of the term ``deformed Borel--Laplace resummation''. \hfill$\diamond$
\end{remark}

If the $q$-Borel transform $\tilde f(z)$ is singular at a point $z_\omega \in \IC$, the ray $\re^{\ri \theta}\IR_{\ge 0}$, with $\theta={\rm arg}(z_\omega)$, will be called a {\it Stokes ray}, in analogy with the usual Borel resummation. The $q$-Laplace transform (\ref{eq:q-laplace}) is singular whenever $z$ lies on a Stokes ray. Thus, the $q$-Borel--Laplace resummation defines an analytic function on wedges, and its discontinuities can be determined from the behavior of the $q$-Borel transform near its singularities. In this paper, we will not calculate these discontinuities explicitly. Rather, whenever possible, we will calculate the deformed Borel--Laplace resummation for generic values of $z$, with its discontinuities encoded in the branch cuts of the resulting expression. In the case of the Jones interpolation series~\eqref{eq:cJHi}, we will often set $z=1$ after resummation, and in some cases, such as the trefoil knot, this specialization requires a choice of branch in the resulting formula.

The deformed Borel--Laplace resummation procedure has some advantages over the standard Borel resummation. The coefficients of the MMR series are usually not known in closed form, which typically precludes obtaining closed-form expressions for the conventional Borel resummation (although, in some cases, conjectural closed-form expressions can be obtained with the help of state integrals; see \cite{ggm1,peacock,MR4841756}). In contrast, the Habiro polynomials appearing in the series~(\ref{eq:cJHi}) are known to all orders for many knots~\cite{MR1997328}.

We now define the analytic Jones invariant of a knot \(K\) as follows.
\begin{definition}\label{def:analytic-inv}
Suppose that the coefficients of the interpolating series~\eqref{eq:cJHi} of the colored Jones polynomials of a knot \(K\) grow as \(q^{-\beta m^2/2}\). We then define the {\it analytic Jones invariant of \(K\)}, denoted by \(\hat J_{K,x}(q,z)\), to be the \(p\)-Borel--Laplace resummation of \(J_{K,x}(q,z)\), where \(p=q^\beta\), provided this resummation exists.\hfill$\diamond$
\end{definition}
We make a few remarks concerning this definition.
\begin{enumerate}
\item The value of \(\beta\) depends on the knot. If (\ref{eq:cJHi}) has a positive radius of convergence, we set \(\beta=0\). In this case, we simply define
\be
\hat J_{K,x}(q,z)=J_{K,x}(q,z).
\ee
\item The \(p\)-Borel--Laplace resummation may not exist. In such cases, further resummations may be performed to obtain a finite result. This will be illustrated below for the knot \({\bf 5}_2^-\).
\item The analytic Jones invariant of a knot can be constructed from a simpler analytic quantum invariant based on the Habiro polynomials. This will be explained and developed in Section~\ref{sec:gen-hab-ser}.
\end{enumerate}

\sectiono{The figure-eight knot}
\label{sec:41}
Our first example is the figure-eight knot $K={\bf 4}_1$. We will carry out a detailed analysis of this example and, on the basis of it, develop in the next sections a general procedure for arbitrary knots.

 \subsection{Deformed Borel--Laplace resummation and contour integral representation}\label{sec:4_1-int-sub-sec1}

In the case of the figure-eight knot, the Habiro polynomials are simply
\be
\label{hab-41}
H_{{\bf 4}_1,m}(q)=1, \quad m\ge0,
\ee
so that
\begin{equation}\label{eq:4_1n}
J_{{\bf 4}_1,n}(q)=\sum_{m=0}^{n-1}
\frac{(q^{1+n},q^{1-n};q)_m}{\mg{q}{-m}}=J_{{\bf 4}_1,n}(q,1),
\end{equation}
where
\begin{equation}\label{eq:4_1z}
J_{{\bf 4}_1,x}(q,z)=\sum_{m=0}^{\infty} \frac{(q^{1+x},q^{1-x};q)_m}{\mg{q}{-m}}
z^m
\end{equation}
is the Jones interpolating series, which is divergent unless $x\in \Z_{\ne0}$. Its coefficients grow like $q^{-m^2/2}$; therefore, in Definition~\ref{def:analytic-inv}, $\beta=1$, and the analytic Jones invariant is defined by $q$-Borel--Laplace resummation.

Applying first the $q$-Borel transform, we obtain
\begin{equation}\label{tildefz}
\tilde J_{{\bf 4}_1,x}(q,z):=(\mathcal{B}_qJ_{{\bf 4}_1,x}(q,\cdot))(z)=\sum_{m=0}^{\infty}
\frac{(q^{1+x},q^{1-x};q)_m}{(q;q)_m}(-z)^m=
{_2\phi_1}\Bigl[
\begin{matrix}
 q^{1+x},q^{1-x}\\
0
\end{matrix}
;q,-z\Bigr].
\end{equation}
Thus, the $q$-Borel--Laplace resummation of~\eqref{eq:4_1z} gives
\begin{equation}\label{eq:interpolated-jones}
\hat J_{{\bf 4}_1,x}(q,z):=\frac{-1}{\log q}(\mathcal{L}_q \tilde J_{{\bf 4}_1,x}(q,\cdot))(z)=\frac{-1}{\log q}\int_0^\infty \frac{\tilde J_{{\bf 4}_1,x}(q,tz)}{(-t;q)_\infty}\rd  t.
\end{equation}
In order to make sense of this formula, we need an analytic continuation of the function $\tilde J_{{\bf 4}_1,x}(q,w)$ along the rays
\(
w\in \re^{\ri\theta}\IR_{>0},
\)
\(
\theta={\rm arg}(z).
\)

A first analytic continuation can be obtained by  specialising $c= 0$ in Jackson's transformation formula (see~\cite[(1.5.4)]{MR2128719})
\begin{equation}
{_2\phi_1}\Bigl[
\begin{matrix}
 a,b\\
c
\end{matrix}
;q,z\Bigr]=\frac{(az;q)_\infty}{(z;q)_\infty}
{_2\phi_2}\Bigl[
\begin{matrix}
 a,c/b\\
c,az
\end{matrix}
;q,bz\Bigr],
\end{equation}
with the result
\begin{equation}
\label{bt-41}
\tilde J_{{\bf 4}_1,x}(q,z)=\frac{(-q^{1+x}z;q)_\infty}{(-z;q)_\infty}
{_1\phi_1}\Bigl[
\begin{matrix}
 q^{1+x}\\
-q^{1+x}z
\end{matrix}
;q,-q^{1-x}z\Bigr].
\end{equation}

A more useful analytic continuation for our purposes follows from
Watson's contour integral representation for the basic hypergeometric series, which states that \cite[(4.2.2)]{MR2128719}
\begin{equation}\label{eq:watson-ac}
{_2\phi_1}\Bigl[
\begin{matrix}
 a,b\\
c
\end{matrix}
;q,z\Bigr]=\frac{(a,b;q)_\infty}{(c,q;q)_\infty}\left(\frac{-1}{2\pi \ri}\right)\int_C\frac{(q^{1+s},cq^s;q)_\infty}{(aq^s,bq^s;q)_\infty}\frac{\pi(-z)^s}{\sin(\pi s)}\rd s,
\end{equation}
where the contour $C$ runs upward along the imaginary axis, deviating in the finite part to separate the poles at $s\in\Z_{\ge 0}$ from all other poles of the integrand. The condition for the convergence of the integral is $|\arg(-z)|<\pi$. Specialising to our case, we obtain
\begin{align}
\tilde J_{{\bf 4}_1,x}(q,z)&=\frac{(q^{1+x},q^{1-x};q)_\infty}{(q;q)_\infty}\left(\frac{-1}{2\pi \ri}\right)\int_C\frac{(q^{1+s};q)_\infty}{(q^{1+s+x},q^{1+s-x};q)_\infty}\frac{\pi z^s}{\sin(\pi s)}\rd s\nonumber\\
&=\frac{(q^{1+x},q^{1-x};q)_\infty}{(q;q)_\infty}\left(\frac{1}{2\pi \ri}\right)\int_{\R-\ri\epsilon}\frac{(q^{\ri s};q)_\infty}{(q^{\ri s+x},q^{\ri s-x};q)_\infty}\frac{\pi z^{\ri s-1}}{\sinh(\pi s)}\rd s,\label{eq:watson-integral}
\end{align}
where the last equality is valid if $0<\epsilon<1$ and $-\epsilon<x<\epsilon$. Substituting~\eqref{eq:watson-integral} into~\eqref{eq:interpolated-jones} and exchanging the order of integrations, we write
\begin{equation}
\hat J_{{\bf 4}_1,x}(q,z)=\frac1{2\ri\log q}\frac{(q^{1+x},q^{1-x};q)_\infty}{(q;q)_\infty}\int_C\frac{(q^{1+s};q)_\infty}{(q^{1+s+x},q^{1+s-x};q)_\infty}\frac{R_s(q)z^s}{\sin(\pi s)}\rd s,
\end{equation}
where
\begin{equation}\label{eq:t-s-int}
R_s(q):=\int_0^\infty\frac{ t^{s}}{(-t;q)_\infty}\rd t=-\frac{(q^{-s};q)_\infty}{(q;q)_\infty}\frac{\pi}{\sin(\pi s)}=\frac{1}{(q;q)_{-s-1}}\frac{\pi}{\sin(-\pi s)}.
\end{equation}
The last equality is a special case of~\eqref{eq:ramanujan-beta}. Thus,
\begin{align}
\hat J_{{\bf 4}_1,x}(q,z)&=\frac{-1}{2\ri\log q}\frac{(q^{1+x},q^{1-x};q)_\infty}{(q;q)_\infty^3}\int_C\frac{\theta_q(q^{-s})}{(q^{1+s+x},q^{1+s-x};q)_\infty}\frac{\pi z^{s}}{(\sin(\pi s))^2}\rd s\nonumber\\
&=\frac{1}{2\log q}\frac{(q^{1+x},q^{1-x};q)_\infty}{(q;q)_\infty^3}\int_{\R-\ri\epsilon}\frac{\theta_q(q^{\ri s})}{(q^{\ri s+x},q^{\ri s-x};q)_\infty}\frac{\pi z^{\ri s-1}}{(\sinh(\pi s))^2}\rd s,
\label{eq:41cont-int}
\end{align}
where the second equality is valid if $0<\epsilon<1$ and $-\epsilon<x<\epsilon$, and we use the theta-function
\begin{equation}\label{eq:theta-q1}
\theta_q(u):=\sum_{n\in\Z}\gamma_q(n)u^n=(q,u,q/u;q)_\infty,
\end{equation}
where the second equality is Jacobi's triple product identity~\cite[(1.6.1)]{MR2128719}.
\begin{remark}
One can show that the following linear functional equations are satisfied
\begin{align}
\frac{1 - qX}{1- qX^2}\hat J_{{\bf 4}_1,x+1}(q,z)&=\Bigl[\frac{1 - qX}{1- qX^2}  -(1 + X^{-1})z\Bigr] \hat J_{{\bf 4}_1,x}(q,z)\nonumber\\
 &+ \frac{z}{qX}\hat J_{{\bf 4}_1,x}(q,z/q) +  zq\hat J_{{\bf 4}_1,x}(q,zq)
\label{eq:gxz-rec-xz}
\end{align}
and 
\begin{equation}\label{eq:gxz-rec-z}
(z^{-1}-X-X^{-1})\hat J_{{\bf 4}_1,x}(q,z)+q\hat J_{{\bf 4}_1,x}(q,zq)+q^{-1}\hat J_{{\bf 4}_1,x}(q,z/q)=z^{-1},
\end{equation}
which imply the difference equation 
 \begin{equation}\label{eq:inhomogeneous-eq-x}
\frac{X
   V_{x-1}}{1-X^2/q}+\frac{XV_{x+1}}{ 1-X^2q}+\bigl[\frac{1-X^{4}}{\left(1-X^2/q\right) \left(1-X^2q\right)}+\left(X-X^{-1}\right)z\bigr] V_x=1+X,
   \end{equation}
    where $X=q^x$ and
   \begin{equation}\label{eq:def-G}
V_x:=(1-X)\hat J_{{\bf 4}_1,x}(q,z).
\end{equation}

Equation~\eqref{eq:inhomogeneous-eq-x} at $z=1$ is related to the $AJ$-conjecture of~\cite{Garoufalidis2004}. 
  Indeed, it states that $1+X$ is the image of $V_x$ acted upon  by the linear operator
 \begin{equation}\label{eq:lin-op}
D(L,M,z,q)=  \frac{M}{1-q^{-1}M^2}L^{-1}+\frac{M}{ 1-qM^2}L+\frac{1-M^4}{\left(1-q^{-1}M^2\right) \left(1-qM^2\right)}+\big(M-\frac1M\big)z,
\end{equation}
where  $M$ and $L$ are invertible operators acting in the space of functions $f_x$ as follows:
\begin{equation}
(Mf)_x=Xf_x,\quad (Lf)_x=f_{x+1},
\end{equation}
so that they satisfy the Weyl commutation relation
\begin{equation}
LM=qML.
\end{equation}
Thus, in the limit $q\to1$, operators $M$ and $L$ commute, and the operator~\eqref{eq:lin-op} becomes
\begin{equation}
D(L,M,z,1)
=\frac{ L+\frac{1}{L}+M+\frac{1}{M}-\left(M-\frac{1}{M}\right)^2
   z}{\frac{1}{M}-M},
\end{equation}
where the numerator at $z=1$ is the $A$-polynomial of the figure-eight knot.
\hfill$\diamond$
\end{remark}

\subsection{Formal asymptotic expansion} \label{prop-as} Let us introduce the variables
 \be
 \label{uu}
 w=-xh,\quad h=-\log q,
 \ee
 and
 \be
 \label{Xqx}
X=q^x=\re^{w}.
 \ee
The Jones interpolating series~\eqref{eq:4_1z} leads to a formal series in $h$ as follows. Write it as 
 \be
 \label{def-41-habiro}
 J_{{\bf 4}_1,x}(q,z)=\sum_{k \ge 0} z^{k} \prod_{j=1}^{k} \bigl(\re^w+ \re^{-w}-\re^{jh} -\re^{-jh}\bigr),
 \ee
expand the summand in $h$, and then, by exchanging the order of summations, sum over $k \ge 0$. In this way one obtains a formal power series $\Phi^{{\bf 4}_1}_0 \left( h; w ,z \right)$. One finds, at the very first orders in $h$,
 \be
\Phi^{{\bf 4}_1}_0 \left( h; w ,z \right)=\frac{1}{1-\left(X+X^{-1}-2\right) z} -\frac{X^3 z \left(X^2 z-(2 z-1)X+z\right)}{\left(X^2 z-X (2 z+1)+z\right)^4} h^2+ {\mathcal O}(h^4). 
\ee
 For $z=1$, the series $\Phi^{{\bf 4}_1}_0(h;w):= \Phi^{{\bf 4}_1}_0(h;w, 1)$ is the conventional MMR series. When $z \not=1$, $\Phi^{{\bf 4}_1}_0(h;w, z)$ is a deformation of the MMR series by the parameter $z$.
 
 A natural question is whether the interpolating Jones invariant $\hat J_{{\bf 4}_1,x}(q,1)$ gives the MMR series in the limit $h\to 0^+$, where $q=\re^{-h}$. We will now show that
\be
\label{J41-as}
\hat J_{{\bf 4}_1,x}(q,z)\sim \Phi^{{\bf 4}_1}_0 \left( h; w, z \right)
\ee
as a formal asymptotic expansion. The derivation is based on the standard asymptotic analysis of functions defined by
Mellin--Barnes integrals, see \cite{PK}. 

We first note the asymptotic expansion for the $q$-Pochhammer symbol when $h\to 0$ \cite{mcintosh}
\be
\label{qpoch-z}
\log\left(\re^{-h z}; \re^{-h} \right)_\infty \sim  -{\pi^2 \over 6 h}-\left(z-{1\over 2}\right) \log h + \log {\sqrt{2 \pi} \over \Gamma(z)} +{h \over 4} B_2(z)-\sum_{n \ge 1}{ B_{2n} B_{2 n+1}(z) \over (2n) (2n+1)! } h^{2 n},
\ee
where $B_n(x)$ are Bernoulli polynomials. We use (\ref{qpoch-z}) in (\ref{eq:41cont-int}) to obtain
\be
\ba
 \hat J_{ {\bf{4}}_1, x} (q, z) \sim -{\ri \over 2 h} \int_{\IR- \ri \epsilon} &  {\Gamma(\ri s +x) \Gamma(\ri s -x) \over  \Gamma(1+x) \Gamma(1-x)}
{ \re^{ (2 \ri s -1) \log h + (\ri s-1) \log z } \over \sinh(\pi s)} \\
& \times  \exp\left[ \sum_{n \ge 1} {B_{2n} \over 2n (2n+1)!} P_n(x, \ri s) h^{2n}\right] \rd s,
\ea
\ee
where
\be
P_n(x,y)=  B_{2n+1}(x+y) + B_{2n+1}(y -x) - B_{2n+1}(1+x) - B_{2n+1}(1 -x)= \sum_{\ell=0}^n p_{n, \ell }(y) x^{2\ell}.
\ee
The polynomials $p_{n,\ell}(y)$ can be evaluated as
\be
\ba
p_{n,n-\ell}(y)= {2 (2n+1)! \over (2n-2\ell)!(2\ell+1)!} \left( B_{2\ell+1}(y)-B_{2\ell+1}(1)\right).
\ea
\ee
Noting that
\be
B_{2\ell+1}(1)={1\over 2}\delta_{\ell,0},
\ee
we find
\be
\ba
\sum_{n \ge 1} {B_{2n} \over 2n (2n+1)!} P_n(x,y) h^{2n}
&=2 (y-1)  \log\left( { \sinh(w/2) \over w/2} \right)\\
& + 2 \sum_{\ell \ge 1} {B_{2 \ell +1}(y) \over (2\ell+1)!} h^{2\ell} \left( {\rd \over \rd w} \right)^{2 \ell} \log\left( { \sinh(w/2) \over w/2} \right),
\ea
\ee
with $w$ defined in \eqref{uu},
 and we used the equality
 \be
  \sum_{n \ge 1} {B_{2n} \over 2n (2n)!} w^{2n}=  \log\left( { \sinh(w/2) \over w/2} \right).
  \ee
The asymptotic expansion can be evaluated by picking the residues in the lower half-plane,
at $s=-\ri k$, $k \ge1$, with the result
\be
\ba
 \hat J_{ {\bf{4}}_1, x} (q, z)\sim &\sum_{k \ge 1} z^{k-1} ( 2 \sinh(w/2))^{2(k-1)}\prod_{j=1}^{k-1} \left(1-{ h^2 \over w^2} j^2 \right)  \\ & \times
 \exp \left\{ 2 \sum_{\ell \ge 1} {B_{2 \ell +1}(k) \over (2\ell+1)!} h^{2\ell} \left( {\rd \over \rd w} \right)^{2 \ell} \log\left( { \sinh(w/2) \over w/2} \right) \right\}.
\ea
\ee
By using the relations
\be
 \left( {\rd \over \rd w} \right)^{2 \ell}\log w= -{(2\ell-1)! \over w^{2\ell}}, \quad \exp \left\{ - \sum_{\ell \ge 1} {B_{2 \ell +1}(k) \over (2\ell+1)\ell} h^{2\ell}  \right\}= \prod_{j=1}^{k-1}\left(1- h^2 j^2 \right),
 \ee
we further simplify
 \be
 \label{phi0-41}
\hat J_{ {\bf{4}}_1, x} (q, z)\sim \sum_{k \ge 1} z^{k-1} \left( 2 \sinh(w/2) \right)^{2(k-1)}
 \exp \left[ 2 \sum_{\ell \ge 1} {B_{2 \ell +1}(k) \over (2\ell+1)!} h^{2\ell} \left( {\rd \over \rd w} \right)^{2 \ell} \log\left( 2 \sinh\left({w\over 2}\right) \right) \right], 
 \ee
which is $ \Phi^{{\bf 4}_1}_0 \left( h; w,z \right)$ as defined by asymptotic expansion of~\eqref{def-41-habiro}, following the procedure explained right after it.

The preceding Mellin--Barnes analysis gives the formal asymptotic series but does
not provide a proof of (\ref{J41-as}) in the usual sense of Poincar\'e asymptotics. However, our numerical calculations are consistent with (\ref{J41-as}) being the asymptotic behavior of the analytic Jones invariant, uniformly in $w$, for $w$ near $0$. 

\subsection{Evaluation of the contour integral I}\label{subsection:evaluation-cont-int-4-1}
\begin{theorem}\label{thm:4_1-full} Let $n\in\Z_{\ge0}$ and $x\in\R$ be such that
\begin{equation}\label{eq:n-dependent-cond}
n+1>|x|.
\end{equation}
Assume also that $z\in\mathbb C\setminus (q^{\Z}\cup \R_{\le0})$. Then
\begin{align}\label{eq:hat-J-x-z-q-1-fi-1}
&\hat{J}_{{\bf 4}_1,x}(q,z)=\sum_{m=0}^{n-1}\frac{(q^{1+x},q^{1-x};q)_{m}}{\mg{q}{-m}}z^m\nonumber\\
+&\frac{z^n(q^{1+x},q^{1-x};q)_\infty }{(q;q)_\infty^3}\sum_{m\in\Z}\frac{\left(\frac{\log z}{\log q} +m\right)q^{mn}\mg{q}{-m}({q^{1-m-x}};q)_\infty}{1-zq^{m}}{_1\phi_1}\Bigl[
\begin{matrix}
 0\\
 q^{1-m-x}
\end{matrix}
;q,q^{1-m+x}\Bigr].
\end{align}
\end{theorem}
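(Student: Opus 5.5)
The plan is to start from the Mellin--Barnes representation~\eqref{eq:41cont-int},
\begin{equation*}
\hat J_{{\bf 4}_1,x}(q,z)=\frac{-1}{2\ri\log q}\frac{(q^{1+x},q^{1-x};q)_\infty}{(q;q)_\infty^3}\int_C\frac{\theta_q(q^{-s})}{(q^{1+s+x},q^{1+s-x};q)_\infty}\frac{\pi z^{s}}{(\sin(\pi s))^2}\rd s ,
\end{equation*}
and to locate the poles of the integrand. The zeros of $\theta_q(q^{-s})$ are simple and sit at $s\in\Z+\frac{2\pi\ri}{h}\Z$. Against the double zeros of $\sin^2(\pi s)$ at $s\in\Z$, this leaves only simple poles at the real integers. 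The remaining poles come from $1/(q^{1+s\pm x};q)_\infty$ and lie at $s=-1\mp x-k+\frac{2\pi\ri}{h}j$, with $k\in\Z_{\ge0}$ and $j\in\Z$. Under~\eqref{eq:n-dependent-cond} all of these have real part at most $|x|-1<n$.

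\emph{Step 1: shift the contour.} I would move $C$ to the right, to a vertical line $\Re s=\sigma$ with $\max(n-1,|x|-1)<\sigma<n$. This crosses only the simple poles at $s=0,1,\dots,n-1$. The residue at $s=m$ should reproduce $\frac{(q^{1+x},q^{1-x};q)_m}{\mg{q}{-m}}z^m$. This is checked using $\theta_q(q^{-s})\approx (q;q)_\infty^2\,\mg{q}{-m}^{-1}\cdot(\ldots)(s-m)\log q$ near $s=m$ together with the normalisation $(q^{1+x},q^{1-x};q)_\infty/(q^{1+m+x},q^{1+m-x};q)_\infty=(q^{1+x},q^{1-x};q)_m$. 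This is the same computation that makes the Borel--Laplace transform reproduce the Taylor coefficients. The horizontal pieces vanish because of the $1/\sin^2$ decay together with $z\notin\R_{\le0}$, which gives $|\arg z|<\pi$. This yields the finite sum in~\eqref{eq:hat-J-x-z-q-1-fi-1}.

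\emph{Step 2: expand the theta function and close contours.} On the line $\Re s=\sigma$, put $s=n+t$ and insert the Jacobi triple product $\theta_q(q^{-s})=\sum_{m\in\Z}\gamma_q(m)q^{-ms}$. This turns the remaining integral into a sum over $m$ of integrals of $z^n q^{-mn}(zq^{-m})^{t}\,G(q^{t})\,\pi/\sin^2(\pi t)$, where $G(u)=1/(q^{1+n+x}u,q^{1+n-x}u;q)_\infty$ is a function of $u=q^t$ alone. After relabelling $m\to-m$ and using $\gamma_q(-m)=\mg{q}{-m}$ up to the normalisation of the theorem, this produces the prefactor $z^nq^{mn}\mg{q}{-m}$.

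To evaluate each $m$-integral, write $\pi^2/\sin^2(\pi t)=\sum_{k\in\Z}(t-k)^{-2}$. Then fold the vertical line into one period $[\,\cdot\,,\cdot+2\pi\ri/h]$ in $\Im t$, using the periodicity of $G(q^t)$. The lattice sum over translates of $(zq^{-m})^t/(t-k)^2$ is then a derivative of a geometric-type series in $zq^{-m}$. Combined with the sum over $k$, I expect this to produce exactly the factor
\begin{equation*}
\frac{\frac{\log z}{\log q}+m}{1-zq^m}.
\end{equation*}
The logarithm comes from the derivative of $(zq^{-m})^t$ in $t$ at the double pole, and $z\notin q^{\Z}$ guarantees that no denominator vanishes. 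What remains is a $u$-integral over a circle of $G(u)$ against a power of $u$. Equivalently, this is the residue sum of $1/(q^{1+n+x}u,q^{1+n-x}u;q)_\infty$ at its poles $u=q^{-1-n\mp x-k}$.

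\emph{Step 3: identify the remaining series.} The partial-fraction expansion of $1/(a u,bu;q)_\infty$ in $u$ should be summed using the $q$-binomial theorem. I expect the result to be the coefficient $({q^{1-m-x}};q)_\infty\,{}_1\phi_1\bigl[0;q^{1-m-x};q,q^{1-m+x}\bigr]$, once the overall factor $(q^{1+x},q^{1-x};q)_\infty/(q;q)_\infty^3$ and the shift by $n$ are absorbed. Both sides depend on $n$ only through the split point, so a useful consistency check is that the formula for $n$ and for $n+1$ agree. The difference of the two sums should equal the $n$-th Taylor term.

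\emph{Main obstacle.} The hard step is Step 2. It requires justifying the interchange of the $m$-sum, the Mittag-Leffler $k$-sum and the integral. It also requires carrying out the periodic folding cleanly so that the double pole yields precisely $\bigl(\frac{\log z}{\log q}+m\bigr)/(1-zq^m)$ with the correct branch of $\log z$. To do this I would first try the $u=q^t$ substitution directly, treating $\sum_k (t-k)^{-2}$ summed over translates as a known elliptic-type kernel. For convergence, I would rely on the Gaussian decay $\gamma_q(m)\sim q^{m^2/2}$ against the growth $q^{-m\sigma}$ on the chosen line.
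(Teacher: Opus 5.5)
Your Step 1 is fine. On $\Re s=\sigma$ with $\max(n-1,|x|-1)<\sigma<n$, only the simple poles $s=0,\dots,n-1$ are crossed, and their residues are exactly the Taylor terms. This is a legitimate substitute for the paper's route through Remark~\ref{rem:choice-of-n-for-r-fi-s} and Lemma~\ref{lem:f(z)=zg(z)}.

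The gap is in Steps 2--3: the bookkeeping is wrong. Take a single theta term, which after your relabelling is $\gamma_q(-m)q^{mn}z^n(zq^m)^t$, and integrate it against $G(q^t)\pi/\sin^2(\pi t)$. The result is \emph{not} $\frac{\frac{\log z}{\log q}+m}{1-zq^m}$ times a $z$-independent number. On your line you may expand $G(q^t)=\sum_{l_1,l_2\ge0}\frac{q^{(1+n+x)l_1+(1+n-x)l_2}}{(q;q)_{l_1}(q;q)_{l_2}}q^{(l_1+l_2)t}$, which converges because $\sigma>|x|-1$. Using $\int_{\Re t=c}\frac{\pi a^t}{\sin^2(\pi t)}\rd t=2\ri\frac{\log a}{a-1}$ for $-1<c<0$, that single theta term contributes
\[ \frac{(q^{1+x},q^{1-x};q)_\infty}{(q;q)_\infty^3}\,\gamma_q(-m)q^{mn}z^n\sum_{l_1,l_2\ge0}\frac{q^{(1+n+x)l_1+(1+n-x)l_2}}{(q;q)_{l_1}(q;q)_{l_2}}\,\frac{\frac{\log z}{\log q}+m+l_1+l_2}{1-zq^{m+l_1+l_2}}, \]
a whole tail of log-factors with indices $m,m+1,m+2,\dots$. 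Equivalently, your folded lattice kernel is a Laurent series in $u=q^t$ whose $L$-th coefficient carries its own log-factor with index $m+L$. So nothing of the form ``one log-factor times $\oint G(u)u^k\,\rd u/u$'' is left over.

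This is also why Step 3 cannot work as stated. A single Taylor coefficient of $G$ is a finite sum that does not depend on $m$, whereas ${}_1\phi_1[0;q^{1-m-x};q,q^{1-m+x}]$ is an infinite series that does. Moreover, the theorem's coefficient of the $m$-th log-factor collects the theta coefficients $\gamma_q(l_1+l_2-m)$ for all $l_1,l_2\ge0$, not just $\gamma_q(-m)$.

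The missing step is a convolution (reindexing) of the triple sum.
\begin{enumerate}
\item Replace $m$ by $m-l_1-l_2$ in the absolutely convergent triple sum. The coefficient of $\frac{\frac{\log z}{\log q}+m}{1-zq^m}$ then becomes $q^{mn}\sum_{l_1,l_2\ge0}\gamma_q(l_1+l_2-m)\frac{q^{(1+x)l_1+(1-x)l_2}}{(q;q)_{l_1}(q;q)_{l_2}}$.
\item The $l_2$-sum is Euler's identity and gives $\gamma_q(l_1-m)(q^{1+l_1-m-x};q)_\infty$.
\item Write $\gamma_q(l_1-m)=\gamma_q(-m)\gamma_q(l_1)q^{-ml_1}$ and $(q^{1+l_1-m-x};q)_\infty=(q^{1-m-x};q)_\infty/(q^{1-m-x};q)_{l_1}$. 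The $l_1$-sum becomes $(q^{1-m-x};q)_\infty\,{}_1\phi_1[0;q^{1-m-x};q,q^{1-m+x}]$.
\end{enumerate}
This is exactly the manipulation in \eqref{eq:4_1-3-fi-1-simp}. In the paper the log-factors arrive already packaged in $\hat\mu_q$ via Theorem~\ref{thm:r-fi-s}. In your Mellin--Barnes setting they come from termwise integration, but the reindexing is unavoidable either way.

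The partial-fraction expansion of $G$ that you propose is the wrong tool. Its poles lie to the left of your line, and summing their residues leads toward the Theorem~\ref{thm2}-type expression with $(q^{\pm2x};q)_\infty$ denominators, not to this formula. Finally, the interchange you flag as the main obstacle is routine once both $\theta_q(q^{-s})$ and $G$ are expanded. Both series converge absolutely and uniformly on $\Re s=\sigma$, and $1/\sin^2(\pi s)$ supplies exponential decay for $|\arg z|<\pi$.
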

A proof of this theorem will be given later on, at the end of Section~\ref{sec:resummation-r-fi-s}, after developing a general formalism for working with deformed Borel--Laplace resummations.

Taking the limit $z\to1$ in~\eqref{eq:hat-J-x-z-q-1-fi-1}, we obtain the interpolating Jones  invariant
\begin{equation}\label{eq:hat-f-x-1}
\hat J_{{\bf 4}_1,x}(q)=\lim_{z\to 1}\hat J_{{\bf 4}_1,x}(q,z)=A_{{\bf 4}_1,x}(q)-\frac{1}{\log q}B_{{\bf 4}_1,x}(q),
\end{equation}
where
\begin{align}
&A_{{\bf 4}_1,x}(q):=\sum_{k=0}^{n-1}\frac{(q^{1+x},q^{1-x};q)_{k}}{\mg{q}{-k}}
\nonumber\\
+&\frac{(q^{1+x},q^{1-x};q)_\infty }{(q;q)_\infty^3}\sum_{m\in\Z_{\ne0}}\frac{mq^{mn}\mg{q}{-m}({q^{1-m-x}};q)_\infty}{1-q^m}
{_1\phi_1}\Bigl[
\begin{matrix}
 0\\
 q^{1-m-x}
\end{matrix}
;q,q^{1-m+x}\Bigr]
\label{eq:G-0-q}
\end{align}
and
\begin{equation}\label{eq:H-q}
B_{{\bf 4}_1,x}(q):=\frac{(q^{1+x};q)_\infty(q^{1-x};q)_\infty^2 }{(q;q)_\infty^3}
{_1\phi_1}\Bigl[
\begin{matrix}
 0\\
 q^{1-x}
\end{matrix}
;q,q^{1+x}\Bigr],
\end{equation}
where condition ~\eqref{eq:n-dependent-cond} is assumed. By using the $q$-Bessel function
\be
       {\mathcal {J}}(X,Y;q) =(qY;q)_\infty{_1\phi_1}\Bigl[
\begin{matrix}
 0\\
 qY
\end{matrix}
;q,qX\Bigr],
  \end{equation}
which is symmetric in its arguments: $\CJ(X,Y;q)= \CJ(Y, X;q)$, we can write (\ref{eq:H-q}) as
\be
\label{b41-ex}
B_{{\bf 4}_1,x}(q)= \frac{ (q^{1+x},q^{1-x};q)_\infty }{(q;q)_\infty^3}
\CJ (q^{x},q^{-x};q).
\ee
For $n=0$, the series (\ref{eq:G-0-q}) reads
\be
\begin{aligned}
A_{{\bf 4}_1,x}(q)={}&\left(X+X^{-1}-2\right)q+\left(5 (X+X^{-1})-9\right)q^2\\& 
+\left(-4 (X^2+X^{-2})+21(X+X^{-1})-31\right)q^3\\
&+\left(X^3+X^{-3}-18(X^2+X^{-2})+70(X+X^{-1})-101\right)q^4
+\mathcal{O}(q^5).
\end{aligned}
   \ee
Setting $x=0$ and $n=0$ in~\eqref{eq:hat-f-x-1}--\eqref{eq:H-q}, we obtain
\begin{equation}
\label{z1x0}
\hat J_{{\bf 4}_1,0}(q)=A_{{\bf 4}_1,0}(q)-\frac{1}{\log q}B_{{\bf 4}_1,0}(q),
\end{equation}
where
\begin{equation}
\label{A0}
A_{{\bf 4}_1,0}(q):=\left.\lim_{x\to0}A_{{\bf 4}_1,x}(q)\right\vert_{n=0}=\sum_{m\in\Z_{\ne0}}\frac{m\mg{q}{\min(m,0)}}{(q^{-m}-1)(q;q)_{|m|}}{_1\phi_1}\Bigl[
\begin{matrix}
 0\\
 q^{1+|m|}
\end{matrix}
;q,q^{1-\min(m,0)}\Bigr]
\end{equation}
and
\begin{equation}
\label{B0}
B_{{\bf 4}_1,0}(q)={_1\phi_1}\Bigl[
\begin{matrix}
 0\\
 q
\end{matrix}
;q,q\Bigr]=g(q). 
\ee
Here, 
\be
\label{def-gq}
g(q) =\sum_{n\geq 0}\frac{\gamma_q(-n)}{(q;q)_n^2} \end{equation}
is the $q$-series that first appeared in \cite{MR3696603} and that pervades the quantum topology of the figure-eight knot.
%

\subsection{Evaluation of the contour integral II}\label{subsection:evaluation-cont-int-4-1-2}

We will now evaluate \eqref{eq:41cont-int} by a residue calculation. 

Let us define the function
\be\label{def-fqn}
\fn{n,q}(z,x)=\sum_{m\in\mathbb{Z}}\frac{z^{-x-\frac{2\pi \ri m}{\log q}}}{\sin^n(\pi(x+\frac{2\pi \ri m}{\log q}))},
\ee
where $n\in\Z_{>0}$, and we will need more explicit expressions for the cases $n=1$ and $n=2$. 

For $n=1$, the function $\fn{1,q}(z,x)$ can be transformed by the bilateral Kronecker identity \cite{Weil1976} written in terms
of the dual nome
\be
\label{tq-def}
\tilde q=\re^{-4 \pi^2/h},\quad q=\re^{-h}.
\ee
For our purposes, it is better
to use formula~\cite[(4.3.9)]{MR2128719}
\be
\fn{1,q}(z,x)=\frac{h (q;q)^3_\infty}{\pi}\frac{\theta_q(-zq^x)}{\theta_q(q^x)\theta_q(-z)}\label{ident1a},
\ee
where we use the standard definition of the theta-function~\eqref{eq:theta-q1}.

%
Let us rewrite \eqref{def-fqn} with $n=1$ as follows
\be
\sum_{m\in\mathbb{Z}}\frac{\left(\tilde q^{1/2} z^{\frac{2\pi \ri}{h}}\right)^m}
{1-\re^{-2\pi \ri x}\tilde q^{m}}=\frac{1}{2\ri}z^x \re^{\ri\pi x}\fn{1,q}(z,x).
\ee
Differentiating with respect to $x$ and changing the sign of $z$, we come to
\be
\label{ident2}
\fn{2,q}(z,x)=\left[-\frac{1}{\pi}\log z-\frac{1}{\pi}\frac{\partial}{\partial x}\right]\fn{1,q}(-z,x).
\ee

\begin{theorem}\label{thm2}
We have
\be
\label{J41-res}
\ba
\hat J_{{\bf 4}_1,x} (q,z)&=-\frac{1}{z} {1 \over \left( q^{x\over 2}- q^{-{x\over 2}} \right)^2}\,
{_2\phi_2}\Bigl[
\begin{matrix}
 0,q\\
q^{1+x},q^{1-x}
\end{matrix}
;q,\frac{q}{z}\Bigr]\\
+ &{\pi^2 \over z } {  \left( q^{x\over 2}- q^{-{x\over 2}} \right) (q^{1+x},q^{1-x};q)_\infty^2 \over (\log q)^2  (q;q)^3_\infty}
\sum_{a\in\{\pm1\}}a \frac{q^{-{ax\over 2}} \fn{2,q}(z,ax)}{(q^{-2ax};q)_\infty}\,
{_1\phi_1}\Bigl[
\begin{matrix}
 0\\
q^{1+2ax}
\end{matrix}
;q,\frac{q^{1+ax}}{z}\Bigr].
\ea
\ee
\end{theorem}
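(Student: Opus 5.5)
The plan is to start from the contour integral representation \eqref{eq:41cont-int} in the variable $s$ on the contour $C$ (equivalently on $\R-\ri\epsilon$ after $s\mapsto \ri s$). Instead of closing the contour to the left, which produces the residues at $s\in\Z_{\ge0}$ used in Theorem~\ref{thm:4_1-full}, we close it to the \emph{right}. This is natural when $|z|>1$ or for generic $z$ off the cut; the identity then extends by analytic continuation in $z$. We then sum the residues of the poles on the other side.

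First I would list the poles of
\[
\frac{\theta_q(q^{-s})}{(q^{1+s+x},q^{1+s-x};q)_\infty}\frac{z^s}{\sin^2(\pi s)}
\]
on that side. There are two families.

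\emph{Poles of $1/(q^{1+s\pm x};q)_\infty$.} These sit at $s=\mp x-1-k+\frac{2\pi\ri m}{\log q}$ with $k\ge0$ and $m\in\Z$. For fixed $k$, the sum over $m$ of the factor $z^s/\sin^2(\pi s)$ reproduces exactly the function $\fn{2,q}$ of \eqref{def-fqn}, evaluated at $\pm x$, times $z^{-1-k}$. The $q$-periodic factors are constant along the $m$-lattice; these are the $q$-Pochhammers and the theta function $\theta_q(q^{-s})$, after using its quasi-periodicity $\theta_q(qu)=-u^{-1}\theta_q(u)$. The residue of $1/(q^{1+s\pm x};q)_\infty$ at the $k$-th zero gives $\frac{\gamma_q(\ldots)}{(q;q)_k (q^{-k};q)_k}$-type factors and $1/\log q$, with $(q^{1\mp 2x};q)$-type factors coming from the companion Pochhammer. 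Summing over $k$ should produce the ${_1\phi_1}[0;q^{1+2ax};q,q^{1+ax}/z]$ series, with the prefactor $(q^{-2ax};q)_\infty^{-1}$ and the $q^{-ax/2}$, $\bigl(q^{x/2}-q^{-x/2}\bigr)$ bookkeeping.

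\emph{Poles from $1/\sin^2(\pi s)$.} These are the negative integers $s=-k$, $k\ge1$. At these points $\theta_q(q^{-s})$ vanishes to first order (since $\theta_q(q^{j})=0$), so the double poles become simple poles. These residues carry no $\log z$ and no $\log q$ factors beyond those that cancel against the prefactor $1/\log q$ and the $\pi$ factors. They should sum to the first term, $-z^{-1}(q^{x/2}-q^{-x/2})^{-2}\,{_2\phi_2}[0,q;q^{1+x},q^{1-x};q,q/z]$. The factor $(q^{1+x},q^{1-x};q)_\infty/(q^{1-k\pm x};q)_\infty$ turns into $1/(q^{\pm x};q)_{k}$-type ratios, which can be rewritten as $(q^{1\pm x};q)_{k-1}$ after extracting $(1-q^{x})(1-q^{-x})=-(q^{x/2}-q^{-x/2})^2$.

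The remaining steps are as follows.

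(i) Justify closing the contour. Using the growth of $\theta_q$ and of the Pochhammer symbols along arcs avoiding the pole lattice, the arc contributions should vanish for $z$ in a suitable sector. A convenient choice is a sequence of vertical lines $\Re s=-N-\tfrac12+\delta$ and horizontal lines at $\Im s=\pm\frac{\pi(2M+1)}{|\log q|}$. This removes the residues at $s\in\Z_{\ge0}$ from consideration.

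(ii) Compute the two residue families as above.

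(iii) Match the resulting sums with the stated basic hypergeometric series, using $\gamma_q$ identities.

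I expect step (i), together with correctly handling the $m$-sum, to be the main obstacle. The $m$-sum must be organized into $\fn{2,q}$, and the sum over $k$ must converge absolutely so that the order of summation can be exchanged. Convergence at fixed $k$ relies on $1/\sin^2$ decaying exponentially in $|\Im s|$. For the sum over $k$, the $\gamma_q$ factors and $|z|$ must be compared; the factor $q/z$ in the series indicates that convergence is unconditional thanks to $q^{k^2/2}$ decay. So I would first establish the identity for $z$ in a sector where all estimates are easy, and then extend it to $z\in\C\setminus(q^\Z\cup\R_{\le0})$ by analytic continuation. The poles of $\fn{2,q}$ at $z\in -q^{\Z}$ (from \eqref{ident1a}) should match the Stokes structure.

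Finally, I would check the result against Theorem~\ref{thm:4_1-full} numerically, for instance at $x=0$ after taking the limit, as a consistency test.
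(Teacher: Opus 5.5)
Your plan is the paper's proof. The paper closes the $\R-\ri\epsilon$ contour in the upper half-plane, which is the left of $C$ in your variable $s$, and collects exactly the three simple-pole families you list: the negative integers, where the zero of $\theta_q(q^{-s})$ cancels one order of $\sin^{-2}(\pi s)$, and the two $x$-dependent lattices. It then sums the $m$-lattices into $\fn{2,q}(z,\pm x)$ and the $n$-sums into ${_2\phi_2}$ and ${_1\phi_1}$, and justifies the closure by the Gaussian decay $\exp\bigl(-\tfrac h2(\Re s)^2\bigr)$ of the integrand as $\Re s\to-\infty$ (in your variable).

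Only the orientation in your first sentence needs correcting. The poles at $s\in\Z_{\ge0}$ lie to the \emph{right} of $C$, where the integrand grows like a Gaussian and their residues just reproduce the divergent series, so you are in fact closing to the left. Because of that Gaussian decay, the closing works for every admissible $z$ without first restricting to $|z|>1$.
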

The proof is given in Appendix~\ref{app:B}.

Formula~\eqref{J41-res} significantly simplifies
when $z=1$. For this, we define a quasi-modular Jacobi form of weight two,
\be
\label{quasi-mod}
\Phi(q,X)=  {X \over (X-1)^2} + \sum_{k \ge 1} {k q^k \over 1-q^k} (X^k + X^{-k}),
\ee
a Jacobi form of weight $-3$,
\be
\label{G3}
\mathcal{G}(q,X)= {2  \over  (q;q)_\infty^6}  {(X- 1)^3 \over X(X+ 1)} {(qX,q X^{-1};q)^4_\infty\over(q X^2,q X^{-2};q)_\infty},
\ee
and two more series
\be
\label{ABC}
\ba
\CC(q,X)&=
  \sum_{k=0}^{\infty}\frac{\gamma_q(-k)}{(X,X^{-1};q)_{k+1}}, \\
  \CZ(q,X)&= {(q;q)_\infty^5 \over 2 (X^{1/2} -X^{-1/2})} \sum_{a\in\{\pm1\}}{X^{-a/2} \over (\theta_q(X^{-a}))^2} \CJ(X^a, X^{2a}; q).
  \ea
\ee
Note that the Laurent series in~\eqref{quasi-mod} converges in the open annulus $|q|<|X|<|q|^{-1}$. We analytically continue $\Phi (q,X)$ outside of this annulus by the series 
\be
\Phi(q,X)=\sum_{\ell\in\mathbb Z}\frac{q^\ell X}{(1-q^\ell X)^2}.
\ee
In particular, $\Phi(q,X)$ has double poles at $X\in q^{\mathbb Z}$.
\begin{corollary}\label{cor:hatJ4_1-evaluates} When $z=1$, (\ref{J41-res}) evaluates to
\be
\label{jcz}
\hat J_{{\bf 4}_1,x} (q)=\CC( q,X)+ \mathcal{G}( q,X) \CZ(q,X) (h^{-1} +\Phi(q,X)),\quad X=q^x,\quad h=-\log q.
\ee
\end{corollary}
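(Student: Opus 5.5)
We set $z=1$ in \eqref{J41-res} and treat the two lines of Theorem~\ref{thm2} separately. Since the limit is taken at $z=1$, a point of $q^{\Z}$, we work with $z$ slightly off the real axis and let $z\to1$. All contributions are meromorphic in $z$ near $1$ apart from the factor $z^{-x-2\pi\ri m/\log q}$ in $\fn{n,q}$, which becomes $1$ at $z=1$ (principal branch).

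\emph{First line.} At $z=1$ it equals $-(q^{x/2}-q^{-x/2})^{-2}\,{_2\phi_2}[0,q;q^{1+x},q^{1-x};q,q]$. Expanding the ${_2\phi_2}$ with $(q;q)_k/(q;q)_k=1$ and $(0;q)_k=1$ gives terms of the form $\mg{q}{k}\,q^k/(q^{1+x},q^{1-x};q)_k$, up to the sign normalisation of ${_r\phi_s}$. The factor $-(q^{x/2}-q^{-x/2})^{-2}=1/(X,X^{-1};q)_1$ is then absorbed by extending the Pochhammer symbols from $(q^{1\pm x};q)_k$ to $(X^{\pm1};q)_{k+1}$. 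Checking that the sign $(-1)^k q^{k(k-1)/2}\cdot q^k$ recombines into $\gamma_q(-k)=(-1)^kq^{k(k+1)/2}$ should give exactly $\CC(q,X)$.

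\emph{Second line.} We use \eqref{ident2} together with \eqref{ident1a}:
\[
\fn{2,q}(1,ax)=-\frac1\pi\,\partial_y\Bigl[\frac{h(q;q)^3_\infty}{\pi}\frac{\theta_q(-zq^y)}{\theta_q(q^y)\theta_q(-z)}\Bigr]_{y=ax,\ z=-1},
\]
where the $\log z$ term vanishes at $z=1$. Since $\theta_q(q^y)/\theta_q(1)$ is singular, we must carry out the limit $z\to-1$ in $\theta_q(-zq^y)/\theta_q(-z)$ carefully. Both numerator and denominator have a simple zero (for $y=0$) at $-z=1$, and for fixed $y$ the ratio $\theta_q(uq^y)/\theta_q(u)$ has a pole at $u=1$. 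The way around this is to work directly from \eqref{def-fqn} with $z=1$: $\fn{2,q}(1,y)=\sum_m \sin^{-2}(\pi(y+2\pi\ri m/\log q))$ is an elliptic-type function of $y$ with periods $1$ and $2\pi\ri/h$. Its expansion is $\frac{h^2}{\pi^2}$ times a Weierstrass-$\wp$ in $X=q^y$ plus a constant. Concretely, using $\pi^2/\sin^2(\pi u)=\sum_k (u-k)^{-2}$ and resumming over the lattice gives
\[
\fn{2,q}(1,y)=\frac{h^2}{\pi^2}\Bigl(\Phi(q,q^y)+\frac{1}{h}\cdot(\text{const})\Bigr).
\]
The $h^{-1}$ term arises from the conditionally convergent lattice sum (the quasi-modular $E_2$ anomaly). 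This is the main obstacle, and I would settle it by taking $\partial_y$ of the logarithmic derivative of the explicit $\fn{1,q}$ formula, i.e.\ $\partial_y\log\theta_q$, which yields $\sum_\ell q^\ell X/(1-q^\ell X)^2$, i.e.\ $\Phi$, plus the term coming from $\log z$ and $z^{-x}$, which produces exactly $h^{-1}$. Since $\Phi(q,X)=\Phi(q,X^{-1})$, the resulting factor $h^{-1}+\Phi$ is independent of $a$ and factors out of $\sum_a$.

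\emph{Assembling the prefactors.} The remaining prefactor is $\frac{\pi^2}{h^2}\cdot\frac{h^2}{\pi^2}=1$ times
\[
\frac{(X^{1/2}-X^{-1/2})(qX,qX^{-1};q)^2_\infty}{(q;q)^3_\infty}\sum_a a\,\frac{X^{-a/2}}{(X^{-2a};q)_\infty}\,{_1\phi_1}[0;q^{1+2ax};q,q^{1+ax}].
\]
We rewrite each ${_1\phi_1}$ as $\CJ(X^a,X^{2a};q)/(qX^{2a};q)_\infty$ and use
\[
(X^{-2a};q)_\infty(qX^{2a};q)_\infty=\theta_q(X^{-2a})/(q;q)_\infty,\qquad \theta_q(X^{-2a})=-X^{-2a}\theta_q(X^{2a})\text{-type relations}.
\]
We then match the result to $\mathcal G\,\CZ$ by relating $\theta_q(X^{\pm2})$ to $(qX^2,qX^{-2};q)_\infty$ and $\theta_q(X^{\pm1})^2$ to $(qX,qX^{-1};q)_\infty^2$. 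This produces the rational factor $(X-1)^3/(X(X+1))$ and the powers $(q;q)_\infty^{-6}\cdot(q;q)_\infty^5$, and the sign $a$ is absorbed by $\theta_q(X^{-a})=-X^{-a}\theta_q(X^a)$. This last step is routine bookkeeping, to be verified also numerically against the $q$-expansion of $A_{{\bf 4}_1,x}$ and $B_{{\bf 4}_1,x}$ from \eqref{eq:hat-f-x-1}, which must agree by Theorem~\ref{thm:4_1-full}.
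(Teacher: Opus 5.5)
Your route is the paper's. The first line of \eqref{J41-res} at $z=1$ gives $\CC(q,X)$ exactly as you describe; with the paper's normalisation the summand is $\gamma_q(k)q^k=\gamma_q(-k)$, with no extra sign. Your prefactor bookkeeping is also right: using $\theta_q(X^{-2})=-X^{-2}\theta_q(X^2)$ and $\theta_q(X^{-1})^2=X^{-2}\theta_q(X)^2$, the second line becomes exactly $\mathcal G(q,X)\CZ(q,X)\cdot\pi^2\fn{2,q}(1,x)/h^2$.

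The problem is the one non-routine step, the value of $\fn{2,q}(1,x)$, where your argument contradicts itself. You first state that the $\log z$ term of \eqref{ident2} vanishes at $z=1$. It does not. Since $\theta_q(z)=(1-z)(q;q)_\infty^3+O((1-z)^2)$, the function $\fn{1,q}(-z,x)$ has a simple pole at $z=1$, so $\log z\,\fn{1,q}(-z,x)$ is of type $0\cdot\infty$. Because $\log z/\theta_q(z)\to-1/(q;q)_\infty^3$, one gets $-\frac1\pi\log z\,\fn{1,q}(-z,x)\to h/\pi^2$. This term is precisely the $h^{-1}$ in \eqref{jcz}, so your sentence as written discards it. Your later remark that the $\log z$ term ``produces exactly $h^{-1}$'' is only asserted, and you leave a placeholder constant. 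Likewise, your displayed formula with $z=-1$ inserted inside the bracket is meaningless, since the bracket is infinite there.

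The $\partial_x$ term is of type $0/0$, because $\theta_q(zq^x)/\theta_q(q^x)\equiv1$ at $z=1$. Expanding $\theta_q(zq^x)/\theta_q(q^x)=1+(z-1)\Psi_q(q^x)+O((z-1)^2)$ gives $-\frac1\pi\partial_x\fn{1,q}(-z,x)\to\frac{h}{\pi^2}\partial_x\Psi_q(X)=-\frac{h^2}{\pi^2}(X\partial_X)^2\log\theta_q(X)=\frac{h^2}{\pi^2}\Phi(q,X)$. Hence $\pi^2\fn{2,q}(1,x)=h+h^2\Phi(q,X)$. This is what the paper obtains in \eqref{Phi-mod} by the same limit $z=1-\epsilon$, and with it the corollary follows from your bookkeeping.

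Your suggested shortcut via the defining lattice sum does not fix the constant on its own. Summing $\sum_m\sin^{-2}\bigl(\pi(x+2\pi\ri m/\log q)\bigr)$ in the given order yields $-4\Phi(\tq,\tX)$. Converting this into $h+h^2\Phi(q,X)$ requires the quasi-modular $S$-transformation of $\Phi$ (the second equality in \eqref{Phi-mod}), which is extra input. Expanding $\pi^2/\sin^2$ and interchanging the two lattice sums instead gives only $\frac{h^2}{\pi^2}\Phi(q,X)$ and silently loses $h/\pi^2$, because the double sum is only conditionally convergent. So the limit through \eqref{ident1a} and \eqref{ident2} is the right tool, provided you keep the $\log z$ term.
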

See~Appendix~\ref{app:B} for the proof.

\begin{remark} By comparing (\ref{jcz}) to (\ref{eq:hat-f-x-1}), (\ref{eq:H-q}),
we deduce the following identity for the $q$-Bessel function:
 \be
 \label{conj-id}
 {(qX, q/X ;q)_\infty  \over (qX^2, q/X^2;q)_\infty } \sum_{a\in\{\pm1\}}X^{a/2} \CJ(X^a, X^{2a};q)=
(X^{1/2}+X^{-1/2})\CJ(X, X^{-1};q).
\ee
By using this identity we also obtain the following expression for the series $A_{{\bf 4}_1,x}(q)$, see~\eqref{eq:G-0-q},
\be
\label{a-easy}
A_{{\bf 4}_1,x}(q)=\CC(q, X)+ \Phi( q,X) B_{{\bf 4}_1,x}(q),
\ee
where $B_{{\bf 4}_1,x}(q)$ is given in (\ref{b41-ex}). We can also write
\be
\label{j41-alt}
\hat J_{{\bf 4}_1,x}(q)=\CC(q, X)+ \ B_{{\bf 4}_1,x}(q) \left( \Phi( q,X)+ {1\over h} \right).
\ee
We note that the result (\ref{a-easy}) agrees with (\ref{eq:G-0-q}), where $n=0$.

We can use (\ref{a-easy}) to express the particular case $x=0$ in terms of known $q$-series. By expanding $X=\re^w$ as a series in $w$, and taking into account identity~\eqref{conj-id}, one finds
\be
A_{{\bf 4}_1,0}(q)= G_2(q)- \frac{1}{8} E_2(q) g(q).
\end{equation}
In this equation, $g(q)$ is defined in~\eqref{def-gq}, and
\be
G_2(q)=
\sum_{n\geq 0} \frac{\gamma_q(-n)}{(q;q)_n^2}
\left[
\frac{1}{2}\left(n+\frac{1}{2} -2E_1^{(n)}(q)\right)^2
- E_2^{(n)}(q)\right],
\end{equation}
where we have introduced, following~\cite{MR3696603}, the $q$-series
\begin{equation}
E_\ell^{(n)}(q) = \sum_{s=1}^\infty s^{\ell-1}\frac{q^{s(n+1)}}{1-q^s}, \qquad E_2(q)=1-24E_2^{(0)}(q).
\end{equation}
The series $G_2(q)$ also plays an important role in quantum topology of the figure-eight knot, as shown in \cite{MR4841756}.\hfill$\diamond$
\end{remark}

Let us make some observations on the above expressions for the interpolating Jones invariant $\hat J_{{\bf 4}_1,x}(q)$. The series $\CC(q,X)$ was denoted by
$\CC_0(q, X)$ in \cite{MR4841756}, and it appears as one of the ingredients 
in the conventional Borel resummation of the
MMR series. It agrees with the inverted Habiro series of the figure-eight knot obtained in \cite{park-inverted}. The series in 
the second line of (\ref{ABC}) is an appropriate combination of the series $\CA_0$, $\CB_0$ 
appearing in \cite{peacock,MR4841756}. Our formula~\eqref{j41-alt} for the interpolating Jones invariant of the figure-eight knot is then
given by its inverted Habiro series $\CC(q,X)$ (which comes from the residues at $s=\ri n$ in
the Watson integral representation (\ref{eq:41cont-int})) plus an additional or ``correction'' term
which comes from the other residues. The series $\CC(q, X)$ and $\Phi(q, X)$ are singular when $X=q^n$, $n\in \IZ$,
but these singularities cancel in the sum of the two terms in~\eqref{j41-alt}. It can be verified
that for $X=q^n$, $n \ge 1$,
the expressions (\ref{jcz}), (\ref{a-easy}) specialize to the $n$-colored Jones polynomial of the
figure-eight knot (the series $B_{{\bf 4}_1,x}(q)$ vanishes for this specialization). This correction term is also crucial for reproducing the MMR series in the asymptotic regime $h\to 0$ near $X=1$.

The formulae obtained so far allow us to evaluate the analytic Jones invariant $\hat J_{{\bf 4}_1,x}(q,z)$ for $0<q<1$, as a function of $x$ and $z$. In \figref{41-fig} we show two evaluations, illustrating different aspects of the invariant. In both cases we set $z=1$ and we use numerical integration to calculate the $q$-Laplace transform of (\ref{bt-41}). This plot illustrates how the interpolating Jones invariant specializes to $J_{{\bf 4}_1,n}(q)$ when $x=n$, and how it asymptotes to the MMR series near $q=1$. 

\begin{figure}[!ht]
\leavevmode
\begin{center}
\includegraphics[height=4cm]{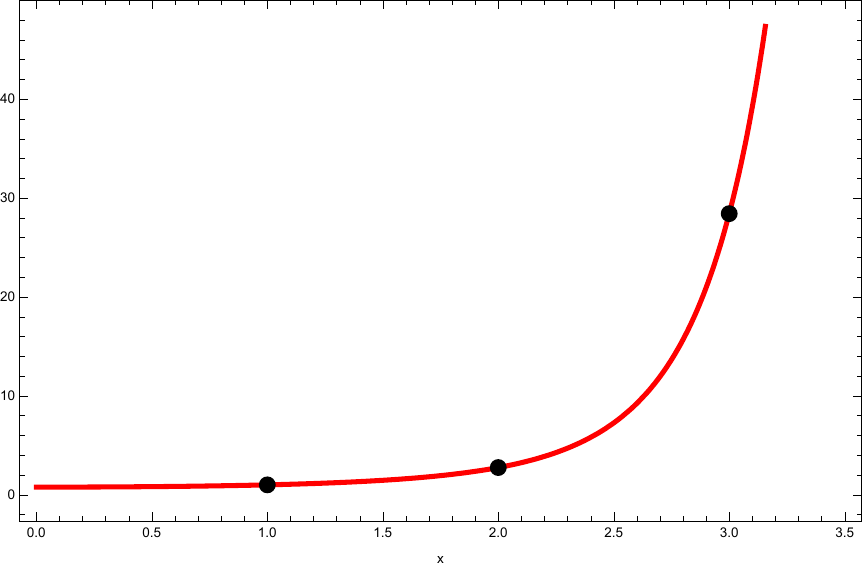}\qquad \qquad \includegraphics[height=4cm]{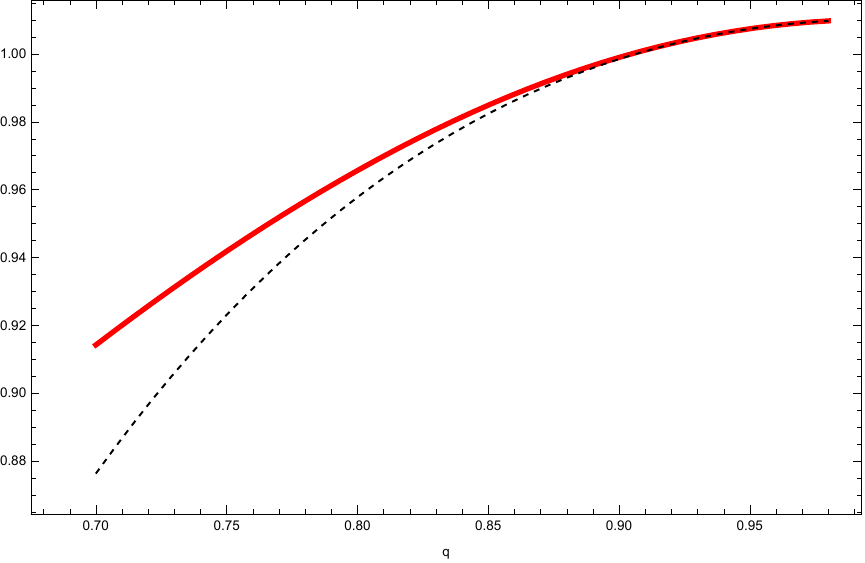}
\end{center}
\caption{Left-hand side: the red line is the interpolating Jones invariant $\hat J_{{\bf 4}_1,x}(q)$ as a function of $x$ for $q=1/2$. The black dots are the corresponding values $1, 11/4, 1819/64$ of the colored Jones polynomial for $n=1,2,3$. Right-hand side: the red line is the interpolating Jones invariant $\hat J_{{\bf 4}_1,x}(q)$ for fixed $w=-1/10$ as a function of $q$. The dashed black line is the MMR expansion, truncated at order $h^2$.}
\label{41-fig}
\end{figure}

\subsection{Comparison with the conventional Borel resummation}

Let us recall the following facts about the conventional Borel resummation. Given a Gevrey-1 series
\begin{equation}
  \varphi(h)=\sum_{n \ge 0} a_n h^{n},\quad a_n = \mathcal{O}(C^n n!),
\end{equation}
its Borel transform is defined by
\begin{equation}
  \widehat \varphi(\zeta)= \sum_{k \ge 0} {a_k \over k!} \zeta^k
\end{equation}
and is a holomorphic function in a neighborhood of the origin. In
favorable cases, this function can be extended to the complex
$\zeta$-plane (also called Borel plane), but it will have
singularities. The Borel resummation of
$\varphi(h)$, when it exists, is defined as the Laplace transform
\begin{equation}
  s(\varphi)(h) = \int_0^\infty \re^{-\zeta}
  \widehat \varphi(h \zeta) \rd \zeta.
\end{equation}
This has discontinuities at Stokes rays in the $h$ plane, whenever
$ {\rm arg}(h)= {\rm arg}(\zeta_s)$, where $\zeta_s$ is a
singularity of $\widehat \varphi(\zeta)$. We define the lateral Borel
resummations for $h$ near a Stokes ray by
\begin{equation}
  s_\pm (\varphi)(h)= \int_0^{\re^{\pm \ri \epsilon} \infty}
  \re^{-\zeta} \widehat \varphi(h \zeta) \rd \zeta.
\end{equation}

As we have seen, the $q$-Borel--Laplace resummation asymptotes to the MMR series, so it has to agree with its Borel resummation up to possible non-perturbative corrections in $h$. We now determine this difference explicitly.

So far, we have considered the MMR series $\Phi_0(h;w)$. However, to any knot $K$, it is possible to associate an additional set of formal perturbative series $\Phi^K_\sigma (h;w)$, labelled by non-trivial flat connections $\sigma$, which can be obtained e.g. by expanding the state integral of \cite{MR3227503, MR2330673} around its different saddle-points, see \cite{ggm1,peacock,MR4841756} for a review of their construction and meaning. In the case of the figure-eight knot, there are two additional series $\Phi^{{\bf 4}_1}_{\sigma}(h;w)$, labelled by the geometric $\sigma=g$ and the conjugate $\sigma=c$ connections. When $w=0$ the series $\Phi^{{\bf 4}_1}_\sigma (h):= \Phi^{{\bf 4}_1}_\sigma(h; 0)$ are given as follows. For $\sigma=g$ we have:
\be
\Phi^{{\bf 4}_1}_g(h)= \exp\left( -{\ri V \over h} \right) \varphi_g(h),
\ee
where
\be
 \varphi_{g} (h)=
  \frac{1}{\sqrt[4]3}\,
 \left(1 - \frac{11h}{72\sqrt{-3}}
  + \frac{697h^2}{2\,(72\sqrt{-3})^2}
  - \frac{724351 h^3}{30\,(72\sqrt{-3})^3} +\cdots\right)
\ee
and $V$ is the hyperbolic volume of the knot. In addition,
\be
\Phi^{{\bf 4}_1}_c(h)= \exp\left( {\ri V \over h} \right) \varphi_c(h), \qquad \varphi_c(h)= \ri \varphi_g(-h).
\ee

In \cite{ggm1,peacock,MR4841756}, explicit formulae were conjectured for the lateral Borel resummations of these formal series slightly above and below the negative imaginary axis in the $h$ plane. They express the resummations in terms of series in $q, X$ and in the dual variables $\tq, \tX$ defined in (\ref{tq-def}), (\ref{tX-def}) (see \cite{fantini-wheeler} for proofs of some related results). One can then use the Stokes automorphisms, also obtained explicitly in \cite{ggm1,peacock,MR4841756}, to obtain the (lateral) Borel resummation in arbitrary directions. In particular, one finds from the results in \cite{MR4841756} that the Borel resummation of the MMR series along the positive real axis of $h$ is given by
\be
\label{phi0im}
s (\Phi^{{\bf 4}_1}_0)(h;w)= \CC(q,X)-{\ri h \over 2 \pi}  {\CC (\tq,\tX)  \over  \CZ (\tq , \tX)} \CZ (q , X).
\ee
The series $\Phi^{{\bf 4}_1}_{\sigma}(h;w)$, $\sigma\in\{g,c\}$, are not  Borel summable along the positive real axis of $h$, and one has to consider lateral Borel resummations. By using the results of \cite{MR4841756}, one finds
\be
s_- (\Phi^{{\bf 4}_1}_g)(h;w)-s_+(\Phi^{{\bf 4}_1}_c)(h;w)= \left( {\ri h \over 2 \pi} \right)^{3/2}{ X^{1/2}- X^{-{1/2}} \over  \tX^{1/2}- \tX^{-{1/2}}}
{\CZ (q , X)  \over  \CZ (\tq , \tX)}.
\ee
We can then write (\ref{phi0im}) as
\be
\label{C0-res}
s (\Phi^{{\bf 4}_1}_0)(h;w)= \CC(q,X) -\left( {\ri h \over 2 \pi} \right)^{1/2} {\tX^{1/2}- \tX^{-{1/2}} \over X^{1/2}- X^{-{1/2}}} \CC(\tq,\tX) \left(s_- (\Phi^{{\bf 4}_1}_g)(h;w)-s_+(\Phi^{{\bf 4}_1}_c)(h;w)\right).
\ee
Let us note that the expression (\ref{phi0im}) shares many properties with (\ref{jcz}). In particular, the second order pole of $\CC(q,X)$ at $X=1$ cancels against a similar pole in the second term in the r.h.s. of (\ref{phi0im}). By using the modular properties of (\ref{quasi-mod}) and (\ref{G3}), we find
\be
\hat J_{{\bf{4}}_1,x}(q)=s(\Phi^{{\bf 4}_1}_0)(h;w) +\left( {\ri h \over 2 \pi} \right)^{-1/2}  { \tX^{1/2}- \tX^{-{1/2}} \over  X^{1/2}- X^{-{1/2}}} A_{{\bf{4}}_1,\tx}(\tq) \left( s_- (\Phi^{{\bf 4}_1}_g)(h;w)-s_+(\Phi^{{\bf 4}_1}_c)(h;w) \right),
\ee
where $\tq^{\tx}= \tX$. Since $ A_{{\bf{4}}_1,\tx}(\tq) \sim \tq$, the interpolating Jones invariant $\hat J_{{\bf{4}}_1,x}(q)$ and the conventional Borel resummation of the MMR series indeed differ by exponentially small terms in $h$.

The properties above can also be seen very clearly in the limit $X=1$. In this case, one finds
\be
s (\Phi^{{\bf 4}_1}_0)(h;0)=G_2(q) -{1 \over 24} E_2(q) g(q) +{4 \pi^2\over h^2} \left({G_2(\tq) \over g(\tq)}-{1\over 24} E_2(\tq) \right)g(q),
\ee
and
\be
\hat J_{{\bf 4}_1,0} (q)=s (\Phi^{{\bf 4}_1}_0)(h;0) -{4 \pi^2\over h^2}   A_{{\bf 4}_1,0}(\tq) {g(q) \over g(\tq)}.
\ee

\sectiono{General formalism}
\label{sec:gen-for}
In order to treat more general divergent power series, in particular those arising from arbitrary knots, we will now introduce a general formalism.

 \subsection{Simple meromorphic functions}

For a meromorphic function \(f(z)\) on \(\mathbb{C}\), regular at the origin, we denote by \(\ps{z}(f(z))\) (or just $\ps{}(f)$ if the argument $z$ is clear from the context)
the set of all poles of $f(z)$, endowed with an enumeration in nondecreasing order
of the modulus. We also denote by $\res{z}(f(z),a)$ (or just $\res{}(f,a)$, if the argument $z$ is clear from the context) the residue of the 1-form $f(z)\rd z$ at $z=a$:
\be
   \res{z}(f(z),a):=\res{z=a}f(z)\,\rd z.
\ee

\begin{definition}
Let  \(f(z)\) be a meromorphic function on \(\mathbb{C}\) regular at the origin. We say that $f(z)$ is
a \emph{{simple meromorphic}} function, or \emph{{sm-function}},  if
\begin{itemize}
 \item all poles of $f(z)$ are simple;
 \item $f(z)$ admits a Mittag-Leffler expansion with no entire part and no
convergence-correction terms, that is
\begin{equation}\label{eq:simple-meromorphic}
   f(z)=\sum_{a\in \ps{}(f)}\frac{\res{}(f,a)}{z-a},
\end{equation}
where the series converges normally on
\(\mathbb{C}\setminus \ps{}(f)\).
\end{itemize}
An sm-function is called of \emph{ finite type} if it has finitely many poles, otherwise it is called of \emph{ infinite type}.\hfill$\diamond$
\end{definition}
\begin{remark}\label{rem:necessary-cond-sm}
 Let $f(z)$ be an sm-function of infinite type. Then, from~\eqref{eq:simple-meromorphic}  it follows that the sum
\begin{equation}\label{eq:abs-conv-ms-fun-crit}
\sum_{a\in\ps{}(f)}\frac{-1}{a}\res{}(f,a)
\end{equation}
absolutely converges to $f(0)$. In practice, this convergence condition can serve as a necessary condition for a meromorphic function (regular at the origin and having all poles simple) to be an sm-function.
\end{remark}
\begin{remark}
The set $\smf$ of all sm-functions is obviously a $\C$-vector space. The subset $\smf_{\text{fin}}\subset \smf$ of sm-functions of finite type is a vector subspace generated by \emph{elementary} sm-functions $\frac{1}{z-a}$ with $a\in\C^\times$.\hfill$\diamond$
\end{remark}
\begin{remark}\label{rem:az+b-trans-sm-func}
 For any sm-function $f(z)$ and any scalars $a\in\C^\times$ and $b\in\C\setminus\ps{}(f),$ the function $f(az+b)$ is also an sm-function. Furthermore, $c$ is a pole of $f(az+b)$ if and only if $ac+b$ is a pole of $f(z)$, and the relation between the residues is given by the equality
 \begin{equation}
 \res{z}( f(az+b),c)=\frac{ 1}a\res{z}(f(z),ac+b).
\end{equation}
\hfill$\diamond$
\end{remark}

\begin{example}\label{ex:fabq(z)}
For any $a$ such that $|a|<1$ and $b\in\C\setminus q^{-\Z_{\ge0}}$, the  series
\begin{equation}\label{eq:fabq(z)}
f_{a,b,q}(z):=\sum_{n\ge0}\frac{(a;q)_nz^n}{(b;q)_n}=
{_2\phi_1}\Bigl[
\begin{matrix}
 a,q\\
 b
\end{matrix}
;q,z\Bigr],
\end{equation}
with radius of convergence 1, is Taylor's expansion of an sm-function.
Indeed, for $|z|<1$, we write
\begin{align}
f_{a,b,q}(z)&=\sum_{m\ge0}\frac{(a,bq^m;q)_\infty z^m}{(b,aq^m;q)_\infty}=\sum_{m,n\ge0}\frac{(a;q)_\infty (\frac ba;q)_n a^{n}q^{mn}z^m}{(b;q)_\infty(q;q)_n}\nonumber\\
&=\sum_{n\ge0}\frac{(a;q)_\infty (\frac ba;q)_n a^{n}}{(b;q)_\infty(q;q)_n(1-q^{n}z)},
\end{align}
where the second equality follows from the \(q\)-binomial theorem. The last expression corresponds to a Mittag-Leffler expansion with no entire part and no
convergence-correction terms. This means that it is an sm-function with the pole set
$$
\ps{}(f_{a,b,q})\subseteq q^{-\Z_{\ge0}},
$$
where each pole $q^{-n}$ is simple or removable with the residue
\begin{equation}\label{eq:res-fabq(z)}
\res{}(f_{a,b,q},q^{-n})=-\frac{(a;q)_\infty (\frac ba;q)_n (\frac aq)^{n}}{(b;q)_\infty(q;q)_n}.
\end{equation}
\hfill$\diamond$
\end{example}

\begin{example}\label{ex:example-3}
Later in this paper we will come across the function
 \begin{equation}\label{eq:main-non-sm-func}
F_q(z):=\sum_{n\ge0}\frac{q^{n(n+1)}}{(q;q)_n(zq^{n};q)_\infty}z^n=\frac{1}{(z;q)_\infty}\sum_{n\ge0}\frac{(z;q)_n}{(q;q)_n}q^{n(n+1)}z^n=\frac{{_1\phi_2}\Bigl[
\begin{matrix}
 z\\
 0,0
\end{matrix}
 ;q,zq^{2}\Bigr]}{(z;q)_\infty},
\end{equation}
which is not an sm-function, despite the fact that it has the pole set
\be
\ps{z}(F_q(z))\subseteq q^{-\Z_{\ge0}},
\ee
where each pole $q^{-n}$ is simple or removable with the residue
\begin{equation}\label{eq:res-of-main-non-sm-fun}
\res{z}(F_q,q^{-n})=-\sum_{k=0}^n\frac{\mg{q}{k}}{(q;q)_{\infty,k,n-k}}q^{k(k-n)}.
\end{equation}
To derive this expression, we first set $b=q$ and send $a\to0$ in~\eqref{eq:res-fabq(z)}:
\be
\res{}(f_{0,q,q},q^{-n})=-\frac{\mg{q}{n}}{(q;q)_{\infty,n}}.
\ee
Next, we remark that
\be
f_{0,q,q}(z)=(z;q)_\infty^{-1},
\ee
so that, from~\eqref{eq:main-non-sm-func}, we obtain
\be
\res{}(F_q,q^{-n})=\res{}(f_{0,q,q},q^{-n})
{_1\phi_2}\Bigl[
\begin{matrix}
 q^{-n}\\
 0,0
\end{matrix}
 ;q,q^{2-n}\Bigr]=-\mg{q}{n}\sum_{k=0}^n\frac{(q^{-n};q)_k}{(q;q)_{\infty,n,k}}q^{k(k-n+1)},
\ee
By using the equality
\be
(q^{-n};q)_k
=
\mg{q}{k}q^{-kn}
\frac{(q;q)_n}{(q;q)_{n-k}},
\ee
and changing the summation variable $k\mapsto n-k$, we arrive at~\eqref{eq:res-of-main-non-sm-fun}.

One can show that $\res{}(F_q,q^{-n})$ diverges as $q^{-n^2/6}$ when $n\to\infty$.
\hfill\(\diamond\)
\end{example}
\subsection{Hadamard product}
Given two formal power series
\begin{equation}\label{eq:series-f-g}
f(z)=\sum_{n\ge0}f_n z^n,
\quad
g(z)=\sum_{n\ge0}g_n z^n,
\end{equation}
their \emph{Hadamard product} is defined by
\be
(f\had g)(z)=\sum_{n\ge0}f_n g_n z^n.
\ee
\begin{remark}
 The set $\fps{z}$ of all formal power series is an algebra with componentwise addition and multiplication (Hadamard product), to be denoted $\fps{z}_{\had}$ and to be called \emph{Hadamard ring}, where the multiplicative identity element is the geometric series
 \be
 \sum_{n\ge 0}z^n=(1-z)^{-1}.
 \ee
 An element $f(z)\in\fps{z}$ is Hadamard invertible if and only if $f_n\ne0$ for any $n\in\Z_{\ge0}$ with the Hadamard-inverse
 \be
 f(z)^{\had-1}=\sum_{n\ge0}\frac{z^n}{f_n}\in\fps{z}.
 \ee
 For example, the inverse infinite $q$-Pochhammer symbol $(z;q)_\infty^{-1}$ is an invertible element of $\fps{z}_{\had}$ with the inverse
 \be
 \left((z;q)_\infty^{-1}\right)^{\had-1}=\sum_{n\ge0}(q;q)_nz^n,
 \ee
 which is simple meromorphic, as a special case of Example~\ref{ex:fabq(z)} given by $f_{q,0,q}(z).$
 \hfill$\diamond$
\end{remark}
If \(f\) and \(g\) have strictly positive radii of convergence \(r(f)\) and \(r(g)\), respectively, then the Hadamard product can be expressed as the contour integral
\begin{equation}\label{eq:cont-int-hadamart-pr}
(f\had g)(z)
=
\oint_{\mathcal C} f\Big(\frac zu\Big)g(u)\frac{\rd u}{2\pi \ri u},
\end{equation}
where the closed contour $\mathcal C$ in the $u$-plane encircles all singularities of $f(\frac zu)\frac{\rd u}{u}$ in the anti-clockwise direction, while all singularities  of $g(u)$ are outside of  $\mathcal C$. In particular, for $0\le |z|<r(f)r(g),$ one can choose
\be
\mathcal C=C_\rho:=\{u\in\C\mid |u|=\rho\},\ \text{ with }\ \frac{|z|}{r(f)}<\rho<r(g).
\ee
In what follows, we will also informally write \(f(z)\had g(z)\) or \(f(z)\had_z g(z)\), instead of \((f\had g)(z)\).

\begin{lemma}[Residue formula for the Hadamard product]\label{lem:res-form-hadamart-pr}
 Let the respective radii of convergence \(r(f)\) and \(r(g)\) of the series
\(f(u)\) and \(g(u)\) be strictly positive, and assume that $f(u)$ is Taylor's expansion of a simple meromorphic function.
Let $z\in\C$ be such that the analytically continued value $g(\frac za)$ is well defined for any $a\in \ps{}(f)$. Then
\begin{equation}\label{eq:res-form-hadamart-pr}
(f\had g)(z)=-\sum_{a\in \ps{}(f)} \frac{\res{}(f,a)}{a} g\Big(\frac za\Big).
\end{equation}
\end{lemma}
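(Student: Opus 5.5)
The plan is to plug the Mittag-Leffler expansion of $f$ into the series $g$ and then interchange summations, first for small $|z|$ and afterwards for general $z$ by analytic continuation.

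\emph{Step 1 (Taylor coefficients of $f$).} Since $f$ is simple meromorphic, \eqref{eq:simple-meromorphic} holds with normal convergence. Each pole satisfies $|a|\ge r(f)>0$, and for $|u|<r(f)$ we can expand
\[
\frac{1}{u-a}=-\sum_{n\ge0}\frac{u^n}{a^{n+1}}.
\]
This gives, at least formally,
\[
f_n=-\sum_{a\in\ps{}(f)}\frac{\res{}(f,a)}{a^{n+1}}.
\]
To justify it rigorously I will not expand termwise inside the sum. Instead I will apply Cauchy's formula $f_n=\oint_{C_\rho}f(u)u^{-n-1}\rd u/(2\pi\ri)$ with $\rho<r(f)$ and integrate the normally convergent series \eqref{eq:simple-meromorphic} term by term on the compact circle. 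For each elementary term $\frac{1}{u-a}$ with $|a|>\rho$, the integral equals $-a^{-n-1}$. Normal convergence on $C_\rho$ means $\sum_a|\res{}(f,a)|/\operatorname{dist}(a,C_\rho)<\infty$, and therefore $\sum_a|\res{}(f,a)|\,|a|^{-1}\rho'^{-n}<\infty$ for suitable $\rho'<\rho$. This is the absolute summability we need.

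\emph{Step 2 (small $|z|$).} For $|z|<r(f)r(g)$, and in fact for $|z|<\rho\, r(g)$ with $\rho<r(f)$ as above, we have
\[
(f\had g)(z)=\sum_n f_ng_nz^n=-\sum_n\sum_a\frac{\res{}(f,a)}{a}\,g_n\Bigl(\frac za\Bigr)^n.
\]
The double sum is absolutely convergent. The reason is that $|z/a|\le|z|/\rho<r(g)$ uniformly in $a$, so $\sum_n|g_n||z/a|^n\le M$ with $M$ independent of $a$, while $\sum_a|\res{}(f,a)/a|<\infty$ by Remark~\ref{rem:necessary-cond-sm}. Fubini then permits the interchange, and the inner sum over $n$ is $g(z/a)$. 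This proves \eqref{eq:res-form-hadamart-pr} on a disc. Equivalently, one could use the contour representation \eqref{eq:cont-int-hadamart-pr}, substitute the Mittag-Leffler series for $f(z/u)$, and pick up the residue of each simple pole at $u=z/a$; this produces the same formula.

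\emph{Step 3 (general $z$).} The right-hand side of \eqref{eq:res-form-hadamart-pr} is the interpretation of $(f\had g)(z)$ for all $z$ at which every $g(z/a)$ is defined, namely its analytic continuation. The main obstacle is convergence of $\sum_a \res{}(f,a)a^{-1}g(z/a)$ outside the disc. Since $|a|\to\infty$ along the enumeration, all but finitely many $z/a$ lie in a fixed compact subdisc of the disc of convergence of $g$, where $g$ is bounded. Absolute convergence of $\sum_a|\res{}(f,a)/a|$ then gives locally uniform convergence of the tail. The finitely many remaining terms are analytic wherever $g(z/a)$ is continued. Hence the right-hand side is analytic on the relevant domain and agrees with $f\had g$ near $0$. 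Therefore it is the analytic continuation, and the identity holds for all admissible $z$, possibly depending on the chosen continuation paths of $g$ if $g$ is multivalued.
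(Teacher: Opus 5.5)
Your argument is correct, but it takes a different route from the paper.

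The paper works entirely with Hadamard's contour representation \eqref{eq:cont-int-hadamart-pr}. For an elementary $f(u)=1/(u-a)$, the form $f(z/u)g(u)\,\rd u/u$ has a single simple pole $u=z/a$ inside $\mathcal C$, which gives the formula immediately. Linearity handles finite type. For infinite type, the truncations $f_N(z/u)$ converge to $f(z/u)$ uniformly for $u\in\mathcal C$, so the identity passes to the limit.

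You instead work with coefficients:
\begin{itemize}
\item you expand $f_n=-\sum_a\res{}(f,a)a^{-n-1}$;
\item you interchange sums by Fubini on the disc $|z|<r(f)r(g)$;
\item you extend to other $z$ via locally uniform convergence of the right-hand side and uniqueness of analytic continuation.
\end{itemize}
You can use the full disc in Step~2, since $|z/a|\le |z|/r(f)$. The auxiliary $\rho'$ in Step~1 is unnecessary, because $|a|^{-n-1}\le |a|^{-1}r(f)^{-n}$.

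The two approaches buy different things. The paper's treats every $z$ admitting a separating contour in one stroke, and convergence of the residue series comes free as a limit of the truncated identities. It does, however, tacitly assume a contour inside the domain of $g$ that encloses $0$ together with all the points $z/a$, which accumulate at $0$. Yours is more elementary and makes absolute and locally uniform convergence explicit, at the cost of a separate continuation step.

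In that continuation step, state the hypothesis precisely rather than as a closing caveat. The values $g(z/a)$ must come from continuing $g$ along $\gamma/a$ for one and the same path $\gamma$ from $0$ to $z$, for instance the rays used in Corollary~\ref{lq-cor}. Then the right-hand side is analytic on a neighbourhood of $\gamma$, and the identity theorem applies. If branches are chosen independently for different $a$, the sum need not be a continuation of $f\had g$ at all.
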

\begin{proof}
 Denote by $B{\mathcal C}$ the bounded component of $\C\setminus\mathcal C$, where $\mathcal C$ is the integration contour in~\eqref{eq:cont-int-hadamart-pr}.

Assume first that $f(u)=\frac{1}{u-a}$ is an elementary sm-function. The 1-form $f(\frac zu)g(u)\frac{\rd u}{u}$ for $u\in B{\mathcal C}$  is meromorphic with the only simple pole $u=\frac za$ and the residue
\be
\res{u=\frac za}f\left(\frac zu\right)g(u)\frac{\rd u}{u} =-\frac{ \res{}(f,a)}{a}
g\left(\frac za\right).
\ee
By linearity of the integral, we conclude that formula~\eqref{eq:res-form-hadamart-pr} holds for any sm-function of finite type $f\in\smf_{\text{fin}}.$

Let now $f(u)$ be an arbitrary sm-function with the set of poles \(\ps{}(f)=\{a_1,a_2,\ldots\},\) enumerated so that $|a_j|\le |a_{j+1}|$ for any $j\ge1.$ Its Mittag-Leffler expansion is
\be
   f(u)=\sum_{j\geq 1}\frac{\res{}(f,a_j)}{u-a_j}.
\ee
Let
\be
   f_N(u):=\sum_{j=1}^N \frac{\res{}(f,a_j)}{u-a_j}\in\smf_{\text{fin}}.
\ee
As \(f_N(u)\) converges to \( f(u)\) normally on \(\mathbb C\setminus \ps{}(f)\), we conclude, in particular,
that \(f_N(\frac zu)\) converges to \(f(\frac zu)\) uniformly on \(u\in\mathcal C\), and we get
\be
   \lim_{N\to\infty}(f_N\had g)(z)=(f\had g)(z).
\ee
On the other hand, since~\eqref{eq:res-form-hadamart-pr} holds for every $f_N$, it holds also for $f$.
\end{proof}

\begin{lemma}\label{lem:product-pole-sets-discrete}
Let \(A\) and \(B\) be discrete subsets of \(\mathbb C^\times\) with no
accumulation point in \(\mathbb C\). Then
\be
   AB:=\{ab \mid a\in A,\ b\in B\}
\ee
is again a discrete subset of \(\mathbb C^\times\) with no
accumulation point in \(\mathbb C\). Moreover, for every
\(c\in AB\), the set
\be
   \{(a,b)\in A\times B \mid ab=c\}
\ee
is finite.
\end{lemma}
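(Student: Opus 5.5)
The plan is to reduce both assertions to a finiteness statement about bounded regions of the plane. The key input is the hypothesis that $A$ and $B$ are discrete in $\mathbb C^\times$ with no accumulation point in $\mathbb C$. This gives two facts. First, every disc $\{|u|\le R\}$ contains only finitely many points of $A$ (respectively $B$). Second, since $0$ is not an accumulation point and $0\notin A\cup B$, there is $\delta>0$ with $|a|\ge\delta$ for all $a\in A$ and $|b|\ge\delta$ for all $b\in B$.

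First I will prove the finiteness of fibres in a slightly stronger form. Fix $R>0$ and count pairs $(a,b)\in A\times B$ with $|ab|\le R$. From $|b|\ge\delta$ we get $|a|\le R/\delta$. By the same argument $|b|\le R/\delta$. Hence both $a$ and $b$ lie in the closed disc of radius $R/\delta$, and only finitely many such pairs exist. Taking $R=|c|$ shows that the set $\{(a,b)\in A\times B\mid ab=c\}$ is finite for every $c\in AB$.

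Next I will deduce discreteness of $AB$ with no accumulation point in $\mathbb C$. By the previous step, $AB\cap\{|u|\le R\}$ is the image of a finite set of pairs, so it is finite for every $R$. A set meeting every closed disc in finitely many points has no accumulation point in $\mathbb C$ and is therefore discrete. Finally, $AB\subset\mathbb C^\times$ because $a,b\neq0$. Moreover $|ab|\ge\delta^2$, so $0$ is also bounded away from $AB$, although that bound is not needed.

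The only step requiring care is extracting the uniform lower bound $\delta$ from the hypothesis. It is exactly where the assumption that $0$ is not an accumulation point is used. Without it, products of a large element of $A$ and a small element of $B$ could accumulate, so I will state this bound explicitly at the start.
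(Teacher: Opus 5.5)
Your proposal is correct and follows essentially the same route as the paper: a uniform lower bound on $|a|$ and $|b|$ (the paper uses separate constants $\varepsilon_A,\varepsilon_B$ rather than your common $\delta$) forces both factors of any product in a bounded disc into finitely many points. From this the paper, like you, gets finiteness of $AB$ in bounded regions and of each fibre.
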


\begin{proof}
Since \(0\notin A\) and \(0\notin B\), and neither \(A\) nor \(B\) has an
accumulation point at \(0\), there exist \(\varepsilon_A,\varepsilon_B>0\) such
that
\be
   |a|\geq \varepsilon_A
   \qquad\text{and}\qquad
   |b|\geq \varepsilon_B
\ee
for all \(a\in A\) and \(b\in B\).

Let \(K\subset\mathbb C\) be compact. Choose \(R>0\) such that
\(K\subseteq \{z\in\mathbb C\mid |z|\leq R\}\). If \(ab\in K\), then
\be
   |a||b|=|ab|\leq R.
\ee
Hence
\be
   |a|\leq \frac{R}{\varepsilon_B},
   \qquad
   |b|\leq \frac{R}{\varepsilon_A}.
\ee
Thus \(a\) and \(b\) must belong to the finite sets
\be
   A\cap \Bigl\{|z|\leq \frac{R}{\varepsilon_B}\Bigr\},
   \qquad
   B\cap \Bigl\{|z|\leq \frac{R}{\varepsilon_A}\Bigr\}.
\ee
Therefore \(AB\cap K\) is finite. This proves that \(AB\) has no accumulation
point in \(\mathbb C\), hence is discrete. Since \(A,B\subset\mathbb C^\times\),
we also have \(AB\subset\mathbb C^\times\).

Taking \(K=\{c\}\), the same argument shows that the fiber
\be
   \{(a,b)\in A\times B \mid ab=c\}
\ee
is finite.
\end{proof}

\begin{proposition}\label{prop:sm-stable-hadamard}
The space of all sm-functions is stable under the Hadamard product.
\end{proposition}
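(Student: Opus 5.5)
The plan is to compute $f\had g$ explicitly from the residue formula of Lemma~\ref{lem:res-form-hadamart-pr} and then read off its Mittag-Leffler expansion. Let $f,g$ be sm-functions with pole sets $A=\ps{}(f)$ and $B=\ps{}(g)$. Both are discrete subsets of $\C^\times$ without finite accumulation points, since $f$ and $g$ are regular at the origin. Their Taylor series have positive radii of convergence, so $(f\had g)(z)$ is a convergent power series near $0$.

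\textbf{Step 1: a double series.} For $z$ near $0$ with $z\notin AB$, each $g(z/a)$ is defined, because $z/a\in B$ would force $z=ab$. Lemma~\ref{lem:res-form-hadamart-pr} then gives
\[
(f\had g)(z)=-\sum_{a\in A}\frac{\res{}(f,a)}{a}\,g\Big(\frac za\Big).
\]
Inserting the Mittag-Leffler expansion of $g$ and using $\frac{1}{a}\cdot\frac{1}{z/a-b}=\frac{1}{z-ab}$, we get
\[
(f\had g)(z)=-\sum_{a\in A}\sum_{b\in B}\frac{\res{}(f,a)\res{}(g,b)}{z-ab}.
\]

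\textbf{Step 2: normal convergence (the main obstacle).} We must show that this double series converges normally on every compact $K\subset\C\setminus AB$. Then its sum is meromorphic on $\C$, agrees with $f\had g$ near $0$, and so gives the analytic continuation of $f\had g$.

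Set $K':=\{z/a\mid z\in K,\ a\in A\}\cup\{0\}$.
\begin{itemize}
\item $K'$ is compact. Its points are bounded in modulus by $\max_K|z|/\varepsilon_A$, where $|a|\ge\varepsilon_A$ on $A$. Its only possible accumulation points are points $z/a$ with $z\in K$ (which lie in $K'$) and $0$, because $A$ has no finite accumulation point.
\item $K'$ avoids $B$. We have $0\notin B$, and $z/a\in B$ would mean $z\in AB$.
\end{itemize}
Normal convergence of the Mittag-Leffler series of $g$ on $K'$ gives a constant $M$ with
\[
\sum_{b\in B}\sup_{u\in K'}\frac{|\res{}(g,b)|}{|u-b|}\le M.
\]
For $z\in K$ and $u=z/a$, the $(a,b)$ term has modulus $\frac{|\res{}(f,a)|}{|a|}\cdot\frac{|\res{}(g,b)|}{|z/a-b|}$. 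Summing, the total of the sup-norms is at most
\[
M\sum_{a\in A}\frac{|\res{}(f,a)|}{|a|},
\]
which is finite by Remark~\ref{rem:necessary-cond-sm}. This uniform control over all $z/a$, including those accumulating at $0$, is the delicate point, and the compactness of $K'$ handles it.

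\textbf{Step 3: regrouping and conclusion.} By Lemma~\ref{lem:product-pole-sets-discrete}, $AB$ is discrete without finite accumulation points, and each fiber $\{(a,b)\mid ab=c\}$ is finite. Absolute convergence lets us regroup the double series by $c\in AB$:
\[
(f\had g)(z)=\sum_{c\in AB}\frac{r_c}{z-c},\qquad r_c:=-\sum_{ab=c}\res{}(f,a)\res{}(g,b).
\]
Each $r_c$ is a finite sum, and the regrouped series still converges normally on $\C\setminus AB$, since its terms are dominated by the finite fiber sums of the previous bound. So $f\had g$ is meromorphic with at most simple poles in $AB$. Points with $r_c=0$ are removable and are simply dropped from the pole set. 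The function is regular at $0$ since $0\notin AB$, and its Mittag-Leffler expansion has no entire part and no convergence-correction terms. Normal convergence makes the enumeration order irrelevant. Hence $f\had g$ is an sm-function.
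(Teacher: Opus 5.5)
Your proposal is correct and follows the paper's own route: apply the residue formula of Lemma~\ref{lem:res-form-hadamart-pr}, insert the Mittag-Leffler expansion of $g$ to obtain $-\sum_{a,b}\res{}(f,a)\res{}(g,b)/(z-ab)$, and regroup over the finite fibers $ab=c$ supplied by Lemma~\ref{lem:product-pole-sets-discrete}. Your Step~2 goes beyond the paper: using the compact set $K'$ together with $\sum_a|\res{}(f,a)/a|<\infty$, it proves the normal convergence of the double series and of its regrouping, which the paper's proof uses without justification.
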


\begin{proof}
Let \(f\) and \(g\) be sm-functions. Write
\be
   f(u)=\sum_{a\in \ps{}(f)}
   \frac{\res{}(f,a)}{u-a},
   \qquad
   g(u)=\sum_{b\in \ps{}(g)}
   \frac{\res{}(g,b)}{u-b}.
\ee
As the pole sets
\(\CP(f)\) and \(\CP(g)\) are subsets of \(\mathbb C^\times\) with no accumulation point in $\C$, by
Lemma~\ref{lem:product-pole-sets-discrete}, the product set
\be
   \ps{}(f)\ps{}(g):=\{ab \mid a\in \ps{}(f),\ b\in \ps{}(g)\}
\ee
is again a discrete subset of \(\mathbb C^\times\) with no accumulation point in $\C$.

Let \(z\notin \ps{}(f)\ps{}(g)\). By the residue formula for the Hadamard product, we
have
\be
   (f\had g)(z)
   =
   -\sum_{a\in \ps{}(f)}
   \frac{\res{}(f,a)}{a}
   g\!\left(\frac za\right).
\ee
Using the Mittag-Leffler expansion of \(g\), we get
\be
   g\!\left(\frac za\right)
   =
   \sum_{b\in \ps{}(g)}
   \frac{\res{}(g,b)}{\frac za-b}
   =
   \sum_{b\in \ps{}(g)}
   \frac{a\,\res{}(g,b)}{z-ab}.
\ee
Therefore
\be
   (f\had g)(z)
   =
   -\sum_{a\in \ps{}(f)}
    \sum_{b\in
    \ps{}(g)}
   \frac{\res{}(f,a)\res{}(g,b)}{z-ab}.
\ee
Grouping together the finitely many terms with the same product \(c=ab\), we
obtain
\be
   (f\had g)(z)
   =
   \sum_{c\in \ps{}(f)\ps{}(g)}
   \frac{R_c}{z-c},
\ee
where
\be
   R_c
   :=
   -\sum_{\substack{a\in \ps{}(f),\, b\in \ps{}(g)\\ ab=c}}
   \res{}(f,a)\res{}(g,b)
\ee
 is a finite sum by Lemma~\ref{lem:product-pole-sets-discrete}.

If \(R_c=0\), then the apparent pole at \(c\) is removable. Hence the actual
pole set of \(f\had g\) is
\be
   \ps{}(f\had g)
   =
   \{c\in \ps{}(f)\ps{}(g)\mid R_c\neq 0\}\subseteq \ps{}(f)\ps{}(g).
\ee
Thus \(f\had g\) has only nonzero simple poles and admits the Mittag-Leffler
expansion
\be
   (f\had g)(z)
   =
   \sum_{c\in \ps{}(f\had g)}
   \frac{\res{}(f\had g,c)}{z-c},
\ee
with no entire part and no convergence-correction terms. Therefore
\(f\had g\) is an sm-function.
\end{proof}
\begin{example}\label{ex:example-4}
Proposition~\ref{prop:sm-stable-hadamard} allows us to construct many more examples of sm-functions of infinite type by using Example~\ref{ex:fabq(z)}.
Namely, for any $r\in\Z_{\ge0}$, any $a_i$ such that $|a_i|<1$ with $1\le i\le r+1$, and any $b_i\notin q^{-\Z_{\ge0}}$ with $1\le i\le r$, the basic hypergeometric series
\be
{_{r+1}\phi_r}\Bigl[
\begin{matrix}
 a_1,\dots,a_{r+1}\\
 b_1,\dots,b_r
\end{matrix}
;q,z\Bigr]=f_{a_1,b_1,q}(z)\had\cdots\had f_{a_r,b_r,q}(z)\had f_{a_{r+1},q,q}(z)\label{phirs}
\ee
is simple meromorphic as a finite Hadamard product of simple meromorphic functions.
\hfill\(\diamond\)
\end{example}

\subsection{Deformed Borel and Laplace transforms}\label{sec:defBL}
We first note that the $q$-Borel transform can be written as the following Hadamard product
\be
\mathcal{B}_q f(z)=(-qz;q)_\infty\had_z f(z).
\ee
Due to the obvious equalities
\be\label{eq:q-borel-of-had-prod}
\mathcal{B}_q(f\had g)(z)=(\mathcal{B}_qf)(z)\had g(z)=f(z)\had (\mathcal{B}_qg)(z),
\ee
writing the expression without parentheses
\be
\mathcal{B}_qf(z)\had_z g(z)
\ee
leaves no ambiguity.
\begin{corollary}\label{lq-cor}
 Under the assumptions of Lemma~\ref{lem:res-form-hadamart-pr}, let $z\in\C^\times$ be such that the $q$-Laplace transform $\mathcal L_qg$ is well defined on the ray $\frac zat$, $t\in\R_{\ge0}$, for each $a\in \ps{}(f)$, and the following condition is satisfied:
 \be\label{eq:tonelli}
 \sum_{a\in \ps{}(f)}\Bigl|\frac{\res{}(f,a)}{a}\Bigr|\int_0^\infty\frac{|g(\frac zat)|}{(-t;q)_\infty}\rd t<\infty.
 \ee
  Then
\begin{equation}\label{eq:res-form-laplace}
\mathcal L_q(f\had g)(z)=-\sum_{a\in \ps{}(f)} \frac{\res{}(f,a)}{a} \mathcal L_qg\Big(\frac za\Big).
\end{equation}
\end{corollary}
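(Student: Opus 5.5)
The plan is to apply Lemma~\ref{lem:res-form-hadamart-pr} pointwise along the ray, and then exchange the $q$-Laplace integral with the sum over poles. Justifying that exchange is precisely the purpose of condition~\eqref{eq:tonelli}.

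First, fix $t\in\R_{>0}$. By Lemma~\ref{lem:res-form-hadamart-pr} applied at the point $zt$, we have
\begin{equation*}
(f\had g)(zt)=-\sum_{a\in\ps{}(f)}\frac{\res{}(f,a)}{a}\,g\Bigl(\frac{zt}{a}\Bigr).
\end{equation*}
The lemma needs each value $g(\frac{zt}{a})$ to be defined by analytic continuation. This holds because $\mathcal L_q g$ is assumed well defined on every ray $\frac za t$, which in particular requires $g$ to be continued along those rays.

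The identity also serves as the definition of the analytic continuation of $f\had g$ along the ray through $z$. This is justified as follows. For small $t$ both sides agree with the convergent power series. Each term on the right is analytic in $t$ near the ray. Finiteness of~\eqref{eq:tonelli} gives finiteness of the sum for almost every $t$, and local uniform convergence of the sum will be argued from the normal convergence of the Mittag-Leffler expansion, exactly as in the proof of Lemma~\ref{lem:res-form-hadamart-pr}. If this local uniformity is delicate, I will simply take the right-hand side as the continuation of $f\had g$ used in $\mathcal L_q$, noting that it coincides with the germ near $t=0$.

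Next, insert the expansion into $\mathcal L_q(f\had g)(z)=\int_0^\infty (f\had g)(zt)\,\rd t/(-t;q)_\infty$. Since $(-t;q)_\infty>0$ for $t\ge0$, condition~\eqref{eq:tonelli} states exactly that
\begin{equation*}
\sum_a\int_0^\infty\Bigl|\frac{\res{}(f,a)}{a}\,\frac{g(\frac zat)}{(-t;q)_\infty}\Bigr|\rd t<\infty.
\end{equation*}
By Tonelli's theorem applied to counting measure times Lebesgue measure, the double series--integral is absolutely convergent. Fubini's theorem then permits swapping $\sum_a$ and $\int_0^\infty$. Each resulting term is $-\frac{\res{}(f,a)}{a}\mathcal L_q g(\frac za)$ by the definition~\eqref{eq:q-laplace} evaluated at $\frac za$, which yields~\eqref{eq:res-form-laplace}. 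The same absolute convergence also shows that the integral defining $\mathcal L_q(f\had g)(z)$ converges.

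The main obstacle is the first step, namely identifying the pointwise sum with the analytic continuation of $f\had g$ along the whole ray, rather than only on $|zt|<r(f)r(g)$. The Fubini step itself is routine once~\eqref{eq:tonelli} is granted.
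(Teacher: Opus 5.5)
Your proposal is correct and is essentially the paper's argument: apply the residue formula of Lemma~\ref{lem:res-form-hadamart-pr} pointwise along the ray, then use condition~\eqref{eq:tonelli} with Tonelli's and Fubini's theorems to exchange the sum over poles with the $q$-Laplace integral. Your extra care about identifying the pointwise sum with the analytic continuation of $f\had g$ along the ray is not needed relative to the paper, because Lemma~\ref{lem:res-form-hadamart-pr} is stated directly for every point at which the continued values of $g$ exist, and the corollary assumes that lemma's hypotheses.
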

\begin{proof}
 Formula~\eqref{eq:res-form-laplace} follows from~\eqref{eq:res-form-hadamart-pr} and linearity of the $q$-Laplace transform, while condition~\eqref{eq:tonelli} allows us to use Tonelli's and then Fubini's theorems for exchanging the order of the summation and the integration.
\end{proof}
\begin{corollary}\label{cor:q-bor-lap-resum-res-form}
Let \(f(u)\) be a simple meromorphic function and \(g(u)\) a \(q\)-Borel--Laplace resummable formal power series, such that $\mathcal{B}_q g$ satisfies the conditions of Corollary~\ref{lq-cor}. Then \((f\had g)(u)\) is \(q\)-Borel--Laplace resummable, and for every \(z\in\mathbb C^\times\) satisfying the hypotheses of Corollary 2, the \(q\)-Borel--Laplace resummation \(\hat{g}(z/a)\) is well defined for every \(a\in \ps{}(f)\) and the \(q\)-Borel--Laplace resummation \(\widehat{f\had g}(z)\) can be computed according to the formula
\begin{equation}\label{eq:res-form-q-borel-laplace-resum}
\widehat{f\had g}(z)=-\sum_{a\in \ps{}(f)} \frac{\res{}(f,a)}{a} \hat{g}\left(\frac za\right).
\end{equation}
\end{corollary}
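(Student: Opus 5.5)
The plan is to reduce everything to Corollary~\ref{lq-cor}, using the compatibility~\eqref{eq:q-borel-of-had-prod} of the $q$-Borel transform with Hadamard products. First, I check that $f\had g$ has the growth needed for the $q$-Borel transform. Since $f$ is simple meromorphic and regular at the origin, it has a positive radius of convergence $r(f)=|a_1|$. Its Taylor coefficients are therefore bounded by $C\rho^{-n}$ for any $\rho<r(f)$. The coefficients of $g$ satisfy $g_n=\mathcal O(r^{-n}q^{-n(n+1)/2})$, so the coefficients of $f\had g$ satisfy the same kind of bound with $r$ replaced by $r\rho$. Hence $\mathcal B_q(f\had g)$ converges near $0$, and by~\eqref{eq:q-borel-of-had-prod} it equals $f\had\mathcal B_q g$.

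Next, I apply Lemma~\ref{lem:res-form-hadamart-pr} to the pair $(f,\mathcal B_q g)$, with $\mathcal B_q g$ playing the role of $g$. Near the origin this gives
\[
\mathcal B_q(f\had g)(u)=-\sum_{a\in\ps{}(f)}\frac{\res{}(f,a)}{a}\,(\mathcal B_q g)\Big(\frac ua\Big).
\]
I then use this identity to analytically continue $\mathcal B_q(f\had g)$ along the ray $u=zt$, $t\ge0$. The hypothesis of Corollary~\ref{lq-cor} says that each $\mathcal B_q g$ is defined on the rays $\frac zat$. Condition~\eqref{eq:tonelli} gives absolute convergence of the series for almost every $t$, and it also gives local uniform convergence on compacta by the Mittag-Leffler normal-convergence argument used in the proof of Lemma~\ref{lem:res-form-hadamart-pr}. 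So the right-hand side is a continuation of $\mathcal B_q(f\had g)$ along the ray.

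With the continuation in hand, I invoke Corollary~\ref{lq-cor} directly. It gives
\[
\mathcal L_q\mathcal B_q(f\had g)(z)=-\sum_{a}\frac{\res{}(f,a)}{a}\,\mathcal L_q\mathcal B_q g\Big(\frac za\Big),
\]
where the integrals converge by~\eqref{eq:tonelli}. Multiplying by $-1/\log q$ yields~\eqref{eq:res-form-q-borel-laplace-resum}. Well-definedness of each $\hat g(z/a)$ is part of the same finiteness: each single integral in~\eqref{eq:tonelli} is finite, since the summand has $\res{}(f,a)\neq0$.

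The main obstacle is the second step. Analytic continuation along a ray is a statement about a holomorphic function on a neighbourhood of the ray, while~\eqref{eq:tonelli} is only an $L^1$ condition on the ray itself. I would first try to argue via the normal convergence of the Mittag-Leffler series of $f$. This shows that the sum in the displayed formula defines the $q$-Borel transform wherever each term is defined and the sum converges locally uniformly. I would treat the precise sense of the continuation as the one implicitly fixed by this formula, which is consistent with the paper's usage.
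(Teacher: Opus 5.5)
Your proposal is correct and follows the paper's route. The paper's proof is just the one-line combination of $\mathcal B_q(f\had g)=f\had\mathcal B_q g$ from \eqref{eq:q-borel-of-had-prod} with Corollary~\ref{lq-cor} applied to $\mathcal B_q g$, followed by multiplication by $-1/\log q$.

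The continuation issue you flag, which the paper passes over, closes cleanly, but the reason is not \eqref{eq:tonelli}. By Remark~\ref{rem:necessary-cond-sm}, $\sum_{a\in\ps{}(f)}|\res{}(f,a)/a|<\infty$. On a compact neighbourhood of a segment of the ray, all but finitely many points $u/a$ lie in a disc where $\mathcal B_q g$ is bounded. The residue series therefore converges uniformly there, so it is holomorphic and gives the continuation.
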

\begin{proof}
 This follows from~\eqref{eq:q-borel-of-had-prod} and~\eqref{eq:res-form-laplace}.
\end{proof}
\begin{example}\label{ex:me_q(z)}
The following divergent power series will be particularly useful:
 \be\label{eq:me_q(z)}
 \me{q}(z):=\sum_{n\ge0}\frac{z^n}{\gamma_q(-n)}. 
 \ee
Through the $q$-Borel--Laplace resummation, it can be converted into an analytic function in the $z$-complex plane cut along the negative real half-axis:
 \be\label{eq:q-b-l-resum-mu_q}
 \rme{q}(z)=\sum_{m\in\Z}\frac{\gamma_q(-m)}{(q;q)_\infty^3}\frac{\frac{\log z}{\log q}+m}{1-zq^m}.
 \ee

Indeed, applying the $q$-Borel transform, we have
 \be\label{eq:B_qmeqz}
 \mathcal{B}_q\me{q}(z)=(-z;q)_\infty^{-1}=\int_{\ri\R-\epsilon}\frac{1}{(q;q)_s}\frac{z^s}{2\ri\sin(-\pi s)}\rd s,
 \ee
 where the second equality is Watson's contour integral representation valid for $|\arg z|<\pi$. Applying to the latter the $q$-Laplace transform and using~\eqref{eq:t-s-int}, we obtain
 \begin{align}
 \mathcal{L}_q\mathcal{B}_q\me{q}(z)&=\int_{\ri\R-\epsilon}\frac{R_s(q)}{(q;q)_s}\frac{z^s}{2\ri\sin(-\pi s)}\rd s
 =\int_{\ri\R-\epsilon}\frac{1}{(q;q)_{s,-s-1}}\frac{z^s}{\frac{2\ri}{\pi}\sin^2(\pi s)}\rd s\nonumber\\
& =\int_{\ri\R-\epsilon}\frac{\theta_q(q^{-s})}{(q;q)_\infty^3}\frac{z^s}{\frac{2\ri}{\pi}\sin^2(\pi s)}\rd s
 =\sum_{m\in\Z}\frac{\gamma_q(-m)}{(q;q)_\infty^3}\int_{\ri\R-\epsilon}\frac{(zq^m)^s}{\frac{2\ri}{\pi}\sin^2(\pi s)}\rd s\nonumber\\
& =\sum_{m\in\Z}\frac{\gamma_q(-m)}{(q;q)_\infty^3}\frac{\log(zq^m)}{zq^m-1},
\end{align}
where, in the third and fourth equalities, we used the definition and the Laurent series 
expansion of the theta function~\eqref{eq:theta-q1}, while in the last step, we used the integration formula
\be\label{eq:frac{log a}{a-1}}
\int_{\ri\R-\epsilon}\frac{a^s}{\frac{2\ri}{\pi}\sin^2(\pi s)}\rd s=\frac{\log a}{a-1}.
\ee
Using the definition
\be\label{eq:def-rme}
\rme{q}(z):=\frac{-1}{\log q}\mathcal{L}_q\mathcal{B}_q\me{q}(z),
\ee
we obtain~\eqref{eq:q-b-l-resum-mu_q}. \hfill $\diamond$
\end{example}

The following lemma is a useful property of the \(q\)-Borel--Laplace resummation, which also holds for the usual Borel resummation.

\begin{lemma}\label{lem:f(z)=zg(z)}
Let $g(z)$ be a formal power series  and $p(z)$ a polynomial. Define a formal power series
\be
f(z)=p(z)g(z).
\ee
Assume that the $q$-Borel--Laplace resummation $\hat{g}(z)$ of $g(z)$ is well defined for $z\in\C^\times$.
Then the $q$-Borel--Laplace resummation $\hat{f}(z)$ of $f(z)$ is also well defined, and
\be\label{eq:hat-f=ehat-g}
\hat f(z)=p(z)\hat g(z).
\ee
\end{lemma}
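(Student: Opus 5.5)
By linearity, and since $p(z)$ is a finite combination of monomials, it suffices to treat the case $p(z)=z^k$ for a fixed $k\in\Z_{\ge0}$. An induction on $k$ then reduces this further to $p(z)=z$. I would therefore prove that if $f(z)=zg(z)$, then $\hat f(z)=z\hat g(z)$. Writing $g(z)=\sum_n g_n z^n$, we have $f_n=g_{n-1}$ for $n\ge1$ and $f_0=0$.

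\emph{Step 1: the $q$-Borel side.} Set $\tilde g=\mathcal B_q g$. From the definition, the coefficients of $\tilde f=\mathcal B_q f$ are
\be
\tilde f_n=\frac{q^{n(n+1)/2}}{(q;q)_n}g_{n-1}=\frac{q^{n}}{1-q^n}\,\frac{q^{n(n-1)/2}}{(q;q)_{n-1}}g_{n-1}.
\ee
Hence $\tilde f$ is a $q$-integral of $\tilde g$:
\be
\tilde f(z)=z\sum_{j\ge0}q^{j+1}\tilde g(zq^{j+1}),
\ee
which one checks by expanding and using $\sum_{j\ge0}q^{(j+1)n}=q^n/(1-q^n)$. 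This formula gives the analytic continuation of $\tilde f$ along any ray on which $\tilde g$ continues, since the points $zq^{j+1}$ lie on the same ray, closer to the origin, and the sum converges because $\tilde g$ is bounded on compact segments of the ray.

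\emph{Step 2: the $q$-Laplace side.} Substitute this into $\int_0^\infty \tilde f(zt)\,\rd t/(-t;q)_\infty$ and exchange sum and integral; Tonelli applies since the integrals of $|\tilde g|$ converge by hypothesis. In the $j$-th term change variables $u=q^{j+1}t$. This gives
\be
\mathcal L_q\tilde f(z)=z\int_0^\infty \tilde g(zu)\,K(u)\,\rd u,\qquad K(u)=\sum_{j\ge1}\frac{q^{-j}u}{(-q^{-j}u;q)_\infty}.
\ee
The key identity to prove is
\be
K(u)=\frac{1}{(-u;q)_\infty}.
\ee
This is a telescoping argument. Using $(-q^{-j}u;q)_\infty=(1+q^{-j}u)(-q^{-j+1}u;q)_\infty$, each term equals $\frac{1}{(-q^{-j+1}u;q)_\infty}-\frac{1}{(-q^{-j}u;q)_\infty}$. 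The partial sums therefore telescope to $\frac{1}{(-u;q)_\infty}-\frac{1}{(-q^{-N}u;q)_\infty}$, and the last term tends to $0$ as $N\to\infty$ for $u>0$. Multiplying by $-1/\log q$ gives $\hat f(z)=z\hat g(z)$.

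\emph{Main obstacle.} The analytic difficulty is justifying the interchange of sum and integral, together with the convergence of the integral defining $\hat f$. Since every term is nonnegative after taking absolute values, and the telescoped kernel reproduces exactly the convergent integral $\int_0^\infty |\tilde g(zu)|\,\rd u/(-u;q)_\infty$, Tonelli gives both at once. I would first state clearly the interpretation that $\hat g$ being well defined includes absolute convergence of that integral; if only conditional convergence were assumed, I would instead truncate at $N$ and pass to the limit using the telescoped remainder. The induction on $k$ and linearity then finish the proof.
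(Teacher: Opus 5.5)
Your argument is correct, and it shares the paper's skeleton. Both proofs reduce to $p(z)=z$, both use the $q$-integral formula $\tilde f(z)=z\sum_{k\ge1}q^k\tilde g(q^kz)$, and both rely on the identity $(-t;q)_\infty=(1+t)(-qt;q)_\infty$. The differences lie in how these pieces are assembled.

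The paper obtains the $q$-integral formula as a Hadamard product $\tilde f=z\,(h\had\tilde g)$, with the sm-function $h(z)=\sum_{k\ge1}q^k/(1-q^kz)$, and then applies the residue formula of Lemma~\ref{lem:res-form-hadamart-pr}. You get the same formula by direct coefficient expansion.

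The more substantive difference is where the telescoping happens. The paper splits the weight $t/(-t;q)_\infty=1/(-qt;q)_\infty-1/(-t;q)_\infty$ inside each term and rescales, arriving at $z\sum_{k\ge0}q^k\hat g(q^kz)-z\sum_{k\ge1}q^k\hat g(q^kz)$. That is, it telescopes the resummed values $\hat g(q^kz)$. You rescale first and telescope the kernels pointwise for $u>0$, via $\sum_{j\ge1}q^{-j}u/(-q^{-j}u;q)_\infty=1/(-u;q)_\infty$.

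Your ordering gives a cleaner justification. The kernels are nonnegative and sum exactly to the $q$-Laplace weight, so Tonelli shows at once that $\mathcal L_q\tilde f(z)$ converges absolutely and equals $z\,\mathcal L_q\tilde g(z)$. This uses only absolute convergence of the single integral defining $\hat g(z)$. In fact, your kernel identity is exactly what is needed to justify the paper's own first interchange of sum and integral, which the paper attributes only to ``the assumed summability''.

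The paper's version, by contrast, uses $\hat g$ at every point $q^kz$. Its final subtraction of two series also tacitly requires control of $\hat g(q^kz)$ as $k\to\infty$. Its advantage is that it presents the lemma as one more instance of the Hadamard-product and residue machinery of Section~\ref{sec:gen-for}.

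Your fallback for merely conditional convergence is only sketched. It is not needed, since absolute convergence is the reading the paper itself adopts in condition~\eqref{eq:tonelli}.
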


\begin{proof}
By linearity, it is enough to prove the claim for monomials $p(z)=z^m$.
By iteration, it therefore suffices to consider the case $p(z)=z$.

In this case,
\[
f_0=0,
\qquad
f_{n+1}=g_n,
\qquad n\geq 0.
\]
Hence, we have for the $q$-Borel transform
\begin{align}
\tilde f(z):=\mathcal{B}_q f(z)
&=
\sum_{n\geq 1}
\frac{q^{n(n+1)/2}}{(q;q)_n}f_n z^n
=
\sum_{n\geq 1}
\frac{q^n}{1-q^n}
\frac{q^{n(n-1)/2}}{(q;q)_{n-1}}f_n z^n
=
z\sum_{n\geq 0}
\frac{q^{n+1}}{1-q^{n+1}}
\frac{q^{n(n+1)/2}}{(q;q)_n}g_n z^n
\nonumber\\
&=
z(h(z)\had_z \tilde g(z)),
\label{eq:tilde-f=z-sum-tilde-g}
\end{align}
where
\be
h(z):=
\sum_{n\geq 1}\frac{q^n}{1-q^n}z^{n-1}.
\ee
For $|qz|<1$, we may rewrite
\begin{align}
h(z)
=
\sum_{n\geq 1}\sum_{k\geq 1}q^{kn}z^{n-1}=
\sum_{k\geq 1}\frac{q^k}{1-q^kz}.
\end{align}
Thus $h$ is Taylor's expansion of an sm-function. By the residue formula~\eqref{eq:res-form-hadamart-pr},
\be\label{eq:res-sum-h-had-tilde-g}
h(z)\had_z\tilde g(z)
=
\sum_{k\geq 1}q^k\tilde g(q^kz).
\ee

Combining \eqref{eq:tilde-f=z-sum-tilde-g} and
\eqref{eq:res-sum-h-had-tilde-g}, we obtain
\begin{align}
\hat f(z)
=
\frac{-1}{\log q}\mathcal L_q\tilde f(z)
=
\frac{-1}{\log q}
\int_0^\infty
\frac{\tilde f(zt)}{(-t;q)_\infty}\,\rd t=
z\sum_{k\geq 1}
\frac{-1}{\log q}
\int_0^\infty
\frac{q^k\tilde g(q^kzt)t}{(-t;q)_\infty}\,\rd t,
\end{align}
where the interchange of the sum and the integral is justified by the
assumed $q$-Borel--Laplace summability of $g(z)$.

Using
\be
(-t;q)_\infty=(1+t)(-qt;q)_\infty,
\ee
we have
\be
\frac{t}{(-t;q)_\infty}
=
\frac{1}{(-qt;q)_\infty}
-
\frac{1}{(-t;q)_\infty}.
\ee
Therefore
\begin{align}
\hat f(z)
&=
z\sum_{k\geq 1}
q^k\frac{-1}{\log q}
\int_0^\infty
\frac{\tilde g(q^kzt)}{(-qt;q)_\infty}\,\rd t
-
z\sum_{k\geq 1}q^k\hat g(q^kz).
\end{align}
In the first sum, make the change of variables $t\mapsto s=qt$. Then
\begin{align}
q^k
\int_0^\infty
\frac{\tilde g(q^kzt)}{(-qt;q)_\infty}\,\rd t
&=
q^{k-1}
\int_0^\infty
\frac{\tilde g(q^{k-1}zs)}{(-s;q)_\infty}\,\rd s.
\end{align}
Hence
\begin{align}
\hat f(z)
=
z\sum_{k\geq 0}q^k\hat g(q^kz)
-
z\sum_{k\geq 1}q^k\hat g(q^kz)=
z\hat g(z).
\end{align}
\end{proof}
\subsection{Resummation of ${_r\phi_s}$ basic hypergeometric series}\label{sec:resummation-r-fi-s}
The basic hypergeometric series~\cite[(1.2.22)]{MR2128719},
\be\label{eq:r-fi-s}
{_r\phi_s}\Bigl[
\begin{matrix}
 a_1,\dots,a_r\\
 b_1,\dots,b_s
\end{matrix};q,z
\Bigr]=\sum_{m\ge0}\frac{(a_1,\dots,a_r;q)_m}{(b_1,\dots,b_s,q;q)_m}\gamma_q(m)^{1+s-r}z^m
\ee
is generically divergent for $r>s+1,$ and it can be converted into an analytic function by applying the $p$-Borel--Laplace resummation with $p:=q^{r-s-1}.$
\begin{theorem}\label{thm:r-fi-s}
Let integers $r,s\ge0$ and complex numbers $a_1,\dots, a_r, b_1,\dots, b_s$ satisfy the conditions
\be\label{eq:cond-par-r-fi-s}
r>s+1,\quad \max(|a_1|,\dots, |a_r|)<1,\quad (b_1,\dots,b_s;q)_\infty\ne0.
\ee
Then, denoting
\be
p:=q^{r-s-1},\quad \kappa:=(-1)^{r-s},
\ee
 the $p$-Borel--Laplace resummation ${_r\hat{\phi}_s}$ of the $ {_r{\phi}_s}$ basic hypergeometric series~\eqref{eq:r-fi-s} is of the form
 \be\label{eq:resummed-r-fi-s}
 {_r\hat{\phi}_s}\Bigl[
\begin{matrix}
 a_1,\dots,a_r\\
 b_1,\dots,b_s
\end{matrix};q,z
\Bigr]=\frac{(a_1,\dots,a_r;q)_\infty}{(b_1,\dots,b_s, q;q)_\infty}\sum_{l_1,\dots,l_r\ge0}\rme{p}(\kappa pzq^{l_1+\dots+l_r})\prod_{i=1}^r\frac{\prod_{j=0}^{l_i-1}(a_i-b_i q^j)}{(q;q)_{l_i}},
 \ee
 where
 \be
b_{i}=q\delta_{i,r},\quad s+1\le i\le r,
\ee
and the function $\rme{p}(z)$ is given in~\eqref{eq:q-b-l-resum-mu_q}.
\end{theorem}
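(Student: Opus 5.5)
The plan is to write the divergent ${_r\phi_s}$ series as a Hadamard product of a simple meromorphic function with a rescaled copy of the basic divergent series $\me{p}$ of Example~\ref{ex:me_q(z)}. Then Corollary~\ref{cor:q-bor-lap-resum-res-form}, applied with $p$ in place of $q$, reduces everything to the already computed resummation $\rme{p}$ in~\eqref{eq:q-b-l-resum-mu_q}.

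\emph{Step 1: factorization.} Pad the lower parameters by $b_i=q\delta_{i,r}$ for $s+1\le i\le r$. Since $(0;q)_m=1$ and $(q;q)_m$ absorbs the factor $(q;q)_m$ in~\eqref{eq:r-fi-s}, the $m$-th coefficient becomes
\[
\prod_{i=1}^r\frac{(a_i;q)_m}{(b_i;q)_m}\,\gamma_q(m)^{-(r-s-1)}.
\]
Next, compute
\[
\gamma_q(m)^{-(r-s-1)}=(-1)^{m(r-s-1)}p^{-m(m-1)/2}
\quad\text{and}\quad
\frac{(\kappa p)^m}{\gamma_p(-m)}=(-1)^{m}\kappa^m p^{m}p^{-m(m+1)/2}.
\]
These two expressions agree, because $(-1)^m\kappa^m=(-1)^{m(r-s+1)}=(-1)^{m(r-s-1)}$. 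Hence
\[
{_r\phi_s}(z)=F(z)\had_z \me{p}(\kappa p z),\qquad F(z):=f_{a_1,b_1,q}(z)\had\cdots\had f_{a_r,b_r,q}(z),
\]
with $f_{a,b,q}$ as in Example~\ref{ex:fabq(z)}. By Proposition~\ref{prop:sm-stable-hadamard}, $F$ is an sm-function.

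\emph{Step 2: explicit Mittag-Leffler expansion of $F$.} From Example~\ref{ex:fabq(z)},
\[
f_{a,b,q}(z)=\frac{(a;q)_\infty}{(b;q)_\infty}\sum_{l\ge0}\frac{\prod_{j=0}^{l-1}(a-bq^j)}{(q;q)_l}\,\frac{1}{1-q^lz},
\]
using $(b/a;q)_la^l=\prod_{j<l}(a-bq^j)$. Hadamard products of the elementary pieces satisfy
\[
\frac1{1-q^{l_1}z}\had\cdots\had\frac1{1-q^{l_r}z}=\frac1{1-q^{l_1+\dots+l_r}z}.
\]
The $r$-fold sum converges absolutely since each coefficient is $\mathcal O(|a_i|^{l_i})$. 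This gives $F(z)$ as an absolutely convergent sum over $(l_1,\dots,l_r)$ of multiples of $1/(1-q^Lz)$, with $L=\sum l_i$ and prefactor $(a_1,\dots,a_r;q)_\infty/(b_1,\dots,b_s,q;q)_\infty$; here the padding $b_r=q$ produces the factor $(q;q)_\infty$. Note that $-\res{}(F,a)/a$ at $a=q^{-L}$ is exactly the grouped coefficient of $1/(1-q^Lz)$.

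\emph{Step 3: apply the residue formula.} Corollary~\ref{cor:q-bor-lap-resum-res-form} (with $q\to p$) then gives
\[
{_r\hat\phi_s}(z)=\sum_{L}\bigl(\text{coef}_L\bigr)\,\rme{p}(\kappa p q^{L}z),
\]
and ungrouping over $(l_1,\dots,l_r)$ yields~\eqref{eq:resummed-r-fi-s}. Lemma~\ref{lem:f(z)=zg(z)} is not needed, since the rescaling $z\mapsto\kappa pz$ commutes with Borel and Laplace by their definitions.

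\emph{Main obstacle: the Tonelli condition~\eqref{eq:tonelli}.} Here $\mathcal B_p\me{p}(w)=(-w;p)_\infty^{-1}$ by~\eqref{eq:B_qmeqz}, so one needs
\[
\sum_{l}\prod_i|c_{l_i}|\int_0^\infty\frac{\rd t}{|(-\kappa pq^{L}zt;p)_\infty|\,(-t;p)_\infty}<\infty .
\]
I would bound the inner integral uniformly in $L$. When $\kappa z\notin\R_{\le0}$, the factor $|(-\kappa pq^Lzt;p)_\infty|^{-1}$ is bounded below away from zero uniformly in $t\ge0$ and $L\ge0$, since all its zeros lie on the ray $-\R_{>0}/(\kappa z)$, off the integration path. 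At the same time $1/(-t;p)_\infty$ decays faster than any power of $t$. Combined with the absolute convergence of $\sum\prod|c_{l_i}|$, guaranteed by $|a_i|<1$, this gives the condition off the Stokes ray. The statement is then understood for $z$ off that ray, i.e.\ where $\rme{p}$ is analytic.
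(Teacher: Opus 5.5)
Your proposal is correct and is essentially the paper's proof: the same Hadamard factorization ${_r\phi_s}=f_{a_1,b_1,q}\had\cdots\had f_{a_r,b_r,q}\had\me{p}(\kappa p z)$, followed by the residue formula of Corollary~\ref{cor:q-bor-lap-resum-res-form} with $q$ replaced by $p$. The only differences are that you first assemble the Mittag-Leffler expansion of the full product $F$ and apply the formula once instead of $r$ times, and that you verify the Tonelli condition~\eqref{eq:tonelli}, which the paper leaves implicit; one wording slip there is that the statement you need is that $|(-\kappa pq^Lzt;p)_\infty|$ itself, not its reciprocal, is bounded away from zero uniformly on the ray.
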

\begin{proof}
With notation~\eqref{eq:fabq(z)} and definition~\eqref{eq:me_q(z)},
 we write
\be\label{eq:S_n(z)}
{_r\phi_s}\Bigl[
\begin{matrix}
 a_1,\dots,a_r\\
 b_1,\dots,b_s
\end{matrix};q,z
\Bigr]=f_{a_1,b_1,q}(z)\had_z\cdots\had_zf_{a_r,b_r,q}(z)\had_z \me{p}(\kappa pz).
\ee
Under conditions~\eqref{eq:cond-par-r-fi-s},
all the factors  $f_{a_i,b_i,q}(z)$ in~\eqref{eq:S_n(z)} are sm-functions, see Example~\ref{ex:fabq(z)}, so that we can apply $r$ times the residue formula~\eqref{eq:res-form-q-borel-laplace-resum} to obtain
\be\label{eq:r-fi-s-residue-sum}
{_r\hat{\phi}_s}\Bigl[
\begin{matrix}
 a_1,\dots,a_r\\
 b_1,\dots,b_s
\end{matrix};q,z
\Bigr]=\sum_{l_1,\dots,l_r\ge0}\rme{p}(\kappa pzq^{l_1+\dots+l_r})\prod_{i=1}^r\frac{(a_i;q)_\infty (\frac{b_i}{a_i};q)_{l_i} a_i^{l_i}}{(b_i;q)_\infty(q;q)_{l_i}},
\ee
which is an equivalent form of~\eqref{eq:resummed-r-fi-s}, when $a_1\cdots a_r\ne0$.
\end{proof}
\begin{remark}\label{rem:choice-of-n-for-r-fi-s}
 If the parameters $a_1,\dots,a_r$ in~\eqref{eq:r-fi-s} do not satisfy conditions~\eqref{eq:cond-par-r-fi-s}, there always exists a positive integer
 $n$ such that
\be
\max(|a_1|,\dots, |a_r|)<q^{-n}.
\ee
Subtracting the first $n$ terms from~\eqref{eq:r-fi-s}, we obtain
\begin{align}\label{eq:r-fi-s-n-terms}
{_r\phi_s}\Bigl[
\begin{matrix}
 a_1,\dots,a_r\\
 b_1,\dots,b_s
\end{matrix};q,z
\Bigr]-\sum_{m=0}^{n-1}\frac{(a_1,\dots,a_r;q)_m}{(b_1,\dots,b_s,q;q)_m}\gamma_q(m)^{s+1-r}z^m\nonumber\\
=\frac{(a_1,\dots,a_r;q)_n}{(b_1,\dots,b_s,q;q)_n}\gamma_q(n)^{s+1-r}z^n{_{r+1}\phi_{s+1}}\Bigl[
\begin{matrix}
 a_1q^n,\dots,a_rq^n,q\\
 b_1q^n,\dots,b_sq^n,q^{1+n}
\end{matrix};q,zq^{n(s+1-r)}
\Bigr],
\end{align}
where the parameters of ${_{r+1}\phi_{s+1}}$ basic hypergeometric series satisfy conditions~\eqref{eq:cond-par-r-fi-s}, so that Theorem~\ref{thm:r-fi-s} applies, and by using Lemma~\ref{lem:f(z)=zg(z)}, we obtain
\begin{align}\label{eq:r-hat-fi-s-n-terms}
{_r\hat{\phi}_s}\Bigl[
\begin{matrix}
 a_1,\dots,a_r\\
 b_1,\dots,b_s
\end{matrix};q,z
\Bigr]=\sum_{m=0}^{n-1}\frac{(a_1,\dots,a_r;q)_m}{(b_1,\dots,b_s,q;q)_m}\gamma_q(m)^{s+1-r}z^m\nonumber\\
+\frac{(a_1,\dots,a_r;q)_n}{(b_1,\dots,b_s,q;q)_n}\gamma_q(n)^{s+1-r}z^n{_{r+1}\hat{\phi}_{s+1}}\Bigl[
\begin{matrix}
 a_1q^n,\dots,a_rq^n,q\\
 b_1q^n,\dots,b_sq^n,q^{1+n}
\end{matrix};q,zq^{n(s+1-r)}
\Bigr].
\end{align}
\end{remark}

In order to illustrate the utility of Remark~\ref{rem:choice-of-n-for-r-fi-s}, here we provide a proof of Theorem~\ref{thm:4_1-full}.

\begin{proof}[Proof of Theorem~\ref{thm:4_1-full}]
We start from the formal power series~\eqref{eq:4_1z}, where
we assume that $x\in\R$. It can be written as a special case of the ${_3\phi_1}$ basic hypergeometric series:
\be\label{eq:4_1-jones-3-fi-1}
J_{{\bf 4}_1,x}(q,z)=
{_3\phi_1}\Bigl[
\begin{matrix}
 q^{1+x},q^{1-x},q\\
 0
\end{matrix};q,z/q
\Bigr]. 
\ee
We now take into account condition~\eqref{eq:n-dependent-cond} and specialize~\eqref{eq:r-hat-fi-s-n-terms}, to obtain
\begin{align}
\hat{J}_{{\bf 4}_1,x}(q,z)&=\sum_{m=0}^{n-1}(q^{1+ x},q^{1-x};q)_m\frac{z^m}{\gamma_q(-m)}\nonumber\\
&+\frac{(q^{1+ x},q^{1-x};q)_nz^n}{\gamma_q(-n)}{_3\hat{\phi}_1}\Bigl[
\begin{matrix}
 q^{n+1+x},q^{n+1-x},q\\
 0
\end{matrix};q,zq^{-n-1}
\Bigr]. \label{eq:4_1-begin-proof}
\end{align}
By using~\eqref{eq:resummed-r-fi-s}, we can write, 
\begin{align}
&\frac{{_3\hat{\phi}_1}\Bigl[
\begin{matrix}
 q^{n+1+x},q^{n+1-x},q\\
 0
\end{matrix};q,zq^{-n-1}
\Bigr]}{(q^{n+1+x},q^{n+1-x};q)_\infty\gamma_q(-n)}=\sum_{l_1,l_2\ge0}\rme{q}(q^{l_1+l_2-n}z)\frac{q^{(n+1+x)l_1+(n+1
-x)l_2}}{\gamma_q(-n)(q;q)_{l_1,l_2}}\nonumber\\
&=\sum_{l_1,l_2\ge0}\sum_{m\in\Z}\frac{\frac{\log z}{\log q}+m}{1-zq^{m}}\frac{\gamma_q(l_1+l_2-m)}{(q;q)_\infty^3}\frac{q^{mn+(1+x)l_1+(1
-x)l_2}}{(q;q)_{l_1,l_2}}\nonumber\\
&=\sum_{m\in\Z}\frac{\frac{\log z}{\log q}+m}{1-zq^{m}}\sum_{l_1\ge0}\frac{\gamma_q(-m)\gamma_q(l_1)(q^{1+l_1-m
-x};q)_\infty}{(q;q)_\infty^3}\frac{q^{mn+(1-m+x)l_1}}{(q;q)_{l_1}}\nonumber\\
&=\sum_{m\in\Z}\frac{\frac{\log z}{\log q}+m}{1-zq^{m}}\frac{\gamma_q(-m)q^{mn}(q^{1-m
-x};q)_\infty}{(q;q)_\infty^3}
{_1\phi_1}\Bigl[
\begin{matrix}
0\\
 q^{1-m-x}
\end{matrix};q,q^{1-m+x}
\Bigr],\label{eq:4_1-3-fi-1-simp}
\end{align}
where, in the third equality, we converted the summation over $l_2$ into the infinite $q$-Pochhammer symbol, and in the last equality we absorbed the summation over $l_1$ into the definition of the basic hypergeometric series ${_1\phi_1}$. Now, substituting~\eqref{eq:4_1-3-fi-1-simp} into~\eqref{eq:4_1-begin-proof}, after simplification, we obtain~\eqref{eq:hat-J-x-z-q-1-fi-1}.

\end{proof}
\section{Generating series for Habiro polynomials}\label{sec:gen-hab-ser}

We define the generating formal power series for Habiro polynomials as
\begin{equation}
\label{gHab}
\mH_K(q,z):=\sum_{m\ge0} \frac{H_{K,m}(q) }{\mg{q}{-m}}z^m,
\end{equation}
which can have zero or strictly positive radius of  convergence, as we will see in examples.

\begin{example} {\it The knot $K={\bf 3}_1^+$.} The Habiro polynomials of the positive trefoil knot are given by
\be
H_{{\bf 3}_1^{+},m}(q)=q^m\mg{q}{-m}=(-1)^mq^{m(m+3)/2}.
\ee
The series
\be
\mH_{{\bf 3}_1^+}(q,z)=\sum_{m\ge0} q^mz^m=\frac{1}{1-qz}
\ee
has radius of convergence $q^{-1}$ and no resummation is needed.\hfill$\diamond$
\end{example}

\begin{example} {\it The knot $K={\bf 3}_1^-$.} For the negative trefoil knot, we have
\be
H_{{\bf 3}_1^{-},m}(q)=H_{{\bf 3}_1^{+},m}(1/q)=\frac{1}{q^m\mg{q}{-m}}=(-1)^mq^{-m(m+3)/2}
\ee
so that the series
\be\label{eq:mh31-qz}
\mH_{{\bf 3}_1^-}(q,z)=\sum_{m\ge0} \frac{z^m}{q^{m(m+2)}}=\sum_{m\ge0} \frac{(-z)^m}{q^m\mg{q^2}{-m}}=\me{q^2}(-z/q)
\ee
has zero radius of convergence. The last equality in~\eqref{eq:mh31-qz} corresponds to Example~\ref{ex:me_q(z)}. Using the result of resummation in~\eqref{eq:q-b-l-resum-mu_q}, we obtain
%
\begin{equation}\label{eq:Borel--Laplace-resummation-3-1-minus}
\hat{\mH}_{{\bf 3}_1^-}(q,z)=\rme{q^2}(-z/q)= \frac{1}{2(q^2;q^2)_\infty^3}\sum_{m\in\Z} \mg{q^2}{-m}\frac{\frac{\log(-z)}{\log q}+2m-1}{1+zq^{2m-1}}.
\end{equation}
\hfill$\diamond$
\end{example}

\begin{example} {The knot $K={\bf 4}_1$.} The Habiro polynomials of the figure-eight knot are given in (\ref{hab-41}),
so that the series $\mH_{{\bf 4}_1}(q,z)$ has a zero radius of convergence:
\be
\mH_{{\bf 4}_1}(q,z)=\sum_{m\ge0} \frac{z^m}{\gamma_q(-m)}=\me{q}(z).
\ee
Again, by using Example~\ref{ex:me_q(z)}, we obtain
%
\be
\label{hatH41}
\hat{\mH}_{{\bf 4}_1}(q,z)=\rme{q}(z)=\frac{1}{(q;q)_\infty^3}\sum_{n\in \Z}\gamma_q(-n)\frac{\frac{\log z}{\log q}+n}{1-zq^{n}}.
\ee
\hfill$\diamond$
\end{example}

\subsection{Analytic Jones invariant} The generating series of Habiro polynomials~\eqref{gHab} 
can be regarded as a simpler object in the theory, since the Jones interpolating series (\ref{eq:cJHi}) can be written as
\begin{equation}\label{eq:J_{K,x}(q,z)-hadamard}
J_{K,x}(q,z)= \phi_{x}(q,z)\had_z \mH_K(q,z),
\end{equation}
where
\begin{align}
\phi_{x}(q,z)&:=\sum_{m\ge0}(q^{1+x},q^{1-x};q)_mz^m=
{_3\phi_2}\Bigl[
\begin{matrix}
 q^{1+x},q^{1-x},q\\
 0,0
\end{matrix}
;q,z\Bigr]\nonumber\\
&=f_{q^{1+x},0,q}(z)\had_zf_{q^{1-x},0,q}(z),\label{eq:had-fac-varphi_x}
\end{align}
and in the last equality we use the notation from~\eqref{eq:fabq(z)}.
\begin{proposition} \label{Jxb} Let $x\in\R$ be such that $|x|<1$, and assume that the $p$-Borel--Laplace resummation $\hat{\mH}_K(q,z)$ of $\mH_K(q,z)$ is well defined for some $p$. Then, the $p$-Borel--Laplace resummation $\hat{J}_{K,x}(q,z)$ of $J_{K,x}(q,z)$ is also well defined and is given by
\be\label{eq:hatJ_Kxqz-res}
\hat{J}_{K,x}(q,z)=(q^{1+x},q^{1-x};q)_\infty \sum_{r \ge 0}  q^r b_r(q,x) \hat{\mH}_K(q,q^r z),
\ee
where
\begin{equation}\label{eq:brqx}
b_r(q,x)
=\sum_{\ell=0}^r
\frac{q^{(2\ell-r)x}}{(q;q)_\ell (q;q)_{r-\ell}}.
\end{equation}
\end{proposition}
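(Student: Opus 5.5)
Our approach is to apply Corollary~\ref{cor:q-bor-lap-resum-res-form} (the residue formula for the resummation of a Hadamard product) to the factorisation \eqref{eq:J_{K,x}(q,z)-hadamard}. Here $f=\phi_x(q,\cdot)$ and $g=\mH_K(q,\cdot)$. The whole argument therefore reduces to two points: showing that $\phi_x(q,z)$ is an sm-function, and identifying its poles and residues explicitly.

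By \eqref{eq:had-fac-varphi_x}, $\phi_x=f_{q^{1+x},0,q}\had f_{q^{1-x},0,q}$. Since $|x|<1$, both $|q^{1\pm x}|<1$, so each factor is an sm-function by Example~\ref{ex:fabq(z)}. Hence $\phi_x$ is simple meromorphic by Proposition~\ref{prop:sm-stable-hadamard}.

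Rather than combining the residues through $R_c$, we compute the Mittag-Leffler expansion of $\phi_x$ directly, as in Example~\ref{ex:fabq(z)}. For $|z|<1$ we write $(q^{1+x},q^{1-x};q)_m=(q^{1+x},q^{1-x};q)_\infty/(q^{1+x+m},q^{1-x+m};q)_\infty$. We expand each reciprocal by Euler's formula, $1/(aq^m;q)_\infty=\sum_{\ell\ge0}a^\ell q^{m\ell}/(q;q)_\ell$, and then sum the geometric series in $m$. This gives
\[
\phi_x(q,z)=(q^{1+x},q^{1-x};q)_\infty\sum_{\ell_1,\ell_2\ge0}\frac{q^{(1+x)\ell_1+(1-x)\ell_2}}{(q;q)_{\ell_1}(q;q)_{\ell_2}}\,\frac{1}{1-q^{\ell_1+\ell_2}z}.
\]
We then group the terms by $r=\ell_1+\ell_2$. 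With $\ell=\ell_1$ we have $q^{(1+x)\ell+(1-x)(r-\ell)}=q^r q^{(2\ell-r)x}$, so
\[
\phi_x(q,z)=(q^{1+x},q^{1-x};q)_\infty\sum_{r\ge0}\frac{q^r b_r(q,x)}{1-q^rz}.
\]
The double sum converges absolutely and normally away from $q^{-\Z_{\ge0}}$, because $\sum_r q^r|b_r|$ is dominated by $(q^{1+x};q)_\infty^{-1}(q^{1-x};q)_\infty^{-1}$-type constants. This is therefore a Mittag-Leffler expansion with no entire part. The poles are $a=q^{-r}$, and the residues satisfy $-\res{}(\phi_x,q^{-r})/q^{-r}=(q^{1+x},q^{1-x};q)_\infty\,q^r b_r(q,x)$. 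Substituting these into \eqref{eq:res-form-q-borel-laplace-resum} yields exactly \eqref{eq:hatJ_Kxqz-res}, with $\hat{\mH}_K(q,z/a)=\hat{\mH}_K(q,q^rz)$.

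The main obstacle is verifying the hypotheses of Corollary~\ref{lq-cor}, namely the Tonelli condition \eqref{eq:tonelli}. Concretely, we need
\[
\sum_r q^r|b_r|\int_0^\infty\frac{|\tilde{\mH}_K(q^r zt)|}{(-t;p)_\infty}\,\rd t<\infty,
\]
where $\tilde{\mH}_K$ is the $p$-Borel transform. The plan is to note that $b_r(q,x)$ is bounded by $C q^{-r|x|}$ with $|x|<1$, so the weights $q^r|b_r|\le Cq^{r(1-|x|)}$ decay geometrically. It then suffices to bound the integrals $\int_0^\infty|\tilde{\mH}_K(q^rzt)|/(-t;p)_\infty\,\rd t$ uniformly in $r$ (or by subexponential growth). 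After the substitution $t\mapsto q^{-r}t$ these become integrals over the same ray $z\R_{>0}$. Assuming, as implicit in the well-definedness of $\hat{\mH}_K$ along the relevant rays, that $|\tilde{\mH}_K|$ is locally integrable near $0$ and grows at most like a power at infinity, the integrals are uniformly bounded: the rescaling only moves the integrand toward the origin, where $\tilde{\mH}_K$ is analytic. The rapid decay of $1/(-t;p)_\infty$ then controls the tail. Fubini then justifies the interchange.
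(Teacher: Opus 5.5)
Your argument is correct and is essentially the paper's. The paper applies the residue formula \eqref{eq:res-form-q-borel-laplace-resum} once for each factor $f_{q^{1\pm x},0,q}$ and regroups the resulting double sum by $r=l_1+l_2$, which is the same computation as your direct Mittag-Leffler expansion of $\phi_x$ (that expansion appears later in the paper as \eqref{eq:phi-ML}).

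Your treatment of the Tonelli condition goes beyond the paper, which leaves it implicit, but it needs two small corrections. First, the bound $b_r\le Cq^{-r|x|}$ fails at $x=0$, where $b_r(q,0)\ge r+1$; use $q^rb_r\le C(r+1)q^{r(1-|x|)}$, or simply $\sum_r q^rb_r=(q^{1+x},q^{1-x};q)_\infty^{-1}$. Second, the power-growth assumption on $\tilde{\mH}_K$ is unnecessary. Absolute convergence of $\int_0^\infty|\tilde{\mH}_K(zt)|(-t;p)_\infty^{-1}\rd t$ together with boundedness of $\tilde{\mH}_K$ near $0$ already bounds $\int_0^\infty|\tilde{\mH}_K(q^rzt)|(-t;p)_\infty^{-1}\rd t$ uniformly in $r$, using the inequality $(-q^{-r}u;p)_\infty\ge q^{-r}u\,(-pu;p)_\infty$.
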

\begin{proof}
Under the assumptions on $x$, both Hadamard factors in~\eqref{eq:had-fac-varphi_x} are sm-functions, so that we can use the residue formula~\eqref{eq:res-form-q-borel-laplace-resum}, applied to~\eqref{eq:J_{K,x}(q,z)-hadamard}, to obtain
\begin{align}
\hat{J}_{K,x}(q,z)= \sum_{l_1,l_2\ge0} \hat{\mH}_K(q,zq^{l_1+l_2})\frac{(q^{1+x},q^{1-x};q)_\infty q^{(1+x)l_1+(1-x)l_2}}{(q;q)_{l_1,l_2}}\nonumber\\
= (q^{1+x},q^{1-x};q)_\infty\sum_{r\ge0} \hat{\mH}_K(q,zq^{r})q^{(1-x)r}\sum_{l_1=0}^r \frac{ q^{2xl_1}}{(q;q)_{l_1,r-l_1}},
\end{align}
which is equivalent to~\eqref{eq:hatJ_Kxqz-res}.
\end{proof}
 Below, we evaluate the analytic Jones invariant in many examples by using appropriate $p$-Borel--Laplace resummations of the Habiro generating series, together with Proposition \ref{Jxb}. We will also present alternative formulae in some examples.


\subsection{The figure-eight knot, revisited}
We can calculate $\hat J_{ {\bf 4}_1, x} (q,z)$ directly from Proposition \ref{Jxb}. We find,
\begin{equation}
\label{41-revis}
\hat J_{ {\bf 4}_1, x} (q,z)
=
\frac{(q^{1+x};q)_\infty(q^{1-x};q)_\infty}{(q;q)_\infty^3}
\sum_{r \geq0} q^r b_r(q,x)
\sum_{m\in\mathbb Z}\gamma_q(r-m)
\frac{\frac{\log z}{\log q}+m}{1-zq^{m}}.
\end{equation}
By using the equality
\be
\sum_{r\ge0}q^r b_r(q,x)\gammaq{r-m}= \gammaq{-m}(q^{1-m-x};q)_\infty
\phioneone\!\left[
\begin{matrix}
  0\\
  q^{1-m-x}
\end{matrix}
;q,q^{1-m+x}\right],
\ee
we find that \eqref{41-revis} is precisely the result obtained previously in (\ref{eq:hat-J-x-z-q-1-fi-1}) for $n=0$.

\subsection{The ${\bf 3}_1^-$ knot}
We can calculate $\hat J_{ {\bf 3}^-_1, x} (q,z)$ again from Proposition \ref{Jxb}. We obtain
\begin{equation}
\label{31-revis}
\hat J_{ {\bf 3}^-_1, x} (q,z)
=
\frac{(q^{1+x},q^{1-x};q)_\infty}{2 (q^2;q^2)_\infty^3}
\sum_{r \geq0} q^r b_r(q,x)
\sum_{m\in\mathbb Z}
\gamma_{q^2}(-m)
\frac{\frac{\log (-z)}{\log q}+r+2m-1}{1+zq^{r+2m-1}}.
\end{equation}
\subsubsection{Other representations for $\hat J_{ {\bf 3}^-_1, x} (q,z)$}
Formula~\eqref{31-revis} comes from the general approach developed in Section~\ref{sec:gen-for}. However, as in the case of the figure-eight knot, considered in detail in Section~\ref{sec:41}, there are additional representations for $\hat J_{ {\bf 3}^-_1, x} (q,z)$ that turn out to be very interesting and useful.

Let us apply the $q^2$-Borel transform directly to the series $J_{ {\bf 3}^-_1, x} (q,z)$. We have
\begin{align}
\label{31-qtransform}
\tilde{J}_{{\bf 3}_1^-,x}(q,z)&:=(\mathcal{B}_{q^2} J_{{\bf 3}_1^-,x}(q,\cdot))(z)=\sum_{m=0}^\infty \frac{(q^{1+x},q^{1-x};q)_m}{(q,-q;q)_m}(z/q)^m\nonumber\\
&={_2\phi_1}\!\left[
\begin{matrix}
 q^{1+x},q^{1-x}\\
 -q
\end{matrix}
 ;q,z/q\right].
\end{align}
We note that $\tilde{J}_{{\bf 3}_1^-,x}(q,z)$ has poles at $z=q^{1-m}$, $m\in\Z_{\ge0}$. Therefore, the $q^2$-Borel--Laplace resummation is ambiguous for $z>0$. This ambiguity will manifest itself in the choice of branch of $\log(-z)$.

We now proceed as in the case of the figure-eight knot in Section~\ref{sec:41}. Using Watson's contour integral formula~\eqref{eq:watson-ac}, we obtain
\begin{equation}\label{eq:watson-trefoil}
\tilde{J}_{{\bf 3}_1^-,x}(q,z)=\frac{(q^{1+x},q^{1-x};q)_\infty}{(q,-q;q)_\infty}\left(\frac{-1}{2\pi \ri}\right)\int_C\frac{(q^{1+s},-q^{1+s};q)_\infty}{(q^{1+s+x},q^{1+s-x};q)_\infty}\frac{\pi(-z/q)^s}{\sin(\pi s)}\rd s,
\end{equation}
where the contour $C$ runs upward along the imaginary axis, deviating in the finite part to separate the poles at $s\in\Z_{\ge 0}$ from all other poles of the integrand. The condition for the convergence of the integral is $|\arg(-z)|<\pi$. Thus,
\begin{align}
 &\hat{J}_{{\bf 3}_1^-,x}(q,z):=\frac{-1}{2\log q}(\mathcal{L}_{q^2}\tilde{J}_{{\bf 3}_1^-,x}(q,\cdot))(z)=\frac{-1}{2\log q}\int_0^\infty\frac{\tilde{J}_{{\bf 3}_1^-,x}(q,tz)}{(-t;q^2)_\infty}\rd t\nonumber\\
& =
 \frac{-1}{2\log q}\frac{(q^{1+x},q^{1-x};q)_\infty}{(q^2;q^2)_\infty}\left(\frac{-1}{2\pi \ri}\right)\int_C\frac{(q^{2+2s};q^2)_\infty}{(q^{1+s+x},q^{1+s-x};q)_\infty}\frac{\pi R_s(q^2)(-z/q)^s}{\sin(\pi s)}\rd s,
\end{align}
where we assume that $|\arg(-z)|<\pi$ and use notation~\eqref{eq:t-s-int}.
Substituting the explicit form of $R_s(q^2)$ from~\eqref{eq:t-s-int}, we obtain
\begin{align}
 \hat{J}_{{\bf 3}_1^-,x}(q,z)&= \frac{\ri\pi}{4\log q}\frac{(q^{1+x},q^{1-x};q)_\infty}{(q^2;q^2)_\infty^2}\int_C\frac{(q^{2+2s},q^{-2s};q^2)_\infty}{(q^{1+s+x},q^{1+s-x};q)_\infty}\frac{(-z/q)^s}{(\sin(\pi s))^2}\rd s\nonumber\\
&= \frac{\pi}{4\log q}\frac{(q^{1+x},q^{1-x};q)_\infty}{(q^2;q^2)_\infty^2}\int_{\R-\ri\epsilon}\frac{(q^{2\ri s},q^{2-2\ri s};q^2)_\infty}{(q^{\ri s+x},q^{\ri s-x};q)_\infty}\frac{(-z/q)^{\ri s-1}}{(\sinh(\pi s))^2}\rd s,\label{eq;3-1-minus-cont-int}
 \end{align}
 where, in the last equality, we also assume that $0<\epsilon<1$ and $|x|<\epsilon$.

  %
 %
 \begin{theorem} We have
 \be
 \label{anal3J}
 \ba
 \hat{J}_{{\bf 3}_1^-,x}(q,z)&=\frac{q\,U_q(z,q^x)}{1-q^x}-
\frac{\pi ^2 q}{2z(\log q)^2(q;q)_{\infty } (q^2;q^2)_{\infty }^3}\frac{ \theta_q(q^x) \theta_{q^2}(q^{2
   x})  }{
   (1-q^x)
   (\frac{q}{z};q)_{\infty } \theta_q(q^{2x})} \\
  & \times\sum_{a\in\{\pm1\}}\fn{2,q}(-z/q,ax)
{_1\phi _1}\bigl[
\begin{matrix}
  q/z\\
 0
\end{matrix}
 ;q,q^{1+2ax}\bigr],
   \ea
   \ee
  where
  \be
  U_q(z,X)=
  \frac{X}{z(1-X)}\,
  {_3\phi_2}\Bigl[
\begin{matrix}
 0,0,q\\
 qX,q/X
\end{matrix}
 ;q,\frac{q}{z}\Bigr]=
  \frac{X}{z(1-X)}\sum_{n\geq0}\frac{(q/z)^n}{(qX,q/X;q)_n}\label{Uq}
  \ee
and  the function $\fn{2,q}(z,x)$ is given in (\ref{ident2}).
\end{theorem}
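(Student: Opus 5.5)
Starting from the Mellin--Barnes representation \eqref{eq;3-1-minus-cont-int}, I evaluate the integral by residues, in parallel with the proof of Theorem~\ref{thm2} in Appendix~\ref{app:B}. Working in the $s$-variable of the first line (contour $C$, integrand $\frac{(q^{2+2s},q^{-2s};q^2)_\infty}{(q^{1+s+x},q^{1+s-x};q)_\infty}\frac{(-z/q)^s}{\sin^2(\pi s)}$), the factor $(-z/q)^s$ with $|-z/q|$ large (respectively small) decides whether to close the contour to the left or to the right. To produce the series in $q/z$, I close to the left, i.e.\ towards $\Re s\to-\infty$. The poles there are of two kinds:
\begin{enumerate}
\item[(i)] $s=-1-k\pm x+\frac{2\pi\ri m}{\log q}$, $k\ge0$, $m\in\Z$, from $(q^{1+s\pm x};q)_\infty$;
\item[(ii)] the integers $s=-k$, $k\ge1$, where $\sin^2(\pi s)$ has a double zero. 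These are partially cancelled: $(q^{-2s};q^2)_\infty$ vanishes simply at $s=-k$, and the other factor $(q^{2+2s};q^2)_\infty$ also vanishes there for $k\ge1$, so the double zero is fully cancelled.
\end{enumerate}
So I must recheck which integer points actually contribute. The first term $\frac{qU_q(z,q^x)}{1-q^x}$, a pure ${}_3\phi_2$ series in $q/z$ with no $\log q$ prefactor, should come from the residual simple poles with $s\in\Z$ that are not cancelled, after the $\frac{1}{\log q}$ prefactor is absorbed by $\frac{\rd}{\rd s}q^{s}$-type Jacobians. Alternatively, I will locate its source by matching the analogous term $-\frac1z(q^{x/2}-q^{-x/2})^{-2}{}_2\phi_2$ in Theorem~\ref{thm2}.

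For the family (i), the periodicity in $m$ is the key point. Writing $s=-1-k+ax+\frac{2\pi\ri m}{\log q}$, every $q$-Pochhammer factor is invariant under the imaginary shift, while $\frac{(-z/q)^s}{\sin^2(\pi s)}$ is not. Summing over $m$ therefore produces exactly $\fn{2,q}(-z/q,\,\cdot)$ of \eqref{def-fqn}, evaluated at argument $ax$ up to the integer shift $-1-k$, which only contributes a factor $(-z/q)^{-1-k}$ (note $\sin^2$ is $1$-periodic). The remaining $k$-sum of residues of $1/(q^{1+s-ax};q)_\infty$ at $q^{s}=q^{-1-k+ax}$ gives factors $\frac{(-1)^kq^{\binom{k+1}{2}}}{(q;q)_k(q;q)_\infty}$. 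Combined with the regular factors evaluated there, namely
\[
(q^{2+2s},q^{-2s};q^2)_\infty,\qquad (q^{1+s+ax};q)_\infty^{-1}\ \text{at }q^s=q^{-1-k+ax}\ \Rightarrow\ (q^{-k+2ax};q)_\infty^{-1},
\]
this should assemble, after the $q$-binomial theorem and Heine-type manipulations, into $(q/z;q)_\infty^{-1}\,{}_1\phi_1\bigl[\begin{smallmatrix}q/z\\0\end{smallmatrix};q,q^{1+2ax}\bigr]$. The theta prefactors $\theta_q(q^x)\theta_{q^2}(q^{2x})/\theta_q(q^{2x})$ and the powers of $\pi^2/(\log q)^2$ arise from the prefactor of \eqref{eq;3-1-minus-cont-int} together with the reflection $(q^{a};q)_\infty(q^{1-a};q)_\infty=\theta_q(q^a)/(q;q)_\infty$.

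Plan of execution:
\begin{enumerate}
\item Verify the decay of the integrand on large left arcs, avoiding the poles, for $|z|>q$ and $|\arg(-z)|<\pi$.
\item Compute the residues of the two families.
\item Convert the $m$-sums via \eqref{def-fqn} and \eqref{ident2}.
\item Resum the $k$-sums into ${}_1\phi_1$ and ${}_3\phi_2$ form.
\item Extend to all $z$ by analytic continuation in $z$ off the cut.
\end{enumerate}

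I expect the main obstacle to be step 4 for family (i). The raw residue series is a double series whose identification with $\frac{1}{(q/z;q)_\infty}{}_1\phi_1[q/z;0;q,q^{1+2ax}]$ requires a nontrivial transformation. I would first expand $(q/z)^{k}$-dependence as a series in $q/z$ and compare coefficients, using Euler's identities to recognise $(q/z;q)_\infty^{-1}$. Justifying the contour closure (convergence of the residue sum, growth of $\theta_{q^2}$ against $\sin^2$) is secondary but must also be checked.
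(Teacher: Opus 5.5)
Your route is the paper's: evaluate \eqref{eq;3-1-minus-cont-int} by residues, closing to the left in the $C$-variable (the upper half-plane in the rotated variable of the second line), with the $m$-lattices collapsing into $\fn{2,q}$. The genuine gap is your item (ii). It is false, and as a result the first term of \eqref{anal3J} has no source in your argument.

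The factor $(q^{-2s};q^2)_\infty=\prod_{j\ge0}(1-q^{2j-2s})$ vanishes only at $s\in\Z_{\ge0}$ (and imaginary translates), which lie to the right of $C$. At $s=-k$, $k\ge1$, it equals $(q^{2k};q^2)_\infty\neq0$. So on the left only $(q^{2+2s};q^2)_\infty$ has a simple zero against the double zero of $\sin^2(\pi s)$, and every $s=-1-n$, $n\ge0$, is a genuine simple pole.

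These poles give exactly $\frac{qU_q(z,q^x)}{1-q^x}$. Near $s=-1-n$ one has $(q^{2+2s};q^2)_\infty\approx-2(s+1+n)\log q\,(-1)^nq^{-n(n+1)}(q^2;q^2)_n(q^2;q^2)_\infty$; this $\log q$ is what cancels the prefactor $1/\log q$. Combined with $(q^{-n+x},q^{-n-x};q)_\infty=q^{-n(n+1)}(q^{1+x},q^{1-x};q)_n(q^{x},q^{-x};q)_\infty$, the residue of the integrand is
\[
\frac{2\log q\,(q^2;q^2)_\infty^2}{\pi^2\,(q^{x},q^{-x};q)_\infty}\,\frac{(q/z)^{n+1}}{(q^{1+x},q^{1-x};q)_n}.
\]
Multiply by $2\pi\ri\cdot\frac{\ri\pi}{4\log q}\frac{(q^{1+x},q^{1-x};q)_\infty}{(q^2;q^2)_\infty^2}$ and sum over $n$. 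This gives $\frac{q^x}{(1-q^x)^2}\frac{q}{z}\sum_{n\ge0}\frac{(q/z)^n}{(q^{1+x},q^{1-x};q)_n}=\frac{qU_q(z,q^x)}{1-q^x}$. As written, your residue sum contains only the $\fn{2,q}$-terms, and your hedges ("recheck", "match with Theorem~\ref{thm2}") leave this half of the identity unproved.

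For family (i), step 4 is easier than you expect. The $m$-sum factors off completely, so only a single $k$-series remains, not a double series. At $s=-1-k+ax$, three facts make all Gaussian powers of $q$ cancel:
\begin{itemize}
\item $(q^{2+2s},q^{-2s};q^2)_\infty=\theta_{q^2}(q^{2+2s})/(q^2;q^2)_\infty$;
\item the quasi-periodicity $\theta_{q^2}(q^{-2k}u)=(-1)^kq^{-k(k+1)}u^k\theta_{q^2}(u)$;
\item $(q^{2ax-k};q)_\infty=(-1)^kq^{2axk-k(k+1)/2}(q^{1-2ax};q)_k(q^{2ax};q)_\infty$.
\end{itemize}
The $k$-sum is then exactly $\sum_{k\ge0}\frac{(q/z)^k}{(q,q^{1-2ax};q)_k}={}_2\phi_1\bigl[\begin{smallmatrix}0,0\\ q^{1-2ax}\end{smallmatrix};q,\frac qz\bigr]$. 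The paper finishes with ${}_2\phi_1\bigl[\begin{smallmatrix}0,0\\ c\end{smallmatrix};q,w\bigr]=\frac{1}{(c,w;q)_\infty}{}_1\phi_1\bigl[\begin{smallmatrix}w\\ 0\end{smallmatrix};q,c\bigr]$, the limit of Heine's transformation (III.1) of \cite{MR2128719} with vanishing numerator parameters. Your coefficient comparison via Euler's identities is a valid proof of that identity.

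Two bookkeeping points. First, these poles produce $\fn{2,q}(-z/q,-ax)$ together with $q^{1-2ax}$, i.e.\ the $-a$ term of \eqref{anal3J}; you flipped both signs, which is harmless after summing over $a$. Second, the residue of $1/(q^{1+s-ax};q)_\infty$ carries a factor $-1/\log q$, which supplies the second power of $\log q$.

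Finally, your condition $|z|>q$ in step 1 is the right one and is genuinely needed. Unlike the figure-eight case, the quadratic growth of $\theta_{q^2}(q^{2+2s})$ exactly balances that of the two Pochhammer symbols in the denominator, so decay on the left comes only from $(-z/q)^s$. This matches the radius of convergence $|q/z|<1$ of the ${}_3\phi_2$ in $U_q$, and your step 5 then extends the identity beyond it.
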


\begin{proof} The proof is almost identical to that of Theorem \ref{thm2}. 
We evaluate the integral in~\eqref{eq;3-1-minus-cont-int} by residues in the upper half-plane.
The contribution from poles at $s\in\ri \Z_{\ge 0}$ is
\be
\frac{q\,U_q(z,q^x)}{1-q^x}, \label{contrib1b}
\ee
with $U_q(z,X)$ defined by \eqref{Uq},
while for the contributions from the poles $s=\pm \ri x +\ri n +\frac{2\pi m}{h}$, we use the identity
\be
{_2\phi_1}\Bigl[
\begin{matrix}
 0,0\\
c
\end{matrix}
 ;q,z\Bigr]=\frac{1}{(c,z;q)_\infty}\,
{_1\phi_1}\Bigl[
\begin{matrix}
 z\\
 0
\end{matrix}
;q,c\Bigr],
\ee
obtained as the limit $a,b\to0$ of formula~(III.1) of Appendix III in \cite{MR2128719}, where we substitute $c\mapsto q^{1\pm 2x}$ and $z\mapsto q/z$. \end{proof}

As in the case of the figure-eight knot, let us specialize~\eqref{anal3J} to $z=1$. Define the functions
\be
H_q(z)= {_1\phi_1}\Bigl[
\begin{matrix}
 {q}\\0
\end{matrix}
;q,z\Bigr]=\sum_{n\geq0}\gamma_q(n)z^n,\quad T_q(z)=H_q(z)-\frac{1}{2}\theta_q(z).\label{HT-def}
\ee

\begin{corollary} \label{cor-31}   
  \be
  \label{finalJ3}
  \hat{J}_{{\bf 3}_1^-,x}(q)=A_{{\bf 3}^-_1,x}(q) \pm {\pi \ri \over 2 h} B_{{\bf 3}^-_1,x}(q),
  \ee
where
\be
\label{AJ3}
 A_{{\bf 3}^-_1,x}(q)=
\frac{qU_q(q^x)}{1-q^x}-\frac{q T_q(q^{1-2x})}{(1-q^x)P_q(q^x)},
\ee
with
\be
P_q(X)=(X,q/X;q)_\infty,\label{P-def}
\ee
\be\label{eq:Uq1X}
U_q(X):= U_q(1,X),
\ee
see \eqref{Uq}, 
  and
  \be
  \label{BJ3}
  B_{{\bf 3}^-_1,x}(q)=\frac{ (q^{1+x},q^{1-x},q;q)_\infty\theta_q(-q^x)^2}{2 (q^2;q^2)_\infty^4 q^{x-1}}.
  \ee
  \end{corollary}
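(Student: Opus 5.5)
We specialise \eqref{anal3J} to $z=1$, treating the two terms separately; the sign $\pm$ will come from the branch of $\log(-z)$ as $z\to1$ from above or below the positive real axis. Throughout we write $h=-\log q$.

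\emph{The first term.} By \eqref{Uq} and \eqref{eq:Uq1X}, at $z=1$ it is exactly $qU_q(q^x)/(1-q^x)$, the first summand of \eqref{AJ3}. No work is needed here.

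\emph{The second term: setting $z=1$ in each factor.}
\begin{itemize}
\item The factor $(q/z;q)_\infty$ becomes $(q;q)_\infty$.
\item The series ${_1\phi_1}\bigl[\begin{smallmatrix} q\\0\end{smallmatrix};q,q^{1+2ax}\bigr]$ equals $H_q(q^{1+2ax})$ by \eqref{HT-def}.
\item For $\fn{2,q}(-z/q,ax)$ we use \eqref{ident2}, which reduces it to $\fn{1,q}(z/q,ax)$ together with its $x$-derivative. Then \eqref{ident1a} gives
\[
\fn{1,q}(z/q,ax)=\frac{h(q;q)_\infty^3}{\pi}\,\frac{\theta_q(-zq^{ax-1})}{\theta_q(q^{ax})\,\theta_q(-z/q)} .
\]
\end{itemize}

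\emph{Splitting by branch.} At $z=1$ we have $\log(-z/q)=\pm\ri\pi+h$, depending on the side of the cut. The operator $-\frac1\pi\log(\cdot)-\frac1\pi\partial_x$ in \eqref{ident2} therefore produces two kinds of terms:
\begin{itemize}
\item a term proportional to $\pm\ri\pi\,\fn{1,q}$, which carries one power of $h$;
\item a term built from $h\,\fn{1,q}+\partial_x\fn{1,q}$, which is real.
\end{itemize}
The prefactor $\pi^2/(2(\log q)^2)$ contributes $h^{-2}$. So the imaginary piece scales like $h^{-1}$, and we collect it into $\pm\frac{\pi\ri}{2h}B_{{\bf 3}^-_1,x}(q)$. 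The real piece is $O(h^0)$ and must assemble, together with the first term, into $A_{{\bf 3}^-_1,x}(q)$.

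\emph{Identifying $B$.} Summing the $\pm\ri\pi$ terms over $a=\pm1$ gives an expression in $\theta_q(-q^{\pm x})$, $\theta_q(q^{\pm x})$ and $H_q(q^{1\pm2x})$. We simplify it with the following tools:
\begin{itemize}
\item the quasi-periodicity $\theta_q(qu)=-u^{-1}\theta_q(u)$ and the inversion $\theta_q(1/u)=-u^{-1}\theta_q(u)$;
\item the relation $\theta_{q^2}(q^{2x})$ versus $\theta_q(q^x)\theta_q(-q^x)$ (duplication), which removes $\theta_q(q^{2x})$;
\item the identity $H_q(u)+u^{-1}H_q(1/u)\cdot(\ldots)=\theta_q(u)$, which follows because $H_q$ is the half-sum $\sum_{n\ge0}\gamma_q(n)u^n$ of the Jacobi triple product \eqref{eq:theta-q1}.
\end{itemize}
The combination $\sum_a a\,(\ldots)H_q(q^{1+2ax})$ should then collapse to a multiple of $\theta_q(q^{1+2x})$. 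That in turn yields the product formula \eqref{BJ3}, after applying the triple product to $\theta_q(-q^x)^2$ and cancelling $(q;q)_\infty$ powers against the prefactor $(q;q)_\infty(q^2;q^2)^3_\infty$.

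\emph{Identifying $A$.} For the real part we need $\partial_x\log\fn{1,q}$, which involves logarithmic derivatives of theta functions. The claim \eqref{AJ3} asserts that all of these cancel after summing over $a$, leaving only $-qT_q(q^{1-2x})/((1-q^x)P_q(q^x))$. The half-theta correction $-\tfrac12\theta_q$ inside $T_q$ is produced by the same half-sum identity for $H_q$ used above. Concretely, we rewrite $H_q(q^{1+2x})$ in terms of $H_q(q^{1-2x})$ plus a theta function, so that both $a$-terms become proportional to $T_q(q^{1-2x})$, with coefficients that are odd under $a\mapsto-a$.

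\emph{Expected obstacle.} The main obstacle is this last step: showing that the derivative terms $\partial_x\theta$ cancel exactly between $a=+1$ and $a=-1$. If direct cancellation proves messy, the fallback is to compare $q$-expansions for generic $X$ to high order. That would confirm the structure of \eqref{AJ3} and fix the normalisations, after which the theta-function identity can be proved by the standard method: show the quotient is an elliptic function of $x$ without poles, hence constant.
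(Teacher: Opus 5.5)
Your treatment of the first term and of the $\pm\ri\pi$ part follows the paper and works. With $R(X)=X\theta_q(-X)/\theta_q(X)$ one has $\fn{1,q}(1/q,ax)\propto R(q^{ax})$ and $R(1/X)=-X^{-2}R(X)$. The two $a$-terms therefore combine into $X^2H_q(qX^2)-H_q(q/X^2)=-\theta_q(X^2)$, which cancels the $\theta_q(q^{2x})$ in the denominator; duplication and $\theta_q(X)/(1-X)=(q,qX,q/X;q)_\infty$ then give \eqref{BJ3}. The $H_q$ identity you quote is garbled, though. What is needed is $H_q(z)+H_q(q/z)-1=\theta_q(z)$ together with $H_q(z)+zH_q(qz)=1$, i.e.\ \eqref{H-prop1}, equivalently splitting the triple-product series at $n\ge1$ versus $n\le0$. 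It is this identity, not duplication, that removes $\theta_q(q^{2x})$.

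The genuine gap is in the real part, where the mechanism you expect is wrong. The logarithmic derivatives do not cancel between $a=\pm1$; they are the entire real contribution of the second term. The missing ingredient is the closed-form evaluation
\[
X\partial_X\log R(X)=1+2S(q,X),\qquad S(q,X)=\sum_{n\in\Z}\frac{Xq^n}{1-X^2q^{2n}}=X\frac{(q^2;q^2)_\infty^4}{(q;q)_\infty^2}\frac{\theta_{q^2}(qX^2)}{\theta_{q^2}(X^2)}.
\]
You get it by writing the logarithmic derivative as a bilateral Lambert series and applying the Kronecker identity \eqref{Kronecker} in base $q^2$. The constant $1$ cancels the $h$ in $\log(-1/q)=h\mp\ri\pi$, for each $a$ separately, so $\fn{2,q}(-1/q,x)\propto R(q^x)\bigl(2hS(q,q^x)\pm\ri\pi\bigr)$ as in \eqref{ex-fq2}.

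If the derivative terms cancelled as you anticipate, the remaining real part would come with the weights $R(X^a)$, exactly like the imaginary part. It would then collapse via \eqref{H-prop1} to a multiple of $\theta_q(X^2)$, and $T_q$ could never appear. In fact $S(q,1/X)=-S(q,X)$, so the weights $RS$ are \emph{even} under $a\mapsto-a$, not odd as you write. The two terms therefore add to $X^{-2}RS\bigl(X^2H_q(qX^2)+H_q(q/X^2)\bigr)=2X^{-2}RS\,T_q(q/X^2)$, and the prefactor reduces to $1/P_q(X)$ by \eqref{TH-ident}.

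Your fallback does not close the gap, because it never names the identity to be proved. The target \eqref{AJ3} involves the partial theta series $T_q$, to which no elliptic-function argument applies. Only after isolating the evaluation of $S$ above (and \eqref{TH-ident}) do you have pure theta identities for which that argument works.
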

  See Appendix~\ref{app:B} for the proof.
  
Let us note that the first term in the r.h.s. of (\ref{AJ3}) is the inverted Habiro series of the ${\bf 3}_1^-$ knot introduced in \cite{park-inverted}. As in the case of the figure-eight knot, this is obtained in our calculation from the poles at $s=\ri \IZ_{\ge 0}$. The singularities at $x=n$, $n \in \IZ_{\ge 0}$, are removed by the second term in the r.h.s. of (\ref{AJ3}).

In \figref{31-fig} we plot the real and imaginary parts of (\ref{finalJ3}) for $q=1/2$, and compare it to the values of the colored Jones polynomial for $n=1,2,3$.  
 \begin{figure}[!ht]
\leavevmode
\begin{center}
\includegraphics[height=4cm]{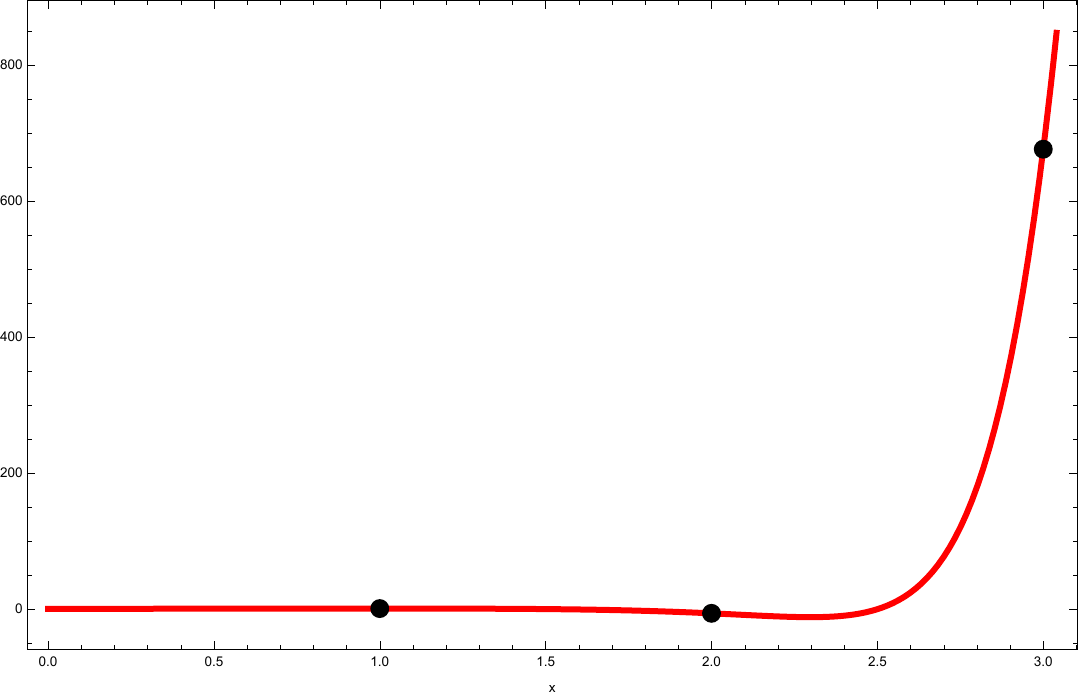}\qquad \qquad  \includegraphics[height=4cm]{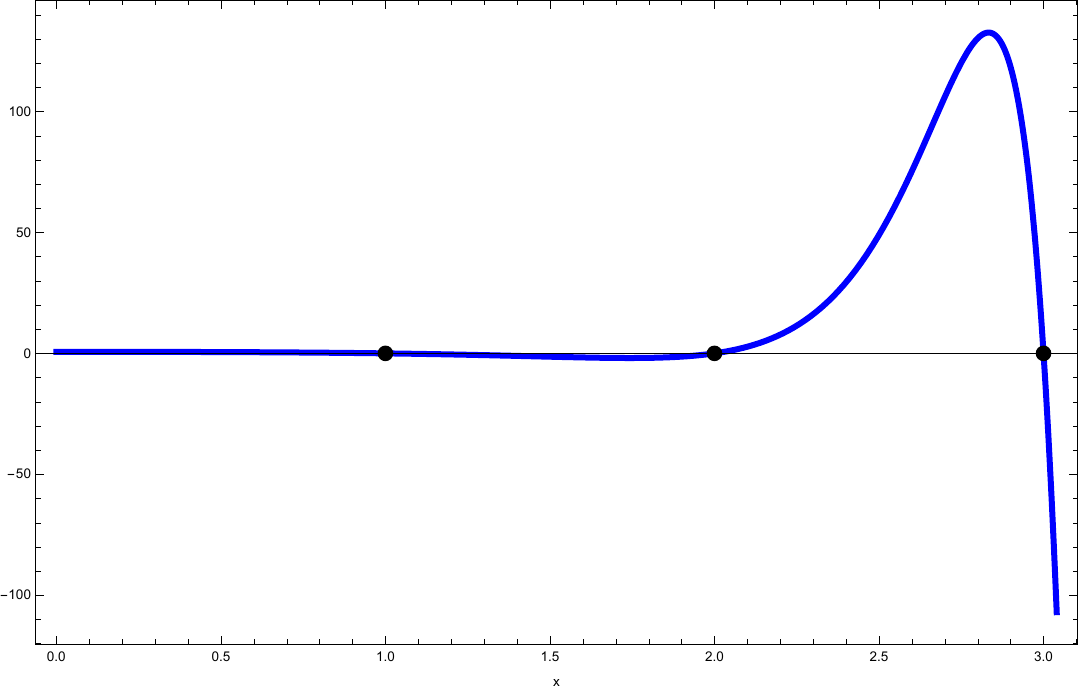}
\end{center}
\caption{Left-hand side: the red line is the real part of the interpolating Jones invariant $\hat J_{{\bf 3}_1^-,x}(q)$ as a function of $x$ for $q=1/2$. The black dots are the corresponding values $1, -6, 676$ of the colored Jones polynomial for $n=1,2,3$. Right-hand side: the blue line is the imaginary part of the interpolating Jones invariant $\hat J_{{\bf 3}_1^-,x}(q)$ for $q=1/2$. The black dots at $x=1,2,3$ are on the $x$-axis and show that the imaginary part vanishes, as required.}
\label{31-fig}
\end{figure}


\subsubsection{Formal asymptotic expansion as $q\to 1$ and comparison to Borel resummation}
As in the case of the figure-eight knot, the interpolating series (\ref{eq:cJHi}) leads to a formal power 
series in $h$ for the trefoil knot, $\Phi^{{\bf 3}^-_1}_0 \left( h; w ,z \right)$. Its very first terms are
 \be
\Phi^{{\bf 3}_1^-}_0(h;w,z)= {1\over 1+ (X+ X^{-1}-2) z} -\frac{\left(X+X^{-1}-2\right) z \left(\left(X+X^{-1}-2\right) z+2\right)}{\left(\left(X+X^{-1}-2\right) z+1\right)^3} h + \CO(h^2).
  \ee
When $z=1$, $\Phi^{{\bf 3}_1^-}_0(h;w)\equiv \Phi^{{\bf 3}_1^-}_0(h;w,1)$ gives the usual MMR series of the trefoil knot.
We can use the integral expression (\ref{eq;3-1-minus-cont-int}) and the asymptotic formula (\ref{qpoch-z}) to obtain the formal asymptotic
expansion of $ \hat{J}_{{\bf 3}_1^-,x}(q,z)$ as $h\to 0^+$, where $q=\re^{-h}$, as we did for the figure-eight knot in Subsection \ref{prop-as}. One finds,
\be
\hat{J}_{{\bf 3}_1^-,x}(q,z) \sim \Phi^{{\bf 3}_1^-}_0(h;w,z), \qquad h \to 0,
\ee
where $w$, given by~\eqref{uu}, is fixed and
\be
\label{trefoil-MMR}
\ba
\Phi^{{\bf 3}_1^-}_0(h;w,z)&=\sum_{k \ge 0} z^{k} \left( 2 \sinh(w/2) \right)^{2k} \gamma_{\re^h}(-k)\re^{kh}
 \\
 & \times \exp \left\{ 2 \sum_{\ell \ge 1} {B_{2 \ell +1}(k+1) \over (2\ell+1)!} h^{2\ell} \left( {\rd \over \rd w} \right)^{2 \ell} \log\left( 2 \sinh(w/2) \right) \right\},
 \ea
 \ee
 which is the deformed MMR series defined by the procedure explained in 
 Subsection \ref{prop-as}.

Let us now compare the result of the $q^2$-Borel--Laplace resummation to the standard Borel resummation of the MMR series, which was studied in some detail in \cite{MR4233201}. We first note the following useful expression \cite{rozansky,MR4233201}:
\be
\label{phi-GM}
\ba
\Phi^{{\bf 3}_1^-}_0 (h; w)&= {q^{23 \over 24} \over X^{1\over 2} - X^{-{1\over 2}}}  \sum_{m \ge 0} {1\over m!} \left(-{h\over 24} \right)^m
\left( {\partial \over \partial y} \right)^{2m} \left[ {\re^y X^{1\over 2} - \re^{-y} X^{-{1\over 2}} \over 1+ \left(\re^y X^{1\over 2} - \re^{-y} X^{-{1\over 2}} \right)^2 } \right]_{y=0}.
\ea
\ee
This suggests introducing the modified MMR series
\be
\Psi_0(h;w)=  {1\over {\sqrt{h}}} q^{-{23 \over 24}} \Phi^{{\bf 3}_1^-}_0(h;w) =\sum_{m \ge 0} a_m(w) h^{m-1/2}.
\ee
As noted in \cite{gmp} in a similar context, it is more convenient in this case to use the modified Borel transform
\be
\widehat \Psi_0 (\zeta;w)= \sum_{m \ge 0} {a_m (w) \over \Gamma\left(m+{1\over 2} \right)} \zeta^{m-1/2},
\ee
since it can be computed in closed form as
\be
\widehat \Psi_0(\zeta;w)= {1\over 2 {\sqrt{\pi \zeta}}} {1\over X^{1/2}- X^{-1/2}} \left[ \mathfrak{f}\left( \ri {\sqrt{\zeta \over 6}};X \right) -
\mathfrak{f}\left( \ri {\sqrt{\zeta \over 6}}; X^{-1} \right)  \right],
\ee
where
\be
\mathfrak{f}(y; X)= {\re^y X^{1/2} - \re^{-y}X^{-1/2} \over  1+ (\re^y X^{1/2} - \re^{-y}X^{-1/2})^2}.
\ee
In terms of this modified Borel transform, the lateral Borel resummations are given by
\be
s_{\pm} \left( \Phi^{ {\bf 3}_1^-}_0 \right)(h;w)= q^{{23 \over 24}} {\sqrt{h}} \int_{0}^{\re^{ \pm \ri  \epsilon} \infty}  \re^{- \zeta} \widehat \Psi_0 (h \zeta;w ) \rd \zeta.
\ee
The modified Borel transform has simple poles at the points
\be
\label{tre-poles}
\zeta^\pm_k= {\pi^2 \over 6} \left(1+ 6k \pm {3 \ri w \over \pi}  \right)^2, \qquad k \in \IZ.
\ee
We will assume that $w=0$ or is purely imaginary but small in absolute value, so that 
the poles (\ref{tre-poles}) lie on the positive real axis in the $\zeta$ plane (when $w=0$ we have double poles). Let us 
then consider the lateral Borel resummations of the MMR series. The real part of these resummations is unambiguous and it can 
be computed in terms of a series obtained in \cite{MR4233201}:
\be
\label{gm-trefoil}
f(q,X)= {q\over 2} \sum_{m \ge 0}  \varepsilon (m) { X^{m/2}-X^{-m/2} \over X^{1/2}-X^{-1/2}} q^{m^2-1 \over 24},
\ee
where
\be\label{eq:varepsilon(m)}
\varepsilon(m)= \begin{cases} -1, & \text{if $m\equiv 1, 11$ mod $12$}, \\
1 , & \text{if $m\equiv 5, 7$ mod $12$},\\
0 &\text{otherwise}
\end{cases}.
\ee
We can represent nonzero contributions to $\varepsilon(m)$ as $m=12k\pm1$ and $m=12k\pm5$, $k,m\geq0$.
Summing over the admissible values of $k$, one can  obtain an alternative formula for $f(q,X)$:
\be
f(q,X)=\frac{q}{1-X}\left(T_{q^3}(qX^3)+X\,T_{q^3}(q^2X^3)-1\right),\label{GM-exp}
\ee
where $T_q(X)$ is defined by \eqref{HT-def}.

One then has,
\be
{\rm Re}\, s_\pm \left( \Phi^{ {\bf 3}_1^-}_0 \right)(h;w)=f(q, X).
\ee
There is in addition a non-zero imaginary part whose sign depends on the choice of lateral resummation. It can be obtained by summing over the residues of the poles (\ref{tre-poles}), and one obtains, for $|w|$ sufficiently small,
\be
\label{ims}
{\rm Im}\,  s_\pm \left( \Phi^{ {\bf 3}_1^-}_0 \right)(h;w)=\mp {\ri \over 2} q^{23\over 24} {\sqrt{2 \pi \over h}} \re^{{3 w^2 \over 2h}} \sum_{m \ge 0} \varepsilon(m) {\widetilde X^{m/2} - \widetilde X^{-m/2}\over X^{1/2}- X^{-1/2}} \tilde q^{m^2 \over 24},
\ee
where $\widetilde X$ is defined in (\ref{tX-def}).

Let us now compare these results with the result (\ref{finalJ3}), (\ref{AJ3}), (\ref{BJ3}) for the interpolating Jones invariant.
\begin{proposition}\label{prop3}
\be
A_{{\bf 3}^-_1, x}(q)= f(q, X), \quad X=q^x, \label{A-prop}
\ee
where $A_{{\bf 3}^-_1, x}(q)$ is defined in \eqref{AJ3} and $f(q,X)$ in~\eqref{gm-trefoil}.
\end{proposition}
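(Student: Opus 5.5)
Proposition~\ref{prop3} reduces to a single identity between $q$-series in $X$. Multiplying both sides of~\eqref{A-prop} by $(1-X)/q$ and using~\eqref{AJ3} and~\eqref{GM-exp}, it suffices to prove
\begin{equation*}
U_q(X)-\frac{T_q(q/X^2)}{P_q(X)}=T_{q^3}(qX^3)+X\,T_{q^3}(q^2X^3)-1 ,
\end{equation*}
where $U_q(X)=\frac{X}{1-X}\sum_{n\ge0}\frac{q^n}{(qX,q/X;q)_n}$ by~\eqref{Uq}. I would first prove this for $X$ in a punctured neighbourhood of $1$ avoiding $q^{\Z}$, and then extend it by analytic continuation in $X$. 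The plan is to rewrite the Fine-type sum in $U_q(X)$ as a Hecke-type (indefinite theta) double series. Splitting that series along the sign of the quadratic form should produce two pieces: the $1/P_q(X)$-weighted partial theta $T_q(q/X^2)$, and the false theta series with characters $\varepsilon(m)$ modulo $12$ that constitute $f(q,X)$.

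\textbf{Step 1: Hecke-type expansion.} I would expand $\sum_{n\ge0} q^n/(qX,q/X;q)_n$ using Bailey's lemma with the standard unit Bailey pair relative to $a=1$, in the limiting form with parameters $\rho_1=X$, $\rho_2=1/X$. The $q^n$ weight (rather than the $q^{n^2}$ of the rank generating function) corresponds to sending only one Bailey parameter to infinity. This gives
\begin{equation*}
\frac{1}{(qX,q/X;q)_\infty}\sum_{n\ge0}\sum_{|j|\le n}(\text{signs})\,X^{\pm\cdots}\,q^{Q(n,j)}
\end{equation*}
for an explicit indefinite binary quadratic form $Q$. An alternative route uses Watson's transformation or Andrews' identities for the Fine function $F(a,b;t)$ at $a=X^{-1}$, $b=X$. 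Either way, the outcome should be
\begin{equation*}
(1-X)P_q(X)\,U_q(X)=\text{(Hecke-type double sum)}.
\end{equation*}

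\textbf{Step 2: Splitting the double sum.} Using $P_q(X)=(1-X)(qX,q/X;q)_\infty$, I would split the double sum in the standard way. The part symmetric under $j\mapsto -j$ (the full cone minus the boundary line) should factor through Jacobi's triple product~\eqref{eq:theta-q1}. Comparing with~\eqref{HT-def}, this part should give $T_q(q/X^2)=H_q(q/X^2)-\tfrac12\theta_q(q/X^2)$, after dividing by $P_q(X)$. The remaining one-sided sums, after completing squares with $m=6n+1$, $6n+5$ and similar, should become $\sum_m \varepsilon(m)X^{\pm m/2}q^{(m^2-1)/24}$. Grouping residues modulo $12$ as in the derivation of~\eqref{GM-exp} then reproduces $T_{q^3}(qX^3)+X\,T_{q^3}(q^2X^3)-1$; the $-1$ and the $\tfrac12\theta$ corrections come from the boundary terms $j=\pm n$.

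\textbf{Step 3: Fallback check.} As a consistency check, and as an alternative proof if the bookkeeping in Step 2 becomes unwieldy, I would argue as follows. The difference of the two sides is meromorphic on $\C^\times$, and its only possible poles lie at $X\in q^{\Z}$ (from $P_q$ and from the denominators of $U_q$). I would show that the residues cancel there. For poles at $X=q^{-k}$, the residues of $U_q$ follow from the truncated sums, and those of $T_q(q/X^2)/P_q(X)$ from~\eqref{P-def}; this cancellation is exactly the mechanism that the text attributes to removing the singularities of the inverted Habiro series. The difference is then analytic near $X=1$, and I would finish by comparing enough coefficients of its Laurent expansion in $q$, together with a growth bound. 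The cleaner route to this growth bound is a quasi-periodicity statement under $X\mapsto qX$ modulo theta factors.

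\textbf{Main obstacle.} I expect the main difficulty to be Step 2. The Bailey-lemma output must be cut into cones precisely enough that the boundary contributions assemble into the $-\tfrac12\theta$ parts of $T_q$ and $T_{q^3}$ and the constant $-1$. I would first try to get the sign conventions right by matching the first few $q$-coefficients of both sides, for instance at $x=0$ after removing the double pole.
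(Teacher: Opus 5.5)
Your reduction is correct. Multiplying by $(1-X)/q$, Proposition~\ref{prop3} is equivalent to
\[
U_q(X)=T_{q^3}(qX^3)+X\,T_{q^3}(q^2X^3)-1+\frac{T_q(q/X^2)}{P_q(X)},
\]
which is exactly the identity the paper proves. The gap is that nothing in the proposal actually establishes it.

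\textbf{Steps~1--2.} These name no Bailey pair, no form $Q$ and no cone decomposition, and the template you describe does not fit the target. With the unit Bailey pair and $\rho_1=X$, $\rho_2=1/X$, Bailey's lemma returns the partial-fraction expansion of $1/(qX,q/X;q)_\infty$, not $\sum_n q^n/(qX,q/X;q)_n$. Sending a parameter to infinity brings in a factor $(-1)^nq^{\binom{n}{2}}$, not a pure $q^n$ weight. A piece that ``factors through Jacobi's triple product'' is a product times a full theta function, whereas $T_q(q/X^2)$ is a partial theta function. Finally, the $\tfrac12\theta$ terms are not boundary contributions. In the true identity they cancel against each other through Watson's quintuple product identity $\theta_{q^3}(qX^3)+X\theta_{q^3}(q^2X^3)=\theta_q(X^2)/P_q(X)$, which your plan never invokes.

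\textbf{Step~3.} This has the same hole. The residue cancellation can be carried out. The residues of $U_q$ at $X=q^{\mp k}$ are infinite tails rather than truncated sums, but they reduce to multiples of $H_q(q^{2k+1})$ via $\sum_{m\ge0}q^m/(q,c;q)_m=H_q(c)/(q,c;q)_\infty$. However, a function holomorphic on $\C^\times$ is not fixed by finitely many $q$-coefficients. The individual terms on the right-hand side also have theta-type growth as $X\to0,\infty$, so any bound on the difference already requires the cancellations you are trying to prove.

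\textbf{The missing idea.} What you need is the quasi-periodicity under $X\mapsto qX$ that you mention only in passing, in its correct \emph{inhomogeneous} form. Put $G_n(X)=\frac{X(X-q^n)}{(1-Xq^n)(X,q/X;q)_n}$. The paper checks that the $n$-th term of $U_q(X)+qX^3U_q(qX)$ equals $(1-qX^2)\bigl(G_{n+1}(X)-G_n(X)\bigr)$. Summing over $n$, with $G_0=-X$ and $G_\infty=X^2/P_q(X)$, gives
\[
U_q(X)+qX^3U_q(qX)=X(1-qX^2)\Bigl(1+\frac{X}{P_q(X)}\Bigr).
\]
Iterating this, using $P_q(q^kX)=\gamma_{1/q}(k)X^{-k}P_q(X)$ and $U_q(q^kX)\to0$, yields directly
\[
U_q(X)=H_{q^3}(qX^3)-1+XH_{q^3}(q^2X^3)+\frac{X^2H_q(qX^2)}{P_q(X)}.
\]
The relation $X^2H_q(qX^2)=T_q(q/X^2)-\tfrac12\theta_q(X^2)$, together with the quintuple product identity, then gives the displayed identity. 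No Hecke-type expansion or Liouville argument is needed.

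After multiplication by $(1-X)/X$, this intermediate formula is one of Ramanujan's partial theta function identities from the lost notebook, proved by Andrews. Citing it would also close the gap.
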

The proof is given in Appendix~\ref{app:B}.

Therefore, the $(q,X)$-series $f(q, X)$, which was proposed in \cite{MR4233201} as the Gukov--Manolescu 
invariant of the knot ${\bf 3}_1^-$, agrees in this case with the real part of the interpolating Jones invariant. The series 
(\ref{BJ3}) gives an additional contribution to this invariant depending on the choice of the branch of $\log(-z)$. By
performing an $S$-duality transformation on (\ref{BJ3}), we can write it as
 \be
 B_{{\bf 3}^-_1, x}(q)= 2 \ri q^{23 \over 24} {\sqrt{ h \over 2 \pi}} \re^{{3 w^2 \over 2h}}  \tq^{1\over 24}  {\tX^{1/2}- \tX^{-1/2} \over X^{1/2}- X^{-1/2}} {(\tq^{1/2}  \tX, \tq^{1/2} \tX^{-1} ; \tq)^2_\infty \over (\tq^{1/2}; \tq)_\infty ^4} {(\tq  \tX, \tq \tX^{-1} ; \tq)_\infty \over (\tq ; \tq)_\infty },
 \ee
and the imaginary part of the interpolating Jones invariant (\ref{finalJ3}) is
\be
\pm { \pi \over 2h}  B_{{\bf 3}^-_1, x}(q)=   \pm {\ri \over 2} q^{23 \over 24} {\sqrt{ 2 \pi \over h}} \re^{{3 w^2 \over 2h}}{\tX^{1/2}- \tX^{-1/2} \over X^{1/2}- X^{-1/2}} \tq^{1\over 24}\left( 1+ \CO(\tq^{1/2}) \right).
 \ee
This agrees with (\ref{ims}) at leading order in $\tq$ but differs at higher orders. This is particularly clear in the case $w=0$, where one has
\be
{\rm Im}\,  s_\pm \left( \Phi^{{\bf 3}^-_1}_0 \right)(h)=\mp {1\over 2} q^{{23 \over 24}} \left({2 \pi \over h} \right)^{3/2}  \sum_{m \ge 1} m\, \varepsilon(m) \tilde q^{ m^2 \over 24},
\ee
see~\eqref{eq:varepsilon(m)} for the definition of $\varepsilon(m)$, while the imaginary part of the interpolating Jones invariant is given by 
\be
\label{amb-qborel}
\mp {1\over 2} q^{{23 \over 24}} \left({2 \pi \over h} \right)^{3/2}  \sum_{m \ge 1}  \varepsilon(m) \tilde q^{ m^2 \over 24}, 
\ee
for two choices of branch of $\log(-1)$. We finally note that, just as in standard Borel resummation, (\ref{BJ3}) can also be calculated by summing the residues of the poles of the $q^2$-Borel transform (\ref{31-qtransform}) along the positive real axis.

\subsection{The ${\bf 5}_2$ knot}
This subsection is devoted to another example, the knot $K={\bf 5}_2$, which is more complicated than the examples we have already analyzed. 

The Habiro invariants of the ${\bf 5}_2$ knots were calculated in~\cite{MR1997328}, with the result
\be
H_{{\bf 5}_2^\pm,m}(q)=(-1)^m q^{\pm\frac{m(m+3)}{2}}
h_m(q^{\pm1}),
\ee
where
\be
\label{hm-def}
h_m(q):=\sum_{k=0}^mq^{k(k+1)}\qbinom{m}{k} 
\ee
and, as in the case of the trefoil knots, the sign~$\pm$ corresponds to the sign of the knot signature, that is, $\sigma({\bf 5}_2^{\pm}) = \pm 2$. 

The series $\mH_{{\bf 5}_2^+}(q,z)$ has radius of convergence $q^{-1}$ which can be seen from the formula:
\begin{equation}
\mH_{{\bf 5}_2^{+}}(q,z)=\sum_{k\ge0} \frac{q^{k(k+2)}}{(qz;q)_{k+1}}z^{k}.
\end{equation}
On the other hand, for the knot ${\bf 5}_2^{-}$ we have
\be
\mH_{{\bf 5}_2^-}(q,z)=\sum_{m\ge0}q^{-m(2m+3)} g_{m}(q)z^m,
\ee
where
\begin{equation}\label{eq:g2m(q)-def}
g_{m}(q):=\sum_{k=0}^m\qbinom{m}{k}q^{k(m+1)}.
\end{equation}
The series $\mH_{{\bf 5}_2^-}(q,z)$ has zero radius of convergence. Its coefficients grow $\sim q^{-2 m^2}$, so one requires a $q^4$-Borel--Laplace resummation.

For the $q^4$-Borel transform, we have
\begin{equation}\label{eq:tildeH52-qz}
\tilde{\mH}_{{\bf 5}_2^-}(q,z):=\mathcal B_{q^4}\mH_{{\bf 5}_2^-}(q,z)=\sum_{m\ge0} \frac{g_{m}(q)(z/q)^m}{(q^4;q^4)_m}=\sum_{m\ge l\ge0} \qbinom{m}{l}\frac{q^{(m-l)(m+1)}(z/q)^m}{(q^4;q^4)_m}.
\end{equation}
\begin{proposition}
 The following Hadamard decomposition formula holds
 \begin{equation}\label{eq:triple-decomp-5-2}
\tilde{\mH}_{{\bf 5}_2^-}(q,z)=\frac{1}{(\frac zq;q^4)_{\infty}}\had_z\Big(\sum_{n\ge0}(q;q)_nz^n\Big)\had_zF_q(z),
\end{equation}
where $F_q(z)$ was introduced in (\ref{eq:main-non-sm-func}).
\end{proposition}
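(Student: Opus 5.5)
The plan is to compare the identity coefficient by coefficient. The Hadamard product of formal power series multiplies coefficients termwise, so it is enough to show that the coefficient of $z^m$ in $\tilde{\mH}_{{\bf 5}_2^-}(q,z)$, namely $g_m(q)q^{-m}/(q^4;q^4)_m$ by \eqref{eq:tildeH52-qz}, equals the product of the $m$-th coefficients of the three Hadamard factors. I will compute those three coefficients separately.

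For the first factor I use the $q$-binomial (Euler) expansion
\begin{equation*}
\frac{1}{(\frac zq;q^4)_\infty}=\sum_{m\ge0}\frac{(z/q)^m}{(q^4;q^4)_m},
\end{equation*}
valid in $\fps{z}$. So its $m$-th coefficient is $q^{-m}/(q^4;q^4)_m$. The second factor has $m$-th coefficient $(q;q)_m$ by definition. The identity therefore reduces to showing that the $m$-th Taylor coefficient $c_m$ of $F_q(z)$ is
\begin{equation*}
c_m=\frac{g_m(q)}{(q;q)_m}=\sum_{k=0}^m\frac{q^{k(m+1)}}{(q;q)_k(q;q)_{m-k}},
\end{equation*}
where the second equality just unpacks $\qbinom{m}{k}$ in \eqref{eq:g2m(q)-def}.

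To get $c_m$, I start from the first expression for $F_q(z)$ in \eqref{eq:main-non-sm-func}. In each summand I expand $1/(zq^n;q)_\infty=\sum_{j\ge0}q^{nj}z^j/(q;q)_j$, again by Euler's formula. Collecting the terms with $n+j=m$ gives
\begin{equation*}
c_m=\sum_{n=0}^m\frac{q^{n(n+1)+n(m-n)}}{(q;q)_n(q;q)_{m-n}}=\sum_{n=0}^m\frac{q^{n(m+1)}}{(q;q)_n(q;q)_{m-n}},
\end{equation*}
which is exactly the required expression. Multiplying the three coefficients, $\frac{q^{-m}}{(q^4;q^4)_m}\cdot(q;q)_m\cdot\frac{g_m(q)}{(q;q)_m}$, recovers the coefficient of $\tilde{\mH}_{{\bf 5}_2^-}(q,z)$, which proves \eqref{eq:triple-decomp-5-2}.

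There is no real obstacle here. The only point needing care is that the manipulations are legitimate. For $|z|$ small every series involved converges absolutely, since $0<q<1$, so rearranging the double sum over $(n,j)$ is justified. Alternatively, one can read the whole computation as an identity of formal power series. Either way, the identification of the Taylor coefficients of $F_q$ is the one step that must be carried out carefully.
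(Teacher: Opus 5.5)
Your proof is correct and is essentially the paper's argument. The paper also factors the coefficient of $z^m$ termwise into $\frac{q^{-m}}{(q^4;q^4)_m}\cdot(q;q)_m\cdot\frac{g_m(q)}{(q;q)_m}$ and identifies the third factor with $F_q(z)$ through Euler's expansion of $1/(zq^n;q)_\infty$. The only difference is direction: the paper resums the double series into $F_q$, while you expand $F_q$ into its Taylor coefficients.
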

\begin{proof}
 We start with the immediate factorisation
 $$
 \tilde{\mH}_{{\bf 5}_2^-}(q,z)=\Big(\sum_{k\ge0}\frac{(z/q)^k}{(q^4;q^4)_k}\Big)\had_z\Big(\sum_{n\ge0}(q;q)_nz^n\Big)\had_z\Big(\sum_{m\ge l\ge0} \frac{q^{(m-l)(m+1)}}{(q;q)_{l,m-l}}z^m\Big).
 $$
 Then, for the last factor, assuming that $|z|<1$, we write
 $$
\sum_{m\ge l\ge0} \frac{q^{(m-l)(m+1)}}{(q;q)_{l,m-l}}z^m=\sum_{l,m\ge0} \frac{q^{m(l+m+1)}}{(q;q)_{l,m}}z^{l+m}=\sum_{m\ge0} \frac{q^{m(m+1)}}{(q;q)_m(q^mz;q)_{\infty}}z^{m}=F_q(z).
 $$
\end{proof}
We now construct a Watson-type contour integral representation for the \(p\)-Laplace transform of \(F_q(z)\), leaving, for the time being, the relation between \(p\) and \(q\) unspecified.
\begin{lemma}\label{lem:div-lap-int} The $p$-Laplace transform of $F_q(z)$ can be expressed as
 \begin{equation}\label{eq:div-ser-expr}
 \mathcal L_{p} F_q(z)=\int_{\ri\R-\epsilon}\frac{S_{p,q}\left(p^{1+s},z (q/p)^{s}\right)(-z)^{s}\rd s}{(p;p)_{-1-s}(q;q)_{s}\frac{2\ri}{\pi}\sin^2(\pi s)},
\end{equation}
where
 \begin{equation}\label{eq:cpqxy-function}
 S_{p,q}(x,y):=\sum_{m\ge0}\frac{(x;p)_{m}y^m}{(q;q)_{m}(p/q^2)^{m(m+1)/2}}.
\end{equation}
\end{lemma}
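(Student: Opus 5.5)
The plan is to reduce the statement to Watson's contour integral for the inverse infinite $q$-Pochhammer symbol, as in Example~\ref{ex:me_q(z)}, and then to apply the $p$-Laplace transform termwise via Ramanujan's integral~\eqref{eq:t-s-int}.

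\emph{Step 1 (Watson representation of $F_q$).} Start from the first form of~\eqref{eq:main-non-sm-func},
\[
F_q(z)=\sum_{n\ge0}\frac{q^{n(n+1)}z^n}{(q;q)_n}\,\frac{1}{(zq^n;q)_\infty}.
\]
For each factor $1/(zq^n;q)_\infty$, use~\eqref{eq:B_qmeqz} with $z$ replaced by $-zq^n$:
\[
\frac{1}{(zq^n;q)_\infty}=\int_{\ri\R-\epsilon}\frac{1}{(q;q)_s}\frac{(-z)^s q^{ns}}{2\ri\sin(-\pi s)}\rd s .
\]
This is valid for $|\arg(-z)|<\pi$, which is the relevant region here, since the $p$-Laplace transform evaluates $F_q$ along the ray $zt$, $t>0$. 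This gives
\[
F_q(z)=\int_{\ri\R-\epsilon}\sum_{n\ge0}\frac{q^{n(n+1)+ns}z^n}{(q;q)_n}\,\frac{(-z)^s}{(q;q)_s\,2\ri\sin(-\pi s)}\rd s ,
\]
where the inner series now converges nicely.

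\emph{Step 2 (termwise $p$-Laplace transform).} Replace $z$ by $zt$ and integrate against $\rd t/(-t;p)_\infty$. The $t$-dependence of the $n$-th term is $t^{n+s}$, so the needed integral is $R_{s+n}(p)$ from~\eqref{eq:t-s-int}. The key algebraic identity is
\[
R_{s+n}(p)=R_s(p)\,p^{-ns-n(n+1)/2}\,(p^{1+s};p)_n .
\]
It follows from $1/(p;p)_{-1-s-n}=(p^{-s-n};p)_n/(p;p)_{-1-s}$, from $(p^{-s-n};p)_n=(-1)^np^{-ns-n(n+1)/2}(p^{1+s};p)_n$, and from $\sin(\pi(s+n))=(-1)^n\sin(\pi s)$.

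\emph{Step 3 (collecting terms).} Inserting this identity, the $n$-sum becomes
\[
\sum_{n\ge0}\frac{(p^{1+s};p)_n\,\bigl(z(q/p)^s\bigr)^n}{(q;q)_n\,(p/q^2)^{n(n+1)/2}}=S_{p,q}\bigl(p^{1+s},z(q/p)^s\bigr).
\]
The $s$-dependent prefactor combines as
\[
\frac{R_s(p)}{(q;q)_s\,2\ri\sin(-\pi s)}=\frac{1}{(p;p)_{-1-s}\,(q;q)_s\,\frac{2\ri}{\pi}\sin^2(\pi s)},
\]
which yields exactly~\eqref{eq:div-ser-expr}.

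\emph{Main obstacle.} The hard part is justifying the interchange of the $n$-sum, the $s$-integral and the $t$-integral. The coefficients of $S_{p,q}$ behave like $(p/q^2)^{-m^2/2}$, so for $p<q^2$ the series $S_{p,q}$ diverges. The relevant case for ${\bf 5}_2^-$ is $p=q^4$, where this happens; this is the reason for the label of the formula. I would therefore first prove the identity for $p>q^2$, where $S_{p,q}$ converges. In that range, Tonelli/Fubini applies: use $|(q;q)_s|^{-1}$ bounded on the contour, the exponential decay of $1/\sin^2(\pi s)$ along $\ri\R$, and $|t^{s}|=t^{-\epsilon}$ integrable against $1/(-t;p)_\infty$. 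For the case $p\le q^2$, I would read the right-hand side of~\eqref{eq:div-ser-expr} as a formal (termwise) identity. In that reading, $S_{p,q}$ is to be understood as a further divergent series to be resummed, matching the iterated resummation announced for ${\bf 5}_2^-$.
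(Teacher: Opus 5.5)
Your proof is correct and follows the paper's route: Watson's integral for $1/(zq^m;q)_\infty$, a termwise $p$-Laplace transform producing $R_{s+m}(p)$, and the reduction $R_{s+m}(p)=R_s(p)\,p^{-ms-m(m+1)/2}(p^{1+s};p)_m$, which assembles $S_{p,q}\bigl(p^{1+s},z(q/p)^s\bigr)$ and the stated prefactor. Your convergence discussion goes beyond the paper, which carries out these interchanges formally for all $0<p<1$: you make the identity rigorous for $p>q^2$ via Fubini and read it termwise when $p\le q^2$, the case relevant to the subsequent resummation at $p=q^4$.
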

See Appendix~\ref{app:B} for the proof.

It follows from~\eqref{eq:cpqxy-function} that, for $p=q^4$, the series $S_{p,q}(x,y)$ is divergent. We therefore apply the same deformed Borel--Laplace resummation technique to $S_{q^4,q}(x,y)$. Let us note that this second application is performed in a non-canonical way, so we cannot regard this iteration of the resummation as part of a well-defined general procedure.

\begin{lemma}\label{lem:bl-resum-c}
The $q^2$-Borel--Laplace resummation of $S_{q^4,q}(x,y)$ is as follows:
\begin{equation}\label{eq:hat-cq4qxy}
\hat S_{q^4,q}(x,y)=\sum_{n\ge0}\sum_{l\in\Z}\sum_{k\ge 4n-2l}\frac{(x;q^4)_{\infty}x^n\mg{q^2}{l}\mg{q}{4n-2l-k}}{(q^2;q^2)_{\infty}^3(q^4;q^4)_n(q;q)_{k+2l-4n,\infty}}\frac{\frac{\log(-yq^{k})}{2\log q}}{yq^{k}+1}.
\end{equation}
\end{lemma}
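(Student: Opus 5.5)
The plan is to mimic the proof of Theorem~\ref{thm:r-fi-s}: write $S_{q^4,q}(x,y)$ as a Hadamard product of two sm-functions and the divergent series $\me{q^2}$, then apply the residue formula~\eqref{eq:res-form-q-borel-laplace-resum} twice and substitute the explicit resummation~\eqref{eq:q-b-l-resum-mu_q}. Throughout, $x$ is treated as a parameter with $|x|<1$, which is the condition under which Example~\ref{ex:fabq(z)} applies. For $p=q^4$ we have $p/q^2=q^2$ and $(q^2)^{m(m+1)/2}=q^{m(m+1)}=(-1)^m\mg{q^2}{-m}$. Hence, as formal series in $y$,
\be
S_{q^4,q}(x,y)=\Big(\sum_{m\ge0}(x;q^4)_m y^m\Big)\had_y\Big(\sum_{m\ge0}\frac{y^m}{(q;q)_m}\Big)\had_y \me{q^2}(-y).
\ee
The coefficients grow like $q^{-m^2}$, which confirms that the $q^2$-Borel--Laplace resummation is the natural one. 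The first factor is $f_{x,0,q^4}(y)$ in the notation of~\eqref{eq:fabq(z)}. The second factor is $f_{0,q,q}(y)=(y;q)_\infty^{-1}$. Both are sm-functions by Example~\ref{ex:fabq(z)}.

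Next I will read off the poles and residues from~\eqref{eq:res-fabq(z)}. For $f_{x,0,q^4}$ the poles are $a=q^{-4n}$, $n\ge0$, with
\be
-\frac{\res{}(f_{x,0,q^4},q^{-4n})}{q^{-4n}}=\frac{(x;q^4)_\infty x^n}{(q^4;q^4)_n}.
\ee
For $(y;q)_\infty^{-1}$ the poles are $b=q^{-j}$, $j\ge0$, and Example~\ref{ex:example-3} gives the residue $-\mg{q}{j}/(q;q)_{\infty,j}$, so that $-\res{}/b=\mg{q}{j}q^{j}/(q;q)_{\infty,j}$. Applying Corollary~\ref{cor:q-bor-lap-resum-res-form} twice (with $q^2$ in place of $q$) and using Lemma~\ref{lem:f(z)=zg(z)} only if needed, I expect
\be
\hat S_{q^4,q}(x,y)=\sum_{n,j\ge0}\frac{(x;q^4)_\infty x^n}{(q^4;q^4)_n}\,\frac{\mg{q}{j}q^{j}}{(q;q)_{\infty,j}}\,\rme{q^2}\big(-yq^{4n+j}\big).
\ee
I will then insert~\eqref{eq:q-b-l-resum-mu_q} with $z=-yq^{4n+j}$. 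After the shift $l\mapsto -l$, the combination $\frac{\log z}{2\log q}+l$ becomes $\frac{\log(-yq^{4n+j-2l})}{2\log q}$ and the denominator becomes $1+yq^{4n+j-2l}$. Setting $k=4n+j-2l$ gives the constraint $k\ge 4n-2l$ and $j=k+2l-4n$. Finally I will combine $\mg{q}{j}q^j$ using $\mg{q}{j}q^j=\mg{q}{-j}$ up to the sign conventions, and rewrite it as $\mg{q}{4n-2l-k}$. This should reproduce~\eqref{eq:hat-cq4qxy}, with $(q;q)_{k+2l-4n,\infty}$ being the paper's notation for $(q;q)_{\infty,j}$. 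This final step is pure bookkeeping, and I will check the signs and the $\gamma$-identities term by term.

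The main obstacle is analytic rather than algebraic: one must verify the Tonelli-type condition~\eqref{eq:tonelli} at each of the two residue steps, so that sums and integrals may be interchanged. The first thing I will try is to bound $|\rme{q^2}(-yq^{4n+j})|$ uniformly for $y$ off the cut. Its factor $\frac{\log z}{2\log q}$ grows only linearly in $n$ and $j$, and the denominators stay bounded away from zero when $y\notin -q^{\Z}$. The weights, by contrast, decay: $|x|^n/(q^4;q^4)_n$ decays geometrically, and $q^{j(j+1)/2}$ decays superexponentially. I will also control the $l$-sum in~\eqref{eq:q-b-l-resum-mu_q} directly via the Gaussian factor $\mg{q^2}{-l}$. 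This should give absolute convergence of the triple sum, with the resulting function defined for $y$ off the half-line $\R_{\ge0}$ after the logarithm's branch is fixed.
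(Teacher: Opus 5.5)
Your proposal is correct and follows essentially the paper's route. The paper writes the $q^2$-Borel transform as $\bigl(\sum_m (x;q^4)_m y^m\bigr)\had_y (y;q)_\infty^{-1}\had_y (y;q^2)_\infty^{-1}$, applies the residue formula to the first two factors, and then Laplace-transforms termwise using \eqref{eq:q-b-l-resum-mu_q}; this is the same computation as your two applications of Corollary~\ref{cor:q-bor-lap-resum-res-form}, and the identity $\mg{q}{j}q^j=\mg{q}{-j}$ you need at the end holds exactly, with no sign ambiguity.

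One small correction on the interchange step. Condition \eqref{eq:tonelli} concerns $\int_0^\infty |(yq^k t;q^2)_\infty^{-1}|\,\rd t/(-t;q^2)_\infty$, not a bound on $\rme{q^2}$. This integral is bounded uniformly in $k$ for $y\notin\R_{\ge0}$, so the interchange is immediate.
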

See Appendix~\ref{app:B} for the proof.

\subsubsection{The rectified $q^4$-Laplace transform of $F_q(z)$}
Putting together Lemmata~\ref{lem:div-lap-int} and \ref{lem:bl-resum-c}, we define the ``rectified'' $q^4$-Laplace transform of $F_q(z)$ as follows:
 \begin{equation}\label{eq:div-ser-expr-rectified}
 \mathcal L^{\text{rec}}_{q^4} F_q(z)=\int_{\ri\R-\epsilon}\frac{\hat{S}_{q^4,q}\bigl(q^{4+4s},z q^{-3s}\bigr)(-z)^{s}\rd s}{(q^4;q^4)_{-1-s}(q;q)_{s}\frac{2\ri}{\pi}\sin^2(\pi s)},
\end{equation}
where $\hat{S}_{q^4,q}(x,y)$ is given by~\eqref{eq:hat-cq4qxy}. It can be rewritten in the form
 \begin{equation}
 \mathcal L^{\text{rec}}_{q^4} F_q(z)=\int_{\ri\R-\epsilon}\frac{(q^{1+s};q)_{\infty}D_q(s,z)\theta_{q^4}(q^{-4s})
 (-z)^{s}\rd s}{2(q;q)_{\infty}^2(q^2;q^2)_{\infty}^3(q^4;q^4)_\infty^2\frac{2\ri}{\pi}\sin^2(\pi s)},\label{Lrecq4}
\end{equation}
where
\begin{equation}
D_q(s,z):=\sum_{k,l \in \IZ,n \ge 0}\frac{q^{4n(1+s)}\mg{q^2}{l+2n}\mg{q}{-2l-k}}{(q^4;q^4)_n(q;q)_{k+2l}}
\frac{\frac{\log(-z)}{\log q}+k-3s}{zq^{k-3s}+1}.\label{Dqsz}
\end{equation}
\begin{lemma}\label{Lq4F}
The ``rectified'' Laplace transform ${\mathcal L}^{\text{rec}}_{q^4} F_q(z)$ can be written as
\begin{equation}
{\mathcal L}^{\text{rec}}_{q^4} F_q(z)=\mathcal P_q\sum_{e,k\in\mathbb Z}A_{e,k}(q)I_{e,k}(z),
\label{eq:LF_AJ}
\end{equation}
where
\be
\mathcal P_q:=
\frac{1}{2(q;q)_\infty^2(q^2;q^2)_\infty^3(q^4;q^4)_\infty^2},
\ee
\begin{equation}
A_{e,k}(q)=
\sum_{n\ge0}\sum_{j\in\mathbb Z}\sum_{l\in\mathbb Z}
\frac{
\gamma_q(e+4j)q^{e+4j+4n}
\gamma_{q^4}(j+n)
\gamma_{q^2}(l+2n)
\gamma_q(-2l-k)
}{
(q;q)_{e+4j}(q^4;q^4)_n(q;q)_{k+2l}
},
\label{eq:Atriple}
\end{equation}
\begin{equation}
I_{e,k}(z)=
\frac{\pi}{2\ri}
\int_{-\varepsilon-\ri\infty}^{-\varepsilon+\ri\infty}
\frac{(L+k-3s)(-z)^s q^{es}}
{(1+zq^{k-3s})\sin^2(\pi s)}\,\rd s,
\label{eq:Jekintegral}
\end{equation}
and
\be\label{eq:def-L}
L:={\log(-z) \over \log(q)}.
\ee
\end{lemma}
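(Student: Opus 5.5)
Starting from the integrand \eqref{Lrecq4}, the natural move is to expand the $s$-dependent factors that multiply $D_q(s,z)$ into Laurent series in $q^{s}$. This turns the integrand into a sum of terms $q^{es}$ times the rational-in-$q^{-3s}$ kernel appearing in $D_q$. Each such term then integrates to exactly one $I_{e,k}(z)$.

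The first step is to expand the two theta/Pochhammer factors. The factor $(q^{1+s};q)_\infty$ will be expanded by Euler's formula,
\[
(q^{1+s};q)_\infty=\sum_{a\ge0}\frac{\gamma_q(a)q^{a}}{(q;q)_a}q^{as},
\]
which converges absolutely for $\Re s>-1$, in particular on $\ri\R-\epsilon$. The factor $\theta_{q^4}(q^{-4s})$ will be expanded by the Jacobi triple product \eqref{eq:theta-q1}:
\[
\theta_{q^4}(q^{-4s})=\sum_{j'\in\Z}\gamma_{q^4}(j')q^{-4j's}.
\]
Multiplying these by $q^{4n(1+s)}$ from $D_q$ gives total exponent $q^{(a+4n-4j')s}$.

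The second step is to reindex so that $e$ becomes the summation variable. Set $e:=a-4j$ with $j:=j'-n$ (so $j'=j+n$). Then
\[
a+4n-4j'=a-4j=e,
\]
so the exponent is just $q^{es}$, and $a=e+4j$. With this substitution:
\begin{itemize}
\item the Euler coefficient becomes $\gamma_q(e+4j)q^{e+4j}/(q;q)_{e+4j}$, understood as zero when $e+4j<0$, which is consistent with $1/(q;q)_{m}=0$ for negative $m$;
\item the theta coefficient becomes $\gamma_{q^4}(j+n)$;
\item the leftover $q^{4n}$ from $q^{4n(1+s)}$ combines with $q^{e+4j}$ to give $q^{e+4j+4n}$.
\end{itemize}
The remaining factors $\gamma_{q^2}(l+2n)\gamma_q(-2l-k)/((q^4;q^4)_n(q;q)_{k+2l})$ carry over unchanged from \eqref{Dqsz}. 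Collecting everything, the coefficient of the kernel
\[
\frac{(L+k-3s)(-z)^s q^{es}}{1+zq^{k-3s}}
\]
is exactly $A_{e,k}(q)$ in \eqref{eq:Atriple}. The remaining constants combine to give $\mathcal P_q$ times $\pi/(2\ri)$: the denominator $\frac{2\ri}{\pi}\sin^2(\pi s)$ contributes $\pi/(2\ri)$, and the prefactor in \eqref{Lrecq4} gives $\mathcal P_q$. Together these produce \eqref{eq:LF_AJ}.

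The main obstacle is justifying the interchange of the quintuple sum with the $s$-integral, and checking that each $A_{e,k}(q)$ and the outer $(e,k)$-sum converge.

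For the sum over $l$, we need $k+2l\ge0$. The product $\gamma_{q^2}(l+2n)\gamma_q(-2l-k)$ has exponent
\[
(l+2n)(l+2n-1)-\tfrac{(2l+k)(2l+k+1)}{2},
\]
which is quadratic in $l$ with coefficient $1-2=-1$. So this sum grows rather than decays. I expect this to be handled by pairing it with the kernel's decay in $k$: on the line $\Re s=-\epsilon$ and for fixed $z$ off the cut, $1/(1+zq^{k-3s})$ decays like $q^{-k}$ for $k\to-\infty$ and is bounded for $k\to+\infty$.

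I would try first to treat $l$ and $k$ jointly: substitute $m=k+2l\ge0$ and check that the combined Gaussian exponent is positive definite in the remaining variables $(j,n,l)$ for fixed $e$. The $q^{4n}\gamma_{q^4}(j+n)$ factor should supply the needed positivity.

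Then on the contour, $|\sin(\pi s)|^{-2}$ decays exponentially and $|q^{es}|=q^{-\epsilon e}$, so Fubini--Tonelli gives absolute convergence uniformly on compacts in $z$. If the quadratic form is only semidefinite in some direction, I would instead keep the $l$-sum inside $D_q$ (which is already given as convergent) and expand only the two prefactors. That reduces the justification to the single theta/Euler expansion, which converges absolutely on $\ri\R-\epsilon$.
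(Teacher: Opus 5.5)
Your argument is correct and is essentially the paper's proof: substitute \eqref{Dqsz} into \eqref{Lrecq4}, expand $(q^{1+s};q)_\infty$ and $\theta_{q^4}(q^{-4s})$ into series, swap sums and integral, and reindex by $p=e+4j-4n$ followed by $j\mapsto j+n$, which is exactly your $a=e+4j$, $j'=j+n$.

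In your convergence discussion, which goes beyond the paper (it simply swaps without comment), there is a sign slip. Since $\gamma_q(-2l-k)=(-1)^k q^{(2l+k)(2l+k+1)/2}$ has a \emph{positive} exponent, the $l$-sum decays like $q^{3l^2}$ rather than growing. The only degenerate direction of the full quadratic form appears once $e$ is summed as well, namely $(e,j,n,l,k)=(4n,-n,n,-2n,4n)$, and along it the factor $|q^{4n(1+s)}|=q^{4n(1-\epsilon)}$ still gives absolute convergence.
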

\begin{proof}
Let us substitute \eqref{Dqsz} in \eqref{Lrecq4}, expand $(q^{1+s};q)_{\infty}$ and
$\theta_{q^4}(q^{-4s})$ into series and swap summations and integration.

The integral can be rewritten in the form \eqref{eq:Jekintegral} and coefficients $A_{e,k}$ are given by
\begin{equation}
A_{e,k}(q):=
\sum_{\substack{p,n\ge0,\;j,l\in\mathbb Z\\p-4j+4n=e}}
\mathcal C_{p,n,j,k,l},
\label{eq:Adef}
\end{equation}
with
\be
\mathcal C_{p, n, j, k, l}:=
\frac{
\gamma_q(p)q^{p+4n}\,
\gamma_{q^4}(j)\,
\gamma_{q^2}(l+2n)\,
\gamma_q(-2l-k)
}{
(q;q)_p\,(q^4;q^4)_n\,(q;q)_{k+2l}
}.
\ee
Eliminating $p$ by $p=e+4j-4n$ and shifting
$j\to j+n$, we come to \eqref{eq:Atriple}.
\end{proof}
\begin{remark}
 The  integral \eqref{eq:Jekintegral} satisfies the following invariance property:
\begin{equation}
I_{e,k}(q^d z)=I_{e+d,k+d}(z),\quad \forall d\in\mathbb Z.
\label{eq:Jshift}
\end{equation}
\hfill$\diamond$
\end{remark}
Let us consider
\be
{\hat\mH}_{{\bf 5}_2^-}(q,z) =\frac{1}{4h}\mathcal{L}_{q^4}( \tilde{\mH}_{{\bf 5}_2^-}(q,\cdot))(z),
\ee
where, from now on, we drop the superscript ``rect'' in the $q^4$-Laplace transform.

\begin{lemma}\label{lem:hatH52-BI}
The following identity holds:
\be\label{eq:H52B}
{\hat\mH}_{{\bf 5}_2^-}(q,z)=
\mathcal P_q\frac{(q;q)_\infty}{4h(q^4;q^4)_\infty}
\sum_{e,k\in\mathbb Z}B_{e,k}(q)I_{e,k}(z),
\ee
where
\begin{equation}
B_{e,k}(q)=
\sum_{j,l\in\mathbb Z}
\frac{
q^{e+1+4j}
\gamma_{q^4}(j)
\gamma_{q^2}(l)
\gamma_q(e-k+4j-2l)
}{
(q;q)_{e+1+4j}(q;q)_{k+1+2l}
}.
\label{eq:Bdouble}
\end{equation}
\end{lemma}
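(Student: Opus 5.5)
The plan is to start from the Hadamard decomposition \eqref{eq:triple-decomp-5-2} and dispose of the two sm-factors by residue formulas, leaving only the rectified $q^4$-Laplace transform of $F_q$, which is already known through Lemma~\ref{Lq4F}. A shift identity for the integrals $I_{e,k}$ then lets us collect everything into a single double sum whose coefficients should be the $B_{e,k}$ of \eqref{eq:Bdouble}.

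The first factor is $(z/q;q^4)_\infty^{-1}=f_{0,q^4,q^4}(z/q)$. By Example~\ref{ex:fabq(z)} and Remark~\ref{rem:az+b-trans-sm-func} it is an sm-function, with poles $q^{1-4j}$, $j\ge0$. The second factor, $\sum_n(q;q)_nz^n=f_{q,0,q}(z)$, is also an sm-function, with poles $q^{-i}$. Since $\mathcal B_{q^4}$ commutes with Hadamard products by \eqref{eq:q-borel-of-had-prod}, Corollary~\ref{lq-cor} applied twice gives
\[
\mathcal L_{q^4}\tilde{\mH}_{{\bf 5}_2^-}(q,z)=\sum_{i,j\ge0}c_ic'_j\,\mathcal L_{q^4}F_q(zq^{i+4j-1}).
\]
The coefficients $-\res{}(f,a)/a$ are read off from \eqref{eq:res-fabq(z)}:
\[
c_i=\frac{(q;q)_\infty q^i}{(q;q)_i}\ \ (\text{after simplification}),\qquad
c'_j=\frac{\gamma_{q^4}(j)}{(q^4;q^4)_{\infty}(q^4;q^4)_j}\ \ (\text{up to the }q^{\pm1}\text{ shift}).
\]
In each term we then replace $\mathcal L_{q^4}F_q$ by its rectified version from Lemma~\ref{Lq4F}. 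This replacement is, strictly speaking, the definition of $\hat{\mH}_{{\bf 5}_2^-}$, i.e.\ the rectified transform is applied termwise.

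Next we insert \eqref{eq:LF_AJ} and use the shift property \eqref{eq:Jshift}, $I_{e,k}(zq^{d})=I_{e+d,k+d}(z)$ with $d=i+4j-1$. Reindexing $e\to e-d$ and $k\to k-d$ collects everything into the single form $\sum_{e,k}B_{e,k}I_{e,k}(z)$. The new coefficient is a convolution of $A_{e',k'}$ against $c_ic'_j$ along the diagonal shift $(e',k')=(e-d,k-d)$. Factoring out $(q;q)_\infty/(q^4;q^4)_\infty$ and the normalization $\frac1{4h}=\frac{-1}{\log q^4}$ gives the prefactor in \eqref{eq:H52B}.

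The main work is showing that this resulting five- or six-fold sum collapses to the double sum \eqref{eq:Bdouble}. I would do this in three steps:
\begin{itemize}
\item \emph{Summing over $i$.} The sum over $i$, together with the factor $\gamma_q(e+4j)q^{e+4j}/(q;q)_{e+4j}$ in \eqref{eq:Atriple}, is of the form $\sum_i q^i/(q;q)_i$ against $\gamma_q$-weights. Using the Euler and $q$-binomial identities, it telescopes, via $\sum_{a+b=N}\gamma_q(a)/((q;q)_a(q;q)_b)=\delta_{N,0}$, to a single term $q^{e+1+4j}/(q;q)_{e+1+4j}$.
\item \emph{Summing over $j$ and $n$.} The sum over the $j$ coming from $c'_j$, combined with $\gamma_{q^4}(j+n)/(q^4;q^4)_n$, collapses in the same way, via the $q^4$ version of that identity, to $\gamma_{q^4}(j)$. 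This removes the $n$-sum.
\item \emph{Simplifying in $l$.} What remains is the $l$-dependent factor $\gamma_{q^2}(l+2n)\gamma_q(-2l-k)/(q;q)_{k+2l}$. After the shifts this should become $\gamma_{q^2}(l)\gamma_q(e-k+4j-2l)/(q;q)_{k+1+2l}$, by reindexing $l\to l-2n$ and using the quadratic identity $\gamma_q(a+b)=\gamma_q(a)\gamma_q(b)(-1)\ldots q^{ab}$ to redistribute Gaussian factors.
\end{itemize}

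Tracking these index shifts and $\gamma$ products consistently is the expected obstacle. I would verify the outcome by comparing low-order $q$-expansions of $B_{e,k}$ computed both ways. Absolute convergence, which is needed to justify exchanging the sums (Tonelli), follows from the Gaussian decay of the $\gamma$ factors.
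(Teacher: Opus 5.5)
Your set-up agrees with the paper's. The two applications of Corollary~\ref{lq-cor}, the shift \eqref{eq:Jshift} with $d=i+4j-1$ and the diagonal reindexing give, after renaming your $i,j$ as $m,t$,
\[
B_{e,k}(q)=\sum_{m,t\ge0}\frac{q^{m+4t}\gamma_{q^4}(t)}{(q;q)_m(q^4;q^4)_t}\,A_{e-m+1-4t,\,k-m+1-4t}(q).
\]
One slip here: your $c'_j$ is missing a factor $q^{4j}$. The correct weight is $q^{4j}\gamma_{q^4}(j)/((q^4;q^4)_\infty(q^4;q^4)_j)$, and this $q^{4t}$ is what cancels the $q^{-4t}$ coming from $q^{e+4j}$ at the shifted index.

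The genuine gap is in the collapse, which you rightly flag as the main work. Because the shift $(e,k)\mapsto(e-d,k-d)$ is diagonal, $m$ enters \emph{both} factors of \eqref{eq:Atriple}: the $e$-part $\gamma_q(e+4j)q^{e+4j}/(q;q)_{e+4j}$ and the $k$-part $\gamma_q(-2l-k)/(q;q)_{k+2l}$. After the powers $q^{\pm m}$ cancel, the $m$-sum is
\[
\sum_{m\ge0}\frac{\gamma_q(a-m)\,\gamma_q(m-b)}{(q;q)_m(q;q)_{a-m}(q;q)_{b-m}},\qquad a=e+1-4t+4j,\quad b=k+1-4t+2l.
\]
Your first step sums over $m$ against the $e$-part only. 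If that were legitimate, the identity $\sum_{a'+b'=N}\gamma_q(a')/((q;q)_{a'}(q;q)_{b'})=\delta_{N,0}$ that you invoke would give $\delta_{a,0}$, not the single term $q^{e+1+4j}/(q;q)_{e+1+4j}$ you claim.

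The missing ingredient is the coupled terminating identity: for $a,b\ge0$ the sum above equals $\gamma_q(a-b)/((q;q)_a(q;q)_b)$. Using $\gamma_q(a-m)\gamma_q(m-b)=\gamma_q(a-b)\,q^{(a-m)(b-m)}$, this is the $q$-Vandermonde-type evaluation $\sum_m\qbinom{a}{m}\qbinom{b}{m}(q;q)_m\,q^{(a-m)(b-m)}=1$. It is exactly where the factor $\gamma_q(e-k+4j-2l)$ in \eqref{eq:Bdouble} comes from. So your third step cannot succeed: once the $e$-part has been used up separately, you cannot manufacture a factor coupling $e$ and $k$ out of $\gamma_q(-2l-k)$ alone by reindexing $l$ and redistributing Gaussian exponents.

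After the $m$-sum, the rest is close to your second step, but the factors are paired differently. First shift $j\mapsto j+t$ and $l\mapsto l+2t$, so that $a=e+1+4j$ and $b=k+1+2l$ no longer involve $t$. Then, with $u=n+t$, the only $t$-dependence at fixed $u$ is $\gamma_{q^4}(t)/((q^4;q^4)_t(q^4;q^4)_{u-t})$, whose sum over $t$ is $\delta_{u,0}$. This removes $n$ and $t$ together. The factors $\gamma_{q^4}(j+n)$ and $\gamma_{q^2}(l+2n)$ are spectators that become $\gamma_{q^4}(j)$ and $\gamma_{q^2}(l)$, which yields \eqref{eq:Bdouble}.
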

See Appendix~\ref{app:B} for the proof.

Now, we evaluate the contour integral~\eqref{eq:Jekintegral}.
In the following formulas, $\log$ denotes the branch used to define $v^s=\exp(s\log v)$, consistently with~\eqref{eq:def-L}. 

Defining
\begin{equation}
I_n(v):=
\frac{\pi}{2\ri}
\int_{-\varepsilon-\ri\infty}^{-\varepsilon+\ri\infty}
\frac{s^n v^s}{\sin^2(\pi s)}\,\rd s,
\label{eq:Indef}
\end{equation}
we have
\begin{equation}
I_0(v)=-\frac{\log v}{1-v},
\qquad
I_1(v)=-\frac1{1-v}-\frac{v\log v}{(1-v)^2}.
\label{eq:I01}
\end{equation}
It is convenient to define the function
\begin{equation}
\Phi_k(v):=(L+k)I_0(v)-3I_1(v)
=\frac{3-(L+k)\log v}{1-v}
+\frac{3v\log v}{(1-v)^2}.
\label{eq:Phik}
\end{equation}
Let us assume that
\be
q^{1-3\varepsilon}<|z|\leq1. \label{modz}
\ee
Then, on the contour $\Re s=-\varepsilon$,
\be
|zq^{k-3s}|<1\quad \mbox{for}\quad k\geq0,\quad \mbox{and} \quad |zq^{k-3s}|>1\quad \mbox{for}\quad k<0.
\ee
Expanding the denominator in \eqref
{eq:Jekintegral} in geometric series, we obtain
\begin{equation}
I_{e,k}(z)=
\sum_{r=0}^{\infty}(-zq^k)^r
\Phi_k\!\left((-z)q^{e-3r}\right),
\qquad k\ge0
\label{eq:Jplusphi}
\end{equation}
and
\begin{equation}
I_{e,k}(z)=
-\sum_{r=1}^{\infty}(-zq^k)^{-r}
\Phi_k\!\left((-z)q^{e+3r}\right),
\qquad k<0.
\label{eq:JminusPhi}
\end{equation}

Note that if the variable $z$ does not satisfy \eqref{modz}, then we need to split the sum
at some value $k=k_0$ depending on the  value of $|zq^{-3s}|$. However, in the next sections
we will combine two pieces \eqref{eq:Jplusphi} and \eqref{eq:JminusPhi} back in one sum using the method
of generating functions. So the condition \eqref{modz} can be removed in the final answer.

\subsubsection{Analytic expression for $\hat{J}_{{\bf 5}_2^-,x}(q,z)$}
From Proposition~\ref{Jxb} we have
\be
\hat{J}_{{\bf 5}_2^-,x}(q,z)=(q^{1+x},q^{1-x};q)_\infty \sum_{r \ge 0}  q^r b_r(q,x) \hat{\mH}_{{\bf 5}_2^-}(q,q^r z),\label{J52sum}
\ee
where $b_r(q,x)$ is given by~\eqref{eq:brqx}.
%
Combining \eqref{eq:H52B}, \eqref{J52sum} and using \eqref{eq:Jshift}, we obtain
\begin{equation}
\hat{J}_{{\bf 5}_2^-,x}(q,z)
=\mathcal P_q\frac{(q;q)_\infty}{4h(q^4;q^4)_\infty}
(q^{1+x},q^{1-x};q)_\infty
\sum_{e,k\in\mathbb Z}C_{e,k}(x,q)I_{e,k}(z),
\label{eq:J52C}
\end{equation}
where
\begin{equation}
C_{e,k}(x,q)=
\sum_{r=0}^{\infty}\sum_{m=0}^{r}
\frac{q^{r(1-x)+2xm}}
{(q;q)_m(q;q)_{r-m}}
B_{e-r,k-r}(q).
\label{eq:CdoubleB}
\end{equation}

Substituting the explicit double sum \eqref{eq:Bdouble} gives
\begin{equation}
\begin{aligned}
C_{e,k}(x,q)
={}&\sum_{r=0}^{\infty}\sum_{m=0}^{r}\sum_{j,l\in\mathbb Z}
\frac{
q^{e+1+4j+x(2m-r)}
\gamma_{q^4}(j)\gamma_{q^2}(l)
\gamma_q(e-k+4j-2l)
}{
(q;q)_m(q;q)_{r-m}
(q;q)_{e-r+1+4j}(q;q)_{k-r+1+2l}
}.
\end{aligned}
\label{eq:CexplicitB}
\end{equation}
\begin{theorem}\label{thm:analytic-form-hatJ52} Let complex variables $U$ and $V$ satisfy the conditions
\be\label{eq:condUV}
q<\bigl|\frac{z}{U^3}\bigr|<1<|V|<q^{-1},\quad |UV|<q^{|x|-1}.
\ee
Then the following equality holds:
 \begin{equation}
\begin{aligned}
&\hat{J}_{{\bf 5}_2^-,x}(q,z)
=
\frac{(q^{1+x},q^{1-x};q)_\infty}{8(q;q)_\infty(q^2;q^2)_\infty^3(q^4;q^4)_\infty^3}
\CT_{U,V}
\bigl[
\CC_{q,x}(U,V)\Theta_q(U,V;z)
\\
&\times
\bigl\{
\frac{3}{h}+
\bigl(L-\Psi_q\bigl(\frac{1}{U},\frac{-z}{U}\bigr)\bigr)\bigl(L-\Psi_q\bigl(\frac{1}{V},\frac{-z}{U^3V}\bigr)-3\Psi_q\bigl(\frac{-z}{U},-z\bigr)\bigr)
-3\Lambda_q\bigl(\frac{-z}{U}\bigr)
\bigr\}
\bigr], 
\end{aligned}
\label{eq:J52unexpanded}
\end{equation}
where the operator $\CT_{U,V}[f(U,V)]$ stands for the constant term in the Laurent expansion in variables $U$ and $V$ of a
function $f(U,V)$ in the domain determined by the conditions~\eqref{eq:condUV},
\begin{equation}
\mathcal C_{q,x}(U,V)=
\frac{
U^{-1}V^{-1}
\theta_{q^4}(U^{-4})\theta_{q^2}(V^{-2})
(qU,qV;q)_\infty
}{
(qUV;q)_\infty
(q^{1+x}UV,q^{1-x}UV;q)_\infty
},
\label{eq:CUVproduct}
\end{equation}
\begin{equation}
\Theta_q(U,V;z)
=
(q;q)_\infty^6
\frac{
\theta_q(-z/U)\theta_q(-zU^{-3}/V)
}{
\theta_q(-z)\theta_q(U^{-1})
\theta_q(-zU^{-3})\theta_q(V^{-1})
},
\label{Thetaprod}
\end{equation}
\begin{equation}
\Psi_q(u,v):=\Psi_q(u)-\Psi_q(v),\quad
\Psi_q(u)
:=
u\frac{\theta_q'(u)}{\theta_q(u)},
\quad
\Lambda_q(u)
:=
\bigl(u\frac{\partial}{\partial u}\bigr)^2\log\theta_q(u),
\label{derivat}
\end{equation}
and $L$ is defined in~\eqref{eq:def-L}.
\end{theorem}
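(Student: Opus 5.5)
The starting point is the representation \eqref{eq:J52C}, with coefficients $C_{e,k}(x,q)$ given by \eqref{eq:CexplicitB}, together with the evaluation of $I_{e,k}(z)$ in \eqref{eq:Jplusphi}--\eqref{eq:JminusPhi} through the function $\Phi_k$ of \eqref{eq:Phik}. The plan is to encode the double index $(e,k)$ by generating functions in two auxiliary variables $U,V$, and then read off $\hat J_{{\bf 5}_2^-,x}(q,z)$ as a constant term. Concretely, the coefficient structure will be packaged as $\sum_{e,k}C_{e,k}(x,q)U^{-e}V^{-k}$, or a closely related monomial weighting, and $\sum_{e,k}U^{e}V^{k}I_{e,k}(z)$ will be evaluated in closed form. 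The constant term $\CT_{U,V}$ of the product then reproduces $\sum_{e,k}C_{e,k}I_{e,k}$.

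\emph{Coefficient generating function.} In \eqref{eq:CexplicitB} the summations over $j$ and $l$ carry the weights $\gamma_{q^4}(j)$ and $\gamma_{q^2}(l)$. After the substitutions $e+4j\mapsto$ (power of $U$) and $k+2l\mapsto$ (power of $V$), these sums reassemble via Jacobi's triple product \eqref{eq:theta-q1} into $\theta_{q^4}(U^{-4})$ and $\theta_{q^2}(V^{-2})$. The factor $\gamma_q(e-k+4j-2l)$ couples the two indices; it contributes a further theta-type sum that will be absorbed on the $I$ side. The sums over $r$ and $m$, with the denominators $(q;q)_m(q;q)_{r-m}(q;q)_{e-r+1+4j}(q;q)_{k-r+1+2l}$, are handled by repeated use of the $q$-binomial theorem ($\sum_n t^n/(q;q)_n=1/(t;q)_\infty$ and its inverse). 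This should produce the ratio $(qU,qV;q)_\infty/\bigl((qUV;q)_\infty(q^{1\pm x}UV;q)_\infty\bigr)$ appearing in \eqref{eq:CUVproduct}. The convergence conditions \eqref{eq:condUV} are exactly what is needed: $|V|>1$ and $|UV|<q^{|x|-1}$ make these expansions legitimate.

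\emph{Integral generating function.} Summing \eqref{eq:Jplusphi} and \eqref{eq:JminusPhi} against $U^eV^k$ turns the geometric-type sums over $r$ and $k$ into bilateral sums. Bilateral sums of the form $\sum_n v^n/(1-aq^n)$ are evaluated by the Kronecker/Ramanujan ${}_1\psi_1$ identity \eqref{ident1a}, which yields ratios of theta functions. The logarithmic and $s$-weighted pieces of $\Phi_k$, namely $I_0$ and $I_1$, arise from differentiating these identities in $\log U$ and $\log z$. This is the mechanism that generates the logarithmic derivatives $\Psi_q$ and $\Lambda_q$ in \eqref{derivat}, together with the factor $L$. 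The same summation also combines the $k\ge0$ and $k<0$ pieces into a single expression, which removes the restriction \eqref{modz}, as announced before the theorem. The prefactors $\mathcal P_q(q;q)_\infty/(4h(q^4;q^4)_\infty)$ then combine with the $(q;q)_\infty^6$ in $\Theta_q$ to give the stated normalization. The term $3/h$ comes from the $3I_1$ part acting on the $-1/(1-v)$ contribution, multiplied against the overall $1/h$ so that it re-collects as written.

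\emph{Main obstacle.} I expect the hardest step to be the second one, with its careful bookkeeping. One must verify that the double-derivative terms of the Kronecker identity organize exactly into the product $\bigl(L-\Psi_q(\cdot,\cdot)\bigr)\bigl(L-\Psi_q-3\Psi_q\bigr)-3\Lambda_q$. One must also justify exchanging the constant-term extraction with the infinite sums, via absolute convergence on the annuli \eqref{eq:condUV}. My first attempt would be to treat $\Phi_k$ as $(L+k-3u\partial_u)$ applied to $\log v/(1-v)$-type kernels. Then $k$ becomes $V\partial_V$ and $3r$ shifts become $U$-derivatives acting on a single theta quotient, reducing everything to logarithmic derivatives of $\Theta_q(U,V;z)$.
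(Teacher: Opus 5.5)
Your architecture is the paper's. You encode $(e,k)$ by $U,V$, write $\sum_{e,k}C_{e,k}I_{e,k}$ as $\CT_{U,V}$ of a product of generating functions, get $\theta_{q^4}(U^{-4})\theta_{q^2}(V^{-2})$ from the $j,l$ sums, and evaluate the $I$-side with the Kronecker identity and derivatives. The orientation matching the statement is $\mathcal C_{q,x}=\sum C_{e,k}U^{e}V^{k}$ and $\mathcal I=\sum I_{e,k}U^{-e}V^{-k}$, not the reverse you first wrote.

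\textbf{The gap: the coupling factor $\gamma_q(e-k+4j-2l)$.} It cannot be ``absorbed on the $I$ side''. The $I$-side generating function is completely determined by the $I_{e,k}$ and equals $h\Theta_q\{\cdots\}$, with nothing left over. If you remove the coupling from the coefficient side, the remaining sums give $\sum_{a,b\ge0}q^aU^aV^b/((q;q)_a(q;q)_b)=1/((qU;q)_\infty(V;q)_\infty)$. That is not the stated $\mathcal C_{q,x}$, and its expansion would require $|V|<1$, contradicting \eqref{eq:condUV}.

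The paper keeps the coupling on the coefficient side, as follows.
\begin{itemize}
\item In \eqref{eq:CdoubleB}, shift $e\mapsto e+r$, $k\mapsto k+r$. The $(r,m)$-sum then decouples into $\sum_{m,n\ge0}(q^{1+x}UV)^m(q^{1-x}UV)^n/((q;q)_m(q;q)_n)=1/(q^{1+x}UV,q^{1-x}UV;q)_\infty$, multiplying $\mathcal B(U,V)=\sum B_{e,k}U^eV^k$.
\item In $\mathcal B$, put $a=e+1+4j$ and $b=k+1+2l$. Then $\gamma_q(e-k+4j-2l)=\gamma_q(a-b)$, and the $(a,b)$-sum becomes
\[
\sum_{a,b\ge0}\frac{q^a\gamma_q(a-b)U^aV^b}{(q;q)_a(q;q)_b}=\frac{(qU,qV;q)_\infty}{(qUV;q)_\infty}.
\]
\item This is evaluated by Euler's identity in $a$, which gives $(-1)^bq^{b(b+1)/2}(q^{1-b}U;q)_\infty=q^bU^b(U^{-1};q)_b\,(qU;q)_\infty$. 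The $q$-binomial theorem in $b$, with argument $qUV$, then finishes the sum.
\end{itemize}
This one-sided coupled sum, not a theta-type sum, is the sole source of $(qU,qV;q)_\infty/(qUV;q)_\infty$. The coefficient side only needs $|UV|<q^{|x|-1}$. The conditions $1<|V|<q^{-1}$ and $q<|z/U^3|<1$ are used on the $I$-side, to sum over $k$ and over $r$ respectively.

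\textbf{A caution on the $I$-side.} ``Logarithmic derivatives of $\Theta_q$'' must not mean differentiating the whole product. After the shift $e=a+3r$, the generating function factorizes as
\[
\mathcal I=\bigl\{\mathcal I_0(U;z)(L-D_V)-3\mathcal I_1(U;z)\bigr\}K(-zU^{-3},V^{-1}),
\]
with $\mathcal I_0=h(L-D_U)K(U^{-1},-z)$ and $\mathcal I_1=D_z\mathcal I_0$. Here $D_U$ and $D_z$ act only on $K(U^{-1},-z)$, and $D_V$ acts only on $K(-zU^{-3},V^{-1})$. Letting $D_U$ or $D_z$ hit $\theta_q(-zU^{-3})$ or $\theta_q(-zU^{-3}/V)$ produces spurious $\Psi_q$ terms. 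With this factorization the remaining terms fall out directly:
\begin{itemize}
\item $D_zL=-1/h$ gives the $3/h$;
\item $D_U\Psi_q(-z/U)=-\Lambda_q(-z/U)$ gives the $-3\Lambda_q$ term;
\item $(L-D_V)K(-zU^{-3},V^{-1})$ gives the factor $L-\Psi_q(1/V,-z/(U^3V))$.
\end{itemize}
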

See the proof in Appendix~\ref{app:B}.

Expression~\eqref{eq:J52unexpanded} is completely explicit and can be evaluated either numerically or as a $(q,X)$-series. 
At $z=1$, the contour conditions~\eqref{eq:condUV} can be
satisfied only for $|x|<1$, so \eqref{eq:J52unexpanded} can be used only in that strip. To evaluate the analytic Jones invariant 
beyond this strip, we write the constant term as a double contour integral, we continue it in $x$, deform the contours, and
we include the residues of the poles they cross. An alternative numerical evaluation consists of calculating first 
\be
\hat{\mH}_{{\bf 5}_2^-}(q,q^r)= {1\over 8h(q;q)_\infty(q^2;q^2)_\infty^3(q^4;q^4)_\infty^3} \CT_{U,V}
 \left[(UV)^r\mathcal B(U,V)\mathcal I (U,V;1)\right], 
 \label{eq:H-diagonal}
\end{equation}
where $\mathcal B(U,V)$, $\mathcal I (U,V;z)$ are defined respectively in \eqref{eq:BUVpre}, 
\eqref{IUV-def}. Then, (\ref{eq:H-diagonal}) is evaluated by using a double contour integral, and finally by using 
\eqref{J52sum} one can calculate the analytic Jones invariant. We should note however that the sum over $r \ge 0$ appearing in \eqref{J52sum} diverges outside the strip $|x|<1$. It can be analytically continued outside this strip by subtracting finitely many terms of the small-$z$ expansion of
$\hat{\mH}_{{\bf 5}_2^-}(q,z)$, and adding their contributions back in
closed form. This is the method we used to plot the real and imaginary parts of the interpolating Jones invariant 
$\hat J_{{\bf 5}_2^-,x}(q)$ at $q=1/2$ as a function of $x$, in \figref{52-fig}. 

\begin{figure}[!ht]
\leavevmode
\begin{center}
\includegraphics[height=4cm]{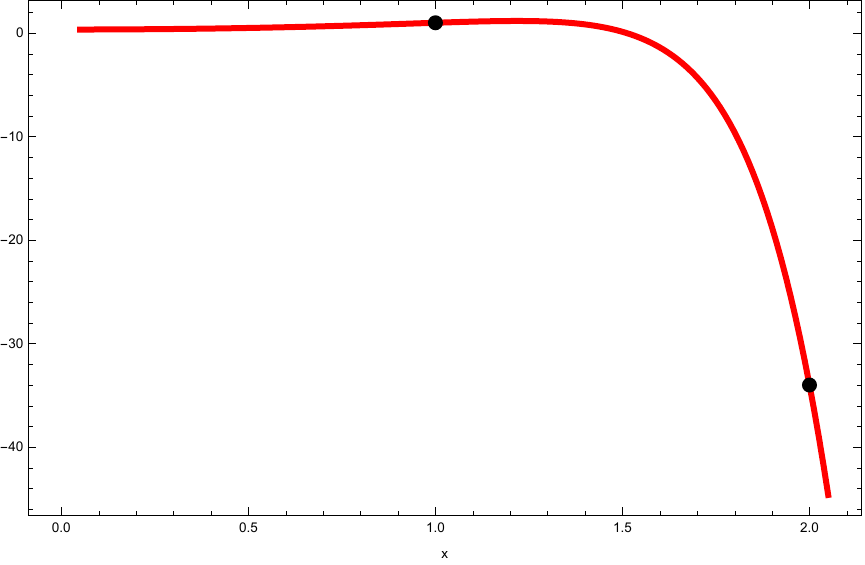}\qquad \qquad \includegraphics[height=4cm]{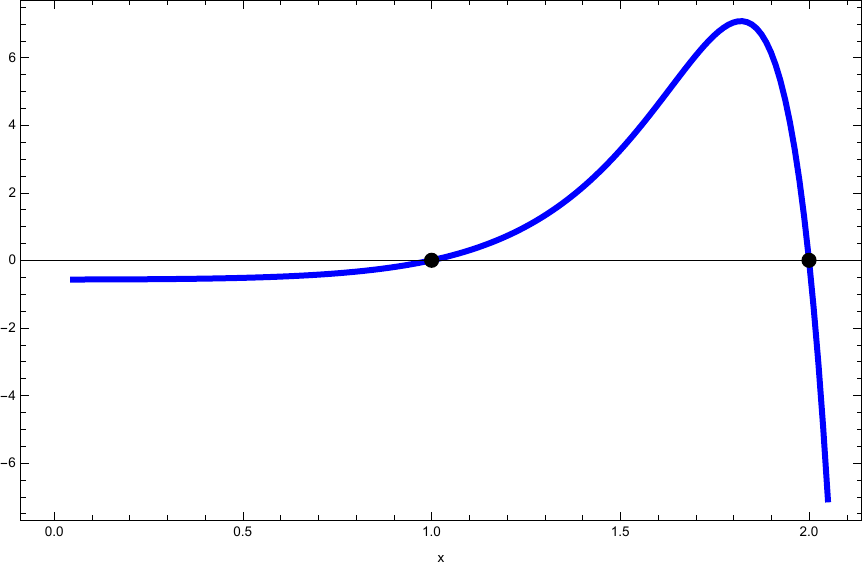}
\end{center}
\caption{Left-hand side: the red line is the real part of the interpolating Jones invariant $\hat J_{{\bf 5}_2^-,x}(q)$ as a function of $x$ for $q=1/2$. The black dots are the corresponding values $1, -34$ of the colored Jones polynomial for $n=1,2$. Right-hand side: the blue line is the imaginary part of the interpolating Jones invariant $\hat J_{{\bf 5}_2^-,x}(q)$ for $q=1/2$. The black dots at $x=1,2$ are on the $x$-axis and show that the imaginary part vanishes, as required.}
\label{52-fig}
\end{figure}

In terms of $(q,X)$-series, at $z=1$ one finds the structure 
\begin{equation}
\hat{J}_{{\bf 5}_2^-,x}(q)
=\mathcal A_x(q)+L\mathcal B_x(q)
+\left(L^2+\frac{3}{h}\right)\mathcal C_x(q),
\qquad
L=\frac{\log(-1)}{\log q}.
\label{eq:general-qX-decomposition}
\end{equation}
If we introduce the symmetric Laurent polynomials
\begin{equation}
S_j=X^j+X^{-j}, 
\label{eq:general-qX-Sj}
\end{equation}
we explicitly find from the above formulae
\begin{equation}
\begin{aligned}
\mathcal A_x(q)
={}&-\frac14q
+\frac{S_2+3S_1+10}{8}q^2+\frac{3S_3+7S_2+5S_1+26}{8}q^3
\\
&+\frac{2S_4-8S_3-11S_2-22S_1+38}{8}q^4
\\
&+\frac{2S_5+4S_4+S_3+36S_2-23S_1+105}{8}q^5
+\mathcal O(q^6),
\end{aligned}
\label{eq:general-qX-A-series}
\end{equation}
\begin{equation}
\begin{aligned}
\mathcal B_x(q)
={}&\frac{3S_1+7}{32}q
+\frac{5S_2-2S_1-18}{32}q^2+\frac{-11S_2+4S_1+38}{32}q^3
\\
&+\frac{11S_3+30S_2+2S_1-58}{32}q^4
\\
&+\frac{-4S_5-8S_4-36S_3-73S_2-24S_1+98}{32}q^5
+\mathcal O(q^6),
\end{aligned}
\label{eq:general-qX-B-series}
\end{equation}
and
\begin{equation}
\begin{aligned}
\mathcal C_x(q)
={}&\frac{S_1+3}{16}q
+\frac{S_2-2S_1}{16}q^2+\frac{-2S_3-3S_2-4S_1+2}{16}q^3
\\
&+\frac{S_3+6S_2-2S_1+4}{16}q^4
\\
&+\frac{2S_4-6S_3-7S_2-6S_1+14}{16}q^5
+\mathcal O(q^6).
\end{aligned}
\label{eq:general-qX-C-series}
\end{equation}

\subsubsection{Formal asymptotic expansion} As in previous cases, the Jones interpolating series (\ref{eq:cJHi}) leads to a
$z$-deformed MMR series for the ${\bf 5}_2^{-}$ knot,
\be
\Phi_0^{{\bf 5}_2^{-}}(h; w, z) = \frac{1}{1+2az}-\frac{az(6+5az)}{(1+2az)^3}h + \CO(h^2),
\ee
where
\begin{equation}
a=\left(2\sinh\frac{w}{2}\right)^2
=X+X^{-1}-2.
\label{eq:a-from-w}
\end{equation}
We now show that the formal asymptotic expansion of $\hat{J}_{{\bf 5}_2^-,x}(q,z)$ when $q \rightarrow 1^{-}$ is given by this series. The key idea is to use modularity of the answer, since after an $S$-duality transformation we have to study the comparatively simpler limit $\tq \rightarrow 0$.

The first step is to write the constant term operator in (\ref{CTMaster}) and subsequent equations
in terms of a double contour integral. We first write
\begin{equation}
U=q^{-u},\qquad V=q^{-v}.
\label{eq:UV-uv}
\end{equation}
Then,
\begin{equation}
\CT_{U,V}\left[ f(U,V) \right]=\frac{h^2}{(2\pi\ii)^2}
\int_{\mathcal C_u}\int_{\mathcal C_v}
f(q^{-u},q^{-v})\,\dd u\,\dd v,
\label{eq:CT-contour}
\end{equation}
where \(\mathcal C_u\) and \(\mathcal C_v\) are vertical cycles of period
\(2\pi\ii/h\). For the formal residue calculation below, their precise positions are irrelevant as long as they 
remain in the same pole-free region, with the poles at nonpositive integers on the same side. 

Let us introduce the additional notation
\be
Z= \log(-z), \qquad \xi=\re^{-2\pi\ri u},\qquad
\eta=\re^{-2\pi\ri v},\qquad \widetilde z=\re^{2\pi\ri Z/h}.
\ee
 By using the $S$-duality transformation of the theta functions and Pochhammer symbols, we find
\begin{equation}
\ba
&\frac{(q^{1+x},q^{1-x};q)_\infty}{8(q;q)_\infty(q^2;q^2)_\infty^3(q^4;q^4)_\infty^3}\CC_{q,x}(q^{-u},q^{-v})\Theta_q(q^{-u},q^{-v};z)
=\\
& \qquad \qquad\,\re^{-(Z+\ri\pi)(u+v)}\re^{h\Phi(u,v)}
\mathcal G_q(u,v)\mathcal R_X(u,v;q)E^\vee(\tq)R_z^\vee(u,v;\tq).
\ea
\label{eq:dual-product}
\end{equation}
In this equation, we have introduced
\be
\ba
\Phi(u,v)&=2u^2+3uv+v^2-3u-2v, \\
\mathcal G_q(u,v)&={\qp{q}{q}\left( q^{1-u}, q^{1-v}; q\right)_\infty   \over \left(q^{1-u-v}; q \right)^3_\infty},\\
\mathcal R_X(u,v;q)&=
{ \left( qX,q/X; q\right)_\infty \qp{q^{1-u-v}}{q}^{2} \over \qp{q}{q}^{2}\left( q^{1-u-v}X, q^{1-u-v}/X; q\right)_\infty }, \\
E^\vee(\tq)&=
\frac{\qp{\tq}{\tq}^{6}}
{\qp{\tq^{1/2}}{\tq^{1/2}}^{3}\qp{\tq^{1/4}}{\tq^{1/4}}^{3}},\\
R_z^\vee(u,v;\tq)&=
\frac{
\theta_{\tq^{1/4}}(\tq^{1/4}\xi)
\theta_{\tq^{1/2}}(\tq^{1/2}\eta)
\theta_\tq(\tq\widetilde z\,\xi)
\theta_\tq(\tq\widetilde z\,\xi^3\eta)
}{
\theta_\tq(\tq\widetilde z)
\theta_\tq(\tq\xi)
\theta_\tq(\tq\widetilde z\,\xi^3)
\theta_\tq(\tq\eta)
}.
\ea
\label{eq:Phi}
\end{equation}
The second line in \eqref{eq:J52unexpanded} can also be transformed. One finds that it is equal to 
\be
{4 \pi^2 \over h^2}{\mathcal{D}}^\vee(u,v;\tq),
\ee
where
\begin{equation}
\begin{aligned}
\mathcal D^\vee(u,v;\tq)
={}&
\left[
\Psi_\tq(\tq\widetilde z\,\xi)-\Psi_\tq(\tq\xi)
\right]
\\
&\times
\left[
-3\Psi_\tq(\tq\widetilde z)
+3\Psi_\tq(\tq\widetilde z\,\xi)
-\Psi_\tq(\tq\widetilde z\,\xi^3\eta)
+\Psi_\tq(\tq\eta)
\right]
\\
&+3\Lambda_\tq(\tq\widetilde z\,\xi).
\end{aligned}
\label{eq:Ddual}
\end{equation}
We can now expand around $\tq=0$ to find 
\be
\ba
E^\vee(\tq)&=1+\mathcal O(\tq^{1/4}),\\
R_z^\vee(u,v;\tq)\mathcal D^\vee(u,v;\tq)
&=-\frac{1}{(\re^{-2\pi\ii u}-1)(\re^{-2\pi\ii v}-1)}
+\mathcal O(\tq^{1/4}).
\ea
\ee
Therefore, in order to obtain the formal asymptotic expansion as $h\to 0^+$, we can write
\be
\hat J_{{\bf 5}_2^{-},x}(q,z)
\sim -\frac14
\int_{\mathcal C_u}\int_{\mathcal C_v}
\frac{
\re^{-Z(u+v)}\re^{h\Phi(u,v)}
\mathcal G_q(u,v)\mathcal R_X(u,v;q)
}{
\sin(\pi u)\sin(\pi v)
}\,\dd u\,\dd v.
\label{eq:pert-double-integral}
\end{equation}
The r.h.s. is a double Mellin--Barnes integral, whose formal asymptotic expansion
can be evaluated as in Subsection \ref{prop-as}. The integrand has poles
at $u=-m$, $v=-n$, with $m,n\in\mathbb Z_{\ge0}$. By using again the asymptotics
(\ref{qpoch-z}), we find
\be
\begin{aligned}
\hat J_{{\bf 5}_2^{-},x}(q,z)
& \sim
\sum_{m,n\geq0}
(-z)^M\binom{M}{m}
\left(2\sinh\frac{w}{2}\right)^{2M}
\\
&\times
\exp\Biggl\{
\frac{h}{2}
\left(3m^2+3mn+n^2+5m+3n\right)
\\
&\quad+
\sum_{\ell\geq1}\frac{h^{2\ell}}{(2\ell+1)!}
\Biggl[
2B_{2\ell+1}(M+1)
\left(\frac{\dd}{\dd w}\right)^{2\ell}
\log\left(2\sinh\frac{w}{2}\right)
\\
&\qquad\qquad+
\frac{B_{2\ell}}{2\ell}
\Bigl(
B_{2\ell+1}(M+1)
-B_{2\ell+1}(m+1)
-B_{2\ell+1}(n+1)
\Bigr)
\Biggr]
\Biggr\},
\end{aligned}
\label{eq:McIntosh-double-residue}
\end{equation}
where $M=m+n$. It is easy to see that this agrees with $\Phi_0^{{\bf 5}_2^{-}}(h; w, z)$ after using the expansion of $h_m(q)$ in \eqref{hm-def} as $q\to 1^{-}$. As in the case of the figure-eight knot and the trefoil knot, numerical calculations are consistent with the $z$-deformed MMR series giving the asymptotic expansion of $\hat J_{{\bf 5}_2^{-},x}(q,z)$ in the Poincar\'e sense.

\section{Interpolation of a sequence of two-variable polynomials}
\label{sec:2variable}

In this section, we apply the deformed Borel--Laplace resummation to construct an analytic invariant associated with a sequence of two-variable polynomials that are invariants of the figure-eight knot. These invariants arise from a solution of the quantum Yang--Baxter equation on a vector space of dimension $2^n$, $n\in\mathbb{Z}_{>0}$, constructed in~\cite{MR4992645}.

It was conjectured that these invariants coincide with the two-variable Links--Gould polynomials, associated with a one-parameter family of $2^n$-dimensional representations of the quantum supergroup $U_q(\mathfrak{gl}(n|1))$. Using the results of~\cite{MR1822139}, this conjecture was verified for several knots when $n=2,3,4$.

For the figure-eight knot $K=\mathbf{4}_1$, the resulting invariant is given by the two-variable Laurent polynomial
\be\label{eq:4_1-lg-n-1}
M_{\mathbf{4}_1,n}(q,t)=\sum_{m=0}^n
{n\brack m}_q
(t,q^{1-n}t^{-1};q)_m.
\ee
Since this sum has a structure similar to Habiro's cyclotomic expansion of the colored Jones polynomials~\eqref{eq:cJH}, we can apply our machinery of deformed Borel--Laplace resummation to extend it to non-integer values of $n$. Indeed, the divergent formal power series
\be\label{eq:TN}
M_{\mathbf{4}_1,y}(q,t,z):=\sum_{m\ge 0}
\frac{(t,q^{-y},q^{1-y}t^{-1};q)_m}{(q;q)_m\gamma_q(m)}
q^{my}z^m
={}_3\phi_1\!\Bigl[
\begin{matrix}
t,q^{1-y}t^{-1},q^{-y}\\
0
\end{matrix};q,q^yz
\Bigr]
\ee
extends~\eqref{eq:4_1-lg-n-1} through the equality
\be
M_{\mathbf{4}_1,n}(q,t)=M_{\mathbf{4}_1,n}(q,t,1),\quad \forall n\in\Z_{>0}.
\ee
Moreover, it specializes to $J_{\mathbf{4}_1,x}(q,z)$ given by~\eqref{eq:4_1-jones-3-fi-1}:
\be
J_{\mathbf{4}_1,x}(q,z)=M_{\mathbf{4}_1,-1}(q,q^{1+x},z).
\ee
Assuming $y\in\R$ and  $t\in\C$ such that
\be
q^{-y}<|t|<1,
\ee
formula~\eqref{eq:resummed-r-fi-s}, applied to~\eqref{eq:TN}, gives
\begin{align}
&\frac{(q;q)_\infty^4}{(t,q^{1-y}t^{-1},q^{-y};q)_\infty}\widehat{M}_{\mathbf{4}_1,y}(q,t,z)=(q;q)_\infty^3\sum_{l_1,l_2,l_3\ge0}\rme{q}(zq^{1+y+l_1+l_2+l_3})\frac{(q^{1+y};q)_{l_3}t^{l_1-l_2}q^{(1-y)l_2-yl_3}}{(q;q)_{l_1,l_2,l_3}},\nonumber\\
&=\sum_{n\in\Z}\frac{\frac{\log z}{\log q}+y+n}{1-zq^{y+n}}\sum_{l_2,l_3\ge0}\gamma_q(1+l_2+l_3-n)(tq^{1+l_2+l_3-n};q)_\infty\frac{(q^{1+y};q)_{l_3}t^{-l_2}q^{(1-y)l_2-yl_3}}{(q;q)_{l_2,l_3}},\nonumber\\
&=\sum_{n\in\Z}\frac{\frac{\log z}{\log q}+y+n}{1-zq^{y+n}}\sum_{r\ge0}\gamma_q(1+r-n)(q^{1-y}/t)^{r}(tq^{1+r-n};q)_\infty\sum_{l=0}^r\frac{(q^{1+y};q)_{l}(t/q)^{l}}{(q;q)_{l,r-l}},\label{eq:LH-pole-sum}
\end{align}
where, in the second equality, we converted the summation over $l_1$ into the infinite $q$-Pochhammer symbol. 

As in previous examples, it is also useful to have a Mellin--Barnes representation for $\widehat{M}_{\mathbf{4}_1,y}(q,t,z)$. 
The coefficients in \eqref{eq:TN} grow as $q^{-m^2/2}$ and, by applying  the $q$-Borel transform with respect to variable $z$, we obtain
\begin{equation}
({\mathcal B}_q{M}_{\mathbf{4}_1,y}(q,t,\cdot))(z)=
{}_3\phi_2\Bigl[
\begin{matrix}
t,q^{1-y}t^{-1},q^{-y}\\
q,0
\end{matrix};q,-q^{1+y}z
\Bigr]. 
\label{eq:BT}
\end{equation}

We can then proceed with a Mellin--Barnes representation for ${}_3\phi_2$.
Repeating calculations similar to those in Section~\ref{sec:4_1-int-sub-sec1}, we obtain
\begin{equation}
\widehat{M}_{\mathbf{4}_1,y}(q,t,z)
=
\frac{(t,q^{1-y}/t,q^{-y};q)_\infty}
{h(q;q)_\infty^3}
\frac1{2\ri}
\int_{-\varepsilon-\ri\infty}^{-\varepsilon+\ri\infty}
\frac{(q^{1+s},q^{1+s},q^{-s};q)_\infty\,
\pi(q^{1+y}z)^s}
{(tq^s,q^{1-y+s}/t,q^{s-y};q)_\infty
\sin^2(\pi s)}\,\rd s.
\label{eq:That-MB}
\end{equation}
We can calculate the integral \eqref{eq:That-MB} by closing the contour of integration to the left.
Far to the left the integrand decays as $q^{N(N+1)/2}$ for $s=-N+u$, where $u$ belongs to a fixed vertical strip.
So the left closing contribution tends to zero super-exponentially.

For simplicity, let us assume that all variables are real and satisfy the following conditions:
\begin{equation}
\quad 0<\varepsilon<1,
\quad
y<-\varepsilon,
\quad
1-\varepsilon-y>v>\varepsilon,
\quad z>0,
\label{eq:common-region}
\end{equation}
where $t=q^v$.
In this case, all three series of poles of the integrand are located to the left of the contour:
\be
s_{n,m}(a)=a-n+\frac{2\pi \ri m}{\log q},\quad n\ge0,\quad m\in\mathbb Z,\quad a\in A,
\ee
where the set $A$ is composed of three elements 
\be\label{eq:setA}
A=\{y,-v,y+v-1\},
\ee
and we assume that for any two distinct elements $a,b\in A$, $a-b\not\in\Z$. 

Put
\be
\rho=q^{1+y}z.
\ee
The lattice sum generated by the imaginary translates of a pole family is given by $\fn{2,q}(1/\rho,a)$, defined by~\eqref{def-fqn} with $n=2$,
where $a\in A$.

Using the explicit form~\eqref{ident2} of $\fn{2,q}$, we obtain
\begin{equation}
\fn{2,q}(1/\rho,a)
=
\frac{h(q;q)_\infty^3}{\pi^2} T(a),
\label{eq:Ktheta}
\end{equation}
with the theta-lattice factors
\be
T(a)
=
\frac{\theta_q(q^{a}/\rho)}
{\theta_q(q^a)\theta_q(\rho^{-1})}
\left\{
 h\bigl[\Psi_q(q^{a}/\rho)-\Psi_q(q^a)\bigr]
 +\log \rho
\right\},
\ee
where $\Psi_q$ is defined in~\eqref{derivat}.
For each $a\in A$, denoting
\be
\{a',a''\}=A\setminus\{a\},
\ee
and evaluating the residues for all $n\ge0$, we obtain the following result
for the $n$-sums
\be\label{n-sums}
\Phi(a)=
\frac{(q^{1+a};q)_\infty^2(q^{-a};q)_\infty}
{(q^{a-a'},q^{a-a''};q)_\infty}
{}_2\phi_2\Bigl[
\begin{matrix}q^{-a},0\\q^{1+a'-a},q^{1+a''-a}\end{matrix};q,\frac{q^{1+y-a}}z
\Bigr].
\ee
Summing the $m$-lattices with \eqref{eq:Ktheta} and using \eqref{n-sums}, we finally obtain
\begin{equation}
\widehat{M}_{\mathbf{4}_1,y}(q,q^v,z)
=
\frac{\prod_{b\in A}(q^{-b};q)_\infty}
{h(q;q)_\infty}
\sum_{a\in A}
T(a)\Phi(a).
\label{eq:That-theta}
\end{equation}
Since ${}_2\phi_2$ is entire in its last argument and the theta/product factors are meromorphic on $\mathbb C^\times$, \eqref{eq:That-theta} gives analytic continuation in $y,v,z$ beyond \eqref{eq:common-region}.
In the limit $z\to 1$ and then $y\to n$, only the first term with $a=y$ survives in the sum \eqref{eq:That-theta}, all poles cancel, 
and we arrive at
\eqref{eq:4_1-lg-n-1}.

\section{Deformed Borel--Laplace resummation and the WRT invariant}
\label{sec:wrt}
In this section we apply the deformed Borel--Laplace resummation procedure to $q$-series which appear naturally in the theory
of WRT invariants of three-manifolds \cite{MR990772, MR1091619}. The analytic objects that we obtain are defined by a surgery presentation of the three-manifold and might depend on it. They lead however to interesting connections to $q$-series associated to three-manifolds which have appeared in the literature.

\subsection{General formalism}

Let $M_K=S_{-p/r}(K)$ be the three-manifold obtained by performing a $-p/r$ surgery on a knot $K$. In \cite{bbl}, it was found that
a universal formula for the WRT invariant of $M_K$ can be obtained by using the Habiro form
of the colored Jones polynomial of $K$. This formula employs a Laplace transform $\mL_b$, $b\in\Q$, acting linearly on the ring of Laurent polynomials in $X^{1/d}$, $d \ge 1$,
defined as
\be
\mL_{b} (X^u)= q^{-u^2/b}.
\ee
Then, up to an overall factor of the form $Aq^\alpha$, the WRT invariant of $M_K$ is given by the Laplace transform
\be
\label{wrt-laplace}
\tau _{M_K} (q)= \mL_{-p/r} \left[ (X^{1\over 2} - X^{-{1\over 2}})(X^{1\over 2r} - X^{-{1\over 2r}}) J_{K,x}(q) \right],
\ee
where the colored Jones polynomial $J_{K,x}(q)$ is expressed in terms of the variable $X=q^x$.
By using Habiro's formula~\eqref{eq:cJH}, this can be written as
\be
\label{tau-gen}
\tau _{M_K} (q)=\sum_{m \ge 0} {H_{K,m}(q) \over \mg{q}{-m}}
\CF_{-p/r, m}(q),
\ee
where
\be
 \CF_{-p/r, m}(q)= \mL_{-p/r} \left[ (X^{1\over 2} - X^{-{1\over 2}})(X^{1\over 2r} - X^{-{1\over 2r}})  (q X,q/X;q)_m\right].
\ee

The $q$-series~\eqref{tau-gen} is defined when $q$ is a root of unity, since only in that case the series truncates and 
one obtains a well-defined
invariant of $M_K$, independent of the surgery presentation. We now promote~\eqref{tau-gen}
to a power series in $z$, as we did in the
case of the colored Jones invariant, assuming $0<q<1$:
%
\be
\label{tauz-1}
\tau_{M_K} \left(q,z \right)=\sum_{m \ge 0} {H_{K,m}(q) \over \mg{q}{-m}} {\mathcal F}_{-p/r, m}(q) z^m.
\ee
Note that this series depends on the knot $K$ and the surgery data $p,r$, and it is not in general an invariant of the three-manifold $M_K$. Let us now introduce the auxiliary series
\be
\Fsurg_{-p/r}(q,z)=\sum_{m \ge 0} {\mathcal F}_{-p/r, m}(q) z^m
\ee
and extend by linearity the definition of Laplace transform $\mL_b$ to formal power series in $z$ whose coefficients are
Laurent polynomials in $X^{1/d}$:
\be
\mL_b\left[\sum_{m\geq0}P_m(X)z^m\right]
:=
\sum_{m\geq0}\mL_b[P_m(X)]z^m,
\qquad
P_m(X)\in\mathbb C[X^{\pm1/d}].
\ee
With this extension one has
\be
\Fsurg_{-p/r}(q,z)=  \mL_{-p/r} \left[ (X^{1\over 2} - X^{-{1\over 2}})(X^{1\over 2r} - X^{-{1\over 2r}})  \phi_x(q,z)\right],
\ee
where $\phi_x(q,z)$ is defined in \eqref{eq:had-fac-varphi_x}. We can then write (\ref{tauz-1}) as a Hadamard product:
\be
\label{tauz-2}
\tau_{M_K} \left(q,z \right)=\Fsurg_{-p/r}(q,z) \had_z \mH_{K}(q,z).
\ee
This is the WRT counterpart of formula (\ref{eq:J_{K,x}(q,z)-hadamard}). 

\begin{lemma} \label{sm-wrt} For any pair of positive coprime integers $p,r$, $\Fsurg_{-p/r}(q,z)$ is Taylor's expansion of a simple meromorphic function.
\end{lemma}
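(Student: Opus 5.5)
Since the Laplace transform $\mL_{-p/r}$ acts coefficientwise on the $z$-expansion and commutes with Hadamard multiplication by series in $z$ with $X$-independent coefficients, the natural route is to apply it termwise to the sm-expansion of $\phi_x(q,z)$ obtained in the proof of Proposition~\ref{Jxb}. Concretely, by Example~\ref{ex:fabq(z)} with $b=0$ and the residue formula (Lemma~\ref{lem:res-form-hadamart-pr}), for $|q^{1\pm x}|<1$ one has
\be
\phi_x(q,z)=(q^{1+x},q^{1-x};q)_\infty\sum_{r\ge0}\frac{q^r b_r(q,x)}{1-q^rz},
\ee
with $b_r(q,x)$ as in~\eqref{eq:brqx}. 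This is, however, an identity of analytic functions of $x$ and not of Laurent polynomials in $X$, so it cannot be fed directly into $\mL_{-p/r}$. I would instead work at the level of coefficients. First, expand $(qX,q/X;q)_m$ via the $q$-binomial theorem (or the finite Jacobi triple product) as a Laurent polynomial $\sum_{j}c_{m,j}(q)X^j$. Next, apply $\mL_{-p/r}$ to $(X^{1/2}-X^{-1/2})(X^{1/(2r)}-X^{-1/(2r)})X^j$, which yields four Gaussian terms $q^{r(j+\epsilon_1/2+\epsilon_2/(2r))^2/p}$. This writes $\CF_{-p/r,m}(q)$ as a finite sum of Gaussian-weighted coefficients.

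Rather than doing this directly, the cleaner plan is to take $m\to\infty$ first at the level of generating functions. The key observation is that $\CF_{-p/r,m}(q)$ converges as $m\to\infty$ to a limit $\CF_{-p/r,\infty}(q)$: the Gaussian $q^{r u^2/p}$ with $0<q<1$ kills high powers, and $(qX,q/X;q)_m\to(qX,q/X;q)_\infty=\theta_q(qX)/(q;q)_\infty$ coefficientwise, with the tails bounded by $q^{\text{const}\cdot m}$. I therefore aim to prove an expansion
\be
\CF_{-p/r,m}(q)=\sum_{k\ge0}d_k(q)\,q^{km}
\ee
with $\sum_k|d_k|<\infty$. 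Granting this, $\Fsurg_{-p/r}(q,z)=\sum_k d_k/(1-q^kz)$, which is a Mittag-Leffler expansion with no entire part, converging normally away from $q^{-\Z_{\ge0}}$. Hence $\Fsurg_{-p/r}$ is an sm-function, with poles contained in $q^{-\Z_{\ge0}}$.

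To get this expansion, I will use $(qX,q/X;q)_m=(qX,q/X;q)_\infty/(q^{m+1}X,q^{m+1}/X;q)_\infty$ and expand the inverse products by Euler's formula, $1/(q^{m+1}X;q)_\infty=\sum_a q^{(m+1)a}X^a/(q;q)_a$. This gives a double sum in $a,b\ge0$ with factor $q^{m(a+b)}X^{a-b}$ multiplying the theta function $\theta_q(qX)$, expanded by~\eqref{eq:theta-q1}. After applying $\mL_{-p/r}$ term by term, we get $d_k=\sum_{a+b=k}\frac{q^{a+b}}{(q;q)_a(q;q)_b(q;q)_\infty}\sum_{n}\gamma_q(n)q^{n}\,\mL_{-p/r}[(\cdots)X^{n+a-b}]$.

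The main obstacle is justifying the term-by-term application of $\mL_{-p/r}$ to what is now an infinite Laurent series, since $\mL$ is only defined on Laurent polynomials, and proving absolute convergence of $\sum_k|d_k|$. I would handle both by an explicit estimate: $|\gamma_q(n)q^n q^{r(n+a-b+c)^2/p}|$ is Gaussian-decaying in $n$ uniformly, because the quadratic forms add with positive leading coefficient $\tfrac12+r/p$. Summing over $n$ then gives a bound of the form $C\,q^{-\kappa(a-b)^2}$ with $\kappa=(r/p)^2/(2(1/2+r/p))<r/p$, multiplied by $q^{r(a-b)^2/p}$; the net exponent stays positive. Together with the factor $q^{a+b}/((q;q)_a(q;q)_b)$, this yields summability. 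The same dominated bound, applied to the finite-$m$ truncation, shows that the formal coefficientwise computation agrees with the true Laurent-polynomial computation. This completes the argument.
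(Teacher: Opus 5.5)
Your proposal is essentially the paper's proof: your Euler expansion of $1/(q^{m+1}X,q^{m+1}/X;q)_\infty$ is the coefficientwise form of the Mittag-Leffler expansion~\eqref{eq:phi-ML} of $\phi_x(q,z)$, and your $d_k$ is the paper's $q^kD_k^{(p,r)}(q)$. Your Gaussian domination plays the role of the paper's uniform bound~\eqref{eq:uniform-theta-bound}, which the paper uses to extend $\mL_{-p/r}$ continuously to the completion $\CA_{\lambda,r}(q)$. A bound uniform in the shift already suffices, because $\sum_{a,b\ge0}q^{a+b}/((q;q)_a(q;q)_b)=(q;q)_\infty^{-2}$, so the extra decay in $a-b$ is not needed.

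There is one slip to fix. In fact $(qX,q/X;q)_\infty=\theta_q(qX)/\bigl((q;q)_\infty(1-X^{-1})\bigr)$, not $\theta_q(qX)/(q;q)_\infty$. So in your formula for $d_k$ the factor $X^{1/2}-X^{-1/2}$ should be replaced by $X^{1/2}$; this is exactly the cancellation~\eqref{eq:cancellation}. Also, your $\kappa$ should be $2\lambda^2/(1+2\lambda)$ with $\lambda=r/p$. Neither correction changes the argument.
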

\begin{proof}
We denote
\begin{equation}
 \lambda:=\frac rp,
 \qquad
 a_+:=\frac{r+1}{2r},
 \qquad
 a_-:=\frac{r-1}{2r},
 \label{eq:parameters}
\end{equation}
and consider the following completion of the vector space of
Laurent polynomials:
\begin{equation}
 \CA_{\lambda,r}(q):=
 \bigl\{
 \sum_{u\in(2r)^{-1}\mathbb Z}c_uX^u:\
 \sum_{u\in(2r)^{-1}\mathbb Z}|c_u|q^{\lambda u^2}<\infty
 \bigr\}.
 \label{eq:laplace-domain}
\end{equation}
For $F(X)=\sum_uc_uX^u\in \CA_{\lambda,r}(q)$, set
\begin{equation}
 \|F\|_\lambda:=\sum_u|c_u|q^{\lambda u^2},
 \qquad
 \mL_{-p/r}[F]:=\sum_uc_uq^{\lambda u^2}.
 \label{eq:laplace-continuous-extension}
\end{equation}
This extended Laplace transform acts continuously in the sense that
$|\mL_{-p/r}[F]|\leq\|F\|_\lambda$.

We start with the Mittag--Leffler representation of $\phi_x(q,z)$:
\begin{equation}
 \phi_x(q,z)
 =(qX,q/X;q)_\infty
 \sum_{k\geq0}\frac{q^k b_k(q,x)}{1-zq^k},
 \qquad
 b_k(q,x)=\sum_{\ell=0}^k
 \frac{X^{2\ell-k}}{(q;q)_\ell(q;q)_{k-\ell}}.
 \label{eq:phi-ML}
\end{equation}
Let us introduce the theta function
\begin{equation}
 \Theta_\lambda(c;q)
 :=\sum_{n\in\Z}\gamma_q(n)
 q^{\lambda(n+c)^2}.
 \label{eq:theta-def}
\end{equation}
Using the equality
\begin{equation}
\ba
&(X^{1/2}-X^{-1/2})(X^{1/(2r)}-X^{-1/(2r)})
 (qX,q/X;q)_\infty \\
  & \qquad =\frac{X^{-a_+}-X^{-a_-}}{(q;q)_\infty}
 \sum_{n\in\Z}\gamma_q(n)X^n,
 \ea
  \label{eq:cancellation}
\ee
and multiplying
\eqref{eq:cancellation} by $b_k(q,x)$ from \eqref{eq:phi-ML}, we obtain
\begin{equation}
 \begin{aligned}
 & (X^{1/2}-X^{-1/2})(X^{1/(2r)}-X^{-1/(2r)})
 (qX,q/X;q)_\infty b_k(q,x)\\
 &\quad=\frac{1}{(q;q)_\infty}
 \sum_{\ell=0}^k\frac{1}{(q;q)_\ell(q;q)_{k-\ell}}
 \sum_{n\in\Z}\gamma_q(n)
 \left(
 X^{n+2\ell-k-a_+}-X^{n+2\ell-k-a_-}
 \right).
 \end{aligned}
 \label{eq:shifted-powers}
\end{equation}
Let $G_k(X)$ denote the left-hand side of \eqref{eq:shifted-powers}.
Completing the square in $n$ shows that
\begin{equation}
 C_{\lambda,q}:=
 \sup_{c\in\mathbb R}\sum_{n\in\mathbb Z}
 q^{n(n-1)/2+\lambda(n+c)^2}<\infty.
 \label{eq:uniform-theta-bound}
\end{equation}
Since $(q;q)_j\geq(q;q)_\infty$, equation
\eqref{eq:shifted-powers} therefore gives
\begin{equation}
 \|G_k\|_\lambda
 \leq \frac{2C_{\lambda,q}}{(q;q)_\infty^3}(k+1).
 \label{eq:Gk-bound}
\end{equation}
Thus, $G_k\in\CA_{\lambda,r}(q)$. Applying term-wise the continuous Laplace
transform $ \mL_{-p/r}$ to the right hand side of~\eqref{eq:shifted-powers} turns the inner sums into
$\Theta_\lambda(2\ell-k-a_+;q)$ and
$\Theta_\lambda(2\ell-k-a_-;q)$.  Moreover, for every compact set
$K\subset\mathbb C\setminus q^{-\mathbb Z_{\geq0}}$,
\begin{equation}
 \sum_{k\geq0}\sup_{z\in K}
 \frac{q^k\|G_k\|_\lambda}{|1-zq^k|}<\infty.
 \label{eq:Gk-local-uniform}
\end{equation}
This justifies that the Laplace transform $ \mL_{-p/r}$ can be applied term by term to
\eqref{eq:phi-ML}. We then obtain
\begin{equation}
 \Fsurg_{-p/r}(q,z)
 =\sum_{k\geq0}\frac{q^k D_k^{(p,r)}(q)}{1-zq^k}, \label{eq:main-ML}
\end{equation}
where
\begin{equation}
\ba
 D_k^{(p,r)}(q)&= \mL_{-p/r} \left[ (X^{1/2}-X^{-1/2})(X^{1/(2r)}-X^{-1/(2r)})
 (qX,q/X;q)_\infty b_k(q,x) \right]\\
 &=\frac{1}{(q;q)_\infty}
 \sum_{\ell=0}^k
 \frac{
 \Theta_\lambda(2\ell-k-a_+;q)
 -\Theta_\lambda(2\ell-k-a_-;q)
 }{(q;q)_\ell(q;q)_{k-\ell}}.
 \ea
 \label{eq:Dk}
\end{equation}
Thus, the possible poles of $ \Fsurg_{-p/r}(q,z)$ are $z=q^{-k}$, $k\geq0$, with the residues
\begin{equation}
 {\rm Res}_{z=q^{-k}}\mathscr F_{-p/r}(q,z)
 =-D_k^{(p,r)}(q).
 \label{eq:residue}
\end{equation}
Inequality~\eqref{eq:Gk-bound} also implies that
$|D_k^{(p,r)}(q)|=\mathcal O(k)$.  Together with the factor $q^k$ in
\eqref{eq:main-ML}, this proves normal convergence away from
$q^{-\mathbb Z_{\geq0}}$. Hence $\Fsurg_{-p/r}(q,z)$ is simple meromorphic.
\end{proof}

\begin{example} When $p=r=1$, one has $\lambda=1$, $a_+=1$ and $a_-=0$, as defined in~\eqref{eq:parameters}. 
The theta functions appearing in~\eqref{eq:Dk} either vanish or are given by a monomial in $q$ times $(q;q)_\infty$.
One eventually finds
\be
 D_k^{(1,1)}(q)
 =-\frac{2(-1)^k}{(q;q)_k}\mathscr{P}_k(q),
 \label{eq:D11-closed}
 \ee
 with
 \be
 \mathscr{P}_k(q)=\sum_{j=0}^{\lfloor k/2\rfloor}
 q^j\genfrac{[}{]}{0pt}{}{k}{2j}_{q}(q;q^2)_j.
 \label{eq:Pk-definition}
\ee

When $p=1$, $r=2$, one has $\lambda=2$, $a_+=3/4$ and $a_-=1/4$, see~\eqref{eq:parameters}. Define
\begin{equation}
 A_m:=q^{-1/8}\frac{\Theta_2(m-3/4;q)}{(q;q)_\infty},
 \qquad
 B_m:=q^{-1/8}\frac{\Theta_2(m-1/4;q)}{(q;q)_\infty},
 \label{eq:AB-definition}
 \ee
 see also~\eqref{eq:theta-def}.
 Then,
 \begin{equation}
D_k^{(1,2)}(q)=q^{1/8}\sum_{\ell=0}^k
 \frac{A_{2\ell-k}-B_{2\ell-k}}
 {(q;q)_\ell(q;q)_{k-\ell}}.
 \label{eq:residue-12}
\end{equation}
$A_m$ and $B_m$ are quasiperiodic modulo $5$ and their values either vanish or are given by a monomial in $q$ times a Rogers--Ramanujan product.\hfill$\diamond$
\end{example}

In many cases, the series (\ref{tauz-1}) is divergent, so it is natural to apply a deformed Borel--Laplace resummation to obtain an analytic object.

\begin{definition} 
Let us suppose that the coefficients of the generating series (\ref{tauz-1}) grow as $q^{-\beta n^2/2}$. Then, we define the analytic function $\hat \tau_{M_K}(q,z)$ as the $q^\beta$-Borel--Laplace resummation of $\tau_{M_K} (q,z)$.
\end{definition}

\begin{remark} An important caveat is that the construction above associates the analytic function $\hat \tau_{M_K}(q,z)$ to the surgery presentation $(K,-p/r)$, rather than to the three-manifold $M_K$ itself. This is because invariance under changes of surgery presentation for $0<q<1$ has not been established, and this is probably the most important open problem concerning this function. Nonetheless, we will denote the function by $\hat \tau_{M_K}(q,z)$ to simplify the notation. We should note however that in some cases different surgery presentations of the same manifold lead to identical generating functions $\hat \tau_{M_K}(q,z)$. For example, the Brieskorn homology sphere $\Sigma(2,3,7)$ can be obtained either as $-1$ surgery on ${\bf 3}_1^-$ or as $+1$ surgery on ${\bf 4}_1$. In both cases, one obtains the same generating function (up to trivial overall factors of the form $Aq^{\alpha}$). 
\end{remark}

 In appropriate circumstances, the following result reduces the calculation of this analytic function to the $q^\beta$-Borel--Laplace resummation of the generating series of Habiro polynomials.

\begin{proposition} \label{tauhat-gen} Let $p,r$ be two positive coprime integers and $z\in \mathbb{C}^\times$. Let us suppose that the $q^\beta$-Borel transform $\tilde \mH_K (q,z)$ admits an analytic continuation along the ray $z \IR_{\ge 0}$, and that 
\begin{equation}\label{eq:H-uniform-Laplace}
\sup_{\ell\geq0}
\int_0^\infty
\frac{\bigl|\widetilde{\mH}_K(q,q^\ell zt)\bigr|}
     {(-t;q^\beta)_\infty}\,\rd t<\infty.
\end{equation}
Then,
\be
\hat \tau_{M_K}\left(q,z \right)= \sum_{\ell \ge 0}  q^\ell D_\ell ^{(p,r)}(q) \hat{\mH}_K(q,q^\ell z).
\ee
We also have
\be
\label{lcb}
\hat \tau_{M_K}\left(q,z \right)= \mL_{-p/r} \left[ (X^{1\over 2} - X^{-{1\over 2}})(X^{1\over 2r} - X^{-{1\over 2r}}) \hat J_{K,x}(q,z) \right],
\ee
i.e. $q^\beta$-Borel--Laplace resummation commutes with the Laplace transformation $\mL_{-p/r}$.
\end{proposition}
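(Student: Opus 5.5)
The first formula follows from the Hadamard decomposition \eqref{tauz-2}, $\tau_{M_K}(q,z)=\Fsurg_{-p/r}(q,z)\had_z\mH_K(q,z)$, combined with Lemma~\ref{sm-wrt} and Corollary~\ref{cor:q-bor-lap-resum-res-form}. By Lemma~\ref{sm-wrt}, $\Fsurg_{-p/r}$ is simple meromorphic. Its Mittag-Leffler expansion \eqref{eq:main-ML} places the poles at $a=q^{-\ell}$ with residues $-D^{(p,r)}_\ell(q)$. Hence $-\res{}(\Fsurg,a)/a=q^{\ell}D^{(p,r)}_\ell(q)$, and the residue formula \eqref{eq:res-form-q-borel-laplace-resum}, applied with the $q^\beta$-Borel--Laplace transform in place of the $q$-one, gives exactly the claimed sum $\sum_\ell q^\ell D^{(p,r)}_\ell\hat\mH_K(q,q^\ell z)$. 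The residue formula holds verbatim for $p$-transforms, since Lemma~\ref{lem:res-form-hadamart-pr} and Corollary~\ref{lq-cor} never used the base. One must also check that the $q^\beta$-Borel transform commutes with the Hadamard product, which is \eqref{eq:q-borel-of-had-prod} for base $q^\beta$.

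The only real work is verifying the Tonelli condition \eqref{eq:tonelli}. Here it reads
\[
\sum_{\ell\ge0}q^\ell|D^{(p,r)}_\ell(q)|\int_0^\infty\frac{|\widetilde\mH_K(q,q^\ell zt)|}{(-t;q^\beta)_\infty}\rd t<\infty .
\]
The bound $|D^{(p,r)}_\ell|=\mathcal O(\ell)$ was obtained in the proof of Lemma~\ref{sm-wrt} from \eqref{eq:Gk-bound}. Hypothesis \eqref{eq:H-uniform-Laplace} bounds the integrals uniformly in $\ell$. The series is therefore dominated by $\sum_\ell \ell q^\ell<\infty$. This also shows that each $\hat\mH_K(q,q^\ell z)$ is well defined, and that $\tau_{M_K}$ is $q^\beta$-Borel--Laplace resummable.

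For \eqref{lcb}, I would start from Proposition~\ref{Jxb}, which holds for $|x|<1$:
\[
\hat J_{K,x}(q,z)=(qX,q/X;q)_\infty\sum_\ell q^\ell b_\ell(q,x)\hat\mH_K(q,q^\ell z).
\]
Multiplying by $(X^{1/2}-X^{-1/2})(X^{1/(2r)}-X^{-1/(2r)})$, each summand becomes $G_\ell(X)\,q^\ell\hat\mH_K(q,q^\ell z)$, with $G_\ell$ as in the proof of Lemma~\ref{sm-wrt}. By \eqref{eq:Dk}, $\mL_{-p/r}[G_\ell]=D^{(p,r)}_\ell$. It remains to apply $\mL_{-p/r}$ term by term. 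I interpret $\mL_{-p/r}$ on functions of $x$ via the continuous extension \eqref{eq:laplace-continuous-extension} on the completion $\CA_{\lambda,r}(q)$. Since $\hat\mH_K(q,q^\ell z)$ is independent of $X$, the sum $\sum_\ell q^\ell\hat\mH_K(q,q^\ell z)G_\ell$ converges in the norm $\|\cdot\|_\lambda$. Indeed, $\|G_\ell\|_\lambda=\mathcal O(\ell)$ by \eqref{eq:Gk-bound}, and $|\hat\mH_K(q,q^\ell z)|$ is bounded uniformly in $\ell$ by \eqref{eq:H-uniform-Laplace} (up to the factor $-1/(\beta\log q)$). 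Continuity of $\mL_{-p/r}$ then permits the interchange, which yields the first formula again and hence \eqref{lcb}.

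The main obstacle is making sense of $\mL_{-p/r}$ applied to $\hat J_{K,x}$. That object is defined analytically only for real $|x|<1$, whereas $\mL_{-p/r}$ acts on expansions in powers $X^{u}$. I would resolve this by taking the norm-convergent expansion above as the definition of the $X$-expansion of $(X^{1/2}-X^{-1/2})(X^{1/(2r)}-X^{-1/(2r)})\hat J_{K,x}$ in $\CA_{\lambda,r}(q)$. I would then note that, for $|x|<1$, it agrees pointwise with the product of the prefactor and $\hat J_{K,x}(q,z)$ given by Proposition~\ref{Jxb}.
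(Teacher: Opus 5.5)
Your proposal is correct and follows essentially the same route as the paper. The Tonelli condition comes from hypothesis~\eqref{eq:H-uniform-Laplace} together with $D^{(p,r)}_\ell(q)=\mathcal O(\ell)$, and Lemma~\ref{sm-wrt} with the residue formula then gives the first identity. For~\eqref{lcb}, the paper likewise uses norm convergence in $\CA_{\lambda,r}(q)$ of $\sum_\ell q^\ell G_\ell(X)\,\hat\mH_K(q,q^\ell z)$ to apply $\mL_{-p/r}$ term by term. Your extra remarks make explicit two points the paper leaves implicit: that the residue formula works unchanged in base $q^\beta$, and how $\mL_{-p/r}$ acting on $\hat J_{K,x}$ should be understood via this $X$-expansion.
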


\begin{proof} 
By~\eqref{eq:H-uniform-Laplace} and
$D_\ell^{(p,r)}(q)=\mathcal O(\ell)$, condition~\eqref{eq:tonelli}
holds. In addition, by Lemma \ref{sm-wrt}, we can apply Corollary~\ref{lq-cor} and we obtain the first formula. To obtain the second, we note that
\begin{equation}
 (X^{1/2}-X^{-1/2})(X^{1/(2r)}-X^{-1/(2r)})
 \hat J_{K,x}(q,z)
 =\sum_{\ell\geq0}q^\ell G_\ell(X)
   \hat{\mH}_K(q,q^\ell z),
\end{equation}
where $G_\ell$ is defined below~\eqref{eq:shifted-powers}.
The condition~\eqref{eq:H-uniform-Laplace} gives a
bound on $|\hat{\mH}_K(q,q^\ell z)|$ independent of $\ell$.
Together with~\eqref{eq:Gk-bound}, this shows that the
right-hand side converges in $\CA_{\lambda,r}(q)$, and we may thus apply $\mL_{-p/r}$ term by
term. Since $\mL_{-p/r}[G_\ell]=D_\ell^{(p,r)}(q)$ by
\eqref{eq:Dk}, the result is the first formula.
\end{proof}
Our use of the Laplace transform as a bridge between the analytic invariants 
of knots and $q$-series attached to three-manifolds is inspired by similar constructions in \cite{gmp,MR4233201}. Proposition~\ref{tauhat-gen} is similar in spirit to Conjecture 1.7 of \cite{MR4233201}.

In the next sections we will illustrate the general picture with various examples. In many cases there are
two equivalent procedures to obtain the analytic functions of three-manifolds: we can use Proposition \ref{tauhat-gen} and rely on known results for the $q^\beta$-Borel--Laplace resummation of the underlying knot, or we can find a Mellin--Barnes representation
directly for the resummation of (\ref{tauz-1}). The procedures are complementary and lead to different viewpoints on the analytic function. In all the examples below, the $q^\beta$-Borel--Laplace resummation of the WRT invariant leads to an expression built from $q$-series, with two properties.
First, its formal asymptotic expansion as $h \to 0$ reproduces the perturbative expansion of the
WRT invariant, or Ohtsuki series of the three-manifold. Numerical calculations are consistent with 
this expansion being an asymptotic expansion in the Poincar\'e sense. Second,
the resulting $q$-series are closely related to $q$-series that have appeared before in the literature 
\cite{lz, hikami1, hikami2, gmp, 3dmod, MR4233201, wheeler}. Similar to what
happens for the analytic invariants of knots, which lead to corrected versions of the inverted Habiro series, 
in some of the examples below the analytic functions of three-manifolds lead to ``corrected'' versions of their postulated 
$\hat Z$-invariants. These corrections are necessary in order to preserve their asymptotic 
relationship to the Ohtsuki series. However, explicit computations show that the resulting $q$-series do not have well-defined radial limits at other roots of unity. This is reminiscent of the fact that radial limits of mock theta functions require cusp-dependent modular corrections.

\subsection{Brieskorn homology spheres}

We first consider some examples of Brieskorn homology spheres, focusing on
manifolds constructed by surgery on knots that we have
already studied, so we can exploit our previous results and illustrate the general theory. 
In addition, these are manifolds for which $q$-series invariants have been already proposed 
from different perspectives, and we will be able to compare our results to these
proposals.

\subsubsection{Manifold $\Sigma(2,3,7)$} Our first example is $\Sigma(2,3,7)$ with its canonical orientation. We realize it as a $-1$ surgery on the trefoil knot ${\bf 3}_1^-$. Since
\be
\CF_{-1,m}(q)= -2 (q^{m+1};q)_{m+1},
\ee
the WRT invariant is given by \cite{bbl, hikami2}
\be
\tau_{\Sigma(2,3,7)}(q)= -2 \sum_{m=0}^{\infty}  q^{-m(m+2)-1}  (q^{m+1};q)_{m+1},
\ee
where for convenience we have included an overall factor $q^{-1}$ in the definition (\ref{tau-gen}). The corresponding generating series is
\be
\label{z-z}
\tau_{\Sigma(2,3,7)}(q,z)= -2 \sum_{m=0}^{\infty}  q^{-m(m+2)-1}  (q^{m+1};q)_{m+1} z^m.
\ee
This series is clearly divergent when $q<1$. To obtain an analytic object, we have to perform a $q^2$-Borel--Laplace resummation. The following result is proven in Appendix~\ref{app:B}.

\begin{lemma}\label{lem:analytic-sigma237} The analytic function of the manifold $\Sigma(2,3,7)$, presented as $-1$ surgery on the
trefoil knot ${\bf 3}_1^-$, is given by
\be
\hat \tau_{\Sigma(2,3,7)}(q,z)= A_{\Sigma(2,3,7)}(q,z)-{\log(-z) \over h} B_{\Sigma(2,3,7)}(q,z),
\ee
where
\be
\ba
 A_{\Sigma(2,3,7)}(q,z)
 &=-{q^{-1}\over (q^2;q^2)_\infty^3}
 \sum_{k\geq0}{(-1)^kq^k\mathscr{P}_k(q)\over(q;q)_k}
 \sum_{n\in\mathbb Z}\gamma_{q^2}(-n)
 {k+2n-1\over1+z q^{k+2n-1}},
 \\
 B_{\Sigma(2,3,7)}(q,z)
 &=-{q^{-1}\over (q^2;q^2)_\infty^3}
 \sum_{k\geq0}{(-1)^kq^k\mathscr{P}_k(q)\over(q;q)_k}
 \sum_{n\in\mathbb Z}{\gamma_{q^2}(-n)\over
 1+z q^{k+2n-1}},
 \ea
 \label{eq:sigma-AB-series}
\ee
and $\mathscr{P}_k(q)$ is defined in~\eqref{eq:Pk-definition}. 

We also have the equalities for $z=1$:
 \be\label{A237M}
 A_{\Sigma(2,3,7)}(q) = \CM(q),
 \ee
 where
 \be
\label{mq}
\CM(q)= \sum_{n\geq 0}
  {\gamma_{q}(-n)
  \over
  \qpochn{q^{n+1}}{q}{n}},
  \ee
  and
 \begin{equation}
  B_{\Sigma(2,3,7)}(q)
  =
  {(q;q)_\infty^2\over 2 (q^2;q^2)_\infty^5}
  \left[
  \theta_{q^6}(q)
  \left(q\,\theta_{q^{21}}(q^4)-\theta_{q^{21}}(q^{10})\right)
  -
  q\,\theta_{q^6}(q^3)\theta_{q^{21}}(q^3)
  \right].
  \label{eq:H-theta-constant-form}
\end{equation}

Finally, let
\be
\label{phi07}
\varphi_{\Sigma(2,3,7)}(h)=-2 h - 13 h^2  -{415 h^3 \over 3} - {24721 \over 12} h^4+\cdots
\ee
be the Ohtsuki series for this manifold. Then, one has the formal asymptotic expansion
\be
\hat \tau_{\Sigma(2,3,7)} (q) \sim \varphi_{\Sigma(2,3,7)}(h), \qquad h\rightarrow 0^+.
\ee
\end{lemma}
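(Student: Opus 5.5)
The first formula would come directly from Proposition~\ref{tauhat-gen} with $K={\bf 3}_1^-$, $p=r=1$ and $\beta=2$. By Lemma~\ref{sm-wrt} and the Example after it, $\Fsurg_{-1}(q,z)=\sum_{k\ge0}q^kD^{(1,1)}_k(q)/(1-zq^k)$ with $D^{(1,1)}_k=-2(-1)^k\mathscr P_k(q)/(q;q)_k$. For the knot factor I would use~\eqref{eq:Borel--Laplace-resummation-3-1-minus}:
\[
\hat\mH_{{\bf 3}_1^-}(q,z)=\frac{1}{2(q^2;q^2)_\infty^3}\sum_{n}\gamma_{q^2}(-n)\,\frac{\frac{\log(-z)}{\log q}+2n-1}{1+zq^{2n-1}}.
\]
First I check hypothesis~\eqref{eq:H-uniform-Laplace}. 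The $q^2$-Borel transform of $\me{q^2}(-z/q)$ is $(z/q;q^2)_\infty^{-1}$. Along a ray $\arg z\neq0$ it is bounded uniformly under $z\mapsto q^\ell z$, since $|1-w|$ is bounded below away from the positive axis. Proposition~\ref{tauhat-gen} then gives
\[
\hat\tau=\sum_\ell q^\ell D_\ell\,\hat\mH(q,q^\ell z).
\]
Next I substitute $\log(-q^\ell z)/\log q=\log(-z)/\log q+\ell$ and use $-1/\log q=1/h$. Collecting the $\log(-z)$ part gives $B$, and the remaining part (with $k=\ell$) gives $A$. The overall factor $q^{-1}$ and the $-2\cdot\frac12=-1$ match~\eqref{eq:sigma-AB-series}, so this step is bookkeeping.

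For $z=1$, the $B$ identity~\eqref{eq:H-theta-constant-form} is a theta-function identity. I would resum over $k$ first: write $1/(1+q^{k+2n-1})$ as a geometric series, or better, recognize $\sum_k(-1)^kq^k\mathscr P_k/((q;q)_k(1+uq^k))$ via $D_k$. Then undo the Laplace transform, rewriting $B$ as $\mL_{-1}$ of $(X^{1/2}-X^{-1/2})^2$ times the $\log$-coefficient of $\hat J_{{\bf 3}_1^-,x}$, which by Corollary~\ref{cor-31} is $B_{{\bf 3}_1^-,x}(q)$ in~\eqref{BJ3}, a theta product in $X$. Applying $\mL_{-1}$ to a product of theta series gives a lattice sum over a rank-2 or rank-3 quadratic form. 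Diagonalising it and splitting into residue classes mod 6 and 21 should produce the stated $\theta_{q^6}\theta_{q^{21}}$ combination; I would confirm this by matching $q$-expansions to high order. Similarly, for~\eqref{A237M} I would apply $\mL_{-1}$ to $A_{{\bf 3}_1^-,x}=f(q,X)$ from Proposition~\ref{prop3}. That yields a false-theta-type sum, which should be compared with Hikami's known identity expressing $\CM(q)$ as a mock/false theta series. The analogous expansion of $\CM$ I would obtain via a Bailey pair / Fine-type transformation.

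The asymptotic statement would follow by commuting the formal $h\to0$ expansion with $\mL_{-1}$, using the expansion $\hat J_{{\bf 3}_1^-,x}\sim\Phi_0^{{\bf 3}_1^-}$ and~\eqref{lcb}. Ohtsuki's series is by definition $\mL_{-1}$ of the MMR-expanded Jones function, computed Gaussian-termwise in $w$ (as in~\cite{bbl}). The $B$ term is $\mathcal O(\tilde q)$-suppressed after $S$-duality, so it does not contribute. I would check the first coefficients $-2h-13h^2$ explicitly.

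The main obstacle is the closed-form identity for $A(q)=\CM(q)$. It requires a nontrivial $q$-series transformation, which I would attempt via the Bailey lemma applied to $\mathscr P_k$, with numerical verification as backup.
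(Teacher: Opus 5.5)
Your derivation of the formula for $\hat\tau_{\Sigma(2,3,7)}(q,z)$ and of the two $z=1$ identities follows the paper. You use Proposition~\ref{tauhat-gen} with $p=r=1$, $\beta=2$ and $D_k^{(1,1)}$. Then you use \eqref{lcb}, combined with Corollary~\ref{cor-31} for $B$ (a rank-two theta lattice sum split into residue classes) and with Proposition~\ref{prop3} for $A$. For $A$ your route is more explicit than the paper's appeal to Gukov--Manolescu, and it works. Since $\mL_{-1}[(X^{1/2}-X^{-1/2})(X^{m/2}-X^{-m/2})]=2q^{(m+1)^2/4}-2q^{(m-1)^2/4}$, one gets
\[
q^{-1}\mL_{-1}\bigl[(X+X^{-1}-2)f(q,X)\bigr]=q^{-\frac{1}{168}}\sum_{m\ge0}\varepsilon(m)\bigl(q^{(7m+6)^2/168}-q^{(7m-6)^2/168}\bigr)=q^{-\frac{1}{168}}\tilde\Psi^{(1)-(13)-(29)+(41)}_{42}(q).
\]
So the only nontrivial input is Hikami's identity \eqref{eq:cmq=psi-mock}, which you can cite rather than re-derive by a Bailey-pair argument.

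The gap is in the asymptotic statement. You cannot simply commute the $h\to0^+$ expansion with $\mL_{-1}$. Written in $X=\re^{w}$, $\mL_{-1}$ is a Gaussian average over $w\in\ri\mathbb{R}$ of width $\sqrt h$. It therefore probes $w=O(\sqrt h)$, i.e.\ $|x|\sim h^{-1/2}$, and also the rest of the circle $|X|=1$, where the MMR coefficients of ${\bf 3}_1^-$ have poles (at $X+X^{-1}=1$). Your input $\hat J_{{\bf 3}_1^-,x}\sim\Phi_0^{{\bf 3}_1^-}$ is only a fixed-$w$ formal statement, and the paper says explicitly that no estimates uniform in $w$ near $0$ are available. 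In addition, the equality of the Gaussian transform of the MMR series with the Ohtsuki series is a theorem (Rozansky, Beliakova--Blanchet--L\^e), not a definition.

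The paper avoids this interchange by working with $\hat\tau$ itself:
\begin{itemize}
\item the $q^2$-Borel transform of \eqref{z-z} is $2(1-q^{-1})\,{}_2\phi_1[q^{3/2},-q^{3/2};0;q,t/q]$;
\item Watson's representation turns this into the Mellin--Barnes integral \eqref{eq:I-vertical};
\item the McIntosh/residue analysis of Subsection~\ref{prop-as}, at the poles $s=-\ri k$ with $k\ge1$, reproduces the $h$-expansions of the summands $-2q^{-m(m+2)-1}(q^{m+1};q)_{m+1}$ of \eqref{z-z}.
\end{itemize}
Each summand is $O(h^{m+1})$, and their sum is the Ohtsuki series; for instance, $m=0,1$ give $-2h-13h^2+\cdots$.

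A minor point: the $\log(-1)B_{\Sigma(2,3,7)}/h$ term is of order $\tq^{1/168}$ up to powers of $h$ (cf.\ \eqref{p-discont}), not $\mathcal{O}(\tq)$. It is still beyond all orders.
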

 %

 %

This result of the $q^2$-Borel--Laplace resummation is closely related to previous results. Let us introduce the false
  theta functions:
\be
\tilde \Psi^{(a)}_p (q)  :=  \sum_{n=0}^\infty \psi^{(a)}_{2p}(n) q^{\frac{n^2}{4p}} \,  \,  \in q^\frac{a^2}{4p}\,\IZ[[q]],
\label{Psis}
\ee
which are Eichler integrals of weight-$\tfrac{3}{2}$ vector-valued modular forms.
Here,
\begin{equation}
 \psi^{(a)}_{2p}(n)  =  \left\{
\begin{array}{cl}
 \pm 1, & n\equiv \pm a\mod 2p\,, \\
0, & \text{otherwise}.
\end{array}\right.
\end{equation}
Let us also introduce the notation \cite{hikami2}
\be
\tilde\Psi^{n_a (a) + n_b (b) + \cdots}_p (q) \; := \;
n_a \tilde\Psi^{(a)}_p (q)
+ n_b \tilde\Psi^{(b)}_p (q)
+ \ldots.
\label{linearPsitilde}
\ee
Then, for $\CM(q)$, defined in~\eqref{mq}, we have
\be\label{eq:cmq=psi-mock}
\CM(q) =q^{-{1 \over 168}} \tilde\Psi^{ (1) - (13) -(29) +(41)}_{42} (q),
\ee
and the $q$-series in the right hand side of~\eqref{eq:cmq=psi-mock} is precisely the false theta function associated to this
three-manifold in~\cite{hikami1}. The same theta function is the $\hat Z$-invariant of this manifold, see, for example~\cite{MR4233201}.

As in the previous knot examples, it is interesting to compare the result of the $q^2$-Borel--Laplace resummation of~\eqref{z-z} with the conventional Borel resummation
of the Ohtsuki series. When $h>0$, $\varphi_{\Sigma(2,3,7)}(h)$ is not Borel summable and one has to consider lateral
Borel resummations. An explicit formula for these was obtained in \cite{gmp}.
Their real parts agree exactly with the real part of the $q^2$-Borel--Laplace resummation:
\be
{\rm Re}\, s_\pm (\varphi_{\Sigma(2,3,7)})(h)={\mathcal M}(q).
\ee
The ambiguous, imaginary parts can also be written in terms of false theta functions, as follows:
\be
\label{p-discont}
\ba
{\rm Im} \, s_\pm (\varphi_{\Sigma(2,3,7)})(h)&=\pm  2 {\sqrt{2 \pi \over h}}q^{-1/168} \biggl[ \frac{\sin \left(\frac{\pi }{42}\right)-\cos \left(\frac{4 \pi
   }{21}\right)}{\sqrt{21}}  \tilde \Psi^{(1) - (13) - (29) + (41)}_{42} (\tq)\\
& \qquad \qquad\qquad+\frac{\sin \left(\frac{5 \pi }{42}\right)+\cos \left(\frac{\pi
   }{21}\right)}{\sqrt{21}}   \tilde\Psi^{(5) + (19) + (23) + (37)}_{42} (\tq) \\
   & \qquad \qquad\qquad+ \frac{\cos \left(\frac{2 \pi }{21}\right)+\cos \left(\frac{5 \pi
   }{21}\right)}{\sqrt{21}} \tilde\Psi^{(11) + (17) + (25) + (31)}_{42} (\tq) \biggr].
  \ea
 \ee
This is an exponentially small correction in $h$. By considering the $S$-duality transformation of (\ref{eq:H-theta-constant-form}) 
one can verify that, as in the case of the trefoil knot, the r.h.s. of (\ref{p-discont}) agrees with $\pm (\pi /h)B_{\Sigma(2,3,7)}(q)$ at leading order in $\tq$, but it is a different function of $h$.

\subsubsection{Manifold $\overline{\Sigma(2,3,7)}$} Our next example is the same manifold $\Sigma(2,3,7)$, but with opposite orientation.
The invariant is obtained simply by changing $h \to -h$, or equivalently, $q\to q^{-1}$. It can also be obtained by doing $-1$ surgery on the figure-eight knot, and one finds
\begin{equation}
  \tau_{\overline{\Sigma(2,3,7)}}(q)
  =
  2\sum_{m\geq0}
 \frac{(q^{m+1};q)_{m+1}}{\gamma_{q}(-m)},
  \label{eq:extra-f1-def}
\end{equation}
where we have introduced an overall $-1$ sign for convenience.
The corresponding series (\ref{tauz-1}) is divergent when $q<1$ and requires a
deformed Borel--Laplace resummation. The proof of the following result is contained in Appendix~\ref{app:B}.

\begin{lemma}\label{lem:analytic-bar-sigma237}  The analytic function of $\overline{\Sigma(2,3,7)}$, obtained by $-1$ surgery on the figure-eight knot, is given by
\be
\hat \tau_{\overline{\Sigma(2,3,7)}}(q,z)= A_{\overline{\Sigma(2,3,7)}}(q,z)+ {\log z \over h} B_{\overline{\Sigma(2,3,7)}}(q,z),
\ee
where
\be
\ba
 A_{\overline{\Sigma(2,3,7)}}(q,z)
 &={2\over(q;q)_\infty^3}
 \sum_{k\geq0}{(-1)^kq^k\mathscr{P}_k(q)\over(q;q)_k}
 \sum_{n\in\mathbb Z}\gamma_q(-n)
 {k+n\over1-zq^{k+n}},\\
 B_{\overline{\Sigma(2,3,7)}}(q,z)
 &=-{2\over(q;q)_\infty^3}
 \sum_{k\geq0}{(-1)^kq^k\mathscr{P}_k(q)\over(q;q)_k}
 \sum_{n\in\mathbb Z}{\gamma_q(-n)\over1-zq^{k+n}},
 \ea
 \label{eq:bar237AB}
 \ee
 with $\mathscr{P}_k(q)$ defined in~\eqref{eq:Pk-definition}.

 At $z=1$, this specializes to
 \begin{equation}
 \hat \tau_{\overline{\Sigma(2,3,7)}}(q)=A_{\overline{\Sigma(2,3,7)}}(q)+{1\over h}B_{\overline{\Sigma(2,3,7)}}(q),
 \label{eq:A-B-at-one}
\end{equation}
where
\be
\ba
A_{\overline{\Sigma(2,3,7)}}(q)
 &={2\over(q;q)_\infty^3}
 \sum_{k\geq0}{(-1)^kq^k\mathscr{P}_k(q)\over(q;q)_k}
 \sum_{\substack{n\in\mathbb Z\\n\neq-k}}
 \gamma_q(-n){k+n\over1-q^{k+n}}\\
 B_{\overline{\Sigma(2,3,7)}}(q)
 &={2\over(q;q)_\infty^3}
 \sum_{k\geq0}{q^{k(k+1)/2}\mathscr{P}_k(q)\over(q;q)_k}.
 \ea
 \label{eq:AB-residue-series}
 \ee
 Additionally, one has
 \be
 \label{Brogers}
 B_{\overline{\Sigma(2,3,7)}}(q)=2{\theta_{q^7}(q^5)\over(q;q)_\infty^4}.
 \ee

 Finally, one has the formal asymptotic expansion
\be
\label{as237bar}
\hat \tau_{\overline{\Sigma(2,3,7)}}(q) \sim \varphi_{\overline{\Sigma(2,3,7)}}(h), \qquad h\rightarrow 0^+,
\ee
where $\varphi_{\overline{\Sigma(2,3,7)}}(h)=\varphi_{\Sigma(2,3,7)}(-h)$ is the Ohtsuki series of $\overline{\Sigma(2,3,7)}$.
   \end{lemma}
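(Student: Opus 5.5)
We derive the formula from Proposition~\ref{tauhat-gen} together with the known resummation $\hat{\mH}_{{\bf 4}_1}(q,z)=\rme{q}(z)$ from~\eqref{hatH41}. Here $\beta=1$. The series~\eqref{eq:extra-f1-def} is $2\,\tau_{M_{{\bf 4}_1}}(q,z)$ with $p=r=1$, so by~\eqref{tauz-2} it equals $\Fsurg_{-1}(q,z)\had_z\me{q}(z)$ up to the sign convention. Lemma~\ref{sm-wrt} with the Example giving $D_k^{(1,1)}(q)=-2(-1)^k\mathscr P_k(q)/(q;q)_k$ provides the residues.

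\textbf{Step 1: the resummed function.} We need the hypothesis~\eqref{eq:H-uniform-Laplace} for $\widetilde{\mH}_{{\bf 4}_1}(q,z)=(-z;q)_\infty^{-1}$. For $z$ off the negative axis, $|(-q^\ell zt;q)_\infty|^{-1}$ is bounded uniformly in $\ell$ and $t\ge0$, and $(-t;q)_\infty^{-1}$ is integrable, so the hypothesis holds. Proposition~\ref{tauhat-gen} then gives
\[
\hat\tau_{\overline{\Sigma(2,3,7)}}(q,z)=-\sum_{k\ge0}q^kD_k^{(1,1)}(q)\,\rme{q}(q^kz),
\]
with the overall sign fixed by the convention in~\eqref{eq:extra-f1-def}. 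Substituting~\eqref{eq:q-b-l-resum-mu_q} and $\log(q^kz)/\log q=\log z/\log q+k$, we obtain a double sum. Its $\log z/\log q=-\log z/h$ part is $B$, and the remaining $k+n$ part is $A$, which gives~\eqref{eq:bar237AB}. Absolute convergence follows from $\gamma_q(-n)$ decay and $D_k=\mathcal O(k)$.

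\textbf{Step 2: the limit $z\to1$.} Only the terms with $n=-k$ are singular. In $A$ the numerator $k+n$ vanishes at $n=-k$, so that term is $\lim (k+n+L)/(1-zq^{k+n})$ with $L=\log z/\log q$. Combined with $L\cdot(1-z)^{-1}$, this gives the finite limit $(\log z/\log q)/(1-z)\to 1/h$. This term produces exactly $B(q)/h$, while the other terms give $A(q)$ with $n\neq-k$. The coefficient is $\gamma_q(k)(-1)^kq^k/(q;q)_k$, and since $\gamma_q(k)(-1)^kq^k=q^{k(k+1)/2}$, this yields the stated $B(q)$. The other part of $B(q,z)$ at $z=1$ must cancel, and I would check that this is consistent with the sign in the statement.

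\textbf{Step 3: the identity~\eqref{Brogers}.} This is the main obstacle. I would expand $\mathscr P_k$ and rewrite $\sum_k q^{k(k+1)/2}\mathscr P_k/(q;q)_k$ as a double sum over $j$ and $k-2j$. The sum over $k-2j$ collapses by Euler's identity to an infinite product, leaving a single Rogers--Ramanujan/Slater-type sum in $j$ with $(q;q^2)_j$. I would then look it up in Slater's list of identities with modulus $7$ (Rogers--Selberg type) to get $\theta_{q^7}(q^5)/(q;q)_\infty$. If no direct match exists, I would verify both sides satisfy the same $q$-difference system, in the style of Andrews' proofs of the mod-$7$ identities.

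\textbf{Step 4: asymptotics.} This step can be done in two ways. The first is to use~\eqref{lcb} with the Mellin--Barnes expansion of $\hat J_{{\bf 4}_1,x}$ from Subsection~\ref{prop-as}: apply $\mL_{-1}$ termwise to the MMR series, which reproduces the Ohtsuki series by the known relation between the Laplace transform of the MMR expansion and the Ohtsuki series. The second is to write a Mellin--Barnes integral for the $q$-Borel--Laplace resummation directly, as in~\eqref{eq:41cont-int}, and collect residues at $s=-\ri k$. Either gives $\varphi_{\Sigma(2,3,7)}(-h)$ formally.
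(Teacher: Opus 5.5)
Your proposal is correct and follows the paper's proof. The ingredients match: Proposition~\ref{tauhat-gen} with $\hat{\mH}_{{\bf 4}_1}=\rme{q}$; the $n=-k$ terms producing $B/h$ via $\lim_{z\to1}\log z/(1-z)=-1$; the substitution $k=2j+\ell$ plus Euler's identity, reducing \eqref{Brogers} to the second Rogers--Selberg identity $\sum_{j}q^{2j^2+2j}/\bigl((q^2;q^2)_j(-q;q)_{2j}\bigr)=(q^2,q^5,q^7;q^7)_\infty/(q^2;q^2)_\infty$; and a Mellin--Barnes integral (your second option) for the asymptotics.

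There are only minor slips. \eqref{eq:extra-f1-def} is $-\tau_{M_{{\bf 4}_1}}$ (the paper's extra overall sign), not $2\tau_{M_{{\bf 4}_1}}$, though your final sign is right. The $n\neq-k$ part of $\frac{\log z}{h}B(q,z)$ simply vanishes at $z=1$ rather than cancelling. Finally, $(q;q^2)_j$ cancels against $(q;q)_{2j}=(q;q^2)_j(q^2;q^2)_j$ instead of surviving in the $j$-sum.
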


The following conjecture relates the $q$-series $A_{\overline{\Sigma(2,3,7)}}(q)$ to a mock theta function. 
  \begin{conjecture} The $q$-series $A_{\overline{\Sigma(2,3,7)}}(q)$ is given by
 \be
 A_{\overline{\Sigma(2,3,7)}}(q)= {\mathcal F}_0(q)+\Sigma_{\overline{\Sigma(2,3,7)}}(q),
 \ee
 where
\begin{equation}
  \mathcal F_0(q)
  =
  \sum_{n\geq0}{q^{n^2}\over(q^{n+1};q)_n}
  \label{eq:extra-F0-def}
\end{equation}
is Ramanujan's seventh-order mock theta function, and
 \be
 \label{sigma1conj}
  \Sigma_{\overline{\Sigma(2,3,7)}}(q)
  =
\left[ {1-25E_2(q)\over48} -{7 \over 2} \,q{\rd \over\rd q} \right]  B_{\overline{\Sigma(2,3,7)}}(q).
  \ee
\end{conjecture}
This conjecture can be motivated in various ways. First of all, the appearance of the mock modular form 
$\CF_0(q)$ is easy to see from the Mellin--Barnes representation of $ \hat \tau_{\overline{\Sigma(2,3,7)}} (q,z)$, 
given by
\begin{equation}
 \hat \tau_{\overline{\Sigma(2,3,7)}} (q,z)
  =
  {\pi \over\log q\,(q;q)^2_\infty}
   \int_{\IR-\ii\epsilon}
  z^{\ri s-1}
  {
  (q^{1-\ri s};q)_\infty(q^{\ri s};q)_\infty
  \over
  \sinh^2(\pi s)
  (-q^{\ri s};q)_\infty(q^{1+2\ri s};q^2)_\infty}
  \,\rd s.
  \label{eq:extra-f1-vertical-integral}
\end{equation}
We can evaluate (\ref{eq:extra-f1-vertical-integral}) by closing the contour
in the upper half-plane. There are poles at $s=\ri \IZ_{\ge 0}$, and two additional pole families at
\be
\ba
  s&=-{(2a+1)\pi\over h}+\ri r,
  &
  a&\in\IZ,\quad r\geq0,
  \\
  s&=-{\pi a\over h}
  +\ri\left(r+{1\over2}\right),
  &
  a&\in\IZ,\quad r\geq0.
  \ea
  \label{eq:extrapoles7}
\ee
 The contribution of the poles at $s=\ri \IZ_{\ge 0}$ gives precisely (\ref{eq:extra-F0-def}). To obtain a conjecture for $\Sigma_{\overline{\Sigma(2,3,7)}}(q)$ it is better to use the Laplace transform. The result (\ref{lcb}) implies in this case
\be
\ba
- \mL_{-1} \left[ (X+X^{-1}-2) A_{{\bf 4}_1,x}(q) \right]&= {\mathcal F}_0(q)+\Sigma_{\overline{\Sigma(2,3,7)}}(q), \\
 -  \mL_{-1} \left[ (X+X^{-1}-2) B_{{\bf 4}_1,x}(q) \right] &= B_{\overline{\Sigma(2,3,7)}}(q).
  \ea
  \ee
As shown in (\ref{a-easy}), the $(q,X)$-series $A_{{\bf 4}_1,x}(q)$ is the sum of two $(q,X)$-series, but they 
are separately singular at $X=1$. The problem of calculating the Laplace transform of such singular $(q,X)$-series 
was considered in \cite{MR4233201, park-inverted} and follow-up works, with the goal of constructing 
$\hat Z$-invariants of manifolds obtained by surgeries on the figure-eight knot. In that case, one has to first 
expand in $X$ around $X=0$. In the case at hand, the resulting formal series are in the space $\CA_{1,1}$ defined in \eqref{eq:laplace-domain}, so one can apply the Laplace transform. One has \cite{park-inverted}
 \be
 -\mL_{-1} \left[ (X+X^{-1}-2) \CC(q,X) \right]= \CF_0(q).
 \ee
 To determine $\Sigma_{\overline{\Sigma(2,3,7)}}(q)$, one has to calculate
 \be
 \mL_{-1} \left[ (X+X^{-1}-2)\Phi(q,X) B_{{\bf 4}_1,x}(q) \right].
 \ee
 Since $\Sigma_{\overline{\Sigma(2,3,7)}}(q)$ is obtained by applying the Laplace transform to $B_{{\bf 4}_1,x}(q)$, after insertion of
  the quasi-modular Jacobi form $\Phi(q,X)$, this suggests that $\Sigma_{\overline{\Sigma(2,3,7)}}(q)$ is a linear combination of $B_{\overline{\Sigma(2,3,7)}}(q)$, $q \rd B_{\overline{\Sigma(2,3,7)}}(q)/\rd q$ and $E_2 (q) B_{\overline{\Sigma(2,3,7)}}(q)$. After fixing the coefficients with the very first terms of the $q$ expansion, one finds (\ref{sigma1conj}).

  Let us now comment on the relationship to other approaches. It is commonly stated that the $\hat Z$-invariant of $\overline{\Sigma(2,3,7)}$
  is $\CF_0(q)$ \cite{3dmod,MR4233201}. This $q$-series does not have a well-defined radial limit when $q\to 1^{-}$, and, in particular, does not reproduce the Ohtsuki series for this manifold. As we have seen above,
  the deformed Borel--Laplace resummation of~\eqref{eq:extra-f1-def} differs from $\CF_0(q)$ by a non-trivial correction. In particular, this correction guarantees the asymptotic expansion (\ref{as237bar}). From the point of view of the Laplace transform, this correction is inherited from the correction to $\CC(q,X)$ in the analytic invariant of the figure-eight knot.

 \subsubsection{Manifold $\overline{\Sigma(2,3,5)}$} Another interesting example is the Poincar\'e sphere with reversed orientation,
 $\overline{\Sigma(2,3,5)}$. This manifold can be obtained by $+1$ surgery on the trefoil
 knot ${\bf 3}_1^-$, so in this case $p=-1$, $r=1$ and Proposition \ref{tauhat-gen} cannot be
 used to calculate the corresponding analytic function. In fact, (\ref{lcb}) cannot be used, since
 the Laplace transform of $\hat J_{ {\bf 3}_1^-,x}(q)$ with a positive surgery is not well-defined as
 a $q$-series. However, one can proceed by direct calculation, as follows. 
 
 The generating series
 (\ref{tauz-1}) is in this case
 \be
\tau_{\overline{\Sigma(2,3,5)}}(q, z)
  =
  2 q^{-2}
  \sum_{m\geq0}
  (-1)^m q^{-(3m^2+7m)/2}(q^{m+1};q)_{m+1}z^m,
  \label{eq:extra-f3-def}
\end{equation}
where we have included an extra factor of $q^{-1}$ for convenience, as compared to the definition in (\ref{wrt-laplace}). 
This series is clearly divergent for $q<1$. By performing a $q^3$-Borel--Laplace resummation,
one obtains the following Mellin--Barnes representation: 
\begin{equation}
\begin{aligned}
\hat \tau_{\overline{\Sigma(2,3,5)}}(q,z)
={}&{\pi q^{-2}(-q;q)_\infty(q;q^2)_\infty
\over 3\log q\,(q^3;q^3)_\infty^3}
\int_{\IR-\ii\epsilon}
\left({z\over q^2}\right)^{\ri s-1}
\\
&\quad\times
{\theta_{q^3}(q^{3\ri s})
\over
\sinh^2(\pi s)(-q^{\ri s};q)_\infty
(q^{1+2\ri s};q^2)_\infty}
\,\rd s.
\end{aligned}
\label{eq:poincare-MB}
\end{equation}
When $z=1$ one finds the following $q$-series expression, 
\be
\label{poincare}
\hat \tau_{\overline{\Sigma(2,3,5)}}(q)=A _{\overline{\Sigma(2,3,5)}}(q)  + {1\over h} B _{\overline{\Sigma(2,3,5)}}(q),
\ee
where
\be
A _{\overline{\Sigma(2,3,5)}}(q)=\chi_0(q)+  \Sigma_{\overline{\Sigma(2,3,5)}}(q),
  \ee
and
 \begin{equation}
  \chi_0(q)
  =
  \sum_{n\geq0}{q^n\over(q^{n+1};q)_n}
  \label{eq:chi0-def}
\end{equation}
is Ramanujan's fifth-order mock theta function. As in the examples above, it is obtained 
from the residues of the integrand of \eqref{eq:poincare-MB} at the poles
$s=\ri n$, $n\in \IZ_{\ge 0}$. The
series $\Sigma_{\overline{\Sigma(2,3,5)}}(q)$ and $B _{\overline{\Sigma(2,3,5)}}(q)$ come from additional poles, and at the very first orders one finds
\be
\ba
 \Sigma_{\overline{\Sigma(2,3,5)}}(q)
  &=
  -{1\over3}-{11\over3}q-{5\over3}q^2
  -{28 \over3}q^3-{41\over3}q^4-17 q^5+\CO(q^6), \\
 B _{\overline{\Sigma(2,3,5)}}(q)  &=
  {2\over3}+{4\over3}q+{4\over3}q^2
  +{14\over3}q^3+{22\over3}q^4+{28\over3}q^5+\CO(q^6).
  \ea
  \label{eq:sigma3s}
\ee
The series $\chi_0(q)$ (or rather $2-\chi_0(q)$) is the $\hat Z$-invariant for 
$\overline{\Sigma(2,3,5)}$ \cite{3dmod}. However, as in the example of $\overline{\Sigma(2,3,7)}$, 
it does not have a well-defined radial limit as $q \to 1^{-}$. It can be checked that the corrections in (\ref{poincare}) 
make it possible to obtain the correct perturbative series $2h-9 h^2+ \CO(h^3)$ as $h\to 0^+$.

\subsection{A hyperbolic closed three-manifold}

Our final example is the hyperbolic three-manifold $M_{-1/2}$ obtained by $-1/2$ surgery on the figure-eight knot. This manifold was considered, for example, in~\cite{MR4233201} and studied in great detail in~\cite{wheeler}. In this case, one can show that 
\be
 \CF_{-1/2,m}(q)
 =-2q^{1/8}(q;q)_{2m+1}
 \sum_{\ell=0}^{m}
 \frac{q^{\ell(\ell+1)}}
 {(q;q)_\ell(q;q)_{m-\ell}},
 \ee
 and the WRT invariant is then (see e.g. \cite{wheeler}):
\be
  \tau_{M_{-1/2}}(q)
  =
  2\sum_{m\geq0}\frac{(q;q)_{2m+1}}{ \gamma_q(-m)}\sum_{\ell=0}^m
  {q^{\ell(\ell+1)}\over
 (q;q)_\ell(q;q)_{m-\ell}},
  \label{eq:corr-start}
\end{equation}
where we have included an overall factor $-q^{-1/8}$ for convenience.

\begin{lemma}\label{lem:analytic-m-1/2} The analytic function of $M_{-1/2}$, obtained by $-1/2$ surgery on the figure-eight knot, is given by
the Mellin--Barnes integral
 \be
 \label{mb-M12}
  \hat \tau_{M_{-1/2}}(q,z)=
  {\pi\over \log q\,(q;q)_\infty^2}
  \sum_{\ell\geq0}
  {q^{\ell(\ell+1)}\over(q;q)_\ell}
  \int_{\IR-\ri\epsilon} {(q^{1-\ell-\ri s};q)_\infty(q^{\ri s};q)_\infty z^{\ri s -1+\ell}
  \over
  \sinh^2(\pi s)
  (-q^{\ell+\ri s};q)_\infty
  (q^{2\ell+1+2\ri s};q^2)_\infty}
  \rd s,
  \ee
which can be evaluated to
\be
\hat \tau_{M_{-1/2}}(q,z)= A_{M_{-1/2}}(q,z)+{\log z \over h}  B_{M_{-1/2}}(q,z),
\ee
where
\be
\ba
 A_{M_{-1/2}}(q,z)
 &=-{1\over(q;q)_\infty^3}
 \sum_{k\geq0} q^k \mathscr{Q}_k(q)
 \sum_{n\in\mathbb Z}\gamma_q(-n)
 {k+n\over1-zq^{k+n}},\\
 B_{M_{-1/2}}(q,z)
 &={1\over(q;q)_\infty^3}
 \sum_{k\geq0} q^k \mathscr{Q}_k(q)
 \sum_{n\in\mathbb Z}{\gamma_q(-n)\over1-zq^{k+n}},
 \ea
 \label{eq:M12AB}
 \ee
\be
\mathscr{Q}_k(q)= q^{-1/8} D_k^{(1,2)}(q),
\ee
and the coefficient $D_k^{(1,2)}(q)$ is given in (\ref{eq:residue-12}). 

When $z=1$, we have
 \be
 \hat \tau_{M_{-1/2}}(q)
 =A_{M_{-1/2}}(q)+{1\over h}B_{M_{-1/2}}(q),
 \label{eq:hyper-AB}
\end{equation}
where
\be
\ba
A_{M_{-1/2}}(q)
 &=-{1\over (q;q)_\infty^3}
 \sum_{k\geq0}q^k \mathscr{Q}_k(q)
 \sum_{\substack{n\in\mathbb Z\\n\neq-k}}
 \gamma_q(-n){k+n\over1-q^{k+n}},\\
 B_{M_{-1/2}}(q)
 &=-{1\over (q;q)_\infty^3}
 \sum_{k\geq0}\gamma_q(-k)\mathscr{Q}_k(q).
 \ea
 \label{eq:hyper-AB-series}
 \ee

One has the formal asymptotic expansion
\be
\label{asM12}
\hat \tau_{M_{-1/2}}(q) \sim \varphi_{M_{-1/2}}(h), \qquad h\rightarrow 0^+,
\ee
where
\be
\varphi_{M_{-1/2}}(h)=2h- 25 h^2+ {1621 \over 3} h^3+ \CO(h^4)
\ee
is the Ohtsuki series of $M_{-1/2}$.
   \end{lemma}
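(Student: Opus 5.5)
Since $H_{{\bf 4}_1,m}=1$, the generating series is $\mH_{{\bf 4}_1}(q,z)=\me{q}(z)$. The series \eqref{eq:corr-start}, promoted to a power series in $z$, is the Hadamard product $\Fsurg_{-1/2}(q,z)\had_z\me{q}(z)$ of \eqref{tauz-2}, up to the normalisation $-q^{-1/8}$ already absorbed. The plan is to apply Proposition~\ref{tauhat-gen} with $\beta=1$, $p=1$, $r=2$. The inputs are Lemma~\ref{sm-wrt}, which shows that $\Fsurg_{-1/2}$ is simple meromorphic with residues $-D_k^{(1,2)}(q)$ at $z=q^{-k}$, and Example~\ref{ex:me_q(z)}, which gives $\hat{\mH}_{{\bf 4}_1}=\rme{q}$ explicitly. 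The uniform bound \eqref{eq:H-uniform-Laplace} holds because $\mathcal B_q\me{q}(z)=(-z;q)_\infty^{-1}$ is bounded by $1$ on the ray $z\R_{\ge0}$ for $z>0$. For general $|\arg z|<\pi$, $|(-q^\ell zt;q)_\infty|^{-1}$ is bounded uniformly in $\ell$ away from the negative axis, and $\int_0^\infty(-t;q)_\infty^{-1}\rd t<\infty$ gives the required bound.

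Proposition~\ref{tauhat-gen} then gives
\[
\hat\tau_{M_{-1/2}}(q,z)\propto\sum_{k\ge0}q^kD_k^{(1,2)}(q)\,\rme{q}(q^kz).
\]
Inserting \eqref{eq:q-b-l-resum-mu_q} with $\log(q^kz)/\log q=k+\log z/\log q$ and writing $\log z/\log q=-\log z/h$ splits the sum into a part linear in $\log z$ and a remainder. These are exactly $B_{M_{-1/2}}(q,z)$ and $A_{M_{-1/2}}(q,z)$ of \eqref{eq:M12AB}, with $\mathscr Q_k=q^{-1/8}D_k^{(1,2)}$ absorbing the normalisation. The constant and sign must be tracked against the overall factor $-q^{-1/8}$.

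For the Mellin--Barnes form \eqref{mb-M12}, I will instead expand $\CF_{-1/2,m}$ as a sum over $\ell$. I will write $(q;q)_{2m+1}/(q;q)_{m-\ell}$ and shift $m=\ell+j$, so that each $\ell$ gives a ${}_r\phi_s$-type series in $j$ with prefactor $z^\ell q^{\ell(\ell+1)}/(q;q)_\ell$. Its $q$-Borel transform is then represented by Watson's integral \eqref{eq:watson-ac}. I will apply $\mathcal L_q$ through $R_s(q)$ from \eqref{eq:t-s-int}, exactly as in \eqref{eq:41cont-int}. The quotients $(q;q)_{2m+1}$ become $(q^{2\ell+1+2\ri s};q^2)_\infty(-q^{\ell+\ri s};q)_\infty$ in the denominator.

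At $z=1$ the factor $\log z$ vanishes, but the term $n=-k$ in $A(q,z)$ has a $0/0$ limit. I will use
\[
\lim_{z\to1}\frac{\log z/\log q+k+n}{1-zq^{k+n}}=\frac{1}{h}
\]
when $n=-k$. This gives \eqref{eq:hyper-AB}, with $B_{M_{-1/2}}(q)$ collecting $\gamma_q(k)$, which up to sign conventions for $\gamma_q(-n)$ with $n=-k$ is the stated $\gamma_q(-k)$.

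The asymptotics \eqref{asM12} should follow by closing the Mellin--Barnes integral on the residues at $s=-\ri k$ and applying \eqref{qpoch-z}, as in Subsection~\ref{prop-as}. This reproduces term by term the $h$-expansion of \eqref{eq:corr-start}, which is the Ohtsuki series.

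The main obstacle is this last step, since the $\ell$-sum and the Gaussian factors must be reorganised so that the residue sum matches the Ohtsuki series. I would first verify it order by order to $h^3$, and only then try a generating-function argument.
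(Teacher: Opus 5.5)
Your proposal is correct and follows the paper's route. Proposition~\ref{tauhat-gen} together with Example~\ref{ex:me_q(z)} gives the $A+\frac{\log z}{h}B$ form, Watson's representation of the $q$-Borel transform taken term by term in $\ell$ together with $R_s(q)$ from \eqref{eq:t-s-int} gives \eqref{mb-M12}, and the Mellin--Barnes residue analysis of Subsection~\ref{prop-as} gives the asymptotics. One bookkeeping correction at $z=1$: the $0/0$ term sits in $\frac{\log z}{h}B_{M_{-1/2}}(q,z)$, since the $n=-k$ term of $A_{M_{-1/2}}(q,z)$ vanishes identically, and it yields $\gamma_q(-k)$ through the exact identity $q^k\gamma_q(k)=\gamma_q(-k)$, not through a sign convention.
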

See Appendix~\ref{app:B} for the proof.

As in the previous examples, calculation of the contour integral \eqref{mb-M12} by residues gives 
additional information and a comparison with
other analytic invariants. By closing the contour in the upper half-plane, we find poles at $s =\ri \IZ_{\ge 0}$, and at
\be
\ba
  s&=-{(2a+1)\pi\over h}+\ii(\ell+j),
  &
  a&\in\IZ,\quad j\geq0,
   \\
  s&=-{\pi a\over h}
  +\ii\left(\ell+j+{1\over2}\right),
  &
  a&\in\IZ,\quad \ell, j\geq0.
\ea
  \label{eq:corr-half-poles}
  \ee
 By summing over the residues at $z=1$, we obtain a decomposition
 \be
 \label{zm12}
A_{M_{-1/2}}(q)=\hat Z_0 \left(M_{-1/2}\right) +\Sigma_{M_{-1/2}}(q),
  \ee
where the first term comes from the contribution of the poles at $s=\ri \IZ_{\ge 0}$ and is given by
\begin{equation}
\ba
  \hat Z_0 \left(M_{-1/2}\right)&=
  \sum_{\ell,k\geq0}
  (-1)^\ell
  q^{(\ell^2+\ell-2\ell k+2k^2)/2}
  {(q;q)_{\ell+k}\over(q;q)_\ell(q;q)_{2k}}\\
  &= 1-q+2q^3-2q^6+q^9+3q^{10}+q^{11}
  -q^{14}-3q^{15}-q^{16}+\cdots.
  \ea
   \label{z0gm}
  \ee
The second term can be written explicitly but its structure seems to be complicated. For the first few terms, one finds
\be
 \Sigma_{M_{-1/2}}(q)=-1-5 q-22 q^2-78 q^3 -236 q^4-634 q^5 +\CO(q^6).
  \label{hyper-sigma}
\ee
The $q$-series (\ref{z0gm}) is the $\hat Z$-invariant for this manifold obtained in \cite{MR4233201}. The closed form above was obtained in \cite{wheeler}, where it was also noted that it does not have a radial limit as $q \to 1^{-}$. In contrast, the formal asymptotic expansion of our analytic function is given by the Ohtsuki series; as in the previous examples, 
the corrections coming from the
additional poles (\ref{eq:corr-half-poles}) are crucial for this. The series  $ \Sigma_{M_{-1/2}}(q)$, in principle, can be
computed by the Laplace transform, as in the case of $\overline{\Sigma(2,3,7)}$. It is given by
\be
  \Sigma_{M_{-1/2}}(q)= -q^{-1/8}  \mL_{-1/2} \left[ (X^{1\over2} -X^{-{1 \over 2}} )(X^{1\over4} -X^{-{1 \over 4} })
\Phi(q, X) B_{{\bf 4}_1,x}(q) \right].
 \ee
However, in this case we have not been able to find a closed formula for $\Sigma_{M_{-1/2}}(q)$. 

\section*{Acknowledgements}

We would like to thank Stavros Garoufalidis, Jie Gu and Campbell Wheeler for discussions. V.M. gratefully acknowledges the hospitality of the University of Geneva during September--October 2026, when part of this work was completed. We acknowledge the use of Mathematica for assistance with symbolic calculations, and of ChatGPT (Open AI) for assistance with computations, derivations, and text editing.

\noindent{\bf Funding.} This work is partially supported by the SNSF research program NCCR The Mathematics of Physics (SwissMAP), the SNSF grants no.~200021-232258, no.~200020-200400 and no.~2000-1-240084.

\appendix
\section{Some useful formulas}
\label{app:A}

\be
(a;q)_n=\prod_{i=0}^{n-1}(1-aq^i), \quad  (a;q)_{-n}=(aq^{-n};q)_n^{-1},\quad n\geq0.
\ee
\begin{equation}\label{eq:gamma}
\mg{q}{n}:=(-1)^n q^{n(n-1)/2}.
\end{equation}
\be
(a_1,\ldots,a_l;q)_k=\prod_{i=1}^l(a_i;q)_k,\quad
(a;q)_{k_1,\ldots,k_l}=\prod_{i=1}^l(a;q)_{k_i}.
\ee
\begin{equation}\label{eq:theta-q2}
\theta_q(z):=\sum_{n\in\Z}\gamma_q(n)z^n=(q,z,q/z;q)_\infty.
\end{equation}
\be\label{eq:phirs}
{_r\phi_s}\Bigl[
\begin{matrix}
 a_1,\dots,a_r\\
 b_1,\dots,b_s
\end{matrix};q,z
\Bigr]=\sum_{m\ge0}\frac{(a_1,\dots,a_r;q)_m}{(b_1,\dots,b_s,q;q)_m}\gamma_q(m)^{1+s-r}z^m.
\ee
\begin{equation}\label{Rsq}
R_s(q):=\int_0^\infty\frac{ t^{s}}{(-t;q)_\infty}\rd t=-\frac{(q^{-s};q)_\infty}{(q;q)_\infty}\frac{\pi}{\sin(\pi s)}=\frac{1}{(q;q)_{-s-1}}\frac{\pi}{\sin(-\pi s)}.
\end{equation}
\be
\int_{\ri\R-\epsilon}\frac{a^s}{\frac{2\ri}{\pi}\sin^2(\pi s)}\rd s=\frac{\log a}{a-1}.\label{integralA}
\ee

\section{Proofs}
\label{app:B}

\begin{proof}[Proof of Theorem~\ref{thm2}]

Let us write
\be\label{eq:41cont-int1}
\hat J_{{\bf 4}_1,x}(q,z)=
\frac{1}{2\log q}\frac{(q^{1+x},q^{1-x};q)_\infty}{(q;q)_\infty^2}I_{{\bf 4}_1,x}(q,z),
\ee
where
\be
I_{{\bf 4}_1,x}(q,z)=
\int_{\R-\ri\epsilon}\frac{(q^{\ri s},q^{1-\ri s};q)_\infty}{(q^{\ri s+x},q^{\ri s-x};q)_\infty}\frac{\pi z^{\ri s-1}}{(\sinh(\pi s))^2}\rd s. \label{integ1}
\ee
We can close the contour in the upper half-plane because the integrand on the contour $C_R=\{s| s=R\re^{\ri\phi}, \pi \ge \phi \ge 0\}$ behaves like
\be
\exp\left(-\frac{h}{2}(\Im s)^2+O(\Im s)\right),
\ee
which follows from the asymptotics
\be
\Re [\log(q^s;q)_\infty]=\frac{h}{2}(\Re s)^2-\frac{h}{2}\Re s +O(1). 
\ee
We use the corrected form of \cite[(4.3.6)]{MR2128719}; the sign of the linear term in the cited formula appears to be a typo.

There are three series of simple poles:
\be
s=\pm \ri x+\ri n+\frac{2\pi m}{h}\quad\mbox{and} \quad s=\ri n, \quad   n\in\Z_{\ge0},  \quad m\in\mathbb{Z}.
\ee
For the first two series of poles, $q^{\ri s}$ does not depend on $m$ and the sums over $n$ and $m$ factorize.
The poles $s=\ri x+\ri n+\frac{2\pi m}{h}$ give the following contribution to $I_{{\bf 4}_1,x}(q,z)$:
\be
-2\pi^2 \ri \left\{\sum_{n=0}^\infty
\frac{(q^{-x-n},q^{1+x+n};q)_\infty}{(q^{-2x-n};q)_\infty z^{n+1}} \mbox{Res}\left[\frac{1}{(q^{\ri s+x};q)_\infty}\right]_{s=\ri x+\ri n}\right\}\sum_{m\in\mathbb{Z}}\frac{z^{-x+\frac{2\pi \ri m}{h}}}
{\sin^2(\pi(x-\frac{2\pi \ri m}{h}))}.\label{contrib1}
\ee
Using
\be
\mbox{Res}\left[\frac{1}{(q^{\ri s+x};q)_\infty}\right]_{s=\ri x+\ri n}=\frac{\ri}{\log q} \frac{\gamma_q(-n)}{(q;q)_n(q;q)_\infty},
\ee
we rewrite \eqref{contrib1} as
\be
\frac{2\pi^2}{z\log q }\frac{\theta_q(q^{-x})}{(q;q)_\infty^2(q^{-2x};q)_\infty}\,
{_1\phi_1}\Bigl[
\begin{matrix}
 0\\
q^{1+2x}
\end{matrix}
;q,\frac{q^{1+x}}{z}\Bigr]\fn{2,q}(z,x). \label{Contrib1a}
\ee
The second contribution from the poles  $s=-\ri x+\ri n+\frac{2\pi m}{h}$ is obtained from \eqref{Contrib1a} by replacing
$x\to-x$. Finally, we have the contribution from the poles $s=\ri n$. These give
\be
2\pi \ri \sum_{n=0}^\infty \frac{(q^{1+n};q)_\infty}{(q^{-n+x},q^{-n-x};q)_\infty}\frac{\pi}{z^{n+1}}
\mbox{Res}\left[\frac{(q^{\ri s};q)_\infty}{\sinh^2(\pi s)}\right]_{s=\ri n}.\label{Contrib2}
\ee
Using
\be
\mbox{Res}\left[\frac{(q^{\ri s};q)_\infty}{\sinh^2(\pi s)}\right]_{s=\ri n}=-\frac{\ri\log q}{\pi^2}\frac{(q;q)_n(q;q)_\infty}{\gamma_q(-n)},
\ee
we rewrite \eqref{Contrib2} as
\be
\frac{2\log q}{z}\frac{(q;q)_\infty^2}{(q^x,q^{-x};q)_\infty}\,
{_2\phi_2}\Bigl[
\begin{matrix}
 0,q\\
q^{1+x},q^{1-x}
\end{matrix}
;q,\frac{q}{z}\Bigr].
\ee
Putting these contributions together, we obtain
\begin{align}
&I_{{\bf 4}_1,x}(q,z)=\frac{2\log q}{z}\frac{(q;q)_\infty^2}{(q^x,q^{-x};q)_\infty}\,
{_2\phi_2}\Bigl[
\begin{matrix}
 0,q\\
q^{1+x},q^{1-x}
\end{matrix}
;q,\frac{q}{z}\Bigr]+\\
\frac{2\pi^2\theta_q(q^{-x})}{z\log q(q;q)_\infty^2}
&\left\{ \frac{\fn{2,q}(z,x)}{(q^{-2x};q)_\infty}\,
{_1\phi_1}\Bigl[
\begin{matrix}
 0\\
q^{1+2x}
\end{matrix}
;q,\frac{q^{1+x}}{z}\Bigr]-
\frac{q^x\fn{2,q}(z,-x)}{(q^{2x};q)_\infty}\,
{_1\phi_1}\Bigl[
\begin{matrix}
 0\\
q^{1-2x}
\end{matrix}
;q,\frac{q^{1-x}}{z}\Bigr]\right\},\nonumber
\end{align}
and this leads to (\ref{J41-res}), which we have written in a form which is manifestly symmetric under $x \leftrightarrow -x$.
\end{proof}

\begin{proof}[Proof of Corollary~\ref{cor:hatJ4_1-evaluates}] In order to evaluate (\ref{J41-res}), we use the explicit result~\eqref{ident2} for the function $\fn{2,q}(z,x)$
for evaluation at $z=1$. To resolve the removable singularities 
appearing in the r.h.s. of~\eqref{ident2}, we set $z=1-\epsilon$ and we take the
limit $\epsilon\to0$. After simple calculations we obtain
\be
\label{f2z1}
\pi^2 \fn{2,q}(1,x)=h +\psi_q^{(1)}(x)+\psi_q^{(1)}(1-x),
\ee
where $\psi_q^{(1)}(x)$ is the first derivative of the $q$-digamma function and can be presented by the following
Lambert series
\be
\psi_q^{(1)}(x)=h^2\sum_{k=1}^\infty \frac{k q^{kx}}{1-q^k}.
\ee
The sum of the two Lambert series appearing in (\ref{f2z1}) can be expressed in terms of (\ref{quasi-mod})
\be
\label{Phi-mod}
\pi^2 \fn{2,q}(1,x)= h + h^2 \Phi(q, X)= (2 \pi \ri)^2 \Phi(\tq, \tX),
\ee
where $\tq$ is defined in (\ref{tq-def}) and
\be
\label{tX-def}
\tX = \re^{-{2 \pi \ri  \over h}\log X}=\re^{2\pi\ri x}.
\ee
After some simple algebra, we arrive at~\eqref{jcz}. 
\end{proof}

\begin{proof} [Proof of Corollary~\ref{cor-31}]

First we evaluate
$\fn{2,q}(-q^{-1},x)$ from (\ref{ident2}). By using the functional equation $\theta_q(q z)= -z^{-1} \theta_q(z)$, we find
\be
 \fn{1,q}(1/q,x) =
  \frac{h (q;q)_\infty^3}{\pi\theta_q(-1)} R(q^x), \qquad R(X)=
 X \frac{\theta_q(-X)}{\theta_q(X)}.
  \ee
From Jacobi's triple product identity for the theta function we find
\be
  X\frac{\partial}{\partial X}\log R(X)=1+2S(q,X),
  \ee
  where
\be
 S(q,X)
  =X\frac{(q^2;q^2)_\infty^4}{(q;q)_\infty^2}\frac{\theta_{q^2}(qX^2)}{\theta_{q^2}(X^2)}\label{S-func},
  \ee
  and we conclude that
  \be
  \label{ex-fq2}
  \fn{2,q}(-q^{-1},x)
  =
  \frac{h(q;q)_\infty^3}{\pi^2\theta_q(-1)}
  \frac{q^x\theta_q(-q^x)}{\theta_q(q^x)}
  \left(
    2h S(q,q^x)\pm \ri\pi \right),
    \ee
    where we have explicitly written down two choices of branch in
    \be
    \label{log-branch}
      \log(-1/q)=h\mp \ri\pi.
      \ee
It is easy to check that the functions $H_q(z)$, introduced in \eqref{HT-def}, satisfy
\be
H_q(z)+H_q(q/z)-1=\theta_q(z), \quad H_q(z)+zH_q(qz)=1.\label{H-prop}
\ee
Combining these equations together, we get
\be\label{H-prop1}
X^2H_q(qX^2)-H_q(q/X^2)+\theta_q(X^2)=0.
\ee
Using that
\be
R(X^{-1})=-X^{-2} R(X),\quad S(X^{-1})= -S(X),\quad \theta_q(-1)=2(q;q)_\infty(-q;q)_\infty^2,
\ee
and \eqref{S-func}, \eqref{HT-def},
we can write the second term of (\ref{anal3J}) as
      \be
      M(q,X) \pm \frac{\ri \pi}{2h}N(q,X),
\ee
where
\be
  M(q,X)
  =-\frac{q}{2(1-X)}\frac{\theta_q(-X)\theta_{q^2}(qX^2)}
{(q^2;q^2)_\infty\theta_q(X^2)}
 \left(X^2 H_q(qX^2)
  +H_q(q/X^2)
  \right)\label{Mq-def}
\ee
 and
 \be
  N(q,X)
  =
  -\frac{
    (q;q)_\infty^2\theta_q(-X)\theta_{q^2}(X^2)}
 {2(q^2;q^2)^5_\infty X(1-X)\theta_q(X^2)} \left(X^2 H_q(qX^2)
  -H_q(q/X^2)\right).\label{Nq-def}
\ee
To simplify \eqref{Mq-def} and \eqref{Nq-def},
we substitute $H_q(qX^2)$ from \eqref{H-prop1}, use definition \eqref{HT-def}
and the identity
\be
\frac{\theta_q(-X)\theta_{q^2}(qX^2)}
 {(q^2;q^2)_\infty\theta_q(X^2)}=\frac{1}{(X,q/X;q)_\infty}=\frac{1}{P_q(X)}.\label{TH-ident}
\ee
This leads to (\ref{finalJ3}). 
\end{proof}
\begin{proof}[Proof of Proposition~\ref{prop3}]

First we construct a telescoping representation for the
function $U_q(X)$, defined in~\eqref{eq:Uq1X} and~\eqref{Uq},
\be
U_q(X)=\frac{X}{1-X}\sum_{n\geq0}\frac{q^n}{(qX,q/X;q)_n}.
\ee
Let
\be
A_n(X)=\frac{1}{(X,q/X;q)_n},\quad G_n(X)=A_n(X)\frac{X(X-q^n)}{1-Xq^n}.
\ee
One can check the following identity
\be
\frac{q^nX}{(X;q)_{n+1}(q/X;q)_n}+\frac{q^{n+2}X^4}{(qX;q)_{n+1}(1/X;q)_n}=
(1-qX^2)\left(G_{n+1}(X)-G_n(X)\right).\label{ident3}
\ee
Summing \eqref{ident3} over $n\geq0$ and taking into account that
\be
G_0(X)=-X,\quad G_\infty(X)=\frac{X^2}{P_q(X)},
\ee
with $P_q(X)$ defined by \eqref{P-def},
we come to the recurrence relation for $U_q(X)$
\be
U_q(X)+qX^3U_q(qX)=X(1-qX^2)\bigl(1+\frac{X}{P_q(X)}\bigr).\label{ident4}
\ee
Iterating \eqref{ident4} $k$ times, we obtain
\be
U_q(X)=\gamma_{q^3}(k)q^kX^{3k}U_q(q^kX)+
\sum_{s=0}^{k-1}\gamma_{q^3}(-s)q^{-s}X^{3s+1}(1-q^{1+2s}X^2)\bigl[1+\frac{q^s X}{P_q(q^s X)}\bigr].
\ee
Now taking the limit $k\to\infty$, and using the relation $U_q(0)=0$, together with
\be
P_q(q^kX)=\gamma_{1/q}(k)X^{-k} P_q(X),
\ee
we  obtain
\be
U_q(X)=H_{q^3}(qX^3)-1+X H_{q^3}(q^2X^3)+\frac{X^2}{P_q(X)}H_q(qX^2).\label{Uq-new}
\ee
Now consider Watson's quintuple product identity (see~\cite{Watson} or \cite[Ex. (5.6)]{MR2128719})
\be
\sum_{m\in\mathbb{Z}}\gamma_{q^3}(m)q^mz^{3m}(1+zq^m)=(q,-z,-q/z;q)_\infty(qz^2,q/z^2;q^2)_\infty.\label{Watson}
\ee

Setting $z=X$ and using the identity \eqref{TH-ident} together with two definitions of the theta-function,
we rewrite \eqref{Watson} as
\be
\theta_{q^3}(qX^3)+X\,\theta_{q^3}(q^2X^3)=\frac{\theta_q(X^2)}{P_q(X)}.\label{Watson1}
\ee
From \eqref{H-prop} we can also get
\be
X^2H_q(qX^2)=T_q(q/X^2)-\frac{1}{2}\theta_q(X^2).\label{ident5}
\ee
Substituting \eqref{ident5} in \eqref{Uq-new} and using \eqref{Watson1} we obtain
\begin{align}
&U_q(X)=H_{q^3}(qX^3)-1+X H_{q^3}(q^2X^3)+\frac{T_q(q/X^2)}{P_q(X)}-\frac{1}{2}\frac{\theta_q(X^2)}{P_q(X)}\nonumber\\
&=\left(H_{q^3}(qX^3)-\frac{1}{2}\theta_{q^3}(qX^3)\right)+X\left(H_{q^3}(q^2X^3)-\frac{1}{2}\theta_{q^3}(q^2X^3)\right)
-1+\frac{T_q(q/X^2)}{P_q(X)}\label{Uq-new1}\\
&=T_{q^3}(qX^3)+X\,T_{q^3}(q^2X^3)-1+\frac{T_q(q/X^2)}{P_q(X)}.\nonumber
\end{align}
Substituting \eqref{Uq-new1} into \eqref{AJ3}, we conclude that it coincides with  \eqref{GM-exp}.
\end{proof}

\begin{proof}[Proof of Lemma~\ref{lem:div-lap-int}]
 Using Watson's integral formula for the inverse of infinite $q$-Pochhammer symbol
 $$
 \frac{1}{(z;q)_\infty}=\int_{\ri\R-\epsilon}\frac{(-z)^s\rd s}{(q;q)_s2\ri\sin(-\pi s)},
 $$
 we write
 $$
 F_q(z)=\sum_{m\ge0}\int_{\ri\R-\epsilon}\frac{q^{m(m+1)}z^m(-zq^m)^s\rd s}{(q;q)_{m,s}2\ri\sin(-\pi s)},
 $$
 so that, for $0<p<1$,
 \be
 \mathcal L_p F_q(z)=\sum_{m\ge0}\int_{\ri\R-\epsilon}\frac{q^{m(m+1)}(zq^s)^m(-z)^s R_{s+m}(p)\rd s}{(q;q)_{m,s}2\ri\sin(-\pi s)},
 \ee
 where $R_s(p)$ is defined in (\ref{eq:t-s-int}). By using its explicit value, and after some further algebraic simplifications, we find 
 \be
 \ba
  \mathcal L_p F_q(z) &=\sum_{m\ge0}\int_{\ri\R-\epsilon}\frac{(p;p)_{s+m}\theta_p(p^{-s})}{(p;p)_\infty^3p^{sm}}\frac{\mg{q^2/p}{-m}q^{sm}(-z)^{s+m}\rd s}{(q;q)_{m,s}\frac{2\ri}{\pi}\sin^2(\pi s)}
\\
& =\int_{\ri\R-\epsilon}\sum_{m\ge0}\frac{(p^{1+s};p)_{m}\bigl(-z(\frac q p)^{s}\bigr)^m}{(q;q)_{m}\mg{pq^{-2}}{-m}}\frac{(p;p)_{s}\theta_p(p^{-s})(-z)^{s}\rd s}{(p;p)_\infty^3(q;q)_{s}\frac{2\ri}{\pi}\sin^2(\pi s)}
 \\
&=\int_{\ri\R-\epsilon}\frac{S_{p,q}\bigl(p^{1+s},z (q/p)^{s}\bigr)(-z)^{s}\rd s}{(p;p)_{-1-s}(q;q)_{s}\frac{2 \ri}{\pi}\sin^2(\pi s)},
\ea
\ee
thus obtaining~\eqref{eq:div-ser-expr}.
 \end{proof}

\begin{proof}[Proof of Lemma~\ref{lem:bl-resum-c}]
We consider
 \begin{align}
& \tilde S_{q^4,q}(x,y):=(\mathcal B_{q^2}S_{q^4,q}(x,\cdot))(y)=\sum_{m\ge0}\frac{(x;q^4)_{m}y^m}{(q;q)_{m}(q^2;q^2)_{m}}\nonumber\\
& =\Bigl(\sum_{m\ge0}(x;q^4)_{m}y^m\Bigr)\had_y(y;q)_\infty^{-1}\had_y(y;q^2)_\infty^{-1}.
 \end{align}
By applying \eqref{eq:res-form-hadamart-pr} to the first two factors, we obtain
\bea
\tilde S_{q^4,q}(x,y)=&
 \sum_{k,n\ge0}\frac{(x;q^4)_{\infty}x^n\mg{q}{-k}}{(q^4;q^4)_n(q;q)_{k,\infty}(yq^{4n+k};q^2)_\infty}
 =\sum_{n\ge0}\sum_{k\ge4n}\frac{(x;q^4)_{\infty}x^n
 \mg{q}{4n-k}}{(q^4;q^4)_n(q;q)_{k-4n,\infty}(yq^{k};q^2)_\infty}\nonumber\\
& =\sum_{k\ge0}\sum_{0\le n\le\frac{k}{4}}\frac{(x;q^4)_{\infty}x^n\mg{q}{4n-k}}{(q^4;q^4)_n(q;q)_{k-4n,\infty}}(\mathcal{B}_{q^2}\me{q^2})(-yq^k),\label{sum1}
\end{align}
see~\eqref{eq:B_qmeqz}. Applying $\mathcal L_{q^2}$ to \eqref{sum1}, using~\eqref{eq:def-rme} and~\eqref{eq:q-b-l-resum-mu_q}, and shifting the summation variable $k$, we obtain
\be
-2(\log q) \hat S_{q^4,q}(x,y)
=\sum_{n\ge0}\sum_{l\in\Z}\sum_{k\ge 4n-2l}\frac{(x;q^4)_{\infty}x^n\mg{q^2}{l}\mg{q}{4n-2l-k}}
 {(q^2;q^2)_{\infty}^3(q^4;q^4)_n(q;q)_{k+2l-4n,\infty}}\frac{\log(-yq^{k})}{-yq^{k}-1}.
\ee
\end{proof}

\begin{proof}[Proof of Lemma~\ref{lem:hatH52-BI}]
Applying ${\mathcal L}_{q^4}$ to \eqref{eq:triple-decomp-5-2}, substituting \eqref{eq:LF_AJ}
and using \eqref{eq:res-form-laplace} twice, we obtain
\begin{equation}
\begin{aligned}
\hat \mH_{{\bf 5}_2^-}(q,z)
={}&\mathcal P_q\frac{(q;q)_\infty}{4h(q^4;q^4)_\infty}
\sum_{\substack{t,m\ge0\\e,k\in\mathbb Z}}
\frac{(-1)^tq^{2t(t+1)+m}}
{(q^4;q^4)_t(q;q)_m}
A_{e,k}(q)I_{e,k}(q^{m+4t-1}z).
\end{aligned}
\label{eq:H52pre}
\end{equation}
Using the shift identity~\eqref{eq:Jshift}, after reindexing $e,k$, we obtain~\eqref{eq:H52B}, where
\begin{equation}
B_{e,k}(q):=
\sum_{m,t\ge0}
\frac{q^{m+4t}\gamma_{q^4}(t)}
{(q;q)_m(q^4;q^4)_t}
A_{e-m+1-4t,\,k-m+1-4t}(q).
\label{eq:BfromA}
\end{equation}
Substituting~\eqref{eq:Atriple} into~\eqref{eq:BfromA}, we obtain 
\begin{equation}
\begin{aligned}
B_{e,k}
={}&\sum_{\substack{m,t,n\ge0\\j,l\in\mathbb Z}}
\frac{q^{e+1+4j+4n}\gamma_{q^4}(t)
\gamma_{q^4}(j+n)\gamma_{q^2}(l+2n)}
{(q;q)_m(q^4;q^4)_t(q^4;q^4)_n}
\\
&\quad\times
\frac{
\gamma_q(e+1-4t+4j-m)
\gamma_q(m-k-1+4t-2l)
}{
(q;q)_{e+1-4t+4j-m}
(q;q)_{k+1-4t+2l-m}
}.
\end{aligned}
\label{eq:Bfive}
\end{equation}
For any integers $a,b\ge0$, one has the terminating identity
\begin{equation}
\sum_{m\ge0}
\frac{\gamma_q(a-m)\gamma_q(m-b)}
{(q;q)_m(q;q)_{a-m}(q;q)_{b-m}}
=
\frac{\gamma_q(a-b)}{(q;q)_a(q;q)_b},
\label{eq:mcollapse}
\end{equation}
which can be used in~\eqref{eq:Bfive} with
\[
a=e+1-4t+4j,\qquad b=k+1-4t+2l,
\]
to remove the $m$-sum.  Now, shifting
\[
j\mapsto j+t,\quad l\mapsto l+2t,
\]
and putting $u=n+t$, we can use  the identity
\[
\sum_{n=0}^{u}
\frac{\gamma_{q^4}(u-n)}
{(q^4;q^4)_{u-n}(q^4;q^4)_n}
=\delta_{u,0},
\]
which follows from
\[
(x;q^4)_\infty\frac1{(x;q^4)_\infty}=1.
\]
Hence the five-fold sum in~\eqref{eq:Bfive} reduces to the double sum and we come to \eqref{eq:Bdouble}.
\end{proof}

\begin{proof}[Proof of Theorem~\ref{thm:analytic-form-hatJ52}]
Let us define generating functions for the coefficients $B_{e,k}$ and $C_{e,k}$
\begin{equation}
\mathcal B(U,V):=\sum_{e,k\in\mathbb Z}B_{e,k}(q)U^eV^k,
\quad
\mathcal C_{q,x}(U,V):=\sum_{e,k\in\mathbb Z}C_{e,k}(x,q)U^eV^k,
\label{eq:BCUVdef}
\end{equation}
and the one for $I_{e,k}(z)$ with inverted $U$ and $V$,
\begin{equation}
\mathcal I(U,V;z)
:=
\sum_{e,k\in\mathbb Z}I_{e,k}(z)U^{-e}V^{-k}.
\label{eq:Jgen}
\end{equation}

The main observation is that the sum in \eqref{eq:J52C} can be evaluated using the identity
\be
\sum_{e,k\in\mathbb Z}C_{e,k}(x,q)I_{e,k}(z)=
\CT_{U,V}
\left[{\mathcal C}_{q,x}(U,V)\mathcal I(U,V;z)\right].\label{CTMaster}
\ee

We first sum $\mathcal B(U,V)$. In the first definition in~\eqref{eq:BCUVdef}, taking into account~\eqref{eq:Bdouble}, we perform the change of summation variables
$$
e \mapsto a-1-4j,
\quad
k \mapsto b-1-2l,
$$
so that \(a,b\ge 0\). Consequently,
\be
e-k+4j-2l=a-b,
\quad
U^eV^k=U^{-1}V^{-1}U^aV^bU^{-4j}V^{-2l}.
\ee
It follows that
\begin{equation}
\mathcal B(U,V)=
U^{-1}V^{-1}
\theta_{q^4}(U^{-4})\theta_{q^2}(V^{-2})\,
\mathcal S(U,V),
\label{eq:BUVpre}
\end{equation}
where
\begin{equation}
\mathcal S(U,V):=
\sum_{a,b\ge0}
\frac{q^a\gamma_q(a-b)}{(q;q)_a(q;q)_b}U^aV^b=
\frac{(qU,qV;q)_\infty}{(qUV;q)_\infty},
\label{eq:SUVdef}
\end{equation}
where, in the second equality, we first sum over $a$ to obtain $(q^{1-b}U;q)_\infty$, and then sum over $b$ using the $q$-binomial summation formula.
Therefore,
\begin{equation}
\mathcal B(U,V)=
\frac{
U^{-1}V^{-1}
\theta_{q^4}(U^{-4})\theta_{q^2}(V^{-2})
(qU,qV;q)_\infty
}{
(qUV;q)_\infty
}.
\label{eq:BUVproduct}
\end{equation}

Next, we turn to the calculation of $\mathcal C_{q,x}(U,V)$. We substitute~\eqref{eq:CdoubleB} into the second definition in~\eqref{eq:BCUVdef}. In the resulting expression, we first shift the summation variables according to $e\mapsto e+r$ and $k\mapsto k+r$. We then replace $r$ with a new summation variable $n=r-m$.
 This gives
\begin{equation}
\mathcal C_{q,x}(U,V)=
\frac{\mathcal B(U,V)}
{(q^{1+x}UV,q^{1-x}UV;q)_\infty}
\label{eq:CUVfromB}
\end{equation}
under the condition $q^{1-|x|}|UV|<1$.
Substituting \eqref{eq:BUVproduct} into~\eqref{eq:CUVfromB}, we obtain~\eqref{eq:CUVproduct}.

The next step is to evaluate the generating function ${\mathcal I}(U,V;z)$. Collecting the powers of $\log(-z)$, we write
\be
\label{IUV-def}
{\mathcal I}(U,V;z)=\sum_{i=0}^2{\mathcal I}_i(U,V;z) L^i,
\ee
where $L$ is defined in~\eqref{eq:def-L}.

Let us first evaluate 
${\mathcal I}_2(U,V;z)$.
For any integer $n\in\Z$, from~\eqref{eq:Phik} we can write
an expansion
\be
\Phi_k(-zq^n)=\sum_{i=0}^2 \Phi_{k,i}(-zq^n)L^i,\label{Phikexp}
\ee
where the coefficients are rational functions of $z$. 
Using
\be
\Phi_{k,2}(v)=\frac{h}{1-v},\label{Phik2}
\ee
we obtain
\be
{\mathcal I}_2(U,V;z)= h\sum_{k\geq 0}\sum_{e\in \mathbb{Z}}\sum_{r\geq0}(-zq^k)^r\frac{U^{-e}V^{-k}}
{1+zq^{e-3r}}-h\sum_{k<0}\sum_{e\in \mathbb{Z}}\sum_{r\geq 1}(-zq^k)^{-r}\frac{U^{-e}V^{-k}}
{1+zq^{e+3r}}.\label{twosum}
\ee
We now change the sign of $r$ in the second term in \eqref{twosum}, and then shift $e\mapsto a+ 3r$ in both terms.  We can 
either sum over $k$ with the same result for both $r\geq0$ and $r<0$:
\be
\frac{1}{1-q^rV^{-1} }
\ee
under the condition $1<|V|<q^{-1}$, or sum over $r$ with the same result for both $k\geq0$ and $k<0$:
\be
\frac{1}{1+q^kzU^{-3} }
\ee
under the condition $q<|zU^{-3}|<1$.
As a result,
\be
{\mathcal I}_2(U,V;z)= h\sum_{a\in\mathbb{Z}}\frac{U^{-a}}{1+zq^a}\sum_{r\in\mathbb{Z}}
\frac{(-zU^{-3} )^{r}}{1-q^r V^{-1}}=h\Theta_q(U,V;z),\label{prodexp}
\ee
where
\begin{equation}\label{eq:Theta-product}
\Theta_q(U,V;z)=
K(U^{-1},-z)K( -zU^{-3},V^{-1}),
\end{equation}
and
\begin{equation}
K(a,b):=\sum_{n\in\mathbb Z}\frac{b^n}{1-aq^n}
=
(q;q)_\infty^3
\frac{\theta_q(ab)}{\theta_q(a)\theta_q(b)}
\label{Kronecker}
\end{equation}
is the bilateral Kronecker identity \eqref{ident1a}
rewritten in a symmetric form.
Substituting~\eqref{Kronecker} into~\eqref{eq:Theta-product}, we obtain~\eqref{Thetaprod}.

Coefficients of  ${\mathcal I}(U,V;z)$ at powers $L^1$ and $L^0$ are obtained by differentiating~\eqref{prodexp}. 
Introduce
differential operators
\begin{equation}
D_U:=U\frac{\partial}{\partial U},
\quad
D_V:=V\frac{\partial}{\partial V},
\quad
D_z:=z\frac{\partial}{\partial z}.\label{diffops}
\end{equation}
Since
\begin{equation}
\log(-zq^a)=(L+a)\log q=-h(L+a),
\end{equation}
we have
\begin{equation}
I_0(-zq^a)
=
h\frac{L+a}{1+zq^a},
\end{equation}
where $I_0$ is defined in \eqref{eq:I01}.
This gives the exact generating identity
\begin{align}
\mathcal I_0(U;z)
&:=
\sum_{a\in\mathbb Z}U^{-a}I_0(-zq^a)
=
h(L-D_U)K(U^{-1},-z)\nonumber\\
&=hK(U^{-1},-z)(L-\Psi_q(U^{-1},-z/U)).
\label{eq:I0gen}
\end{align}
From~\eqref{eq:I01} we obtain
\begin{equation}
I_1(y)=y\frac{\rd}{\rd y}I_0(y),
\end{equation}
and for $y=-zq^a$, one has, at fixed $a$,
\begin{equation}
y\frac{\rd}{\rd y}=D_z.
\end{equation}
Hence
\begin{align}
\mathcal I_1(U;z)
&:=
\sum_{a\in\Z}U^{-a}I_1(-zq^a)
=
D_z\mathcal I_0(U;z)\nonumber\\
&=-K(U^{-1},-z)
+h(L-D_U)D_zK(U^{-1},-z)\nonumber\\
&=-K(U^{-1},-z)
+\mathcal I_0(U;z)\Psi_q(-z/U,-z)+hK(U^{-1},-z)\Lambda_q(-z/U)
\label{eq:I1gen}
\end{align}
where we used \eqref{eq:I0gen}, $D_zL=-h^{-1}$, and
\be
 h(L-D_U)D_zK(U^{-1},-z)=
\mathcal I_0(U;z)\Psi_q(-z/U,-z)+hK(U^{-1},-z)\Lambda_q(-z/U).
\ee

The factor $L+k$ in $\Phi_k$, see~\eqref{eq:Phik}, is generated by $V$-differentiation, because
\begin{equation}
D_VV^{-k}=-kV^{-k}.
\end{equation}
Thus, using~\eqref{Kronecker},
\begin{equation}
(L-D_V)K(-zU^{-3},1/V)=K(-zU^{-3},1/V)(L-\Psi_q(V^{-1},-zU^{-3}/V)).
\end{equation}
Therefore, the complete generating function of $I_{e,k}$ is
\begin{align}
\mathcal I(U,V;z)
=
\bigl\{\mathcal I_0(U;z)(L-D_V)
-3\mathcal I_1(U;z)\bigr\}K(-zU^{-3},V^{-1})
=
h\Theta_q(U,V;z)\nonumber\\
\times\bigl\{
\frac{3}{h}+
\bigl(L-\Psi_q\bigl(\frac{1}{U},\frac{-z}{U}\bigr)\bigr)\bigl(L-\Psi_q\bigl(\frac{1}{V},\frac{-z}{U^3V}\bigr)-3\Psi_q\bigl(\frac{-z}{U},-z\bigr)\bigr)
-3\Lambda_q\bigl(\frac{-z}{U}\bigr)
\bigr\}.
\label{eq:Jgenoperator}
\end{align}
Substituting~\eqref{eq:Jgenoperator} into~\eqref{CTMaster}, equation \eqref{eq:J52C} becomes~\eqref{eq:J52unexpanded}.
\end{proof}

\begin{proof}[Proof of Lemma~\ref{lem:analytic-sigma237}] The first part of the lemma follows from Proposition \ref{tauhat-gen}: 
in this case $p=r=1$ and the $q^2$-Borel transform of $\mH_{ {\bf 3}_1^-}(q,z)$ satisfies (\ref{eq:H-uniform-Laplace}) away from the Stokes ray going through $z=1$. The value at $z=1$ is to be understood as a lateral limit. To obtain the additional
 identities, the best way to proceed is to use (\ref{lcb}). With our normalization of the invariant, we must have
\be
\ba
\mL_{-1} \left[ (X+X^{-1}-2) A_{{\bf 3}_1^-,x}(q) \right]&= q \CM(q), \\
   \mL_{-1} \left[ (X+X^{-1}-2) B_{{\bf 3}_1^-,x}(q) \right] &=2 q B_{\Sigma(2,3,7)}(q).
  \ea
  \ee
 The first equality follows from Theorem 1.3 in \cite{MR4233201}. To see the second equality,  first use
\begin{equation}
  (qX,q/X;q)_\infty(q;q)_\infty
  =
  {\theta_q(X)\over 1-X}.
  \label{eq:B-theta-rewrite}
\end{equation}
It follows that
\begin{equation}
  \left(X+X^{-1}-2\right)B_{{\bf 3}^-_1,x}(q)
  =
  {q\over2(q^2;q^2)_\infty^4}
  \left(X^{-2}-X^{-1}\right)
  \theta_q(X)\theta_q(-X)^2.
  \label{eq:B-after-Alexander-factor}
\end{equation}
The elementary product identity
\begin{equation}
  \theta_q(X)\theta_q(-X)
  =
  {(q;q)_\infty^2\over(q^2;q^2)_\infty}
  \theta_{q^2}(X^2)
  \label{eq:theta-product-q-q2}
\end{equation}
therefore gives
\begin{equation}
  \left(X+X^{-1}-2\right)B_{{\bf 3}^-_1,x}(q)
  =
  {q(q;q)_\infty^2\over2(q^2;q^2)_\infty^5}
  \left(X^{-2}-X^{-1}\right)
  \theta_{q^2}(X^2)\theta_q(-X).
  \label{eq:B-q2-theta-form}
\end{equation}
Expanding the two theta functions,
\be
  \theta_{q^2}(X^2)
  =
  \sum_{r\in\IZ}(-1)^r q^{r(r-1)}X^{2r}, \qquad
  \theta_q(-X)
  =
  \sum_{m\in\IZ}q^{m(m-1)/2}X^m,
  \label{eq:theta-expand}
\ee
one can easily check that the r.h.s. of \eqref{eq:B-q2-theta-form} is in the space $\CA_{1,1}$ defined in \eqref{eq:laplace-domain}, therefore one can apply $\mL_{-1}$
to obtain 
\begin{equation}
 q {(q;q)_\infty^2\over2(q^2;q^2)_\infty^5}
  \sum_{r,m\in\IZ}
  (-1)^r q^{r(r-1)+m(m-1)/2}
  \left(
  q^{(2r+m-2)^2}-q^{(2r+m-1)^2}
  \right).
  \label{eq:H-double-theta-sum}
\end{equation}
After putting \(k=2r+m\), the inner \(r\)-sum is a unary
theta function:
\begin{equation}
 q {(q;q)_\infty^2\over2(q^2;q^2)_\infty^5}
  \sum_{k\in\IZ}
  q^{k(k-1)/2}
  \left(q^{(k-2)^2}-q^{(k-1)^2}\right)
  \theta_{q^6}(q^{3-2k}).
  \label{eq:H-one-sum-theta}
\end{equation}
Splitting \(k\) into residue classes modulo \(3\), and using
\begin{equation}
 \gamma_Q(n)z^{n} \theta_Q(Q^n z)
  =
\theta_Q(z),
  \qquad
  \theta_Q(Q/z)=\theta_Q(z),
  \label{eq:theta-quasiperiods}
\end{equation}
one finds (\ref{eq:H-theta-constant-form}). To prove the asymptotic expansion, we find a Mellin--Barnes representation for $\hat \tau_{\Sigma(2,3,7)}(q,z)$. The $q^2$-Borel transformation of (\ref{z-z}) gives:
\be
\ba
\tilde \tau_{\Sigma(2,3,7)}(q,t) &=-2 q^{-1}\sum_{m=0}^{\infty}   { (q^{m+1};q)_{m+1}  \over (q, -q; q)_m} \left( {t \over q} \right)^m=
2 (1-q^{-1}) 
{_2 \phi_1} \Bigl[ 
\begin{matrix}
 q^{3/2}, -q^{3/2}\\0
\end{matrix}
 ;q, {t \over q}\Bigr].
  \label{eq:S-simple}
  \ea
\end{equation}
Using Watson's formula, and following the by now standard steps, we obtain the formula
\be
\hat \tau_{\Sigma(2,3,7)}(q,z)= - {\pi q^{-1}  \over 2 \log(q) (q^2;q^2)^2_\infty}
  \int_{\mathbb R-\ri\epsilon}
  \left(-{z\over q}\right)^{\ri s-1}
  {
  \qpoch{q^{\ri s}}{q}
  \qpoch{q^{2-2\ri s}}{q^2}
  \over
  \sinh^2(\pi s)
  \qpoch{q^{1+2\ri s}}{q^2}
  }
  \,\rd s .
  \label{eq:I-vertical}
\end{equation}
One can now evaluate the formal asymptotic expansion by using the same technique as in Subsection \ref{prop-as}.
Let us also note that this integral can be calculated by closing the contour in the upper half-plane and picking the residues, as we did for the figure-eight knot. There are simple poles at $s=\ri n$, $n\in \IZ_{\ge 0}$, and a second family of poles at
\begin{equation}
  s=
  {\pi\ell\over \log q}
  +\ri\left(m+{1\over 2}\right),
  \qquad
  m=0,1,2,\ldots,\quad \ell\in\mathbb Z.
  \label{eq:half-poles}
\end{equation}
The calculation of these contributions is similar to what happens in the case of the trefoil knot. The case $z=1$ requires a choice of branch of the logarithm, as in (\ref{log-branch}). The contribution of the poles at $s = \ri n$ gives (\ref{A237M}), while (\ref{eq:H-theta-constant-form}) is the contribution of the poles at (\ref{eq:half-poles}).
\end{proof}

\begin{proof}[Proof of Lemma~\ref{lem:analytic-bar-sigma237}] The first part of the lemma follows again from Proposition \ref{tauhat-gen},
 since in this case $p=r=1$ and the $q$-Borel transform of $\mH_{ {\bf 4}_1}(q,z)$ 
 satisfies (\ref{eq:H-uniform-Laplace}) away from Stokes rays. The expressions 
 (\ref{eq:AB-residue-series}) are obtained by
 taking the limit $z \to 1$. The
extra $B_{\overline{\Sigma(2,3,7)}}(q)/h$ term at $z=1$ comes from the terms with $k+n=0$ through the limit
\begin{equation}
 \lim_{z\to1}\frac{\log z}{1-z}=-1.
 \label{eq:exceptional-limit}
\end{equation}
 To derive the closed formula (\ref{Brogers}) for
 $B_{\overline{\Sigma(2,3,7)}}(q)$, we use \eqref{eq:Pk-definition} and put
$k=2j+\ell$.  Since
\begin{equation}
 \frac{1}{(q;q)_k} \qbinom{k}{2j}
 =\frac{1}{(q;q)_{2j}(q;q)_\ell},
 \qquad
 (q;q)_{2j}=(q,q^2;q^2)_j,
 \label{eq:qbinomial-cancellation}
\end{equation}
we obtain
\begin{equation}
 (q;q)_\infty^3B_{\overline{\Sigma(2,3,7)}}(q)
 =2 \sum_{j,\ell\geq0}
 \frac{q^{(2j+\ell)(2j+\ell+1)/2+j}}
 {(q^2;q^2)_j(q;q)_\ell}.
 \label{eq:double-sum}
\end{equation}
Euler's identity
\begin{equation}
 (-z;q)_\infty
 =\sum_{\ell\geq0}
 \frac{q^{\ell(\ell-1)/2}z^\ell}{(q;q)_\ell}
 \label{eq:Euler}
\end{equation}
allows one to evaluate the $\ell$-sum in \eqref{eq:double-sum}:
\begin{equation}
 \sum_{\ell\geq0}
 \frac{q^{\ell(\ell+1)/2+2j\ell}}{(q;q)_\ell}
 =(-q^{2j+1};q)_\infty.
 \label{eq:ell-sum}
\end{equation}
Therefore,
\begin{equation}
 (q;q)_\infty^3  B_{\overline{\Sigma(2,3,7)}}(q)
 =2 \sum_{j\geq0}
 \frac{q^{2j(j+1)}(-q^{2j+1};q)_\infty}
 {(q^2;q^2)_j}.
 \label{eq:after-Euler}
\end{equation}
If we now use
\begin{equation}
 (-q^{2j+1};q)_\infty
 =\frac{(-q;q)_\infty}{(-q;q)_{2j}},
 \qquad
 (-q;q)_\infty=\frac{(q^2;q^2)_\infty}{(q;q)_\infty},
 \label{eq:minus-products}
\end{equation}
we arrive at
\begin{equation}
B_{\overline{\Sigma(2,3,7)}}(q)
 =2 \frac{(q^2;q^2)_\infty}{(q;q)_\infty^4}
 \sum_{j\geq0}
 \frac{q^{2j^2+2j}}
 {(q^2;q^2)_j(-q;q)_{2j}}.
 \label{eq:Rogers-Selberg-reduction}
\end{equation}
The r.h.s. can then be evaluated with the second Rogers--Selberg identity,
\begin{equation}
 \sum_{j\geq0}
 \frac{q^{2j^2+2j}}
 {(q^2;q^2)_j(-q;q)_{2j}}
 =\frac{(q^2,q^5,q^7;q^7)_\infty}{(q^2;q^2)_\infty}.
 \label{eq:second-Rogers-Selberg}
\end{equation}
This proves (\ref{Brogers}).

Finally, to derive the asymptotic expansion, we first obtain a Mellin--Barnes representation. The \(q\)-Borel transform is
\begin{equation}
  \tilde \tau_{\overline{\Sigma(2,3,7)}}(q, t)
  =
 2  \sum_{m\geq0}
  (-1)^m{(q^{m+1};q)_{m+1}\over(q;q)_m}t^m
  =
  2 (1-q)\,
  {_3\phi_2}\Bigl[
  \begin{matrix}
  -q,\ q^{3/2},\ -q^{3/2}\\
  0,\ 0
  \end{matrix}
  ;q,\ -t
  \Bigr].
  \label{eq:extra-f1-borel}
\end{equation}
By using Watson's representation, we eventually find the expression in 
\eqref{eq:extra-f1-vertical-integral}. The standard approach used in Subsection 
\ref{prop-as} establishes the formal asymptotic expansion.
  \end{proof}
  
  \begin{proof}[Proof of Lemma~\ref{lem:analytic-m-1/2}] The first part of the lemma follows once more from Proposition \ref{tauhat-gen}. To establish the asymptotic behavior,
 we express the invariant in Mellin--Barnes form.
The $q$-Borel transform is
\be
\ba
 \tilde \tau_{M_{-1/2}}(q,t)
 &=
  2\sum_{m\geq0}\sum_{\ell=0}^m
  (-1)^m q^{\ell(\ell+1)}
  {(q;q)_{2m+1}\over
  (q;q)_m(q;q)_\ell(q;q)_{m-\ell}}
  t^m\\
  &= 2 \sum_{\ell\geq0}
 \gamma_{q^2}(-\ell) t^\ell
  {(q;q)_{2\ell+1}\over(q;q)_\ell^2}
 {_4\phi_3}
  \Bigl[
  \begin{matrix}
  -q^{\ell+1},\ q^{\ell+3/2},\ -q^{\ell+3/2},\ 0\\
  0,\ 0,\ 0
  \end{matrix}
  ;q,\ -t
  \Bigr].
  \ea
  \label{eq:corr-borel-ell}
  \ee
 By using Watson's representation of the $q$-hypergeometric function and following the usual steps,
 we end up with the Mellin--Barnes integral representation~\eqref{mb-M12} for the $q$-Borel--Laplace resummation.
 %
  %
  From this expression it is then easy to obtain the formal asymptotic expansion as in Subsection \ref{prop-as}.
  \end{proof}
\bibliography{biblio}{}

\newcommand{\etalchar}[1]{$^{#1}$}
\def\cprime{$'$} \def\cprime{$'$}
\providecommand{\bysame}{\leavevmode\hbox to3em{\hrulefill}\thinspace}
\providecommand{\href}[2]{#2}
\providecommand{\eprint}{\begingroup \urlstyle{rm}\Url}
\begin{thebibliography}{GGMnW25}

\bibitem[AK14]{MR3227503}
J{\o}rgen~Ellegaard Andersen and Rinat Kashaev, \emph{A {TQFT} from {Q}uantum
  {T}eichm\"uller {T}heory}, Comm. Math. Phys. \textbf{330} (2014), no.~3,
  887--934.

\bibitem[AKP26]{akp}
Sergei Alexandrov, Albrecht Klemm, and Boris Pioline, \emph{{Large Order
  Enumerative Geometry, Black Holes and Black Rings}}, \eprint{2605.19552}.

\bibitem[Ask80]{MR567718}
Richard Askey, \emph{Ramanujan's extensions of the gamma and beta functions},
  Amer. Math. Monthly \textbf{87} (1980), no.~5, 346--359.

\bibitem[Bal94]{MR1317343}
Werner Balser, \emph{From divergent power series to analytic functions},
  Lecture Notes in Mathematics, vol. 1582, Springer-Verlag, Berlin, 1994,
  Theory and application of multisummable power series.

\bibitem[Bal00]{MR1722871}
\bysame, \emph{Formal power series and linear systems of meromorphic ordinary
  differential equations}, Universitext, Springer-Verlag, New York, 2000.

\bibitem[BBL05]{bbl}
Anna Beliakova, Christian Blanchet, and Thang Le, \emph{{Laplace transform and
  universal sl (2) invariants}}, \eprint{math/0509394}.

\bibitem[BNG96]{MR1389962}
Dror Bar-Natan and Stavros Garoufalidis, \emph{On the
  {M}elvin-{M}orton-{R}ozansky conjecture}, Invent. Math. \textbf{125} (1996),
  no.~1, 103--133.

\bibitem[Bor99]{Borel1899}
{\'E}mile Borel, \emph{M{\'e}moire sur les s{\'e}ries divergentes}, Annales
  Scientifiques de l'{\'E}cole Normale Sup{\'e}rieure \textbf{16} (1899),
  9--131.

\bibitem[CCF{\etalchar{+}}19]{3dmod}
Miranda C.~N. Cheng, Sungbong Chun, Francesca Ferrari, Sergei Gukov, and
  Sarah~M. Harrison, \emph{{3d Modularity}}, JHEP \textbf{10} (2019), 010,
  \eprint{1809.10148}.

\bibitem[Cos09]{MR2474083}
Ovidiu Costin, \emph{Asymptotics and {B}orel summability}, Chapman \& Hall/CRC
  Monographs and Surveys in Pure and Applied Mathematics, vol. 141, CRC Press,
  Boca Raton, FL, 2009.

\bibitem[Dre15]{MR3384489}
Thomas Dreyfus, \emph{Building meromorphic solutions of {$q$}-difference
  equations using a {B}orel-{L}aplace summation}, Int. Math. Res. Not. IMRN
  (2015), no.~15, 6562--6587.

\bibitem[DW01]{MR1822139}
David De~Wit, \emph{An infinite suite of {L}inks-{G}ould invariants}, J. Knot
  Theory Ramifications \textbf{10} (2001), no.~1, 37--62.

\bibitem[FW24]{fantini-wheeler}
Veronica Fantini and Campbell Wheeler, \emph{{Summability for State Integrals
  of hyperbolic knots}}, \eprint{2410.20973}.

\bibitem[Gar04]{Garoufalidis2004}
Stavros Garoufalidis, \emph{On the characteristic and deformation varieties of
  a knot}, Geometry \& Topology Monographs \textbf{7} (2004), 291--309,
  \eprint{math/0306230}, Knots in Hellas '98, Vol. 3 (Delphi).

\bibitem[GGM21]{ggm1}
Stavros Garoufalidis, Jie Gu, and Marcos Mari{\~n}o, \emph{The resurgent
  structure of quantum knot invariants}, Communications in Mathematical Physics
  \textbf{386} (2021), no.~1, 469--493.

\bibitem[GGM23]{peacock}
\bysame, \emph{{Peacock patterns and resurgence in complex Chern--Simons
  theory}}, Research in the Mathematical Sciences \textbf{10} (2023), no.~3,
  29.

\bibitem[GGMnW25]{MR4841756}
Stavros Garoufalidis, Jie Gu, Marcos Mari\~no, and Campbell Wheeler,
  \emph{Resurgence of {C}hern-{S}imons theory at the trivial flat connection},
  Comm. Math. Phys. \textbf{406} (2025), no.~1, Paper No. 20, 60.

\bibitem[GK17]{MR3696603}
Stavros Garoufalidis and Rinat Kashaev, \emph{From state integrals to
  {$q$}-series}, Math. Res. Lett. \textbf{24} (2017), no.~3, 781--801.

\bibitem[GK23]{MR4700600}
\bysame, \emph{The descendant colored {J}ones polynomials}, Pure Appl. Math. Q.
  \textbf{19} (2023), no.~5, 2307--2334.

\bibitem[GL11]{MR2860990}
Stavros Garoufalidis and Thang T.~Q. L\^{e}, \emph{Asymptotics of the colored
  {J}ones function of a knot}, Geom. Topol. \textbf{15} (2011), no.~4,
  2135--2180.

\bibitem[GM21]{MR4233201}
Sergei Gukov and Ciprian Manolescu, \emph{A two-variable series for knot
  complements}, Quantum Topol. \textbf{12} (2021), no.~1, 1--109.

\bibitem[GMnP16]{gmp}
Sergei Gukov, Marcos Mari\~no, and Pavel Putrov, \emph{{Resurgence in complex
  Chern-Simons theory}}, \eprint{1605.07615}.

\bibitem[GPPV20]{gppv}
Sergei Gukov, Du~Pei, Pavel Putrov, and Cumrun Vafa, \emph{{BPS spectra and
  3-manifold invariants}}, J. Knot Theor. Ramifications \textbf{29} (2020),
  no.~02, 2040003, \eprint{1701.06567}.

\bibitem[GPV17]{gpv}
Sergei Gukov, Pavel Putrov, and Cumrun Vafa, \emph{{Fivebranes and 3-manifold
  homology}}, JHEP \textbf{07} (2017), 071, \eprint{1602.05302}.

\bibitem[GR04]{MR2128719}
George Gasper and Mizan Rahman, \emph{Basic hypergeometric series}, second ed.,
  Encyclopedia of Mathematics and its Applications, vol.~96, Cambridge
  University Press, Cambridge, 2004, With a foreword by Richard Askey.

\bibitem[Guk05]{gukov}
Sergei Gukov, \emph{{Three-dimensional quantum gravity, Chern-Simons theory,
  and the A polynomial}}, Commun. Math. Phys. \textbf{255} (2005), 577--627,
  \eprint{hep-th/0306165}.

\bibitem[GW22]{gar-wheel}
Stavros Garoufalidis and Campbell Wheeler, \emph{{Modular $q$-holonomic
  modules}}, \eprint{2203.17029}.

\bibitem[Hab02]{MR2002603}
Kazuo Habiro, \emph{On the quantum {$\rm sl_2$} invariants of knots and
  integral homology spheres}, Invariants of knots and 3-manifolds ({K}yoto,
  2001), Geom. Topol. Monogr., vol.~4, Geom. Topol. Publ., Coventry, 2002,
  pp.~55--68.

\bibitem[Hik05]{hikami1}
Kazuhiro Hikami, \emph{{O}n the quantum invariant for the {B}rieskorn homology
  spheres}, International Journal of Mathematics \textbf{16} (2005), no.~06,
  661--685, \eprint{https://doi.org/10.1142/S0129167X05003004}.

\bibitem[Hik07]{MR2330673}
\bysame, \emph{Generalized volume conjecture and the {$A$}-polynomials: the
  {N}eumann--{Z}agier potential function as a classical limit of the partition
  function}, J. Geom. Phys. \textbf{57} (2007), no.~9, 1895--1940.

\bibitem[Hik11]{hikami2}
\bysame, \emph{{Decomposition of Witten-Reshetikhin-Turaev invariant: Linking
  pairing and modular forms}}, AMS/IP Stud. Adv. Math. \textbf{50} (2011),
  131--151.

\bibitem[HMnMO14]{hmmo}
Yasuyuki Hatsuda, Marcos Mari\~{n}o, Sanefumi Moriyama, and Kazumi Okuyama,
  \emph{Non-perturbative effects and the refined topological string}, J. High
  Energy Phys. (2014), no.~9, 168, front matter+41.

\bibitem[Kas97]{MR1434238}
R.~M. Kashaev, \emph{The hyperbolic volume of knots from the quantum
  dilogarithm}, Lett. Math. Phys. \textbf{39} (1997), no.~3, 269--275.

\bibitem[KM25]{MR4992645}
Rinat Kashaev and Vladimir Mangazeev, \emph{On braided {H}opf structures on
  exterior algebras}, J. Pure Appl. Algebra \textbf{229} (2025), no.~12, Paper
  No. 108135, 30.

\bibitem[KR89]{MR1026957}
A.~N. Kirillov and N.~Yu. Reshetikhin, \emph{Representations of the algebra
  {${U}_q({\rm sl}(2)),\;q$}-orthogonal polynomials and invariants of links},
  Infinite-dimensional {L}ie algebras and groups ({L}uminy-{M}arseille, 1988),
  Adv. Ser. Math. Phys., vol.~7, World Sci. Publ., Teaneck, NJ, 1989,
  pp.~285--339.

\bibitem[LZ99]{lz}
Ruth Lawrence and Don Zagier, \emph{Modular forms and quantum invariants of
  3-manifolds}, Asian Journal of Mathematics \textbf{3} (1999), no.~1, 93--108.

\bibitem[Mas03]{MR1997328}
Gregor Masbaum, \emph{Skein-theoretical derivation of some formulas of
  {H}abiro}, Algebr. Geom. Topol. \textbf{3} (2003), 537--556.

\bibitem[Mci99]{mcintosh}
Richard~J. Mcintosh, \emph{Some asymptotic formulae for q-shifted factorials},
  The Ramanujan Journal \textbf{3} (1999), no.~2, 205--214.

\bibitem[MM95]{MR1328734}
P.~M. Melvin and H.~R. Morton, \emph{The coloured {J}ones function}, Comm.
  Math. Phys. \textbf{169} (1995), no.~3, 501--520.

\bibitem[MM01]{MR1828373}
Hitoshi Murakami and Jun Murakami, \emph{The colored {J}ones polynomials and
  the simplicial volume of a knot}, Acta Math. \textbf{186} (2001), no.~1,
  85--104.

\bibitem[Mn15]{Marino2015}
Marcos Mari\~no, \emph{Spectral theory and mirror symmetry}, arXiv:1506.07757,
  2015.

\bibitem[Mur07]{MR2349300}
Hitoshi Murakami, \emph{The colored {J}ones polynomials and the {A}lexander
  polynomial of the figure-eight knot}, JP J. Geom. Topol. \textbf{7} (2007),
  no.~2, 249--269.

\bibitem[Oht95]{ohtsuki}
Tomotada Ohtsuki, \emph{A polynomial invariant of integral homology 3-spheres},
  Math. Proc. Cambridge Philos. Soc. \textbf{117} (1995), no.~1, 83--112.

\bibitem[Par]{park-inverted}
Sunghyuk Park, \emph{{Inverted state sums, inverted Habiro series, and
  indefinite theta functions}}, \eprint{arXiv:2106.03942}.

\bibitem[PK01]{PK}
R.B. Paris and D.~Kaminski, \emph{{Asymptotics and Mellin--Barnes integrals}},
  Encyclopedia of Mathematics and its Applications, vol.~85, Cambridge
  University Press, 2001.

\bibitem[Roz97]{rozansky}
L.~Rozansky, \emph{{Higher order terms in the Melvin-Morton expansion of the
  colored Jones polynomial}}, Communications in Mathematical Physics
  \textbf{183} (1997), no.~2, 291--306.

\bibitem[RT91]{MR1091619}
N.~Reshetikhin and V.~G. Turaev, \emph{Invariants of {$3$}-manifolds via link
  polynomials and quantum groups}, Invent. Math. \textbf{103} (1991), no.~3,
  547--597.

\bibitem[Svo25]{svoboda}
Josef Svoboda, \emph{{Inverted Habiro Series and its Residues}}, arXiv preprint
  arXiv:2509.22610 (2025).

\bibitem[Wat29]{Watson}
G.~N. Watson, \emph{Theorems {S}tated by {R}amanujan ({VII}): {T}heorems on
  {C}ontinued {F}ractions}, J. London Math. Soc. \textbf{4} (1929), no.~1,
  39--48.

\bibitem[Wei76]{Weil1976}
Andr{\'e} Weil, \emph{{Elliptic functions according to Eisenstein and
  Kronecker}}, Ergebnisse der Mathematik und ihrer Grenzgebiete, vol.~88,
  Springer-Verlag, Berlin, Heidelberg, New York, 1976.

\bibitem[Whe25]{wheeler}
Campbell Wheeler, \emph{{Quantum Modularity for a Closed Hyperbolic
  3-Manifold}}, SIGMA \textbf{21} (2025), 004, \eprint{2308.03265}.

\bibitem[Wil22]{MR4404799}
Sonny Willetts, \emph{A unification of the {ADO} and colored {J}ones
  polynomials of a knot}, Quantum Topol. \textbf{13} (2022), no.~1, 137--181.

\bibitem[Wit89]{MR990772}
Edward Witten, \emph{Quantum field theory and the {J}ones polynomial}, Comm.
  Math. Phys. \textbf{121} (1989), no.~3, 351--399.

\bibitem[Wit11]{MR2809462}
\bysame, \emph{Analytic continuation of {C}hern--{S}imons theory},
  Chern-{S}imons gauge theory: 20 years after, AMS/IP Stud. Adv. Math.,
  vol.~50, Amer. Math. Soc., Providence, RI, 2011, pp.~347--446.

\end{thebibliography}
\bibliographystyle{hamsalpha}
%
%
%
%
%
%
%

\end{document}